\DeclareMathOperator{\ord}{ord}
\DeclareMathOperator{\tmod}{mod}
\DeclareMathOperator{\tdiv}{div}
\newtheorem{thm}{Theorem}[section]
\newtheorem{lem}{Lemma}[section]
\newtheorem{conj}{Conjecture}[section]
\newtheorem{exa}{Example}[section]
\newtheorem{dfn}{Definition}[section]
\newtheorem{exe}{Exercise}[section]
\newcommand{\N}{\mathbb{N}}
\newcommand{\Z}{\mathbb{Z}}
\newcommand{\C}{\mathbb{C}}
\newcommand{\F}{\mathbb{F}}
\title{Configurations Of Consecutive Primitive Roots}
\author{N. A. Carella}
\begin{document}
\thispagestyle{empty}
\date{}
\maketitle

\vskip .25 in 
\textbf{\textit{Abstract}:} Let \(p \geq 2\) be a large prime, and let $k \ll \log p $ be a small integer. This note proves the existence of various configurations of $(k+1)$-tuples of consecutive and quasi consecutive primitive roots $n+a_0, n+a_1, n+a_2, \ldots, n+a_k$ in the finite field $\F_p$, where $a_0,a_1, \ldots, a_k$ is a fixed $(k+1)$-tuples of distinct integers.  \let\thefootnote\relax\footnote{
\date{\today}\\
\textit{Mathematics Subject Classifications}: Primary 11A07, Secondary 11N37. \\
\textit{Keywords}: Primitive root; Consecutive primitive root; Consecutive squarefree primitive root; Relative prime primitive root.}

\vskip .25 in 
\tableofcontents
\newpage
%ssssssssssssssssssssssssssssssssssssssssssssssssss
\section{Introduction}  \label{s8800}
Let \(p \geq 2\) be a large prime, and let $\F_p$ be a finite field. The order $\ord_p \alpha= d$ of an element $\alpha \in \F_p$ is the smallest divisor $d\mid p-1$ for which $\alpha^d\equiv 1 \bmod p$. An element of maximal order $\ord_p(\alpha)=p-1$ is called a primitive root. This note is concerned with the configurations of subsets of primitive roots in finite fields. A configuration deals with the existence of $(k+1)$-tuples of quasi consecutive primitive roots 
\begin{equation} \label{eq8800.030}
n +a_0, \quad n+a_1,\quad n+a_2,\quad \ldots,\quad n+a_k, 
\end{equation}
where $a_0, a_1, \ldots, a_k$ is a fixed $(k+1)$-tuples of distinct integers, in a finite field $\F_p$, or in large subsets $\mathcal{A} \subset \F_p$. The corresponding counting functions have the forms 
\begin{equation} \label{eq8800.035}
\sum_{ n\in \F_p}  \Psi \left(n+a_0\right)\Psi \left(n+a_1\right )\cdots \Psi \left(n+a_k\right)f\left(n+a_0\right)f\left(n+a_1\right)\cdots f \left(n+a_k\right),
\end{equation}
and
\begin{equation} \label{eq8800.037}
\sum_{ n\in \mathcal{A}}  \Psi \left(n+a_0\right)\Psi \left(n+a_1\right )\cdots \Psi \left(n+a_k\right)f\left(n+a_0\right)f\left(n+a_1\right)\cdots f \left(n+a_k\right),
\end{equation}
respectively, where $\Psi :\mathbb{N} \longrightarrow \{0,1\}$ is the characteristic function of primitive roots modulo $p$, see Section \ref{s333}, and $f: \mathbb{N}\longrightarrow \Z$ is an arithmetic function. The function $f$ restricts the sequence of $(k+1)$-tuples of quasi consecutive primitive roots to certain subsequence of integers. There are many possible classes of clusters and constellations of primitive roots generated by the different classes of $(k+1)$-tuples. The precise results for some of the various restricted $(k+1)$-tuples of configurations of quasi consecutive primitive roots are detailed below.

%rrrrrrrrrrrrrrrrrrrrrrrrrrrrrrrrrrrrrrrrrrrrr
\subsection{Consecutive Primitive Roots}
The earliest works on consecutive primitive roots seems to be that in \cite{CL56} or before. The author proved a general result for the existence of consecutive primitive roots. The proof is based on the divisors dependent characteristic function for primitive roots, see Lemma \ref{lem333.02}. Later, a qualitative result for the existence of some consecutive primitive roots was proved in \cite{VE68}. A quantitative and weaker result for two consecutive primitive roots is proved in \cite{SM75}, the same result, but emphasizing the numerical aspects, is also proved in \cite{CS85}. More recently, some partial result but no proof for $k$-consecutive primitive roots appears in \cite{TT13}.

\begin{thm}  \label{thm8800.040}  Let \(p\geq 2\) be a large prime, and let $k\ll \log p $ be an integer. Then, the finite field $\F_p$ contains $(k+1)$-tuples of consecutive primitive roots. Furthermore, the number of $(k+1)$-tuples has the asymptotic formula
\begin{equation} \label{eq8800.045}
N(k,p)=\left (\frac{\varphi(p-1)}{p-1}\right )^{k+1}p+O(p^{1-\varepsilon}),
\end{equation}
where $\varepsilon>0$ is an arbitrary small number.
\end{thm}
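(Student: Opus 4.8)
The plan is to convert the count into a main term plus a family of multiplicative character sums, and then to control the latter by the Weil bound. For the consecutive case one takes $a_i=i$, so that
\[
N(k,p)=\sum_{n\in\F_p}\prod_{i=0}^{k}\Psi(n+i).
\]
Inserting the divisor-dependent characteristic function of Lemma~\ref{lem333.02} into each factor, written as
\[
\Psi(n+i)=\frac{\varphi(p-1)}{p-1}\sum_{d_i\mid p-1}\frac{\mu(d_i)}{\varphi(d_i)}\sum_{\ord(\chi_i)=d_i}\chi_i(n+i),
\]
and expanding the product, I obtain
\[
N(k,p)=\left(\frac{\varphi(p-1)}{p-1}\right)^{k+1}\sum_{d_0,\dots,d_k\mid p-1}\ \prod_{i=0}^{k}\frac{\mu(d_i)}{\varphi(d_i)}\sum_{\ord(\chi_i)=d_i}\ \sum_{n\in\F_p}\prod_{i=0}^{k}\chi_i(n+i).
\]
This splits into the diagonal term, in which every $d_i=1$ and hence every $\chi_i$ is principal, and an off-diagonal remainder in which at least one $d_i>1$.

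The diagonal supplies the main term. With all $\chi_i$ principal one has $\prod_i\chi_i(n+i)=1$ except at the $k+1$ points where some $n+i\equiv 0$, so the inner sum equals $p-(k+1)$, and the diagonal contributes
\[
\left(\frac{\varphi(p-1)}{p-1}\right)^{k+1}\bigl(p-(k+1)\bigr)=\left(\frac{\varphi(p-1)}{p-1}\right)^{k+1}p+O(k),
\]
which matches \eqref{eq8800.045}.

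For the remainder I would fix a generator $\chi$ of the cyclic character group and write $\chi_i=\chi^{e_i}$, so that $\prod_i\chi_i(n+i)=\chi\!\left(\prod_i(n+i)^{e_i}\right)$. Because the shifts $a_i=i$ are distinct, whenever the $\chi_i$ are not all principal the rational function $\prod_i(x+i)^{e_i}$ is not a perfect $(p-1)$st power in $\F_p(x)$, and the Weil bound for mixed multiplicative character sums gives
\[
\left|\sum_{n\in\F_p}\prod_{i=0}^{k}\chi_i(n+i)\right|\le k\sqrt{p}.
\]
Summing this over the $\prod_i\varphi(d_i)$ characters of each order cancels the weights $1/\varphi(d_i)$, and $\sum_{d\mid p-1}|\mu(d)|=2^{\omega(p-1)}$ bounds the divisor sum, so the off-diagonal remainder is
\[
\ll\left(\frac{\varphi(p-1)}{p-1}\right)^{k+1}k\sqrt{p}\,\bigl(2^{\omega(p-1)}\bigr)^{k+1}\ll k\,2^{(k+1)\omega(p-1)}\sqrt{p},
\]
where $\omega(p-1)$ is the number of distinct prime factors of $p-1$.

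The main obstacle is exactly this last estimate: to reach $O(p^{1-\varepsilon})$ one must show $2^{(k+1)\omega(p-1)}=O(p^{1/2-\varepsilon})$, that is $(k+1)\omega(p-1)\ll\log p$. This is the place where the hypothesis $k\ll\log p$ is forced to interact with the true size of $\omega(p-1)$, and the crude count $2^{(k+1)\omega(p-1)}$ has to be sharpened (or the slack in $p^\varepsilon$ exploited) so that the remainder stays genuinely smaller than the main term. Once this is secured, the error in \eqref{eq8800.045} is $O(p^{1-\varepsilon})$, and since the main term $(\varphi(p-1)/(p-1))^{k+1}p$ dominates it for all large $p$, the count $N(k,p)$ is positive and the desired $(k+1)$-tuples of consecutive primitive roots exist.
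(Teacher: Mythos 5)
Your route is the classical one of Carlitz: you expand each $\Psi(n+i)$ with the divisor-dependent characteristic function of Lemma \ref{lem333.02}, isolate the principal-character diagonal as the main term, and control the off-diagonal multiplicative character sums $\sum_n\prod_i\chi_i(n+i)$ by the Weil bound. The paper deliberately does not do this; it uses the divisor-free characteristic function of Lemma \ref{lem333.03}, so that the main term comes from the trivial additive character ($u_i=0$, Lemma \ref{lem9799.76}) and the error term is an additive exponential sum handled by Theorem \ref{thm333.04} and Lemma \ref{lem8899.06}, with no divisor sum over $d\mid p-1$ appearing at all. For fixed $k$ your method is sound and in fact yields the sharper error $O\bigl(2^{(k+1)\omega(p-1)}k\sqrt p\bigr)=O(p^{1/2+\varepsilon})$.

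The genuine gap is exactly the one you flag at the end, and it is not a technicality that the slack in $p^{\varepsilon}$ can absorb. Your off-diagonal bound is $k\,2^{(k+1)\omega(p-1)}\sqrt p$, and for $k\asymp\log p$ and a typical prime, where $\omega(p-1)\asymp\log\log p$, the exponent $(k+1)\omega(p-1)\log 2$ is of order $\log p\cdot\log\log p$, so $2^{(k+1)\omega(p-1)}$ exceeds every fixed power of $p$; for primorial-type totients with $\omega(p-1)\asymp\log p/\log\log p$ it is $\exp\bigl(c(\log p)^2/\log\log p\bigr)$. Hence the remainder does not stay below $p^{1-\varepsilon}$ in the stated range $k\ll\log p$ unless you either restrict to $k\ll\log p/\omega(p-1)$ or replace the crude triangle-inequality count of character tuples by something genuinely finer. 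This blow-up of the divisor-dependent expansion is precisely why the paper switches to the divisor-free representation: in Lemma \ref{lem8899.06} the error is bounded by estimating a single factor nontrivially by $p^{1-\varepsilon}$ and the remaining $k$ factors trivially by $\varphi(p-1)/p\le 1$, so no factor exponential in $k\,\omega(p-1)$ ever appears. As written, your argument establishes the theorem only for bounded $k$ (or $k=o(\log p/\log\log p)$ for generic $p$), not in the claimed range.
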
 
The complete proof for this case is given in Section \ref{s9090}.
\begin{thm}  \label{thm8800.050}  Let \(p\geq 2\) be a large prime, and let $k\ll \log p $ be an integer. Then, any large subset of elements $\mathcal{A} \subset \F_p$ of cardinality $p^{1-\varepsilon/2}\ll \# \mathcal{A}$ contains $(k+1)$-tuples of consecutive primitive roots. Furthermore, the number of $(k+1)$-tuples has the asymptotic formula
\begin{equation} \label{eq8800.055}
N(k,p, \mathcal{A})=\left (\frac{\varphi(p-1)}{p-1}\right )^{k+1}\# \mathcal{A}+O(p^{1-\varepsilon}),
\end{equation}
where $\varepsilon>0$ is an arbitrary small number.
\end{thm}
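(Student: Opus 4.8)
The plan is to follow the architecture of the proof of Theorem \ref{thm8800.040}, replacing the complete sum over $\F_p$ by an incomplete sum over $\mathcal{A}$ and recovering the missing complete sum by Fourier completion. Starting from the divisor-dependent characteristic function of Lemma \ref{lem333.02}, I would write each factor as
\begin{equation*}
\Psi(n+a_i)=\frac{\varphi(p-1)}{p-1}\sum_{d_i\mid p-1}\frac{\mu(d_i)}{\varphi(d_i)}\sum_{\ord(\chi_i)=d_i}\chi_i(n+a_i),
\end{equation*}
and substitute this into $N(k,p,\mathcal{A})=\sum_{n\in\mathcal{A}}\prod_{i=0}^{k}\Psi(n+a_i)$, the counting function \eqref{eq8800.037} with $f\equiv 1$. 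Expanding the product yields a main term, coming from the choice $d_0=\cdots=d_k=1$ in which every $\chi_i$ is principal, equal to $\bigl(\varphi(p-1)/(p-1)\bigr)^{k+1}\#\mathcal{A}$ up to the negligible count of $n$ with some $n+a_i\equiv 0$. Each remaining tuple $(d_0,\dots,d_k)\neq(1,\dots,1)$ contributes an incomplete character sum $S=\sum_{n\in\mathcal{A}}\prod_{i=0}^{k}\chi_i(n+a_i)$ with at least one non-principal $\chi_i$, and the whole task reduces to showing that these error terms sum to $O(p^{1-\varepsilon})$.

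First I would bound each $S$ by completion. Writing the indicator of $\mathcal{A}$ through its finite Fourier expansion $\mathbf{1}_{\mathcal{A}}(n)=\tfrac1p\sum_{t\in\F_p}\widehat{\mathbf{1}}_{\mathcal{A}}(t)\,e_p(tn)$, where $e_p(x)=e^{2\pi i x/p}$ and $\widehat{\mathbf{1}}_{\mathcal{A}}(t)=\sum_{m\in\mathcal{A}}e_p(-tm)$, turns $S$ into
\begin{equation*}
S=\frac1p\sum_{t\in\F_p}\widehat{\mathbf{1}}_{\mathcal{A}}(t)\sum_{n\in\F_p}e_p(tn)\prod_{i=0}^{k}\chi_i(n+a_i).
\end{equation*}
The inner sum is now a complete mixed additive--multiplicative character sum over $\F_p$. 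Since the $a_i$ are distinct and at least one $\chi_i$ is non-principal, the datum $e_p(tX)\prod_i\chi_i(X+a_i)$ defines a non-trivial character on the affine line, so Weil's theorem gives the uniform bound $\bigl|\sum_{n}e_p(tn)\prod_i\chi_i(n+a_i)\bigr|\le (k+1)\sqrt{p}$ for every $t$. Combining this with the Cauchy--Schwarz/Parseval estimate $\sum_{t}|\widehat{\mathbf{1}}_{\mathcal{A}}(t)|\le\sqrt{p}\,\bigl(\sum_t|\widehat{\mathbf{1}}_{\mathcal{A}}(t)|^2\bigr)^{1/2}=p\,(\#\mathcal{A})^{1/2}$ yields $|S|\ll k\,(p\,\#\mathcal{A})^{1/2}$, uniformly in the $\chi_i$.

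Next I would assemble the total error. Summing the uniform bound over the characters of each fixed order collapses the character counts against the denominators through $\sum_{d\mid p-1}\tfrac{|\mu(d)|}{\varphi(d)}\,\#\{\chi:\ord\chi=d\}=\sum_{d\mid p-1}|\mu(d)|=2^{\omega(p-1)}$, so the error is
\begin{equation*}
E\ll\left(\frac{\varphi(p-1)}{p-1}\right)^{k+1}k\,(p\,\#\mathcal{A})^{1/2}\,\bigl(2^{\omega(p-1)}\bigr)^{k+1}.
\end{equation*}
Under the hypotheses $k\ll\log p$ and $\#\mathcal{A}\gg p^{1-\varepsilon/2}$, the factors $\bigl(\varphi(p-1)/(p-1)\bigr)^{k+1}$ and $\bigl(2^{\omega(p-1)}\bigr)^{k+1}$ are absorbed into $p^{o(1)}$, and comparison with the main term $\bigl(\varphi(p-1)/(p-1)\bigr)^{k+1}\#\mathcal{A}\gg p^{1-\varepsilon/2+o(1)}$ gives $E=O(p^{1-\varepsilon})$ together with $N(k,p,\mathcal{A})>0$ for all large $p$, proving existence and the asymptotic formula \eqref{eq8800.055}.

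The \emph{main obstacle} is the estimation of the incomplete character sums over the unstructured set $\mathcal{A}$: the completion step trades the square-root cancellation of Weil's bound for an extra factor $\sqrt{p}$, so the crude Parseval input $|S|\ll k(p\,\#\mathcal{A})^{1/2}$ only barely competes with the main term, and the delicate balancing of the $(k{+}1)$-fold divisor and Fourier sums is exactly where the work lies. The comfortable route is to exploit structure in $\mathcal{A}$: when $\mathcal{A}$ is an interval (or a bounded union of intervals or progressions) the weights satisfy $\sum_{t\neq 0}|\widehat{\mathbf{1}}_{\mathcal{A}}(t)|\ll p\log p$, whence $|S|\ll k\sqrt{p}\,\log p$, which for $\#\mathcal{A}\gg p^{1-\varepsilon/2}$ lies well below the main term and secures the stated error with room to spare; the hard part is verifying the uniformity of Weil's estimate across every twisted character and confirming that the accumulated divisor and frequency sums do not erode the saving.
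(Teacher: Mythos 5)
Your route (divisor-dependent characteristic function of Lemma \ref{lem333.02}, completion over $\mathcal{A}$ by finite Fourier expansion, Weil's bound for the complete mixed sums) is genuinely different from the paper's, which works with the divisor-free additive-character representation (Lemmas \ref{lem333.03} and \ref{lem333.09}) and the exponential-sum estimates of Section \ref{s4} and Section \ref{s8899}; indeed the paper never writes out a dedicated argument for the subset case at all --- the proof tagged ``Theorem \ref{thm8800.050}'' in Section \ref{s9050} is actually the proof of the squarefree Theorem \ref{thm8800.080} over all of $\F_p$. But your error analysis has a genuine gap: the bound you arrive at,
\begin{equation*}
E\ll\left(\frac{\varphi(p-1)}{p-1}\right)^{k+1}k\,(p\,\#\mathcal{A})^{1/2}\,\bigl(2^{\omega(p-1)}\bigr)^{k+1},
\end{equation*}
is itself at least $k\,\#\mathcal{A}$, hence larger than both the main term and the asserted $O(p^{1-\varepsilon})$. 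Concretely, $\frac{\varphi(p-1)}{p-1}\cdot 2^{\omega(p-1)}=\prod_{q\mid p-1}\bigl(2-\tfrac{2}{q}\bigr)\geq 1$, so the first and third factors multiply to something $\geq 1$; and since $\#\mathcal{A}\leq p$ one has $(p\,\#\mathcal{A})^{1/2}\geq\#\mathcal{A}$, so the Parseval completion bound $|S|\ll k(p\,\#\mathcal{A})^{1/2}$ never improves on the trivial bound $|S|\leq\#\mathcal{A}$ for any admissible $\mathcal{A}$. The claim that $\bigl(2^{\omega(p-1)}\bigr)^{k+1}$ is absorbed into $p^{o(1)}$ is also false in the stated range: for $k\asymp\log p$ it is $\geq 2^{k+1}=p^{\Theta(1)}$ even when $\omega(p-1)=1$, and $p^{\Theta(\log\log p)}$ for a typical prime. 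So the argument as written cannot conclude $N(k,p,\mathcal{A})>0$.

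Neither of your two suggested repairs closes the gap for the theorem as stated. Restricting to intervals gives $\sum_{t\neq 0}|\widehat{\mathbf{1}}_{\mathcal{A}}(t)|\ll p\log p$ and a genuine saving in $S$, but the theorem claims the result for \emph{arbitrary} subsets of cardinality $\gg p^{1-\varepsilon/2}$, where no such Fourier decay is available; and even with perfect square-root cancellation in every $S$, the divisor-sum factor $2^{(k+1)\omega(p-1)}$ from Lemma \ref{lem333.02} still swamps any power of $p$ for $k\asymp\log p$. (This blow-up is precisely what the paper's divisor-free representation is designed to avoid, since it introduces no sum over $d\mid p-1$.) It is worth noting that the statement itself is problematic for arbitrary $\mathcal{A}$ --- taking $\mathcal{A}$ to be the complement of the set of primitive roots gives $\#\mathcal{A}\geq (p-1)/2$ yet $N(k,p,\mathcal{A})=0$ when $a_0=0$ --- so no error analysis of this shape can succeed without further hypotheses on $\mathcal{A}$; but within the framework you set up, the quantitative failure above is where your argument breaks.
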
 

The average length of $(k+1)$-tuples is $k \ll \log p/\log \log \log p$. This statistic is dependent on the primes decomposition of the average totient $p-1$. Asymptotically, highly composite totients $p-1$ have slightly shorter lengths $k \ll \log p/ \log \log p$. The Fermat and Germain totients have the longest lengths, namely, $k \ll \log p$, the details appears in Lemma \ref{lem9799.156}. The distribution of $(k+1)$-tuples of consecutive primitive roots is a very interesting research problem. The numerical data is not adequate to make any strong heuristic, but it suggests that the $(k+1)$-tuples of consecutive primitive roots are not uniformly distributed.

%rrrrrrrrrrrrrrrrrrrrrrrrrrrrrrrrrrrrrrrrrr
\subsection{Consecutive Squarefree Primitive Roots}
The result for a single squarefree primitive root $n$ in a finite field $\F_p$, which is a special case of Theorem \ref{thm8800.190}, is proved in Theorem \ref{thm9010.040}. A result for two consecutive squarefree primitive roots $n$ and $n+1$ in a finite field $\F_p$ is given in Theorem \ref{thm9040.040} and a result for three consecutive squarefree primitive roots $n$, $n+1$ and $n+2$ is given in Theorem \ref{thm9030.050}. The next case for four squarefree primitive roots $n$, $n+1$, $n+2$ and $n+3$ is not feasible, see \eqref{eq297.100}. However, there are other sequences of integers that support long strings of quasi consecutive squarefree primitive roots.

\begin{thm}  \label{thm8800.080}  Let \(p\geq 2\) be a large prime, and let $k\ll \log p$ be an integer. For any admissible $(k+1)$-tuples $a_0<a_1< \cdots<a_k$, the finite field $\F_p$ contains $(k+1)$-tuples of consecutive squarefree primitive roots
\begin{equation} \label{eq8800.200}
n+a_0, \quad n+a_1,\quad n+a_2,\quad \ldots,\quad n+a_k.
\end{equation}
Furthermore, the number of $(k+1)$-tuples has the asymptotic formula
\begin{equation} \label{eq8800.045}
N(k,p)=\prod_{q\geq 2}\left ( 1-\frac{\omega(q)}{q^2}\right )\left (\frac{\varphi(p-1)}{p-1}\right )^{k+1}p+O(p^{1-\varepsilon}),
\end{equation}
where $\varepsilon>0$ is an arbitrary small number.
\end{thm}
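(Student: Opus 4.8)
The plan is to estimate the counting function
\begin{equation}
N(k,p)=\sum_{n\in\F_p}\prod_{i=0}^{k}\Psi(n+a_i)\,\mu^2(n+a_i),
\end{equation}
where $\mu^2$ is the characteristic function of the squarefree integers and $\Psi$ is the divisor-dependent characteristic function of primitive roots supplied by Lemma \ref{lem333.02}. Substituting that formula for each $\Psi(n+a_i)$ produces a main coefficient $\bigl(\varphi(p-1)/(p-1)\bigr)^{k+1}$ times a weighted exponential sum over the additive characters attached to the divisors of $p-1$, with each factor also carrying the squarefree indicator. Isolating the principal divisor term (all characters trivial) gives the expected leading density $\bigl(\varphi(p-1)/(p-1)\bigr)^{k+1}\sum_{n}\prod_i\mu^2(n+a_i)$, and every remaining term is a nontrivial character sum to be controlled by the Weil bound exactly as in the proof of Theorem \ref{thm8800.040} in Section \ref{s9090}.

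Two analytic inputs must then be combined. First I would evaluate $\sum_{n\le x}\prod_{i=0}^k\mu^2(n+a_i)$, the count of $n$ for which all of $n+a_0,\dots,n+a_k$ are simultaneously squarefree; by the standard sieve expansion $\mu^2(m)=\sum_{d^2\mid m}\mu(d)$ applied to each coordinate and the Chinese Remainder Theorem, this has the asymptotic $\prod_{q\ge 2}\bigl(1-\omega(q)/q^2\bigr)\,x+O(x^{1/2+\varepsilon})$, where $\omega(q)$ counts the residue classes modulo $q^2$ forbidden by the admissibility condition on the tuple (so for each prime $q$ at most $k+1$ classes are excluded, and admissibility guarantees $\omega(q)<q^2$ so the product does not vanish). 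This is precisely the local factor appearing in \eqref{eq8800.045}. Second, the nonprincipal divisor terms are bounded by a character sum over a squarefree-supported set; after inserting the same $\mu^2=\sum_{d^2\mid m}\mu(d)$ identity, the squarefree weight splits off and the inner complete character sums over $\F_p$ are estimated by Weil, each contributing $O(p^{1/2})$, with the number of divisor/character combinations absorbed into the admissible $p^{\varepsilon}$ losses since $k\ll\log p$.

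The hard part will be the bookkeeping that simultaneously carries the squarefree weight $\prod_i\mu^2(n+a_i)$ and the primitive-root character structure through to a clean error term $O(p^{1-\varepsilon})$. Two issues intertwine: the $d$-summation from the squarefree sieve ranges up to $\sqrt{p}$, so the off-diagonal estimate must be uniform in those sieve parameters, and the admissibility hypothesis has to be used in exactly the right place to keep the singular product $\prod_{q\ge 2}\bigl(1-\omega(q)/q^2\bigr)$ positive and to rule out local obstructions (this is what makes the four-consecutive case $a_i=i$, $0\le i\le 3$, fail, as noted after \eqref{eq297.100}, because $q=2$ forces one of four consecutive integers to be divisible by $4$). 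I would handle the interaction by first separating the character sum over the totient divisors from the squarefree sieve, estimating the latter uniformly via the $O(x^{1/2+\varepsilon})$ squarefree remainder and the former via Weil, and then recombining; the product of two $O(p^{1-\varepsilon'})$-type savings, together with the $k\ll\log p$ bound on the number of factors, yields the stated $O(p^{1-\varepsilon})$ after adjusting $\varepsilon$.
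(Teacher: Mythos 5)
Your route is genuinely different from the paper's: you expand each $\Psi(n+a_i)$ with the divisors-dependent formula of Lemma \ref{lem333.02} (multiplicative characters indexed by $d\mid p-1$) and propose to kill the nonprincipal terms with the Weil bound, whereas the paper's proof in Section \ref{s9050} uses the divisors-free additive-character representation of Lemma \ref{lem333.03}, evaluates the main term by Lemma \ref{lem9709.76} (reducing to the correlation $\sum_n\prod_i\mu^2(n+a_i)$ via Theorem \ref{thm8009.10}, exactly as you do), and bounds the error by Lemma \ref{lem8829.08}, which discards the weights $\mu^2(n+a_i)\le 1$ trivially and invokes the bound of Theorem \ref{thm333.04} on $\sum_{\gcd(n,p-1)=1}e^{i2\pi b\tau^n/p}$ rather than Weil. (Incidentally, the proof of Theorem \ref{thm8800.040} in Section \ref{s9090} that you appeal to also does not use Weil; it uses the same additive-character machinery.)

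The genuine gap is in your claim that ``the number of divisor/character combinations is absorbed into the admissible $p^{\varepsilon}$ losses since $k\ll\log p$.'' Expanding Lemma \ref{lem333.02} in each of the $k+1$ coordinates produces $\bigl(\sum_{d\mid p-1}|\mu(d)|\bigr)^{k+1}=2^{(k+1)\omega(p-1)}$ character tuples, each contributing a mixed character sum of size roughly $(k+1)p^{1/2}\log p$ after Weil and completion. For fixed $k$ this is $d(p-1)^{k+1}p^{1/2+o(1)}=p^{1/2+\varepsilon}$ and the Carlitz--Cohen argument goes through, but for $k\asymp\log p$ the factor $2^{(k+1)\omega(p-1)}=e^{(k+1)\omega(p-1)\log 2}$ has exponent of order $\log p\cdot\omega(p-1)$, which is superpolynomial in $p$ (already $(\log p)^{c\log p}$ for a typical totient with $\omega(p-1)\asymp\log\log p$); the $p^{1/2}$ saving per sum is overwhelmed and no error term $O(p^{1-\varepsilon})$ results. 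This is exactly the limitation of the divisors-dependent representation that the paper flags in Section \ref{s333}, and it is why the paper switches to Lemma \ref{lem333.03}, where the error analysis involves a single nontrivially bounded factor and $k$ trivially bounded ones (Lemma \ref{lem8899.06}). A secondary, repairable issue: carrying the squarefree sieve $\mu^2(m)=\sum_{d^2\mid m}\mu(d)$ with $d$ up to $\sqrt p$ inside the nonprincipal character sums creates the uniformity problem you yourself flag; it is avoided entirely by bounding $\mu^2\le 1$ in the error term and reserving the sieve for the main term only, as the paper does.
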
 
The complete proof for this case is given in Section \ref{s9050}.
%rrrrrrrrrrrrrrrrrrrrrrrrrrrrrrrrrrrrrrrrrrrrrr
\subsection{Consecutive $s$-Power Free Primitive Roots}
 Let $s \geq 2$ be a small integer. A primitive root $n \in \F_p$ is $s$-power free if and only if it is not divisible by an $s$-power, exempli gratia, $r^s\nmid n$ for all prime $r\geq2$. This idea generalizes the idea of squarefree primitive roots.
\begin{thm}  \label{thm8800.190}  Let \(p\geq 2\) be a large prime, and let $s\geq 2 $ be a small integer. Then, the finite field $\F_p$ contains $s$-power free primitive roots. Furthermore, the number of such elements has the asymptotic formula
\begin{equation} \label{eq8800.145}
N_s(p)=\frac{1}{\zeta(s)}\frac{\varphi(p-1)}{p-1}p+O(p^{1-\varepsilon}),
\end{equation}
where $\zeta(s)$ is the zeta function, and $\varepsilon>0$ is an arbitrary small number.
\end{thm}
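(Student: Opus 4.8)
The plan is to realize $N_s(p)$ as a weighted character sum and then isolate the principal character as the main term. Writing $\mu_s$ for the indicator of $s$-power free integers, so that $\mu_s(n)=\sum_{d^s\mid n}\mu(d)$, the quantity to estimate is
$$N_s(p)=\sum_{1\le n\le p-1}\Psi(n)\,\mu_s(n),$$
where $\Psi$ is the characteristic function of primitive roots modulo $p$. First I would substitute the divisor-dependent representation of $\Psi$ from Lemma \ref{lem333.02}, which has the shape $\Psi(n)=\tfrac{\varphi(p-1)}{p-1}\sum_{d\mid p-1}\tfrac{\mu(d)}{\varphi(d)}\sum_{\ord(\chi)=d}\chi(n)$, and interchange the order of summation so that the inner object becomes, for each character $\chi$, the twisted sum $\sum_{n\le p-1}\mu_s(n)\chi(n)$.

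The principal character $d=1$ produces the main term. Here the character is trivial, so the sum collapses to $\sum_{n\le p-1}\mu_s(n)$, and the elementary count of $s$-power free integers gives $\sum_{n\le x}\mu_s(n)=x/\zeta(s)+O(x^{1/s})$. Taking $x=p-1$ and multiplying by the density factor $\varphi(p-1)/(p-1)$ yields the claimed main term $\tfrac{1}{\zeta(s)}\tfrac{\varphi(p-1)}{p-1}p$ up to an admissible error.

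The work is in bounding the contribution of the nonprincipal characters ($d>1$). For such $\chi$ I would expand $\mu_s(n)=\sum_{e^s\mid n}\mu(e)$, write $n=e^sm$, and use complete multiplicativity of $\chi$ to factor the sum as $\sum_{e\le (p-1)^{1/s}}\mu(e)\chi(e)^s\sum_{m\le (p-1)/e^s}\chi(m)$. The inner incomplete character sum is nontrivial, and here a single application of the Pólya--Vinogradov inequality, giving $O(p^{1/2}\log p)$, is \emph{not} enough after summing over the $O(p^{1/s})$ values of $e$ (for $s=2$ this only recovers the trivial bound). The remedy is to split the $e$-range at a parameter $E$: for $e\le E$ apply Pólya--Vinogradov, contributing $O(E\,p^{1/2}\log p)$, while for $e>E$ apply the trivial bound on a sum of length $(p-1)/e^s$, contributing $O(p/E^{s-1})$. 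Balancing the two at $E=p^{1/(2s)}$ yields the bound $O(p^{1/2+1/(2s)+o(1)})$ for each $\chi$.

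Finally I would sum over all characters. Because there are exactly $\varphi(d)$ characters of order $d$, the weight $\mu(d)/\varphi(d)$ cancels the count, leaving $\mu(d)$ over the squarefree divisors $d\mid p-1$; their number is $2^{\omega(p-1)}=p^{o(1)}$. Hence the total error is $\ll p^{o(1)}\cdot p^{1/2+1/(2s)}\ll p^{1-\varepsilon}$ for every $s\ge 2$, which also dominates the main-term error $O(p^{1/s})$. Since the main term is of order at least $p/\log\log p$, it exceeds the error for all large $p$, establishing existence. The single genuine obstacle is the character-sum estimate: one must resist settling for Pólya--Vinogradov alone and instead use the range-splitting that trades it against the trivial bound, since otherwise the $s=2$ case fails entirely.
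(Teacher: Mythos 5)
Your argument is correct, but it follows a genuinely different route from the paper. The paper never touches multiplicative characters for this theorem: it substitutes the divisors-free representation of $\Psi$ from Lemma \ref{lem333.03} (additive characters $\psi$ evaluated at $(\tau^{m}-n)u$), splits at $u=0$ versus $u\neq 0$, evaluates the main term by Lemma \ref{lem9739.06} (which performs exactly the hyperbola-style expansion $\mu_s(n)=\sum_{d^s\mid n}\mu(d)$ that you use, but against the trivial character), and disposes of the error term by Lemma \ref{lem8809.02}, which ultimately rests on the exponential-sum bound $\sum_{\gcd(n,p-1)=1}e^{i2\pi b\tau^{n}/p}\ll p^{1-\varepsilon}$ of Theorem \ref{thm333.04}. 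You instead use the divisor-dependent representation of Lemma \ref{lem333.02}, reduce the error to twisted sums $\sum_{n\le p-1}\mu_s(n)\chi(n)$ over nonprincipal $\chi$, and handle those by P\'olya--Vinogradov combined with the range-splitting at $E=p^{1/(2s)}$ --- a step you are right to insist on, since a single application of P\'olya--Vinogradov summed over all $e\le p^{1/s}$ is indeed trivial at $s=2$. Your bookkeeping over characters ($\varphi(d)$ characters of order $d$ against the weight $\mu(d)/\varphi(d)$, leaving $2^{\omega(p-1)}=p^{o(1)}$ terms) is also sound. What each approach buys: yours is elementary and self-contained, yielding the explicit exponent $p^{1/2+1/(2s)+o(1)}$ in the error (so $p^{3/4+o(1)}$ at worst), whereas the paper's additive-character machinery gives only an unspecified $p^{1-\varepsilon}$ here but is the uniform engine it reuses for all the correlated configurations (consecutive tuples, coprimality constraints) where the divisor-dependent function becomes unwieldy. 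The only cosmetic discrepancy in your write-up is that the principal character contributes $\tfrac{\varphi(p-1)}{p-1}\cdot\tfrac{p-1}{\zeta(s)}$ rather than $\tfrac{\varphi(p-1)}{p-1}\cdot\tfrac{p}{\zeta(s)}$, but the difference is $O(1)$ and is absorbed into the error term, as you note.
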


\begin{thm}  \label{thm8800.195}  Let \(p\geq 2\) be a large prime, and let $a_0\ne a_1$ and $ s \geq 2$ be small integers. Then, the finite field $\F_p$ contains a pair of consecutive $s$-power free primitive roots $n+a_0$ and $n+a_1$. Furthermore, the number of such pairs has the asymptotic formula
\begin{equation} \label{eq8800.145}
N_s(2,p)=\prod_{q\geq 2}\left (1-\frac{\rho(q)}{q^s} \right ) \left (\frac{\varphi(p-1)}{p-1}\right )^2p+O(p^{1-\varepsilon}),
\end{equation}
where $\rho(s)=1,2$, and $\varepsilon>0$ is an arbitrary small number.
\end{thm}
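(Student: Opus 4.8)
The plan is to realise $N_s(2,p)$ as the weighted correlation sum \eqref{eq8800.035} with $k=1$, taking the restricting function $f=\mathbf{1}_s$ to be the characteristic function of the $s$-power free integers, so that
\[
N_s(2,p) = \sum_{n\in\F_p}\Psi(n+a_0)\,\Psi(n+a_1)\,\mathbf{1}_s(n+a_0)\,\mathbf{1}_s(n+a_1).
\]
First I would substitute the divisor-dependent characteristic function of Lemma \ref{lem333.02}, which writes each factor as $\Psi(n+a_i)=\tfrac{\varphi(p-1)}{p-1}\sum_{d_i\mid p-1}\tfrac{\mu(d_i)}{\varphi(d_i)}\sum_{\ord\chi_i=d_i}\chi_i(n+a_i)$, together with the elementary identity $\mathbf{1}_s(m)=\sum_{b^s\mid m}\mu(b)$. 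Expanding the product turns $N_s(2,p)$ into a multiple sum over the moduli $d_0,d_1\mid p-1$, the $s$-power sieve parameters $b_0,b_1$, and the associated characters $\chi_0,\chi_1$, with an inner complete character sum in $n$.

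Next I would isolate the principal-character diagonal $d_0=d_1=1$, so that $\chi_0$ and $\chi_1$ are both the principal character. This contribution equals $\left(\tfrac{\varphi(p-1)}{p-1}\right)^2$ times the number of $n\in\F_p$ for which both $n+a_0$ and $n+a_1$ are $s$-power free. Evaluating the $b_0,b_1$ sum by the Chinese remainder theorem (pairing $n\equiv -a_0 \bmod b_0^s$ and $n\equiv -a_1 \bmod b_1^s$) gives the Euler product density $\prod_{q\ge 2}\left(1-\tfrac{\rho(q)}{q^s}\right)p+O(p^{1-\varepsilon})$, where $\rho(q)$ counts the residue classes modulo $q^s$ excluded by the two divisibility conditions: generically the classes $-a_0$ and $-a_1$ are distinct, giving $\rho(q)=2$, while if $q^s\mid a_1-a_0$ they coincide and $\rho(q)=1$. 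This produces exactly the claimed main term.

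The remaining contribution comes from the off-diagonal terms in which at least one of $\chi_0,\chi_1$ is nonprincipal. After truncating the $b_0,b_1$ Möbius sums at a level of size a small fixed power of $p$ and bounding the discarded tail trivially, the surviving inner sums are hybrid character sums of the shape $\sum_{n}\chi_0(n+a_0)\chi_1(n+a_1)$ restricted to the progressions dictated by $b_0^s$ and $b_1^s$. I would evaluate these by completing the sum and invoking the Weil bound, which yields square-root cancellation $O(p^{1/2})$ per nonprincipal configuration; summing over the at most $\tau(p-1)^2$ pairs of moduli and the admissible $b_0,b_1$ keeps the total error within $O(p^{1-\varepsilon})$, exactly as in the squarefree pair case of Theorem \ref{thm8800.080}. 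The main obstacle is the simultaneous bookkeeping of the two independent sieve truncations against the character sums: one must choose the truncation level so that the large-$b_i$ tail and the accumulated Weil error are balanced, and verify that the number of nontrivial moduli dividing $p-1$ does not erode the power saving. Since $s\ge 2$ only improves the convergence of the $b_i$ sums relative to the squarefree case, this balancing is no harder than for Theorem \ref{thm8800.080}, and the same choice of parameters carries through.
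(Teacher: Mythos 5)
Your proposal is correct in outline, but it follows the classical route rather than the paper's. The paper deliberately avoids the divisors-dependent characteristic function of Lemma \ref{lem333.02}: it substitutes the divisors-free representation of Lemma \ref{lem333.03}, splits the resulting additive-character sum into the term $u_0=u_1=0$ (the main term) and the rest (the error term), then quotes Lemma \ref{lem9715.77} --- which in turn imports the correlation asymptotic of Theorem \ref{thm9549.310} for $\sum_{n\le x}\mu_s(n)\mu_s(n+a)$ --- for the main term, and Lemma \ref{lem8829.08} (ultimately Theorem \ref{thm333.04} and Lemma \ref{lem333.22} on sums of $e^{i2\pi b\tau^n/p}$ over $\gcd(n,p-1)=1$) for the error term. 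You instead expand via Lemma \ref{lem333.02} into multiplicative characters indexed by $d_0,d_1\mid p-1$, extract the principal-character diagonal as the main term, and control the off-diagonal terms with the Weil bound. Both routes reach the same main term, since your CRT computation of the density $\prod_q(1-\rho(q)/q^s)$ with $\rho(q)=1$ or $2$ according to whether $q^s\mid a_1-a_0$ reproduces exactly the constant in Theorem \ref{thm9549.310}; the difference is in the error analysis. Your approach must absorb the $4^{\omega(p-1)}=p^{o(1)}$ character configurations and the two sieve truncations against $O(p^{1/2}\log p)$ per completed Weil sum, which does close since $p^{o(1)+2\delta+1/2}\ll p^{1-\varepsilon}$, but it requires the bookkeeping you describe; what it buys you is reliance only on the standard Weil bound rather than on the bound of Theorem \ref{thm333.04} for the coprimality-restricted exponential sum, which is the less standard ingredient in the paper's argument. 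Conversely, the paper's route buys a cleaner separation: the entire arithmetic content of the main term is delegated to the pre-packaged correlation lemma, and no divisor-sum blowup ever appears.
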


The complete proofs for these cases are given in Section \ref{s9110}.
%rrrrrrrrrrrrrrrrrrrrrrrrrrrrrrrrrrrrrrrrrrrrrr
\subsection{Consecutive Primitive Roots And Relatively Prime}
The earliest work considered the existence of primitive roots relatively prime to $p-1$. In other words, the case $q=p-1$ was proved in \cite{HM76} using the divisors dependent characteristic function in Lemma \ref{lem333.02}. A generalized version for $q \leq p-1$, using the divisors free characteristic function in Lemma \ref{lem333.03}, is realized in Theorem \ref{thm8800.090}. In addition, for $a \geq 1$, a result for two consecutive primitive roots $n$, $n+a$, and relative prime to $q=q(a)$ is proved in Theorem \ref{thm8800.095}. Both of these results are appear to be new in the literature.
\begin{thm}  \label{thm8800.090}  Let \(p\geq 2\) be a large prime, and let $q< p $ be an integer. Then, the finite field $\F_p$ contains primitive roots relatively prime to $q$. Furthermore, the number of such elements has the asymptotic formula
\begin{equation} \label{eq8800.045}
N_r(p,q)=\frac{\varphi(q)}{q}\frac{\varphi(p-1)}{p-1}p+O(p^{1-\varepsilon}),
\end{equation}
where $\varepsilon>0$ is an arbitrary small number.
\end{thm}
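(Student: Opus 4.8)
The plan is to detect the two defining conditions separately and then reduce the count to a character sum. First I would write the counting function as
\[
N_r(p,q)=\sum_{n=1}^{p-1}\Psi(n)\,\mathbf{1}_{\{\gcd(n,q)=1\}},
\]
where $\Psi$ is the characteristic function of primitive roots modulo $p$. For the coprimality condition I would apply M\"obius inversion, $\mathbf{1}_{\{\gcd(n,q)=1\}}=\sum_{d\mid\gcd(n,q)}\mu(d)$, and interchange the order of summation. This reduces the problem to counting primitive roots lying in the arithmetic progressions $n\equiv 0 \bmod d$ for each squarefree $d\mid q$:
\[
N_r(p,q)=\sum_{d\mid q}\mu(d)\sum_{\substack{1\le n\le p-1\\ d\mid n}}\Psi(n).
\]

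Next I would insert the divisors-free characteristic function from Lemma \ref{lem333.03}, which expresses $\Psi(n)$ as a single weighted sum of the multiplicative characters $\chi(n)$ modulo $p$, the principal character $\chi_0$ carrying the weight $\varphi(p-1)$ and hence contributing the constant density $\tfrac{\varphi(p-1)}{p-1}$. Substituting this and isolating the $\chi_0$ term produces the main term: for $\chi_0$ the inner sum is simply the number of $n\le p-1$ divisible by $d$, namely $\lfloor (p-1)/d\rfloor$, and a short elementary computation gives
\[
\sum_{d\mid q}\mu(d)\left\lfloor\frac{p-1}{d}\right\rfloor=\frac{\varphi(q)}{q}\,p+O\!\left(2^{\omega(q)}\right).
\]
Multiplying by $\tfrac{\varphi(p-1)}{p-1}\le 1$ yields the principal contribution $\tfrac{\varphi(q)}{q}\tfrac{\varphi(p-1)}{p-1}p+O(p^{o(1)})$, exactly the claimed asymptotic.

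The contributions of the nonprincipal characters constitute the error. Each one involves an incomplete character sum $\sum_{j\le (p-1)/d}\chi(j)$ over the progression, which I would bound by the P\'olya--Vinogradov inequality (equivalently by the Weil bound after completing the sum) as $O(p^{1/2}\log p)$, uniformly in the length. Summing these estimates over the squarefree $d\mid q$ and over the characters, and absorbing the divisor counts $2^{\omega(q)}$ and $2^{\omega(p-1)}$, both of size $p^{o(1)}$, together with the Ramanujan-type weights of Lemma \ref{lem333.03}, gives a total error of order $O(p^{1/2+o(1)})=O(p^{1-\varepsilon})$ for any $\varepsilon<1/2$.

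The hard part will be the bookkeeping of this error: one must verify that the \emph{accumulation} over all squarefree $d\mid q$ and over all nonprincipal characters, rather than any single character sum, stays below $p^{1-\varepsilon}$. The delicate regime is $q$ close to $p$, where the progressions $n\equiv 0\bmod d$ become short; here it is essential that P\'olya--Vinogradov bounds incomplete character sums \emph{uniformly} in their length, so that the shortness does not degrade the estimate, while the divisor counts and weights stay $p^{o(1)}$. Once the error is shown to be of smaller order than the main term, which satisfies $\tfrac{\varphi(q)}{q}\tfrac{\varphi(p-1)}{p-1}p\gg p/(\log\log p)^2$, the positivity $N_r(p,q)>0$ follows for all large $p$, establishing both the existence statement and the asymptotic formula.
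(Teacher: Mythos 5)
Your argument is sound, but it is not the route the paper takes, and you have mislabeled your main tool: the representation you describe --- a weighted sum over multiplicative characters $\chi$ modulo $p$ in which the principal character carries the density $\varphi(p-1)/(p-1)$ --- is the \emph{divisors-dependent} characteristic function of Lemma \ref{lem333.02}, not the divisors-free one of Lemma \ref{lem333.03} (which detects $\ord_p(u)=p-1$ via additive characters $\psi$ and the equation $\tau^{n}-u=0$). With that correction your proof is essentially the classical method of \cite{HM76} extended from $q=p-1$ to general $q<p$: M\"obius inversion on $\gcd(n,q)=1$ first, then the principal character gives $\sum_{d\mid q}\mu(d)\lfloor (p-1)/d\rfloor=\frac{\varphi(q)}{q}p+O(2^{\omega(q)})$, and P\'olya--Vinogradov handles each nonprincipal character uniformly in the length of the progression, with the accumulated weights $2^{\omega(q)}2^{\omega(p-1)}=p^{o(1)}$ under control; this yields the stronger error $O(p^{1/2+o(1)})$. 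The paper deliberately avoids this: it keeps the coprimality condition inside the counting sum, applies Lemma \ref{lem333.03}, splits on the additive frequency ($u=0$ gives the main term, evaluated by M\"obius inversion in Lemma \ref{lem9729.256}; $u\neq 0$ gives the error), and bounds the error via the exponential sum $\sum_{\gcd(n,p-1)=1}e^{i2\pi b\tau^{n}/p}\ll p^{1-\varepsilon}$ of Theorem \ref{thm333.04} and Lemma \ref{lem8899.09}, arriving only at $O(p^{1-\varepsilon})$. Your approach buys a sharper remainder and more elementary character-sum technology; the paper's buys freedom from the divisor structure of $p-1$ and a template that it reuses uniformly across all the other configurations in the article. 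Either way the main term is $\gg p/(\log\log p)^{2}$, so positivity follows.
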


\begin{thm}  \label{thm8800.095}  Let \(p\geq 2\) be a large prime, let $q< p $ be an integer, and let $a\geq 1$ be a fixed integer. Then, the finite field $\F_p$ contains a pair of quasi consecutive primitive roots $n$, $n+a$, relatively prime to $q=q(a)$. Furthermore, the number of such pairs has the asymptotic formula
\begin{equation} \label{eq8800.045}
N_r(2,p,q)=c_2(q,a)\left (\frac{\varphi(q)}{q}\right )^2\frac{\varphi(p-1)}{p-1}p+O(p^{1-\varepsilon}),
\end{equation}
where $c_2(q,a)\geq 0$ is a dependence correction factor, and $\varepsilon>0$ is an arbitrary small number.
\end{thm}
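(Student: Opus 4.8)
The plan is to model the count as a correlation sum over $\F_p$ and then to isolate a main term from Weil-type character-sum error terms, following the template already used for Theorem \ref{thm8800.090} and for the pair case of Theorem \ref{thm8800.040}. First I would write the counting function as
\begin{equation}
N_r(2,p,q)=\sum_{n\in\F_p}\Psi(n)\,\Psi(n+a)\left(\sum_{d\mid\gcd(n,q)}\mu(d)\right)\left(\sum_{e\mid\gcd(n+a,q)}\mu(e)\right),
\end{equation}
where $\Psi$ is the primitive-root characteristic function of Section \ref{s333}, and the two inner M\"obius sums are the indicators of $\gcd(n,q)=1$ and $\gcd(n+a,q)=1$. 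Substituting the \emph{divisors-free} characteristic function of Lemma \ref{lem333.03} for each factor $\Psi$ peels off a density $\varphi(p-1)/(p-1)$ from each primitive-root detector and rewrites the primitivity conditions as short weighted sums of additive characters $e_p(\cdot)$ attached to the residues coprime to $p-1$. After swapping the orders of summation, $N_r(2,p,q)$ becomes a linear combination of complete character sums over $\F_p$, indexed by the divisors $d,e\mid q$ together with the character data supplied by the two detectors.

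Second, I would extract the main term from the principal (trivial character) part of the expansion. The two primitive-root detectors contribute the factor $(\varphi(p-1)/(p-1))^2$, in agreement with the pair case $k=1$ of Theorem \ref{thm8800.040}. The coprimality conditions contribute a local density obtained by counting, for each prime $\ell\mid q$, the residues $n\bmod\ell$ with $n\not\equiv0$ and $n+a\not\equiv0$; this count is $\ell-1$ when $\ell\mid a$ and $\ell-2$ when $\ell\nmid a$. Comparing the resulting joint density with the independent product $(\varphi(q)/q)^2$ produces the correction factor
\begin{equation}
c_2(q,a)=\prod_{\substack{\ell\mid q\\ \ell\mid a}}\frac{\ell}{\ell-1}\prod_{\substack{\ell\mid q\\ \ell\nmid a}}\frac{\ell(\ell-2)}{(\ell-1)^2}\geq0,
\end{equation}
a singular-series constant that vanishes exactly when there is a local obstruction, namely when $2\mid q$ and $a$ is odd. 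Since $p$ is prime and $q<p$, the conditions modulo $q$ and modulo $p$ are independent by the Chinese Remainder Theorem, so these densities multiply and, together with the $p$ available values of $n$, assemble the main term $c_2(q,a)\left(\frac{\varphi(q)}{q}\right)^2\left(\frac{\varphi(p-1)}{p-1}\right)^2p$.

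Third, I would bound the error. The non-principal contributions reduce to hybrid character sums of the schematic shape $\sum_{n\in\F_p}\chi_1(n)\chi_2(n+a)e_p(m_1n+m_2(n+a))$, further restricted to arithmetic progressions modulo $d$ and $e$; each such nontrivial sum has square-root cancellation by the Weil bound, contributing $O(\sqrt{p})$. The total error is the number of terms in the expansion times $\sqrt{p}$, and since the number of divisor terms is $O(q^{o(1)})=O(p^{o(1)})$ and the character support is short precisely because Lemma \ref{lem333.03} is a divisors-free detector, the accumulated error is $\ll p^{1/2+o(1)}$, which is absorbed into $O(p^{1-\varepsilon})$. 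The main obstacle will be controlling these correlated sums uniformly in the shift $a$ and in the moduli $d,e\mid q$ as $q$ grows toward $p$: the divisor sum and the progression conditions threaten to swamp the square-root saving, and the shift $a$ must be separated cleanly so that only the genuinely diagonal terms feed the main term and the constant $c_2(q,a)$, while every off-diagonal term keeps its full Weil saving. Confirming that this diagonal/off-diagonal split is clean, so that the coprimality correlation is captured entirely by $c_2(q,a)$, is where the careful bookkeeping concentrates.
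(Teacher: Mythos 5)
Your overall route is the paper's route: detect primitivity with the divisors-free characteristic function of Lemma \ref{lem333.03}, detect coprimality with the M\"obius sums of Definition \ref{dfn297.35}, split into the principal-character main term and the nonprincipal error term, and evaluate the main term by interchanging summation over the divisors $d,e\mid q$. This is exactly what the paper does via Lemma \ref{lem9729.356} for the main term and Lemma \ref{lem8899.06} for the error term. Where you genuinely improve on the paper is the constant: you compute $c_2(q,a)$ as an explicit local-density product over primes $\ell\mid q$ and identify precisely when it vanishes, whereas the paper inserts $c_2(q,a)$ by fiat in the line $\sum_{n\in\F_p,\,d\mid n,\,e\mid n+a}1=c_2(q,a)\frac{p}{de}+O(1)$ and admits it ``does not show the details of the dependence'' between $d$ and $e$. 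Your singular series is the correct way to make that step rigorous (the true count is $p/\lcm(d,e)+O(1)$ when the two congruences are compatible and $0$ otherwise, and resumming gives exactly your product). Note also that your main term carries $\bigl(\varphi(p-1)/(p-1)\bigr)^2$, which is what two independent primitivity detectors produce and what consistency with the $k=1$ case of Theorem \ref{thm8800.040} demands; the paper's statement and Lemma \ref{lem9729.356} carry only a single factor $\varphi(p-1)/(p-1)$, so your formula disagrees with the stated theorem but is the one the method actually yields.

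The one genuine gap is in your error analysis. Having committed to the divisors-free detector of Lemma \ref{lem333.03}, the nonprincipal terms are not hybrid multiplicative-character sums $\sum_{n}\chi_1(n)\chi_2(n+a)e_p(\cdots)$ amenable to the Weil bound; they are double sums of the shape $\sum_{\gcd(c,p-1)=1}\sum_{0<u\leq p-1}e_p\bigl((\tau^{c}-n)u\bigr)$, whose inner object after summing over $n\in\F_p$ is the incomplete sum $\sum_{\gcd(c,p-1)=1}e_p(b\tau^{c})$ restricted to exponents coprime to $p-1$. No square-root cancellation is known for these; the paper's Theorem \ref{thm333.04} gives only $O(p^{1-\varepsilon})$, and the paper explicitly remarks that $p^{1/2+\varepsilon}$ is merely conjecturally optimal. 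Your claimed $\ll p^{1/2+o(1)}$ error is therefore unjustified in this framework; you should instead bound one detector nontrivially by Theorem \ref{thm333.04} and the other trivially by $\varphi(p-1)/p\leq 1$, as in Lemma \ref{lem8899.06}, after which the divisor sum over $d,e\mid q$ costs only $q^{o(1)}$ and the total error is $O(p^{1-\varepsilon})$ --- still sufficient for the theorem as stated, so the gap is in the justification, not in the conclusion.
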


Both parameters $c_2(q,a)\geq 0$ and $q=q(a)$ depend on $a \geq 1$. For instance, for $a=2b+1$ odd, the value $q=q(a)$ must be odd, and $c_2(q,a)>0$, otherwise $c_2(q,a)= 0$ for even $q$. The complete proof for both of these cases are given in Section \ref{s2887}.

%sssssssssssssssssssssssssssssss
%nnnnnnnnnnnnnnnnnnnnnnnnnnnnnnnnnnnnnnnn
%533-533-533-533-533-533-533-533-533-533-533-533-533-533-533-533-533-533-533-533-533-533-
\section{Results For Arithmetic Functions } \label{s8533}
Several results for some arithmetic functions required in later sections are recorded here.

%qqqqqqqqqqqqqqqqqqqqqqqqqqqqqqqqqqqq
\subsection{Prime Divisors Counting Function}\label{s8533A}
Let $p_i\geq 2$ denotes the $i$th prime in increasing order, and let $n \in \N$ be an integer. An integer has a unique prime decomposition $n=p_1^{v_1} \cdot p_2^{v_2} \cdots p_t^{v_t} $, where $v_i\geq1$. 

\begin{dfn}  \label{dfn297.05}{\normalfont The prime divisors counting function $\omega:\mathbb{N} \longrightarrow \N$ is defined by $\omega(n)=t$.
}
\end{dfn}

The number of prime divisors $\omega(n)$ of a random integer $n \in \N$ is a normal random variable with mean $\log \log n$, and standard error $\sqrt{\log \log n}$, as verified below. 

\begin{thm} \label{thm8533.01}
Let \(x\geq 1\) be a large number, and $a\leq q=o(\log x)$ be a pair of integers. Then, $\omega(n)$ has the followings average orders in an arithmetic progression.
\begin{enumerate} [font=\normalfont, label=(\roman*)]
\item  
$\displaystyle 
\sum_{\substack{n \leq x\\ n \equiv a \bmod q}}\omega(n) =\frac{1}{\varphi(q)}x \log \log x +x\beta(q,a)+ O\left (\frac{x}{\log x}\right ), 
$
\item 
$\displaystyle
\sum_{\substack{n \leq x\\ n \equiv a \bmod q}}\left (\omega(n)-\log \log n\right )^2 \leq \frac{C(q,a)}{\varphi(q)}x \log \log x,    
$
\end{enumerate}
where $\beta(q,a)\ne 0$ and $C(q,a)$ are constants.
\end{thm}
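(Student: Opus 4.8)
The plan is to establish both estimates by Turán's moment method, localized to the residue class $n\equiv a \bmod q$. For part (i) I would start from $\omega(n)=\sum_{p\mid n}1$, interchange the order of summation, and write
\[
\sum_{\substack{n\le x\\ n\equiv a \bmod q}}\omega(n)=\sum_{p\le x}\#\{\,n\le x:\ n\equiv a \bmod q,\ p\mid n\,\}.
\]
For each prime $p\nmid q$ the congruences $n\equiv a \bmod q$ and $n\equiv 0 \bmod p$ combine, by the Chinese Remainder Theorem, into a single class modulo $pq$, so the inner count equals $x/(pq)+O(1)$; the primes $p\mid q$ are either forced or inconsistent according to whether $p\mid\gcd(a,q)$, and contribute only to the lower-order terms. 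Summing $1/p$ over $p\le x$ through Mertens' theorem $\sum_{p\le x}1/p=\log\log x+B+O(1/\log x)$ then yields $\log\log x$ times the density of the progression. I should flag that this elementary count produces the density $1/q$ as the leading coefficient; reconciling it with the $1/\varphi(q)$ written in Theorem~\ref{thm8533.01} requires care in specifying exactly which residues are being summed, and the two coincide when $q$ is prime. The Mertens constant, the discarded primes $p\mid q$, and the $O(\pi(x))=O(x/\log x)$ accumulated rounding errors are precisely what is packaged into the constant $\beta(q,a)$ and the error term.

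For part (ii) I would carry out Turán's variance computation inside the progression. Expanding $(\omega(n)-\log\log n)^2$ reduces matters to the first moment above together with the second moment $\sum_{n\equiv a}\omega(n)^2$. Using $\omega(n)^2=\sum_{p\mid n}1+2\sum_{p<\ell,\,p\ell\mid n}1$ and counting the class $\{p\ell\mid n,\ n\equiv a \bmod q\}$ again by the Chinese Remainder Theorem (density $1/(p\ell q)$ when $\gcd(p\ell,q)=1$), the off-diagonal double sum is governed by $\left(\sum_{p\le x}1/p\right)^{2}=(\log\log x)^{2}+O(\log\log x)$. The mechanism is the cancellation of the $(\log\log x)^2$ contributions arising from $\sum\omega(n)^2$, from the cross term $-2\sum\omega(n)\log\log n$, and from $\sum(\log\log n)^2$; these annihilate to leading order, leaving a remainder of size $\log\log x$ times the density of the progression, with all $q$-dependence collected into $C(q,a)$.

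The hard part, especially in (ii), is the honest control of the accumulated $O(1)$ rounding errors uniformly in $q$. In the first moment these total only $O(\pi(x))=O(x/\log x)$, harmless once $q=o(\log x)$. In the second moment the $O(1)$ errors run over prime pairs with $p\ell\le x$, whose number is $\asymp x\log\log x/\log x$; this stays below the main term $\asymp x\log\log x/q$ exactly when $q=o(\log x)$, so the stated hypothesis on $q$ is not cosmetic but is precisely what the error analysis forces. A final technical point is the replacement of $\log\log n$ by $\log\log x$, which is standard: the two agree up to $o(1)$ outside a negligible initial range of small $n$, so partial summation shows the substitution perturbs neither estimate to leading order. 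Once this bookkeeping is in place, both parts follow from the Chinese Remainder Theorem counting and Mertens' theorem above.
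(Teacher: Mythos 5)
Your proposal is correct in outline and, in fact, more faithful to the statement as written than the paper's own argument. The paper's proof of (i) begins by rewriting the sum as $\sum_{p \leq x,\, p \equiv a \bmod q}\sum_{k \leq x/p}1$, i.e.\ it places the congruence condition on the \emph{prime divisors} $p$ rather than on the integers $n$; it then applies Mertens' theorem in arithmetic progressions to $\sum_{p\equiv a (q)}1/p$, which is where the $1/\varphi(q)$ in the main term comes from. That computes $\sum_{n\leq x}\#\{p\mid n:\ p\equiv a \bmod q\}$, a different quantity from $\sum_{n\leq x,\ n\equiv a \bmod q}\omega(n)$. Your interchange is the correct one for the sum actually displayed in the theorem: by the Chinese Remainder Theorem the inner count is $x/(pq)+O(1)$ for $p\nmid q$, the leading coefficient is the density $1/q$ of the progression times the ordinary Mertens sum, and your remark that this does not reconcile with the stated $1/\varphi(q)$ except for special $q$ is a genuine and correct criticism of the theorem as stated, not a defect of your argument. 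Your accounting of the $O(\pi(x))=O(x/\log x)$ rounding errors matches the role of the fractional-part sum $\sum\{x/p\}$ in the paper.

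For part (ii) the paper supplies no proof at all --- its proof environment ends after establishing (i) --- so your localized Tur\'an variance computation is new content rather than a variant of the paper's route. The structure you describe (second moment via $\omega(n)^2=\sum_{p\mid n}1+2\sum_{p<\ell,\ p\ell\mid n}1$, CRT counting with density $1/(p\ell q)$, cancellation of the $(\log\log x)^2$ terms, and the observation that the $O(1)$ errors over pairs $p\ell\leq x$ total $O(x\log\log x/\log x)$, which is dominated by the main term precisely because $q=o(\log x)$) is the standard and correct way to obtain the bound, and your point that the hypothesis on $q$ is forced by this error analysis is well taken. The one thing to make explicit when you write it up is the treatment of primes $p\mid q$ (and pairs with $\gcd(p\ell,q)>1$), where the congruences are either inconsistent or redundant; you mention this for the first moment but should carry it through the second moment as well, since those terms are absorbed into $C(q,a)$.
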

\begin{proof} (i) Let $\{x\} \in (0,1)$ be the fractional function. The finite sum $\sum_{k \leq x/p}1$ tallies the number of integers $n \leq x$ divisible by a prime $p \leq x$. Thus, 
\begin{eqnarray} \label{eq8533.55}
\sum_{\substack{n \leq x\\ n \equiv a \bmod q}}\omega(n) &= & \sum_{\substack{p \leq x\\ p \equiv a \bmod q}}\sum_{k \leq x/p}1\\
&= & x\sum_{\substack{p \leq x\\ p \equiv a \bmod q}}\frac{1}{p} -\sum_{\substack{p \leq x\\ p \equiv a \bmod q}}\left \{ \frac{x}{p} \right \}\nonumber.
\end{eqnarray}
Apply Mertens theorem in arithmetic progression to the first finite sum, and estimate the second finite sum to obtain this: 
\begin{equation} \label{eq8533.57}
\sum_{\substack{n \leq x\\ n \equiv a \bmod q}}\omega(n) =  
 x\left (\frac{1}{\varphi(q)} \log \log x +\beta(q,a)+ O\left (\frac{1}{\log x}\right )\right ) + O\left (\frac{x}{\varphi(q)\log x}\right ),
\end{equation} where $\beta(q,a)\ne 0$ is a constant.
\end{proof}
Observe that there are a few versions of Mertens theorem in arithmetic progression, see \cite[Theorem 15.4]{DL12}, \cite{LZ07}, et alii. The basic case $q=2$ of Theorem \ref{thm8533.01} is proved in \cite[Theorem 7.2]{DL12}, \cite[Proposition 2.6]{OM14}, et cetera. The more general concept of the Erdos-Kac theorem provides finer details on the distribution of the random variable $\omega(n) \in \N$.

\begin{lem} \label{lem8533.05}
Let \(n\geq 1\) be a large integer, then
\begin{enumerate} [font=\normalfont, label=(\roman*)]
\item  \text{The average number $\omega(n)$ of prime divisors $p \mid n$ satisfies}
$$
\omega(n) \ll \log \log n . 
$$
\item \text{The maximal number $\omega(n)$ of prime divisors $p \mid n$ satisfies}
$$
\omega(n) \ll \log n/ \log \log n .   
$$
\end{enumerate}
\end{lem}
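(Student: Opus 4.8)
The plan is to handle the two parts separately, since part (i) concerns the average order of $\omega$ and part (ii) its maximal order.

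For (i), I would specialize Theorem \ref{thm8533.01}(i) to the trivial modulus $q=1$ (equivalently, combine the two residue classes in the case $q=2$). This gives $\sum_{n\leq x}\omega(n)=x\log\log x+\beta x+O(x/\log x)$ for an absolute constant $\beta$, and dividing by $x$ shows that the mean value of $\omega(n)$ over $n\leq x$ is $\log\log x+O(1)$. Hence on average $\omega(n)\ll\log\log n$. No input beyond the Mertens estimate already used in Theorem \ref{thm8533.01} is needed.

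For (ii), the idea is extremal: among integers of a given size, the primorial maximizes $\omega$. If $\omega(n)=k$, then $n$ is divisible by $k$ distinct primes, so $n\geq p_1p_2\cdots p_k$, whence $\log n\geq\sum_{i\leq k}\log p_i=\theta(p_k)$, where $\theta$ denotes the Chebyshev function. First I would apply the prime number theorem in the forms $\theta(t)\sim t$ and $p_k\sim k\log k$ to obtain $\log n\gg k\log k$. Then I would invert this relation: taking logarithms once more gives $\log\log n\gg\log k$, and substituting back into $k\log k\ll\log n$ yields $k\ll\log n/\log\log n$, which is the claim since $k=\omega(n)$.

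The routine parts---summing the Mertens estimate and inserting the prime number theorem asymptotics---are standard. The one step needing care is the inversion in (ii): passing from the implicit bound $k\log k\ll\log n$ to the explicit bound $k\ll\log n/\log\log n$. I would make this precise by a brief contradiction argument: if $k>C\log n/\log\log n$, then $\log k>(1-o(1))\log\log n$, so $k\log k>C(1-o(1))\log n$, which exceeds the available upper bound $k\log k\leq(1+o(1))\log n$ once $C$ is taken large enough. This is the main (and only mildly technical) obstacle; everything else follows directly from the cited asymptotic laws.
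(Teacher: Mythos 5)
Your proposal is correct and follows essentially the same route as the paper: part (i) is read off from Theorem \ref{thm8533.01}(i) by summing over the residue classes, and part (ii) rests on the extremality of the primorial, which is exactly the paper's ``set $n=\prod_{p\leq x}p$ and employ routine calculations.'' Your write-up merely makes explicit the inversion step $k\log k\ll\log n\Rightarrow k\ll\log n/\log\log n$ that the paper leaves as routine.
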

\begin{proof} (i) Set $a=1$ and $q=2$ in Theorem \ref{thm8533.01}-i. (ii) Set $n=\prod_{p \leq x} p$, and employ routine calculations. 
\end{proof}
Both of these results are standard results in analytic number theory, see \cite[Theorem 2.6]{MV07}.

%qqqqqqqqqqqqqqqqqqqqqqqqqqqqqqqqqqqqqqqqqqqqqqqq
\subsection{Mobius Function} \label{s297}

\begin{dfn}  \label{dfn297.20}{\normalfont The Mobius function $\mu:\mathbb{N} \longrightarrow \{-1,0,1\}$ is defined by
\begin{equation}
\mu(n) =
\left \{
\begin{array}{ll}
(-1)^{\omega(n)}     &n=p_1 p_2 \cdots p_v\\
0           &n \ne p_1 p_2 \cdots p_v,\\
\end{array}
\right .
\end{equation}
where the $p_i\geq 2$ are primes. }
\end{dfn}

The function $\mu$ is quasiperiodic. It has a period of 4, that is, $\mu(4)= \cdots =\mu(4m)=0$ for any integer $m \in \Z$. But, its interperiods values are pseudorandom, that is, the values 
\begin{equation} \label{eq297.100}
\mu(n),\quad \mu(n+4), \quad \cdots,\quad \mu(n+4m)
\end{equation}
are not periodic as $n \to \infty$. 

\begin{dfn}  \label{dfn297.25}{\normalfont An integer $n \in \N$ is said to be $s$-power free if for each prime $p \mid n$, the maximal prime power divisor is $p^{s-1} \mid \mid n$. Equivalently, the $p$-\textit{adic} valuation $v_p(n)=s-1$ for any $s\geq 2$.
}
\end{dfn}
The $2$-free integers are usually called squarefree integers.
\begin{dfn}  \label{dfn297.30}{\normalfont The characteristic function for $s$-power free integers is defined by
\begin{equation}
\mu_s(n) =
\left \{
\begin{array}{ll}
1     &\text{if } p^s \nmid n \text{ for any prime } p \mid n,\\
0           &\text{if } p^s \mid n \text{ for any prime } p \mid n.\\
\end{array}
\right .
\end{equation}
 }
\end{dfn}

The characteristic function for $s$-power free integers is closely linked to the Mobius function. 
\begin{lem} \label{lem297.58} For any integer $n \geq 1$, the characteristic function for squarefree integers has the expansion
\begin{equation}
\mu(n)^2= \sum_{d^2 \mid n} \mu(d).
\end{equation}
More generally, the characteristic function for $s$-power free integers has the expansion
\begin{equation}
\mu_s(n)= \sum_{d^s \mid n} \mu(d).
\end{equation} 
\end{lem}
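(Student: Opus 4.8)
The plan is to establish the more general $s$-power-free identity $\mu_s(n) = \sum_{d^s \mid n} \mu(d)$ first, and then recover the squarefree case as the specialization $s = 2$, using the elementary fact that $\mu(n)^2 = \mu_2(n)$ since both equal the indicator of squarefree $n$. Because every arithmetic function appearing here is determined by its values on prime powers once multiplicativity is known, I would reduce the whole statement to a computation at $n = p^v$.

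First I would show that the right-hand side $g(n) = \sum_{d^s \mid n} \mu(d)$ is multiplicative. The cleanest route is to recognise $g$ as a Dirichlet convolution $g = h * \mathbf{1}$, where $\mathbf{1}(n) = 1$ is the constant function and $h(m) = \mu\bigl(m^{1/s}\bigr)$ if $m$ is a perfect $s$-th power and $h(m) = 0$ otherwise; indeed $g(n) = \sum_{m \mid n} h(m)$ after the substitution $m = d^s$. The function $\mathbf{1}$ is multiplicative, and $h$ is multiplicative because for coprime $m_1, m_2$ the product $m_1 m_2$ is a perfect $s$-th power if and only if each $m_i$ is, and then $\mu$ factors over the coprime $s$-th roots. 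Since the Dirichlet convolution of multiplicative functions is multiplicative, $g$ is multiplicative. In parallel, $\mu_s$ is multiplicative directly from Definition \ref{dfn297.30}, because being $s$-power free is a condition imposed independently on each prime in the factorization of $n$.

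With multiplicativity in hand, I would evaluate both sides on a prime power $n = p^v$. The divisors $d$ with $d^s \mid p^v$ are exactly $d = p^e$ with $0 \le e \le \lfloor v/s \rfloor$, so $g(p^v) = \sum_{e=0}^{\lfloor v/s \rfloor} \mu(p^e)$. Because $\mu(p^e) = 0$ for $e \ge 2$, this sum collapses: it equals $\mu(1) = 1$ when $v < s$ (so that $\lfloor v/s \rfloor = 0$) and $\mu(1) + \mu(p) = 1 - 1 = 0$ when $v \ge s$. This matches $\mu_s(p^v)$, which is $1$ precisely when $v \le s-1$ and $0$ otherwise. Agreement on all prime powers, combined with multiplicativity of both sides, yields $g(n) = \mu_s(n)$ for every $n \ge 1$, and the squarefree statement follows by setting $s = 2$.

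There is no serious obstacle here; the only point requiring a little care is the multiplicativity of $g$, where one must justify that a product of coprime integers is an $s$-th power if and only if both factors are, the step that lets $\mu$ distribute over the $s$-th roots. Once that is settled the argument is pure prime-power bookkeeping, and the vanishing $\mu(p^e) = 0$ for $e \ge 2$ does all the real work in truncating the sum.
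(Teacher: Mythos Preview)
Your argument is correct. The paper itself does not give a proof of this lemma; it simply states the identities and cites early sources such as \cite{CL32} and \cite{ML47}, treating the result as standard. Your route via multiplicativity plus a prime-power check is the canonical one and fills in exactly what the paper omits.
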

The case $s=2$ for squarefree integers is usually denoted by $\mu^2(n)=\mu_2(n)$. Some early works on this topic appear in \cite{CL32} and \cite{ML47}. 

\begin{dfn}  \label{dfn297.35}{\normalfont A pair of integers $a$ and $q$ are relatively prime if and only if $\gcd(a,q)=1$. The characteristic function for relatively prime integers is defined by
\begin{equation}
\sum_{\substack{d \mid a \\ d \mid q}} \mu(d) =
\left \{
\begin{array}{ll}
1     &\text{if and only if } \gcd(a,q)=1,\\
0           &\text{if and only if } \gcd(a,q)\ne 1.\\
\end{array}
\right .
\end{equation}
 }
\end{dfn}

\begin{lem} \label{lem297.98}Let \(n\geq 1\) be an integer. Then, 
\begin{enumerate} [font=\normalfont, label=(\roman*)]
\item  
$\displaystyle 
\sum_{d\mid n}\mu(n) =
\left \{
\begin{array}{ll}
1     &\text{if } n=1,\\
0           &\text{if } n\ne 1.\\
\end{array}
\right .
$
\item 
$\displaystyle
\sum_{d\mid n}\mu^2(n) =2^{\omega(n)}.  
$
\end{enumerate}

\end{lem}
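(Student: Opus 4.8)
The plan is to reduce both identities to the evaluation of a multiplicative function at prime powers, the standard device for handling divisor sums of multiplicative summands. (I read the summands as $\mu(d)$ and $\mu^2(d)$, the intended quantities.) Writing $n=\prod_{i}p_i^{a_i}$ throughout, I will repeatedly use that a divisor sum $\sum_{d\mid n}h(d)$ is multiplicative whenever $h$ is.

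For part (i), I would first observe that $\mu$ is multiplicative, so $f(n)=\sum_{d\mid n}\mu(d)$ is multiplicative and it suffices to compute $f(1)$ and $f(p^a)$ for $a\geq 1$. Plainly $f(1)=\mu(1)=1$, while Definition \ref{dfn297.20} gives $\mu(p^j)=0$ for $j\geq 2$, whence
$$
f(p^a)=\mu(1)+\mu(p)+\sum_{j=2}^{a}\mu(p^j)=1-1+0=0.
$$
Multiplicativity then yields $f(n)=\prod_i f(p_i^{a_i})=0$ for every $n>1$. A more transparent variant, which I may include, is purely combinatorial: for $n>1$ with $\omega(n)=t\geq 1$, only the squarefree divisors contribute, and grouping them by their number $j$ of prime factors gives $\sum_{d\mid n}\mu(d)=\sum_{j=0}^{t}\binom{t}{j}(-1)^j=(1-1)^t=0$ by the binomial theorem.

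For part (ii), the same multiplicative reduction applies to $g(n)=\sum_{d\mid n}\mu^2(d)$, the number of squarefree divisors of $n$. At a prime power $p^a$ with $a\geq 1$, only $d=1$ and $d=p$ are squarefree, so $g(p^a)=\mu^2(1)+\mu^2(p)=1+1=2$, and therefore $g(n)=\prod_i 2=2^{\omega(n)}$. Equivalently, I can note that the squarefree divisors of $n$ correspond bijectively to subsets of its set of $\omega(n)$ prime divisors, of which there are exactly $2^{\omega(n)}$.

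I do not anticipate any real obstacle, since both identities are classical. The only points requiring care are invoking multiplicativity correctly and observing that squarefree-ness restricts the relevant divisors to the prime support of $n$; once that reduction is made, the binomial identity settles (i) and the subset count settles (ii).
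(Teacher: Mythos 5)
Your proof is correct and complete. Note, however, that the paper supplies no proof at all for Lemma \ref{lem297.98} --- it is stated bare, with no proof environment, evidently as a classical fact --- so there is nothing in the source to compare your argument against. Your reading of the summands as $\mu(d)$ and $\mu^2(d)$ (rather than the literal $\mu(n)$, which would make both identities false) is the right repair of the statement's typo. The multiplicative reduction to prime powers handles both parts cleanly, and the two alternatives you offer --- the binomial identity $\sum_{j=0}^{t}\binom{t}{j}(-1)^j=(1-1)^t=0$ for (i) and the bijection between squarefree divisors and subsets of the prime support for (ii) --- are each self-contained proofs in their own right, so the writeup is if anything over-determined. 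No gaps.
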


%qqqqqqqqqqqqqqqqqqqqqqqqqqqqqqqqqqqqqqqqq
\subsection{Extreme Values Of The Totient Function} \label{s997}
Some estimates for the extreme values of the Euler totient function are stated in this subsection. The Euler totient function counts the number of relatively prime integers \(\varphi (n)=\#\{ k:\gcd (k,n)=1 \}\). For each $n\in \mathbb{N}$, this counting function is compactly expressed by the analytic formula
\begin{equation}\label{eq997.22}
\varphi (n)=n\sum_{d \mid n} \frac{\mu(d)}{d}=n\prod_{p \mid n}\left (1-\frac{1}{p}\right ).
\end{equation}
The explicit lower bound
\begin{equation}\label{eq997.30}
\frac{\varphi(n)}{n}> \frac{1}{e^{\gamma} \log \log n+5/(2 \log \log n)}
\end{equation} 
and other estimates are given in \cite[Theorem 7]{RS62}. The maximal values of the Euler function occurs at the prime arguments. Id est, $\varphi(p)=p-1<p$. There are other subsets of integers that have nearly maximal values. In fact, asymptotically, these integers and the primes number have the same order of magnitudes.\\

\begin{lem} \label{lem997.51} Let $ x \geq 1$ be a large number, and let $n=1+\prod_{p \leq \log x}p$. Then
\begin{enumerate} [font=\normalfont, label=(\roman*)]
\item $\varphi(n)=n+O\left (n/\log \log n \right)$,
\item $\varphi(n+1)=n/2+O\left (n/\log n \right)$.
\end{enumerate}		 
\end{lem}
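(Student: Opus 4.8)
The plan is to read off the arithmetic structure of the primorial $P:=\prod_{p\le \log x}p$. It is squarefree, and by Chebyshev's estimate (the prime number theorem) its logarithm is $\log P=\sum_{p\le \log x}\log p\sim \log x$, so that $\log n\sim\log P\sim\log x$ and hence $\log\log n\sim\log\log x$. Both parts then reduce to estimating $\varphi(m)/m=\prod_{p\mid m}(1-1/p)$ for $m\in\{n,n+1\}$, and the only arithmetic input I need is which primes $p\le\log x$ divide $m$, together with the maximal-order bound $\omega(m)\ll\log m/\log\log m$ of Lemma \ref{lem8533.05}(ii).

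For (i), first I would note that $n=1+P\equiv 1\pmod p$ for every prime $p\le\log x$, so $\gcd(n,P)=1$ and every prime divisor of $n$ exceeds $\log x$. Consequently
\[
0\le 1-\frac{\varphi(n)}{n}=1-\prod_{p\mid n}\Big(1-\frac1p\Big)\le\sum_{p\mid n}\frac1p\le\frac{\omega(n)}{\log x}\ll\frac{\log n}{\log x\,\log\log n}\ll\frac{1}{\log\log n},
\]
using $\log n\sim\log x$ in the last step. This gives $\varphi(n)=n+O(n/\log\log n)$.

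For (ii), the key observation is that $n+1=2+P$ is divisible by $2$ (since $2\mid P$) but by no odd prime $p\le\log x$, because $n+1\equiv 2\not\equiv 0\pmod p$ for $3\le p\le\log x$. Thus the prime $2$ is forced while every odd prime divisor of $n+1$ exceeds $\log x$, whence
\[
\frac{\varphi(n+1)}{n+1}=\frac12\prod_{\substack{p\mid n+1\\ p>\log x}}\Big(1-\frac1p\Big)=\frac12\Big(1+O\big(\tfrac{\omega(n+1)}{\log x}\big)\Big).
\]
Combining this with $(n+1)/2=n/2+O(1)$ produces the leading term $n/2$, with an error controlled by the same quantity $\omega(n+1)/\log x$ as in (i).

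The hard part will be the exact order of the error term, which is where I expect the statement to require care. The elementary reciprocal-sum bound above yields a relative error of order $1/\log\log n$ for both $m=n$ and $m=n+1$; this matches part (i) precisely, but is weaker than the $O(n/\log n)$ claimed in (ii). Upgrading the error in (ii) would need more than $\omega(n+1)\ll\log n/\log\log n$: one would have to rule out the odd prime divisors of $n+1$ all clustering immediately above $\log x$, so as to force $\sum_{p\mid n+1,\,p>\log x}1/p\ll 1/\log n$. I would therefore either sharpen the factorization analysis of the single integer $n+1=2+P$ to bound that reciprocal sum, or else record (ii) with the same $O(n/\log\log n)$ error that the elementary argument unconditionally delivers.
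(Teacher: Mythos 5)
Your argument for part (i) is correct and is essentially the paper's: both proofs rest on the observation that $n=1+\prod_{p\le\log x}p$ is coprime to every prime $p\le\log x$, so all of its prime divisors exceed $\log x\sim\log n$, and then control $\prod_{p\mid n}(1-1/p)$ using $\omega(n)\ll\log n/\log\log n$. The only cosmetic difference is that you bound $1-\prod_{p\mid n}(1-1/p)$ by the reciprocal sum $\sum_{p\mid n}1/p\le\omega(n)/\log x$, whereas the paper lower-bounds the product by the Mertens product $\prod_{\log n<p\le 2\log n}(1-1/p)=1+O(1/\log\log n)$ over the full interval; both yield the same $O(n/\log\log n)$.

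Your reservation about part (ii) is well founded, and you should not try to ``fix'' your argument to reach $O(n/\log n)$: the paper's own proof of (ii) does not reach it either. The paper's entire argument for (ii) is the single upper-bound line
\[
\varphi(n+1)=(n+1)\prod_{p\mid (n+1)}\Bigl(1-\frac1p\Bigr)\le\frac{n}{2}\Bigl(1-\frac{1}{\log n}\Bigr),
\]
with no matching lower bound; moreover that displayed inequality is itself suspect, since every odd prime divisor $q$ of $n+1$ satisfies $q>\log x$, so each factor $1-1/q$ exceeds $1-1/\log n$ rather than being bounded above by it (the honest upper bound is just $\varphi(n+1)\le(n+1)/2$). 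The missing lower bound is exactly what you identify: with $\omega(n+1)\ll\log n/\log\log n$ odd prime divisors all possibly sitting just above $\log x$, the elementary estimate gives only $\sum_{q\mid n+1,\,q>\log x}1/q\ll 1/\log\log n$, hence $\varphi(n+1)=n/2+O(n/\log\log n)$, and nothing in the paper rules out the clustering scenario that would make this sharp. Your proposed resolution --- record (ii) with the error term $O(n/\log\log n)$ that the argument actually delivers --- is the correct one; fortunately the lemma is used in the paper only to illustrate the range of totient gaps, so the weaker error term costs nothing downstream.
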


\begin{proof} (i) Observe that $\log n \geq \sum_{p \leq \log x} \log p$, so that $p \leq \log x \leq 2\log n$. Hence, a prime divisor $q \mid n=1+\prod_{p \leq \log x}p$ implies that $q>\log n$. Consequently, there is the upper bound
\begin{eqnarray}\label{eq997.11}
\varphi(n)&= &n \prod_{p\mid n}\left ( 1-\frac{1}{p}\right)\\
&\leq & n\left ( 1-\frac{1}{\log n}\right)\nonumber\\
&=& n +O\left (\frac{1}{\log n}\right) \nonumber.
\end{eqnarray}
In the other direction, there is the lower bound
\begin{eqnarray}\label{eq997.13}
	\varphi(n)&=&n \prod_{p \mid n}\left ( 1-\frac{1}{p}\right)\\
&\geq &n \prod_{\log n< p \leq 2\log n}\left ( 1-\frac{1}{p}\right)\nonumber\\
&=&n+O\left ( \frac{n}{\log \log n} \right )\nonumber.
	\end{eqnarray}
	Both relations \eqref{eq997.11} and \eqref{eq997.13} confirm the claim.
	(ii) The prime divisors of $n+1$ are $q=2$ and some prime $q> \log n$, so the claim follows from
	\begin{equation} \label{eq997.15}
	\varphi(n+1)=(n+1) \prod_{p|(n+1)}\left ( 1-\frac{1}{p}\right)\leq \frac{n}{2}\left (1-\frac{1}{\log n} \right )=\frac{n}{2}+O\left ( \frac{n}{\log n} \right ).
	\end{equation} 
	   \end{proof}

\begin{thm} \label{thm9192.21}  Let \(p\geq 2\) be a large prime. Then, the followings extreme values hold.
\begin{enumerate} [font=\normalfont, label=(\roman*)]
\item$ \displaystyle 
\frac{\varphi(n)}{n}\leq n-1$,  \tabto{6cm} if $n\geq 2$ is an integer.
\item$ \displaystyle 
\frac{\varphi(n)}{n}\geq \frac{e^{-\gamma}}{4\log \log n}$,  \tabto{6cm} if $n\geq 2$ is a highly composite integer.
\item$ \displaystyle 
\frac{\varphi(n)}{n}\approx \frac{e^{-\gamma}}{\log \log \log n}$,    \tabto{6cm} if $n\geq 2$ is an average integer.
\end{enumerate}
 \end{thm}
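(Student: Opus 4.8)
The plan is to treat the three parts separately, using the product formula \eqref{eq997.22} together with Mertens' theorem and the Rosser--Schoenfeld bound \eqref{eq997.30}.

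For part (i) I would argue directly from the counting definition of $\varphi$. Among the residues $1,2,\ldots,n$, the integer $n$ itself is never coprime to $n$ once $n\geq 2$, so at most $n-1$ of them are counted; hence $\varphi(n)\leq n-1$, and since $\varphi(n)/n<1\leq n-1$ the stated inequality follows a fortiori. Equality $\varphi(n)=n-1$ forces every one of $1,\ldots,n-1$ to be coprime to $n$, which happens precisely when $n$ is prime, recovering the remark that the maximal values occur at prime arguments.

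For part (ii) I would start from the explicit lower bound \eqref{eq997.30}, namely $\varphi(n)/n>\left(e^{\gamma}\log\log n+5/(2\log\log n)\right)^{-1}$, and absorb the secondary term into the main one. Once $\log\log n$ exceeds a fixed absolute constant one has $5/(2\log\log n)\leq 3e^{\gamma}\log\log n$, equivalently $(\log\log n)^2\geq 5/(6e^{\gamma})$, so the denominator is at most $4e^{\gamma}\log\log n$ and therefore $\varphi(n)/n>e^{-\gamma}/(4\log\log n)$. This bound in fact holds for every sufficiently large $n$; the qualifier \emph{highly composite} records that it is essentially sharp for those $n$, since for a primorial $n=\prod_{p\leq y}p$ Mertens' third theorem gives $\varphi(n)/n\sim e^{-\gamma}/\log y$ with $\log y\sim\log\log n$ by the prime number theorem.

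For part (iii) I would combine the normal order of $\omega(n)$ with Mertens' third theorem. By Theorem \ref{thm8533.01}, equivalently Lemma \ref{lem8533.05}(i), an average integer has $\omega(n)=k\sim\log\log n$ prime divisors. Estimating the product $\varphi(n)/n=\prod_{p\mid n}(1-1/p)$ by the extremal configuration in which these $k$ primes are the smallest ones $p_1<\cdots<p_k$, the prime number theorem gives $p_k\sim k\log k$, and Mertens' third theorem yields $\prod_{p\leq p_k}(1-1/p)\sim e^{-\gamma}/\log p_k$. Since $\log p_k\sim\log k\sim\log\log\log n$, this produces $\varphi(n)/n\approx e^{-\gamma}/\log\log\log n$, the claimed average order. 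The main obstacle is precisely this last part: the symbol $\approx$ is doing genuine work, because a truly typical integer has $\varphi(n)/n$ bounded below by a positive constant, its limiting distribution being nondegenerate, so the estimate must be read as the order of magnitude attached to the model in which the $\log\log n$ average prime divisors are taken to be the initial segment of primes. Making this heuristic precise requires fixing that model and distinguishing it cleanly from both the prime case of part (i) and the highly composite case of part (ii), where the larger count $\omega(n)\sim\log n/\log\log n$ pushes the relevant prime as far as $y\sim\log n$ and hence replaces $\log\log\log n$ by $\log\log n$ in the denominator.
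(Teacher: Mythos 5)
The paper states Theorem \ref{thm9192.21} with no proof at all, so there is no argument of record to compare yours against; the only supporting material nearby is the Rosser--Schoenfeld bound \eqref{eq997.30}, Lemma \ref{lem997.51}, and the informal discussion of prime classes in Section \ref{s9292}. Your treatment of (i) and (ii) is correct and is essentially the only reasonable reading of the statement: (i) is surely a typo for $\varphi(n)\leq n-1$, and your observation that the printed inequality $\varphi(n)/n\leq n-1$ is trivially true for $n\geq 2$ disposes of it either way; for (ii) your absorption of the secondary term $5/(2\log\log n)$ into $3e^{\gamma}\log\log n$ is numerically sound once $(\log\log n)^2\geq 5/(6e^{\gamma})$, i.e.\ for $n\geq 8$ (the handful of smaller cases can be checked directly), and the resulting bound holds for all sufficiently large $n$, hence in particular for highly composite $n$, with your Mertens-theorem remark correctly explaining why that class is the extremal one.

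The genuine issue is (iii), and you have located it exactly. As literally written, $\varphi(n)/n\approx e^{-\gamma}/\log\log\log n$ is false for a typical integer: $\varphi(n)/n$ has mean value $6/\pi^2$ and a nondegenerate limiting distribution, so it does not tend to $0$ along a density-one sequence of $n$, whereas $e^{-\gamma}/\log\log\log n\to 0$. What the statement records is the heuristic obtained by placing the normal number $\omega(n)\sim\log\log n$ of prime divisors (Theorem \ref{thm8533.01}, Lemma \ref{lem8533.05}) at the smallest primes and applying Mertens' third theorem together with $p_k\sim k\log k$; this is precisely how the paper itself uses the claim in \eqref{eq9292.277}. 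No proof of (iii) as stated is possible; an honest write-up must either reformulate it as a lower bound valid for all $n$ with $\omega(n)\ll\log\log n$ (which is what your extremal-configuration computation actually delivers) or demote it to a heuristic remark. Your proposal is therefore as complete as the theorem permits, and the defect it exposes lies in the statement rather than in your argument.
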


The totient function have a wide range of values, as confirmed by Lemma \ref{lem997.51}, and this accounts for the wide range and large gaps in the sequence of totient gaps  
\begin{equation} \label{eq997.17}
	\varphi(2)-\varphi(1), \; \;	\varphi(3)-\varphi(2),	\; \;\varphi(4)-\varphi(3),\; \; \ldots, \; \;	\varphi(n+1)-\varphi(n), \; \; \ldots .
	\end{equation}
The gap can be as small as $\varphi(n+1)-\varphi(n)=0$, and it can be as large as $\varphi(n+1)-\varphi(n)=n/2+O\left (n/\log n \right)$. For example, $\varphi(4)-\varphi(3)=0$, and $\varphi(2\cdot3\cdot5+1)-\varphi(2\cdot3\cdot5+2)=14$.

%sssssssssssssssssssssssssssssssssssssssssssssssssssssssss

\section{Summatory Functions For Squarefree Integers} \label{s9339}
The subset of $2$-power free integers are usually called squarefree  integers, and denoted by 
\begin{equation}
\mathcal{Q}_2=\{n\in \mathbb{Z}:\mu^2(n)\ne 0\}
\end{equation} 
and the complementary subset of non squarefree integers is denoted by 
\begin{equation}
\overline{\mathcal{Q}_2}=\{n\in \mathbb{Z}:\mu^2(n)= 0\}.
\end{equation} 
The number of squarefree integers have the following asymptotic formulas.
\begin{lem} \label{lem9339.107} Let $\mu: \mathbb{Z} \longrightarrow \{-1,0,1\}$ be the Mobius function. Then, for any sufficiently large number $x\geq1$, 
\begin{equation} 
\sum_{n \leq x} \mu^2(n) =\frac{6}{\pi^2}x+O \left (x^{1/2} \right ). 
\end{equation} 
\end{lem}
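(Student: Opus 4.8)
The plan is to use the convolution identity for the squarefree characteristic function recorded in Lemma \ref{lem297.58}, namely $\mu^2(n) = \sum_{d^2 \mid n} \mu(d)$, and then to exploit the fact that the condition $d^2 \mid n \leq x$ automatically truncates the $d$-summation at $d \leq x^{1/2}$. This is the classical elementary (hyperbola-free) treatment of the squarefree counting function.

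First I would substitute the identity and interchange the order of summation. Writing each $n$ with $d^2 \mid n$ as $n = d^2 m$, the inner count of admissible $m$ is exactly $\lfloor x/d^2 \rfloor$, so
\begin{equation}
\sum_{n \leq x} \mu^2(n) = \sum_{d \leq x^{1/2}} \mu(d) \left\lfloor \frac{x}{d^2} \right\rfloor.
\end{equation}
Next I would replace the floor by $\lfloor x/d^2 \rfloor = x/d^2 - \{x/d^2\}$, splitting the sum into a main term $x \sum_{d \leq x^{1/2}} \mu(d)/d^2$ and an error term $-\sum_{d \leq x^{1/2}} \mu(d)\{x/d^2\}$. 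Since the fractional parts are bounded by $1$ and there are at most $x^{1/2}$ terms, this second sum contributes $O(x^{1/2})$.

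For the main term I would complete the truncated series to the full Dirichlet series $\sum_{d \geq 1} \mu(d)/d^2 = 1/\zeta(2) = 6/\pi^2$. The tail is controlled by $\left| \sum_{d > x^{1/2}} \mu(d)/d^2 \right| \leq \sum_{d > x^{1/2}} 1/d^2 = O(x^{-1/2})$, so after multiplication by $x$ it also contributes only $O(x^{1/2})$. Collecting the main term together with both error estimates then yields $\tfrac{6}{\pi^2}x + O(x^{1/2})$.

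There is no serious obstacle in this argument; the only point requiring care is that both error sources --- the fractional-part sum and the tail of the Dirichlet series --- must independently be held to $O(x^{1/2})$, which they are. This $O(x^{1/2})$ is in fact the natural barrier of the elementary method: any improvement of the exponent below $1/2$ lies much deeper and is tied to the distribution of the zeros of $\zeta(s)$, so I would not attempt to sharpen it here.
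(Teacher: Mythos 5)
Your proof is correct and complete, and it is exactly the argument the paper intends: its one-line proof merely says ``Use Lemma \ref{lem297.58} or confer to the literature,'' i.e.\ the same convolution identity $\mu^2(n)=\sum_{d^2\mid n}\mu(d)$ that you start from. You have simply carried out the standard interchange-of-summation details that the paper leaves implicit, and both error contributions are handled correctly.
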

\begin{proof} Use Lemma \ref{lem297.58} or confer to the literature.\end{proof}

The constant coincides with the density of squarefree integers. Its approximate numerical value is
\begin{equation}\label{eq9339.72}
\frac{6}{\pi^2}=\prod_{q\geq 2}\left ( 1-\frac{1}{q^2}\right )=0.607988295164627617135754\ldots,
\end{equation}
where $q\geq2$ ranges over the primes. The remainder term
\begin{equation} 
R(x)=\sum_{n \leq x} \mu^2(n) -\frac{6}{\pi^2}x
\end{equation} 
is a topic of current research, its optimum value is expected to satisfies the upper bound $R(x)=O(x^{1/4+\varepsilon})$ for any small number $\varepsilon>0$. Currently, $R(x)=O\left (x^{1/2}e^{-\sqrt{\log x}}\right )$ is the best unconditional remainder term.

\begin{lem} \label{lem9339.8} Let $\mu(n)$ be the Mobius function. Then, for any sufficiently large number $x\geq1$, 
\begin{equation} 
\sum_{n \leq x} \mu^2(n) =\frac{6}{\pi^2}x+\Omega \left (x^{1/4} \right ). 
\end{equation} \end{lem}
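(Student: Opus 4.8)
The plan is to read off the oscillation bound from the analytic structure of the generating Dirichlet series, the decisive input being the zeros of $\zeta(s)$ on the critical line. For $\Re(s)>1$ the squarefree indicator has the Dirichlet series
\begin{equation}
\sum_{n\geq 1}\frac{\mu^2(n)}{n^s}=\frac{\zeta(s)}{\zeta(2s)},
\end{equation}
which follows from the convolution identity of Lemma \ref{lem297.58} together with the Euler product. Writing $R(x)=\sum_{n\leq x}\mu^2(n)-\tfrac{6}{\pi^2}x$ as in the statement and applying Abel summation, one gets for $\Re(s)>1$
\begin{equation}
\frac{\zeta(s)}{\zeta(2s)}-\frac{6}{\pi^2}\cdot\frac{s}{s-1}=s\int_1^\infty R(x)\,x^{-s-1}\,dx=:s\,G(s).
\end{equation}
The simple pole of $\zeta(s)/\zeta(2s)$ at $s=1$, whose residue is $1/\zeta(2)=6/\pi^2$ (the density constant of Lemma \ref{lem9339.107}), is cancelled exactly by the rational term, so $G(s)$ is precisely the object whose singularities encode the true size of $R(x)$.

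Next I would locate the singularities of $G(s)$ to the left of $\Re(s)=1$. They come from the zeros of $\zeta(2s)$, that is, from the points $s=\rho/2$ where $\rho$ runs through the nontrivial zeros of $\zeta$. By Hardy's theorem $\zeta$ has infinitely many zeros $\rho_0=\tfrac{1}{2}+i\gamma_0$ on the critical line, so $\zeta(s)/\zeta(2s)$ acquires poles at $s_0=\rho_0/2=\tfrac{1}{4}+i\gamma_0/2$, all lying on the vertical line $\Re(s)=\tfrac{1}{4}$. For such a pole to survive in $G(s)$ one needs $\zeta(s_0)\neq 0$; since $\Re(s_0)=\tfrac{1}{4}$, this holds for the low-lying zeros by the (numerically verified) non-vanishing of $\zeta$ at the corresponding half-points. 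I would therefore fix one low-lying zero $\rho_0$, with $\gamma_0\neq 0$, at which $G$ has a genuine simple pole.

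The decisive step is a Landau--Phragm\'en oscillation theorem: if the Mellin transform $G(s)=\int_1^\infty R(x)x^{-s-1}\,dx$ has a pole at a non-real point $s_0$ with $\Re(s_0)=\tfrac{1}{4}$, then $R(x)=\Omega(x^{1/4})$. The mechanism is that the residue at $s_0$ contributes to $R(x)$ an oscillatory piece of order $x^{\Re(s_0)}=x^{1/4}$ whose sign changes infinitely often. To make this rigorous I would argue by contradiction: assuming $R(x)=o(x^{1/4})$, pass to the integrated error $R_1(x)=\int_1^x R(t)\,dt=o(x^{5/4})$, whose transform is a shift of $G$ with the pole moved to $s_0+1$ on $\Re(s)=\tfrac{5}{4}$; a smoothed contour shift then forces the boundary behaviour of the transform along that line to be regular, contradicting the pole, and a Tauberian step returns the bound to $R$ itself.

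I expect this last oscillation theorem to be the main obstacle. Because $R(x)$ is sign-changing, Landau's non-negativity argument identifying the abscissa of convergence with a singularity does not apply directly, and the bare hypothesis $R(x)=o(x^{1/4})$ only yields analyticity of $G$ in the open half-plane $\Re(s)>\tfrac{1}{4}$, not on the boundary line where $s_0$ sits. The remedy is the standard Phragm\'en--Landau machinery, testing $G$ against a suitable kernel so as to convert the boundary pole at $s_0$ into a genuine positive lower bound for $\limsup_{x\to\infty}|R(x)|/x^{1/4}$. A secondary technical point, which I would settle by selecting a low-lying zero and invoking the known non-vanishing of $\zeta$ at $\rho_0/2$, is to guarantee that the pole of $G$ at $s_0$ is not removed by a vanishing numerator.
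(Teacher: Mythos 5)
Your proposal follows essentially the same analytic route as the paper: both read the $\Omega\left(x^{1/4}\right)$ off the singularity of $\zeta(s)/\zeta(2s)$ at $s=\rho_0/2$ produced by the first critical zero $\rho_0=\tfrac12+i\,14.134\ldots$ of $\zeta$. The difference is one of rigor rather than of method. The paper writes a Perron integral and asserts a termwise expansion $\frac{1}{\zeta(2)}x+\sum_{\rho}c_{\rho}x^{\rho/2}$, from which the claim is said to "follow"; this glosses over both the convergence of the sum over zeros and, more seriously, the fact that the mere presence of a term $c_{\rho_0}x^{\rho_0/2}$ does not by itself yield a lower bound for $\limsup|R(x)|x^{-1/4}$. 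You correctly isolate the two points that carry the real content: that the pole of $G(s)$ at $s_0=\rho_0/2$ is not cancelled by a zero of the numerator $\zeta(s)$ (settled numerically for a low-lying $\rho_0$), and that a singularity on the boundary line $\Re(s)=\tfrac14$ must be converted into an oscillation estimate by the Landau--Phragm\'en kernel argument, since $R(x)=o\left(x^{1/4}\right)$ only gives analyticity in the open half-plane. Your outline is the standard rigorous version of the paper's one-line sketch, and filling in the oscillation theorem as you describe would complete it.
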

\begin{proof} The generating series for squarefree integers is $\zeta(s)/\zeta(2s)=\sum_{n \geq 1}\mu^2(n)n^{-s}$ at $s=2$. The Perron intergral yields
\begin{equation}\label{eq9339.66}
\sum_{n \leq x} \mu^2(n) =\frac{1}{i2 \pi}\int_{c-\infty}^{c+\infty}\frac{\zeta(s)}{\zeta(s)}\frac{x^s}{s}ds=\frac{1}{\zeta(2)}x +\sum_{\zeta(\rho)=0}c_{\rho}x^{\rho/2},
\end{equation}
where $c\ne0$ is a constant. The coefficients $c_{\rho}$ are indexed by the zeros $\rho \in \C$ of the zeta function $\zeta(s)$. Since the zeta function has a zero $\rho_0=1/2+i14.134725\ldots $, the claim follows.
\end{proof}
\begin{thm}\label{thm9339.10}  Let $x\geq 1$ be a large number, let $a$ and $q$ be a pair of integers, $1 \leq a <q =O(\log^c x )$, with $c\geq 0$ constant, and let $\mu: \mathbb{Z} \longrightarrow \{-1,0,1\}$ be the Mobius function. Then, 
\begin{equation}\label{eq9339.74}
\sum_{\substack{n \leq x \\
n \equiv a \bmod q}}\mu(n)^2=\frac{6}{\pi^2}\prod_{p\mid q}\left ( 1-\frac{1}{p^2}\right )^{-1}\frac{x}{q}+O\left (\frac{x}{q} +q^{1/2+\varepsilon} \right ),
\end{equation}
where $\varepsilon>0$ is an arbitrary small number.
\end{thm}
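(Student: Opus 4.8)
The plan is to reduce the count to an elementary divisor sum by means of the squarefree characteristic identity recorded in Lemma \ref{lem297.58}, namely $\mu^2(n)=\sum_{d^2\mid n}\mu(d)$, and then to separate the progression condition from the divisibility condition. Substituting the identity and interchanging the order of summation gives
\begin{equation*}
\sum_{\substack{n\leq x\\ n\equiv a\bmod q}}\mu(n)^2=\sum_{d\leq \sqrt{x}}\mu(d)\sum_{\substack{n\leq x,\ n\equiv a\bmod q\\ d^2\mid n}}1,
\end{equation*}
where the truncation $d\leq\sqrt{x}$ is forced by $d^2\mid n\leq x$. The inner sum counts the integers $n\leq x$ lying in a single residue class modulo $\lcm(d^2,q)$: by the Chinese Remainder Theorem the simultaneous conditions $n\equiv 0\bmod d^2$ and $n\equiv a\bmod q$ are solvable if and only if $\gcd(d^2,q)\mid a$, and in that case the solution is unique modulo $\lcm(d^2,q)$, so the count is $x/\lcm(d^2,q)+O(1)=x\gcd(d^2,q)/(qd^2)+O(1)$, using $\lcm(d^2,q)\gcd(d^2,q)=qd^2$.

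Feeding this back, the main term is
\begin{equation*}
\frac{x}{q}\sum_{\substack{d\geq 1\\ \gcd(d^2,q)\mid a}}\frac{\mu(d)\gcd(d^2,q)}{d^2},
\end{equation*}
after extending the summation to all $d\geq 1$. To evaluate this series I would factor each squarefree $d$ as $d=d_0 d_1$ with $\rad(d_0)\mid q$ and $\gcd(d_1,q)=1$; multiplicativity then splits the series into a finite Euler product over the primes $p\mid q$ times $\sum_{\gcd(d_1,q)=1}\mu(d_1)/d_1^2=\prod_{p\nmid q}(1-p^{-2})$. Since $\gcd(a,q)=1$ forces $d_0=1$ (the conditions $\gcd(d_0^2,q)\mid a$ and $\rad(d_0)\mid q$ leave only $d_0=1$), the finite factor collapses to $1$ and the constant becomes $\prod_{p\nmid q}(1-p^{-2})=\frac{6}{\pi^2}\prod_{p\mid q}(1-p^{-2})^{-1}$, which is precisely the claimed leading coefficient.

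It then remains to collect the errors. The $O(1)$ term from the inner count contributes $O(\sqrt{x})$ after summation over $d\leq\sqrt{x}$, and the tail of the extended series, bounded by $\gcd(d^2,q)\leq q$ together with $\sum_{d>\sqrt{x}}d^{-2}\ll x^{-1/2}$, contributes $\ll (x/q)\cdot q\cdot x^{-1/2}=O(\sqrt{x})$; the residual divisor-sum and coprimality bookkeeping absorbs into the $q^{1/2+\varepsilon}$ term. Under the hypothesis $q=O(\log^{c}x)$ one has $\sqrt{x}=O(x/q)$, so every contribution collapses into the stated remainder $O(x/q+q^{1/2+\varepsilon})$. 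The main obstacle is the second step: the progression condition $n\equiv a\bmod q$ does not interact cleanly with $d^2\mid n$ because of the common factor $\gcd(d^2,q)$, and the real work lies in checking that the resulting series is multiplicative, convergent, and collapses to the stated constant, while keeping both the Euler-factor evaluation and the tail estimate uniform in the range $q\ll\log^{c}x$.
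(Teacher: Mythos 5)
Your argument is correct in substance, but it is genuinely different from what the paper does: the paper offers no proof at all for this theorem, deferring entirely to Hooley \cite{HC75} and Warlimont \cite{WR80}, whose results are designed for the much harder regime of moduli as large as $q\leq x^{2/3}$. Your elementary route --- expand $\mu^2(n)=\sum_{d^2\mid n}\mu(d)$, swap the order of summation, count the single residue class modulo $\lcm(d^2,q)$, and evaluate the resulting series $\sum_{\gcd(d,q)=1}\mu(d)d^{-2}=\prod_{p\nmid q}(1-p^{-2})$ --- is entirely adequate in the stated range $q=O(\log^c x)$ and is self-contained, which is a real gain over the paper's bare citation; it even yields the sharper remainder $O(x^{1/2})$, which is absorbed by the stated $O(x/q)$ since $q\ll\log^c x$. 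Two points deserve explicit mention. First, the theorem as written never assumes $\gcd(a,q)=1$, yet the claimed constant $\frac{6}{\pi^2}\prod_{p\mid q}(1-p^{-2})^{-1}$ is correct only in that case (compare Theorem \ref{thm3.110}, where the constant carries a factor depending on $d=\gcd(a,q)$); you silently import this hypothesis when you argue that $d_0=1$, and you should state it. Second, once $\gcd(a,q)=1$ is assumed, the solvability condition $\gcd(d^2,q)\mid a$ combined with $\gcd(d^2,q)\mid q$ forces $\gcd(d,q)=1$ outright, so the factorization $d=d_0d_1$ is unnecessary scaffolding --- the series collapses to $\sum_{\gcd(d,q)=1}\mu(d)d^{-2}$ immediately. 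Neither point is a gap in the mathematics, only in the bookkeeping.
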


\begin{proof} Consult \cite{HC75}, \cite{WR80}, and the literature.\end{proof}
The range of moduli $q \leq x^{2/3}$ is discussed and improved to $q \leq x^{1-\varepsilon}$ in \cite{NR14}. The $q$-dependence in the constant 
\begin{equation}\label{eq9339.76}
\frac{1}{q}\sum_{\substack{n \geq 1 \\
\gcd(n,q)=1}} \frac{\mu(n)}{n^2}=\frac{1}{q}\prod_{p \nmid q}\left ( 1-\frac{1}{p^2}\right )=\frac{6}{\pi^2}\frac{1}{q}\prod_{p\mid q}\left ( 1-\frac{1}{p^2}\right )^{-1}
\end{equation}
propagates the dependence in the asymptotic formula for consecutive $s$-power free integers. For example, the  probability or density of two consecutive squarefree integers is not $\left (6/\pi^2 \right )^2$, but a more complicated expression similar to \eqref{eq9339.76}. The equidistribution of $s$-power free integers in arithmetic progressions is affirmed by the result below. This also indicates a level of distribution of $2/3$ over any arithmetic progression $\{n=qm+a: m \geq 1\}$. 

\begin{thm}\label{thm3.110}  Let $x\geq 1$ be a large number, let $a$ and $q$ be a pair of integers, $1 \leq a <q =O(\log^c x )$, with $c\geq 0$ constant, and let $\mu: \mathbb{Z} \longrightarrow \{-1,0,1\}$ be the Mobius function. Then, 
\begin{equation}\label{eq9339.174}
\sum_{q \leq x^{2/3}\log^{-c-1}x} \max_{ a \bmod q} \left | \sum_{\substack{n \leq x \\
n \equiv a \bmod q}}\mu(n)^2-\frac{\varphi(q)}{d\varphi(q/d)}\prod_{p\nmid q}\left ( 1-\frac{1}{p^2}\right )\frac{x}{q} \right | \ll \frac{x}{\log ^c x} ,
\end{equation}
where $d=\gcd(a,q)$ and $c>0$ is an arbitrary constant.
\end{thm}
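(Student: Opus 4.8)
The plan is to prove this averaged (Bombieri--Vinogradov type) estimate by combining the squarefree divisor expansion of Lemma \ref{lem297.58} with a truncation argument and then extracting genuine cancellation on average over the moduli $q$. This last feature is essential: the triangle inequality applied to the pointwise asymptotic of Theorem \ref{thm9339.10} is far from enough, since $\sum_{q\le Q}(x/q)$ alone already exceeds the target $x/\log^c x$ by a factor $\log^{c+1}x$ or more across the full range $q\le x^{2/3}$. So the moduli must genuinely interfere.

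First I would write $\mu(n)^2=\sum_{d^2\mid n}\mu(d)$ and interchange summation,
\[
\sum_{\substack{n\le x\\ n\equiv a \bmod q}}\mu(n)^2=\sum_{d\le \sqrt{x}}\mu(d)\,\#\{n\le x:\, n\equiv a \bmod q,\ d^2\mid n\}.
\]
Fix a cutoff $z$ with $\log^{c+1}x\ll z\ll x^{1/3}$. For $d\le z$ the inner count equals $x/\lcm(q,d^2)+O(1)$ whenever the simultaneous congruences $n\equiv a\bmod q$ and $n\equiv 0\bmod d^2$ are compatible, that is $\gcd(q,d^2)\mid a$; summing these compatible terms and completing the resulting series to all $d$ reproduces the claimed main term $\tfrac{\varphi(q)}{d\,\varphi(q/d)}\prod_{p\nmid q}(1-p^{-2})\,x/q$, by exactly the local computation that produces the $q$-dependent constant \eqref{eq9339.76} and underlies Theorem \ref{thm9339.10}. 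Completing the series costs $O(x/(qz))$ per modulus and the fractional parts contribute $O(z)$ per modulus; summed over $q\le Q:=x^{2/3}\log^{-c-1}x$ and maximised over $a$ these give $O\!\big((x/z)\log x+Qz\big)$, which is $O(x/\log^c x)$ for an admissible $z$.

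Everything then reduces to the tail $T(q,a)=\sum_{z<d\le\sqrt x}\mu(d)\,\#\{n\le x:\, n\equiv a\bmod q,\ d^2\mid n\}$ on average, and this is the heart of the matter. The trivial bound $O(1)$ for each count is fatal here: it yields $\sum_{q\le Q}\max_a|T(q,a)|\ll Q\sqrt x\gg x$, so cancellation is unavoidable. The key step is the substitution $n=d^2m$ (legitimate since $\mu(d)\ne 0$ forces $d$ squarefree), which turns the inner count into the number of $m\le x/d^2$ lying in a single residue class modulo $q/\gcd(d^2,q)$. After peeling off the smooth main part, which is harmless because $\sum_{q\le Q}q^{-1}\sum_{d>z}\gcd(d^2,q)/d^2\ll x\,z^{-1+\varepsilon}$, one is left with a bilinear sum in $d$ and $m$ weighted by $\mu(d)$. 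I would estimate this bilinear form by detecting the residue of $m$ through additive characters modulo $q$ and invoking the large sieve inequality to average over $q\le Q$, as in the treatments of \cite{HC75}, \cite{WR80}, and \cite{NR14}; the exponent $2/3$ emerges from balancing the length $\sqrt x$ of the $d$-sum against the modulus range $Q$ in the large sieve bound.

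The main obstacle is the inner maximum $\max_{a\bmod q}$, which blocks a direct appeal to orthogonality of characters. I would remove it by the standard device of selecting, for each $q$, an extremal residue $a=a(q)$ attaining the maximum and detecting the single class $n\equiv a(q)\bmod q$ via a complete set of additive characters; Cauchy--Schwarz together with the large sieve then bounds the resulting character sums uniformly in the choice $a(q)$, at the cost of a factor absorbed into the power of $\log x$. Maintaining this uniformity while keeping the total error below $x/\log^c x$, and in particular controlling the diagonal contribution where $\gcd(d^2,q)>1$ (so that $d$ and $q$ share prime factors and $\lcm(q,d^2)$ is abnormally small), is the most delicate point, and is the portion I would write out in complete detail.
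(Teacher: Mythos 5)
The paper does not actually prove this theorem: its entire ``proof'' is the citation \emph{Consult \cite{OR71} and the literature}, so your proposal can only be measured against what a genuine proof requires, and there it has a real gap at its centre. The skeleton is sound --- expanding $\mu(n)^2=\sum_{e^2\mid n}\mu(e)$, extracting the main term from $e\le z$, disposing of the smooth part of the tail via $\sum_{q\le Q}\gcd(e^2,q)/q\ll e^{\varepsilon}\log x$, and your observation that the triangle inequality applied to Theorem \ref{thm9339.10} cannot work are all correct. But you have mislocated the critical range and the tool you propose for it does not deliver. The intermediate range $z<e\le(x/q)^{1/2}$ needs no cancellation whatsoever: the trivial $O(1)$ per term gives $(x/q)^{1/2}$ per modulus and $\sum_{q\le Q}(x/q)^{1/2}\ll(xQ)^{1/2}\ll x^{5/6}$, comfortably below the target. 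The entire difficulty sits in the range $e>(x/q)^{1/2}$, where $x/e^2<q$ and the inner count over $m$ is $0$ or $1$.

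In that range the decomposition $\mu(n)^2=\sum_{e^2m=n}\mu(e)$ is a pure type-I identity with no bilinear structure, and neither of your proposed devices closes the argument. Additive detection of the residue class of $m$ costs $\tfrac1q\sum_{h\ne 0}\min\bigl(x/e^2,\|he^2/q\|^{-1}\bigr)\ll x/(qe^2)+\log q$ per value of $e$, hence $\sqrt{x}\log q$ per modulus and $Q\sqrt{x}\asymp x^{7/6}$ in total --- worse than trivial. The large sieve applied to the sequence $b_n=\sum_{e^2\mid n,\,e>z}\mu(e)$ fares no better: $\|b\|_2^2\asymp x/z$, so the standard Bombieri--Vinogradov machinery returns a bound of shape $(x+Q^2)^{1/2}\|b\|_2\asymp x^{2/3}(x/z)^{1/2}\gg x$ for any admissible $z\le x^{1/3}$; the method needs a type-II decomposition that this sequence does not possess. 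What is actually required --- and what \cite{OR71} supplies, with \cite{HC75} and \cite{WR80} giving the pointwise bound $(x/q)^{1/2}+q^{1/2+\varepsilon}$ whose $q^{\varepsilon}$ is precisely what makes the pointwise route fail at level $x^{2/3}\log^{-c-1}x$ --- is an elementary count of the solutions of $me^2\equiv a\pmod q$, $me^2\le x$, $e>(x/q)^{1/2}$, carried out on average over $q\le Q$ with a worst-case residue $a(q)$ for each modulus. That count is the one ingredient your sketch does not contain, so as it stands the proposal is a plan rather than a proof.
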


\begin{proof} Consult \cite{OR71} and the literature.\end{proof}

\begin{lem} \label{lem9339.117} Let $x\geq1$ be a large number, and let $\mu: \mathbb{Z} \longrightarrow \{-1,0,1\}$ be the Mobius function. If $q=O(\log^c x)$ with $c\leq 0$ constant, then,  
\begin{equation} 
\sum_{\substack{n \leq x\\ \gcd(n,q)=1}} \mu^2(n) =\frac{6}{\pi^2}\prod_{p\nmid q}\left ( 1+\frac{1}{p}\right )^{-1}x+O \left (x^{1/2} \right ). 
\end{equation} 
\end{lem}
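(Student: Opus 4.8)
The plan is to detect the squarefree condition with the identity $\mu^2(n)=\sum_{d^2\mid n}\mu(d)$ of Lemma~\ref{lem297.58} and to separate the coprimality constraint after factoring $n=d^2m$. Since $\gcd(d^2m,q)=1$ is equivalent to the conjunction $\gcd(d,q)=1$ and $\gcd(m,q)=1$, and since $d^2\mid n\leq x$ forces $d\leq\sqrt{x}$, interchanging the order of summation yields
\begin{equation}\label{eq9339.220}
\sum_{\substack{n\leq x\\ \gcd(n,q)=1}}\mu^2(n)=\sum_{\substack{d\leq\sqrt{x}\\ \gcd(d,q)=1}}\mu(d)\sum_{\substack{m\leq x/d^2\\ \gcd(m,q)=1}}1 .
\end{equation}

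First I would evaluate the inner sum by Legendre's inclusion--exclusion, $\sum_{\substack{m\leq y\\ \gcd(m,q)=1}}1=\sum_{e\mid q}\mu(e)\lfloor y/e\rfloor=\tfrac{\varphi(q)}{q}y+O(2^{\omega(q)})$, and substitute $y=x/d^2$. The main term then factors as
\begin{equation}\label{eq9339.222}
\frac{\varphi(q)}{q}\,x\sum_{\substack{d\leq\sqrt{x}\\ \gcd(d,q)=1}}\frac{\mu(d)}{d^2},
\end{equation}
and I would complete the truncated $d$-sum to the absolutely convergent Euler product
\begin{equation}\label{eq9339.224}
\sum_{\substack{d\geq1\\ \gcd(d,q)=1}}\frac{\mu(d)}{d^2}=\prod_{p\nmid q}\left(1-\frac{1}{p^2}\right)=\frac{1}{\zeta(2)}\prod_{p\mid q}\left(1-\frac{1}{p^2}\right)^{-1}.
\end{equation}
Combining this with the prefactor and using the elementary identity $\tfrac{\varphi(q)}{q}\prod_{p\mid q}(1-p^{-2})^{-1}=\prod_{p\mid q}\frac{1-1/p}{1-1/p^2}=\prod_{p\mid q}(1+1/p)^{-1}$ collapses the local factors into the stated density $\tfrac{6}{\pi^2}\prod_{p\mid q}(1+1/p)^{-1}$, which is exactly the natural density of squarefree integers coprime to $q$.

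What remains is error bookkeeping, which is the only delicate point. The tail discarded when passing from \eqref{eq9339.222} to \eqref{eq9339.224} is $\ll x\sum_{d>\sqrt{x}}d^{-2}\ll x^{1/2}$, while the accumulated $O(2^{\omega(q)})$ from the inner count, summed over the at most $\sqrt{x}$ admissible values of $d$, contributes $O(2^{\omega(q)}x^{1/2})$. I expect the main obstacle to be exactly the uniform control of this second error in the modulus $q$: it is absorbed into $O(x^{1/2})$ precisely because the hypothesis $q=O(\log^c x)$ with $c\leq0$ forces $q$, and hence $\omega(q)$ and $2^{\omega(q)}$, to remain bounded. Were $q$ allowed to grow with $x$, this crude term would have to be balanced against the main term and a sharper count or a Perron/contour argument in the spirit of Theorem~\ref{thm9339.10} would be required; under the stated boundedness of $q$ the elementary estimate suffices.
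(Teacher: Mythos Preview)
Your argument is correct and self-contained, whereas the paper does not actually prove this lemma: it merely asserts that the proof is ``lengthier and more difficult than Lemma~\ref{lem9339.107}'' and defers to \cite[Lemma~2]{DK05}. Your elementary route---insert $\mu^2(n)=\sum_{d^2\mid n}\mu(d)$, swap sums, evaluate the inner coprime count by inclusion--exclusion, and complete the $d$-series to its Euler product---is the standard one and is entirely adequate here. The only place any real care is needed is the uniformity in $q$, and you handle this correctly: the hypothesis $q=O(\log^c x)$ with $c\le 0$ forces $q$ (hence $2^{\omega(q)}$) to be bounded, so the accumulated $O(2^{\omega(q)}x^{1/2})$ is genuinely $O(x^{1/2})$. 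Note also that your computation yields the density $\tfrac{6}{\pi^2}\prod_{p\mid q}(1+1/p)^{-1}$, which is the correct constant; the product $\prod_{p\nmid q}(1+1/p)^{-1}$ printed in the statement diverges to zero and is evidently a typo for $p\mid q$ (the same slip recurs in Lemma~\ref{lem9009.421} and elsewhere in the paper).
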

\begin{proof} The proof is lengthier and more difficult than Lemma \ref{lem9339.107}, see \cite[Lemma 2]{DK05}.\end{proof}

%ssssssssssssssssssssssssssssssssssssssssssssssssssssssssssssssss
\section{Correlation Functions For Squarefree Integers} \label{s8009}
A sequence of squarefree integers
\begin{equation} \label{eq8009.030}
n +a_0, \quad n+a_1,\quad n+a_2,\quad \ldots,\quad n+a_k, 
\end{equation}
imposes certain restriction on the $(k+1)$-tuple $( a_0, a_1, \ldots, a_k)$. A stronger restriction is required for sequence of prime $(k+1)$-tuples , see \cite{BT13}, and the literature for extensive details.

\begin{dfn}  \label{dfn8009.35}{\normalfont A $k$-tuple $(a_0,a_1, \ldots, a_k)$ is called \textit{admissible} if the numbers $a_0,a_1, \ldots, a_k$ is not a complete residues system modulo $p$ for any prime $p\leq k$.
 }
\end{dfn}

\begin{lem}\label{lem8009.41}  Let $x\geq 1$ be a large number, and let $\mu: \mathbb{Z} \longrightarrow \{-1,0,1\}$ be the Mobius function. Then, 
\begin{equation}\label{eq8009.74}
\sum_{n \leq x}\mu(n)^2 \mu(n+1)^2=\prod_{p\geq 2}\left ( 1-\frac{2}{p^2}\right )x+O\left (x^{2/3} \right ).
\end{equation}
\end{lem}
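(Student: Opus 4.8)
The plan is to establish the asymptotic formula for the correlation of squarefree indicators at consecutive arguments by expanding each factor via Lemma \ref{lem297.58} and then detecting the resulting divisibility conditions through the Chinese Remainder Theorem. Writing $\mu(n)^2 = \sum_{d^2 \mid n} \mu(d)$ and $\mu(n+1)^2 = \sum_{e^2 \mid n+1} \mu(e)$, the sum becomes
\begin{equation*}
\sum_{n \leq x} \mu(n)^2 \mu(n+1)^2 = \sum_{d \leq \sqrt{x}} \sum_{e \leq \sqrt{x}} \mu(d)\mu(e) \#\{ n \leq x : d^2 \mid n, \; e^2 \mid n+1 \}.
\end{equation*}
Since $d^2 \mid n$ and $e^2 \mid n+1$ force $\gcd(d^2,e^2)=1$, i.e.\ $\gcd(d,e)=1$, the inner count is controlled by a single congruence $n \equiv 0 \bmod d^2$ and $n \equiv -1 \bmod e^2$, which by CRT has a unique solution modulo $d^2 e^2$ when $\gcd(d,e)=1$ and no solution otherwise. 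Hence the inner cardinality is $x/(d^2 e^2) + O(1)$, giving a main term
\begin{equation*}
x \sum_{\substack{d,e \geq 1 \\ \gcd(d,e)=1}} \frac{\mu(d)\mu(e)}{d^2 e^2}
\end{equation*}
plus an error from the $O(1)$ terms.

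First I would evaluate the main-term double series. Factoring over primes, the coprimality constraint $\gcd(d,e)=1$ means that at each prime $p$ we may have $p \mid d$, or $p \mid e$, or neither, but not both; the local factor is therefore $1 - 1/p^2 - 1/p^2 = 1 - 2/p^2$. This yields exactly the constant $\prod_{p \geq 2}(1 - 2/p^2)$ claimed in \eqref{eq8009.74}, and I would note in passing that this product converges (and is positive, since $2/p^2 < 1$ for every prime $p$), so the main term is a genuine positive density. The appearance of the $-2/p^2$ local factor rather than $(1-1/p^2)^2$ is precisely the dependence phenomenon flagged after \eqref{eq9339.76}.

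The main obstacle is controlling the error term to obtain the claimed $O(x^{2/3})$ rather than a weaker bound. A naive truncation of both $d$ and $e$ up to $\sqrt{x}$ and summing the $O(1)$ remainders gives $O(x)$, which is useless, so the standard device is to split the ranges asymmetrically. I would restrict the ``long'' variables to $d, e \leq y$ for a parameter $y$ to be optimized, estimate the main portion with error $O(y^2)$ from the $O(1)$ count over the $O(y^2)$ coprime pairs, and bound the complementary tail where $\max(d,e) > y$ by using the fact that $d^2 \mid n$ (or $e^2 \mid n+1$) forces $n$ to lie in a sparse set, contributing $O\!\left(\sum_{d > y} x/d^2\right) = O(x/y)$. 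Balancing $y^2$ against $x/y$ gives the optimal choice $y = x^{1/3}$, whence both contributions are $O(x^{2/3})$, matching the stated error. The tail estimate requires a little care because one must simultaneously account for the divisibility of $n+1$ by squares in the other variable; I expect the bookkeeping of these two coupled tail sums, and the verification that the completed tail of the convergent series $\sum_{d>y}\mu(d)d^{-2}\sum_e \mu(e) e^{-2}$ also contributes only $O(x/y)$ to the main term, to be the technically delicate part. With the parameter balanced, the two displays above combine to give \eqref{eq8009.74}.
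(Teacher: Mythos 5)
Your argument is correct in substance, but note that the paper does not actually prove Lemma \ref{lem8009.41}: its ``proof'' is only a pointer to Carlitz \cite{CL32}, Mirsky \cite{ML47}, and Mennema \cite{MI17}. What you have written out is essentially the classical argument contained in those references: expand both indicators via Lemma \ref{lem297.58}, observe that $d^2\mid n$ and $e^2\mid n+1$ force $\gcd(d,e)=1$ so that the Chinese Remainder Theorem gives the count $x/(d^2e^2)+O(1)$, identify the singular series as the Euler product $\prod_{p}\left(1-2/p^2\right)$ (your local computation $1-1/p^2-1/p^2$ is right, and it correctly captures the dependence phenomenon the paper flags after \eqref{eq9339.76}), and balance a truncation parameter at $y=x^{1/3}$. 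So you are supplying exactly the content the paper outsources. One refinement on the step you yourself call delicate: truncating both variables simultaneously tempts one to bound the number of $e$ with $e^2\mid n+1$ by $2^{\omega(n+1)}$, which costs an $x^{\varepsilon}$; it is cleaner to truncate one factor at a time and use that the untruncated inner sum satisfies $\sum_{e^2\mid n+1}\mu(e)=\mu(n+1)^2\in\{0,1\}$, so the tail is at most $\sum_{d>y}\lfloor x/d^2\rfloor\ll x/y$, while the $O(1)$ per pair in the box $d,e\le y$ contributes $O(y^2)$; with $y=x^{1/3}$ both are $O(x^{2/3})$ with no epsilon loss. With that bookkeeping arranged, your proof is complete and self-contained, which is arguably an improvement on the paper's citation-only treatment.
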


\begin{proof} The earliest proof seems to be that in \cite{CL32}, and \cite{ML47}. Recent proofs appear in \cite{MI17}, and the literature.
\end{proof}

The constant coincides with the density of 2-consecutive squarefree integers. Its approximate numerical value is
\begin{equation}\
\prod_{q\geq 2}\left ( 1-\frac{2}{q^2}\right )=0.322699054242535576161483\ldots,
\end{equation}
where $q\geq2$ ranges over the primes.
\begin{lem}\label{lem8009.44}  Let $x\geq 1$ be a large number, and let $\mu: \mathbb{Z} \longrightarrow \{-1,0,1\}$ be the Mobius function. Then, 
\begin{equation}\label{eq8009.74}
\sum_{n \leq x}\mu(n)^2 \mu(n+1)^2\mu(n+2)^2=\prod_{p\geq 2}\left ( 1-\frac{3}{p^2}\right )x+O\left (x^{2/3} \right ).
\end{equation}
\end{lem}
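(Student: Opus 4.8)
The plan is to mirror the proof of Lemma \ref{lem8009.41}, replacing the pair $(n,n+1)$ by the triple $(n,n+1,n+2)$. First I would insert the squarefree expansion of Lemma \ref{lem297.58}, writing $\mu(n+i)^2=\sum_{d_i^2\mid n+i}\mu(d_i)$ for $i=0,1,2$, and interchange the order of summation. This rewrites the left-hand side as
\begin{equation}
\sum_{d_0,d_1,d_2\ge 1}\mu(d_0)\mu(d_1)\mu(d_2)\,N(d_0,d_1,d_2;x),
\end{equation}
where $N(d_0,d_1,d_2;x)=\#\{n\le x: d_0^2\mid n,\ d_1^2\mid n+1,\ d_2^2\mid n+2\}$ counts the solutions of a system of three congruences.

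Next I would decide exactly when this system is solvable. The pairwise compatibility conditions are $\gcd(d_0^2,d_1^2)\mid 1$, $\gcd(d_1^2,d_2^2)\mid 1$, and $\gcd(d_0^2,d_2^2)\mid 2$; since $\gcd(d_i,d_j)^2$ is a perfect square, each of these forces $\gcd(d_i,d_j)=1$ — the difference $2$ between the outer terms is harmless precisely because no square exceeding $1$ divides $2$. Hence $N$ vanishes unless $d_0,d_1,d_2$ are pairwise coprime, in which case the Chinese Remainder Theorem gives a unique residue class modulo $(d_0d_1d_2)^2$, so $N=x/(d_0d_1d_2)^2+O(1)$. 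Keeping the main term $x/(d_0d_1d_2)^2$ and extending the sum to all pairwise coprime triples yields, by a local (Euler product) computation, the constant
\begin{equation}
\sum_{\substack{d_0,d_1,d_2\ge 1\\ \text{pairwise coprime}}}\frac{\mu(d_0)\mu(d_1)\mu(d_2)}{(d_0d_1d_2)^2}=\prod_{p\ge 2}\left(1-\frac{3}{p^2}\right),
\end{equation}
because at each prime $p$ at most one of the squarefree $d_i$ may contain the factor $p$, giving a local factor $1+3\cdot(-1/p^2)=1-3/p^2$. This reproduces the asserted density and also explains, through the factor $1-3/p^2$, why the analogue for four consecutive integers collapses (at $p^2=4$ the local factor would be $1-4/4=0$).

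The hard part will be the error term $O(x^{2/3})$. A naive symmetric truncation of all three expansions at a parameter $D$ contributes $O(D^3)$ from the $O(1)$ in $N$ and $O(x/D)$ from the tail of the main sum, balancing only to $O(x^{3/4})$. To do better I would peel the three factors off one at a time: for the piece in which some $d_i>D$, I would leave the remaining Möbius sums \emph{complete}, collapsing them back to the true indicators $\mu^2(n+j)\le 1$ before estimating, so that each large-divisor contribution is governed by $\sum_{D<d\le\sqrt x}\bigl(x/d^2+1\bigr)\ll x/D+\sqrt{x}$ rather than by a per-triple $O(1)$, and I would control the single squarefree count remaining in the arithmetic progression $d_0^2\mid n$ by Theorem \ref{thm9339.10}. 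The delicate point, and the step I expect to be the main obstacle, is bounding the tails of large square divisors along these progressions tightly enough while optimizing the cutoff $D$; this is exactly where the exponent $2/3$ is forced, in complete parallel with the pair case of Lemma \ref{lem8009.41}. The extraction of the main term above is, by contrast, routine.
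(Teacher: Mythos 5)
The paper offers no proof of this lemma at all: it is stated without a proof environment, and the neighbouring, more general Theorem \ref{thm8009.10} is justified purely by citation to Mirsky, Tsang, and Mennema. So your proposal is being compared against the literature rather than against an in-paper argument. Your main-term extraction is correct and is exactly the standard route: the expansion $\mu(n+i)^2=\sum_{d_i^2\mid n+i}\mu(d_i)$, the observation that solvability forces the $d_i$ to be pairwise coprime (the shift $2$ being harmless since no square $>1$ divides $2$), the CRT count $N=x/(d_0d_1d_2)^2+O(1)$, and the Euler product with local factor $1-3/p^2$ (which checks out against the residue count mod $4$ and mod $9$, and correctly explains the collapse of the four-term analogue at $p=2$).

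The genuine gap is the error term, and you have identified it yourself without closing it. Two concrete points. First, your peeling scheme as described is not quite sound: after the first peel the surviving factors are the \emph{truncated} Möbius sums $S_i=\sum_{d\leq D,\,d^2\mid n+i}\mu(d)$, not the complete ones, and $|S_i|$ is not bounded by $1$ (it can be as large as a divisor function). The standard repair is to write $S_i=\mu^2(n+i)-L_i$ with $L_i=\sum_{d>D,\,d^2\mid n+i}\mu(d)$, so that the telescoping $A_0A_1A_2-S_0S_1S_2$ is controlled by sums of the form $\sum_{n\leq x}|L_i|$ and $\sum_{n\leq x}|L_i||L_j|$; the single tails give $\ll x/D+\sqrt{x}$ as you say, and the cross terms $\sum_n|L_i||L_j|=\sum_{d_i,d_j>D}\#\{n\leq x: d_i^2\mid n+i,\ d_j^2\mid n+j\}$ need a separate (routine but necessary) estimate. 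Second, even after this repair, balancing the $O(D^3)$ contribution from the all-small range against $x/D$ at $D=x^{1/3}$ yields $O(x^{2/3+\varepsilon})$ with logarithmic or $\varepsilon$ losses from the cross terms; the clean exponent $x^{2/3}$ claimed in the statement requires the sharper bookkeeping carried out in \cite{ML47} and \cite{TK85}. As it stands your argument proves the main term with \emph{some} power-saving error, which is all the paper ever uses, but it does not yet establish the stated $O(x^{2/3})$.
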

The earliest result in this direction appears to be
\begin{equation}\label{eq8009.74}
\sum_{n \leq x}\mu(n)^2 \mu(n+t)^2=cx+O\left (x^{2/3} \right ),
\end{equation}
where $c>0$ is the constant \eqref{eq9339.72}, is studied in \cite{ML47}. Except for minor adjustments, the generalization to sequences of $(k+1)$-tuples of squarefree integers has the same structure.

\begin{thm}\label{thm8009.10}  Let $ a\geq 1$ and $s\geq 2$ be small integers. Let $x\geq 1$ be a large number, and let $\mu_s: \mathbb{Z} \longrightarrow \{-1,0,1\}$ be the $s$-power free characteristic function. Then, 
\begin{equation}\label{eq8009.74}
\sum_{n \leq x}\mu(n+a_0)^2\mu(n+a_1)^2 \cdots \mu(n+a_k)^2=\prod_{p\geq 2}\left ( 1-\frac{\rho(s)}{p^2}\right )x+O\left (x^{2/3+\varepsilon} \right ),
\end{equation}
where $q\geq 1$ is a constant, and 
\begin{equation}\label{eq8009.79}
\rho(s)=\#\{m\leq p^2: qm+a_i\equiv 0 \bmod p^2 \text{ for } i=0,1,2, ..., k\},
\end{equation}
and $\varepsilon>0$ is an arbitrary small number depending on $k$ and $q$.
\end{thm}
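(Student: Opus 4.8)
The plan is to prove Theorem \ref{thm8009.10} by the standard Cohen--Landau sieve expansion for $s$-power free correlations, exactly as one proves the single-variable count in Lemma \ref{lem9339.107} and the two- and three-variable cases in Lemmas \ref{lem8009.41}--\ref{lem8009.44}. First I would insert the expansion from Lemma \ref{lem297.58}, namely $\mu_s(m)=\sum_{d^s \mid m}\mu(d)$, into each of the $k+1$ factors. This turns the left-hand side into
\begin{equation}\label{eq8009.plan1}
\sum_{n \leq x}\prod_{i=0}^{k}\left(\sum_{d_i^s \mid n+a_i}\mu(d_i)\right)
=\sum_{d_0,d_1,\ldots,d_k}\mu(d_0)\mu(d_1)\cdots\mu(d_k)\sum_{\substack{n \leq x\\ d_i^s \mid n+a_i\ \forall i}}1.
\end{equation}
The inner congruence conditions $n\equiv -a_i \bmod d_i^s$ may be merged by the Chinese Remainder Theorem; the merged system is solvable precisely when the pairwise compatibility conditions hold, and when solvable it has a unique residue class modulo $\lcm(d_0^s,\ldots,d_k^s)$, so the inner sum is $x/\lcm(d_i^s)+O(1)$ times the solvability indicator.

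Next I would separate the local densities. For each prime $p$ the joint condition modulo $p^s$ contributes, after summing the local Möbius weights, a factor $1-\rho_p/p^s$, where $\rho_p$ counts the residue classes $m \bmod p^s$ killing at least one factor; the definition \eqref{eq8009.79} is exactly the normalized count of such obstructed classes, and admissibility (Definition \ref{dfn8009.35}) guarantees $\rho_p/p^s<1$ for every prime so that the Euler product $\prod_{p\geq 2}(1-\rho(s)/p^2)$ converges to a positive constant. Multiplicativity of $\mu$ and of $\lcm$ across distinct primes lets me factor the main term as this Euler product times $x$. The diagonal $d_i=1$ supplies the leading term $x$, and I would collect the remaining terms into the error.

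The main obstacle, and the only place real work is needed, is the error term, which is why the exponent degrades from $O(x^{1/2})$ in the single-variable case to $O(x^{2/3+\varepsilon})$. The $O(1)$ from each inner count, summed against $|\mu(d_0)\cdots\mu(d_k)|$ over the admissible range, produces a sum of roughly $\tau_{k+1}$-type growth over squarefree $d_i\leq x^{1/s}$, and one must truncate the $d_i$ at a level $y\asymp x^{1/(s+1)}$ and bound the tail $\max_i d_i>y$ by a Rankin-type argument; this controls the contribution of large moduli at the cost of the $x^{2/3+\varepsilon}$ term, the $\varepsilon$ absorbing the divisor-function factor $\ll x^{\varepsilon}$ that depends on $k$ and on the number of merged congruences. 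I would then verify that the compatibility constraints among the $a_i$ only remove a convergent factor from the Euler product and never the positivity, so the asymptotic \eqref{eq8009.74} holds with a strictly positive leading constant whenever the tuple is admissible. The clean bookkeeping of the local factors $\rho_p$ at the finitely many small primes $p\leq k$, where several of the shifted classes can collide, is the delicate step; everything else is the routine Dirichlet-hyperbola and Rankin estimation already invoked for Lemmas \ref{lem8009.41} and \ref{lem8009.44}.
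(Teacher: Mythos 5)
The paper does not actually prove Theorem \ref{thm8009.10}: its ``proof'' is the single line ``Consult \cite{ML47}, \cite[Theorem 1.2]{MI17}, \cite{TK85}, and the literature.'' Your proposal therefore supplies an argument where the paper supplies none, and the argument you sketch --- expand each factor via Lemma \ref{lem297.58}, interchange summation, merge the congruences $n\equiv -a_i \bmod d_i^s$ by CRT subject to the compatibility conditions $\gcd(d_i,d_j)^s \mid a_i-a_j$, factor the resulting singular series into the Euler product of local densities $1-\rho_p/p^s$, and push the incompatible and large moduli into the error --- is precisely the classical Mirsky/Tsang method that the cited references carry out. Your identification of $\rho_p$ as the number of distinct residues $-a_i \bmod p^s$, and your observation that the product is positive exactly when no prime power $p^s$ is covered (which is why four consecutive squarefree values are impossible, cf.\ \eqref{eq297.100}), correctly repairs the garbled statement of the theorem as printed (which conflates $\mu_s$ with $\mu^2$, writes $\rho(s)$ for what is really $\rho(p)$, and introduces an undefined parameter $q$). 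On the main term your outline is sound.

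The one genuine gap is in the error term. Truncating all the $d_i$ at $y$ costs $O(y^{k+1})$ from the $O(1)$ per compatible tuple, while the tail $\max_i d_i > y$ costs $O(x/y^{s-1})$; for $s=2$ the balance is $y = x^{1/(k+2)}$, giving an error $O\left(x^{(k+1)/(k+2)}\right)$, not $O\left(x^{2/3+\varepsilon}\right)$. Your proposed truncation level $y \asymp x^{1/(s+1)}$ reproduces the exponent $2/3$ only in the case $k=1$; for triples and longer tuples the naive argument already falls short of Lemma \ref{lem8009.44}, and obtaining $2/3$ uniformly in $k$ requires the finer treatment of the large moduli in \cite{TK85} (exploiting that compatible tuples have nearly coprime components, so that the count is governed by $\lcm(d_0^s,\ldots,d_k^s)$ rather than by the box $\max_i d_i \le y$). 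Since every downstream application in the paper only needs some power saving $O(x^{1-\delta})$, your weaker exponent would still suffice for Theorem \ref{thm8800.080} and Lemma \ref{lem9709.76}, but as a proof of the stated bound the Rankin step needs to be replaced by the Tsang argument rather than merely invoked by name.
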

\begin{proof}Consult \cite{ML47}, \cite[Theorem 1.2]{MI17}, \cite{TK85}, and the literature.\end{proof}

The literature does not seem to offer any results for squarefree twin integers $n$ and $n+a$, which are relatively prime to $q=q(a)$. A plausible result might have the form given below. 
\begin{conj}\label{conj8009.105}  Let $x\geq 1$ be a large number, and let $\mu: \mathbb{Z} \longrightarrow \{-1,0,1\}$ be the Mobius function. If $a\geq 1$ is a fixed integer, and $q=O(\log^c x)$ with $c\geq 0$ constant, then, 
\begin{equation}\label{eq8009.190}
\sum_{\substack{n \leq x\\ \gcd(n,q)=1\\\gcd(n+a,q)=1}}\mu(n)^2 \mu(n+a)^2=c_2(q,a)\prod_{p\nmid q}\left ( 1+\frac{1}{p}\right )^{-2}\prod_{p\geq 2}\left ( 1-\frac{2}{p^2}\right )x+O\left (x^{1-\delta} \right ),
\end{equation}
where dependence correction factor $c_2(q,a)\geq 0$, and $\delta >0$ is a small number.
\end{conj}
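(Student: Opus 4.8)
The plan is to reduce the constrained correlation sum to a four-fold convolution by means of the two elementary identities already available in the paper, and then to evaluate the resulting system of congruences by the Chinese remainder theorem. First I would insert the squarefree expansions $\mu(n)^2=\sum_{d_1^2\mid n}\mu(d_1)$ and $\mu(n+a)^2=\sum_{d_2^2\mid n+a}\mu(d_2)$ from Lemma \ref{lem297.58}, together with the coprimality detectors $\mathbf 1_{\gcd(n,q)=1}=\sum_{e_1\mid\gcd(n,q)}\mu(e_1)$ and $\mathbf 1_{\gcd(n+a,q)=1}=\sum_{e_2\mid\gcd(n+a,q)}\mu(e_2)$ from Definition \ref{dfn297.35}. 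After interchanging the order of summation this recasts the left side of \eqref{eq8009.190} as
\[
\sum_{e_1\mid q}\sum_{e_2\mid q}\mu(e_1)\mu(e_2)\sum_{d_1}\sum_{d_2}\mu(d_1)\mu(d_2)\,T(x),
\]
where $T(x)$ counts the integers $n\le x$ subject simultaneously to $n\equiv 0 \bmod e_1 d_1^2$ and $n\equiv -a \bmod e_2 d_2^2$.

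Next I would evaluate $T(x)$ by the Chinese remainder theorem. When the two moduli $e_1 d_1^2$ and $e_2 d_2^2$ are compatible—which at a prime $p$ dividing both forces $p^2\mid a$ or $p\mid a$, according to whether $p$ enters through a square or through $q$—the count equals $x/\lcm(e_1 d_1^2, e_2 d_2^2)+O(1)$, while an incompatibility gives $T(x)=0$. I would then truncate the $d_1,d_2$ summations at a height $y$, treating $d_1,d_2\le y$ as a main term plus an $O(1)$ error per tuple and bounding each tail $d_i>y$ trivially by $\sum_{d>y}x/d^2\ll x/y$. Balancing the $O(y^2)$ accumulated unit errors against the $O(x/y)$ tail, while tracking the $q=O(\log^c x)$ uniformity (harmless, since $q$ is at most polylogarithmic), recovers a power-saving remainder of the shape $O(x^{2/3})$, exactly the analogue here of the error term in Theorem \ref{thm8009.10}.

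Then I would assemble the main term. Extending the $d_1,d_2$ and $e_1,e_2$ sums to infinity and grouping the contributions prime by prime factors the density into an Euler product of local densities. For a prime $p\nmid q$ the two forbidden square-classes $n\equiv 0$ and $n\equiv -a \bmod p^2$ are distinct unless $p^2\mid a$, contributing the factor $1-2/p^2$ (respectively $1-1/p^2$), which reproduces $\prod_{p\ge 2}(1-2/p^2)$ up to the finitely many corrected primes. For a prime $p\mid q$ the coprimality conditions $n\not\equiv 0$ and $n\not\equiv -a \bmod p$ dominate, yielding the factor responsible for $\prod_{p\nmid q}(1+1/p)^{-2}$ and, through the collapse of the two classes when $p\mid a$ (and their complete degeneration when $p=2$ with $a$ odd), the dependence correction factor $c_2(q,a)\ge 0$. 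In particular $c_2(q,a)=0$ precisely when some small prime forces $n$ and $n+a$ to exhaust a complete residue system, matching the even-$q$, odd-$a$ obstruction recorded after Theorem \ref{thm8800.095}.

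The hard part will be the remainder term, not the main term. The elementary truncation argument caps out at $O(x^{2/3})$, as in Theorem \ref{thm8009.10}, whereas the conjecture demands the stronger $O(x^{1-\delta})$. Pushing past the $2/3$ barrier requires controlling the off-diagonal contribution where $d_1$ and $d_2$ are both large, which is no longer a matter of counting lattice points in boxes but of estimating the error in $T(x)$ on average over the square moduli; this is the step that genuinely needs a Heath-Brown type square-sieve or exponential-sum input (compare the improvement of the moduli range to $q\le x^{1-\varepsilon}$ in \cite{NR14}), and is the reason the statement is posed as a conjecture rather than proved outright.
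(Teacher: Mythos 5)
The first thing to say is that the paper does not prove this statement at all: it is posed explicitly as Conjecture \ref{conj8009.105}, introduced with the remark that the literature does not seem to offer any such result, and it is only ever \emph{assumed} later (in Lemma \ref{lem9792.356} and Theorem \ref{thm2288.077}). So there is no proof in the paper to compare yours against; what you have written is an attempt to supply one, and it should be judged on its own merits.

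On those merits your sketch is the right elementary approach and, carried out carefully, it would essentially prove a corrected form of the statement; but three points need attention. First, the joint modulus for the congruence $n\equiv 0$ is $\lcm(e_1,d_1^2)$, not $e_1d_1^2$: a divisor $e_1\mid q$ and the square divisor $d_1$ need not be coprime, so you must either restrict to $\gcd(e_1,d_1)=1$ by multiplicativity or carry the lcm throughout, and likewise on the $n+a$ side and inside the final CRT count. Second, your closing paragraph rests on a misreading of the target: the conjecture asks only for $O(x^{1-\delta})$ for \emph{some} $\delta>0$, and your own truncation at $y=x^{1/3}$ already yields $O(x^{2/3+\varepsilon})=O(x^{1-1/3+\varepsilon})$, which qualifies. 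No square sieve or exponential-sum input is needed to reach the stated error term; it would be needed only to improve it or to push $q$ far beyond the polylogarithmic range assumed here, so the claim that this is "the reason the statement is posed as a conjecture" is not right. Third, the main term your Euler-product computation actually produces is $\prod_{p\mid q}\bigl(1-\nu_p'/p\bigr)\prod_{p\nmid q}\bigl(1-\nu_p/p^2\bigr)$ with $\nu_p,\nu_p'\in\{1,2\}$ according to whether $p^2\mid a$ (respectively $p\mid a$); matching this to the displayed formula requires observing that the factor $\prod_{p\nmid q}\left(1+\frac{1}{p}\right)^{-2}$ in \eqref{eq8009.190} is identically zero as written (the product over all primes not dividing $q$ diverges) and must be read as a product over $p\mid q$, the same slip that appears in Lemma \ref{lem9339.117}. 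With those corrections your argument would close a gap that the paper itself leaves open.
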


The dependence correction factor $c_2(q,a)\geq 0$, and the parameter $q=q(a)$ depends on $a \geq 1$. For instance, for $a=2b+1$ odd, the value $q=q(a)$ must be odd, and $c_2(q,a)>0$, otherwise $c_2(q,a)= 0$ for even $q$.

%ssssssssssssssssssssssssssssssssssssssssssssssssssssssssssssssssssss
\section{Summatory Functions For $s$-Power Free Integers} \label{s9539}
The subset of $k$-power free integers is usually denoted by 
\begin{equation}
\mathcal{Q}_s=\{n\in \mathbb{Z}:\mu_s(n)\ne 0\}
\end{equation} 
and the complementary subset of non $s$-free integers is denoted by 
\begin{equation}
\overline{\mathcal{Q}_s}=\{n\in \mathbb{Z}:\mu_s(n)= 0\}.
\end{equation}

The number $s$-power free integers have the following asymptotic.
\begin{lem} \label{lem9539.118} Given an integer $s \geq 2$, let $\mu_s(n)$ be the $s$th-Mobius function. Then, for any sufficiently large number $x\geq1$, 
\begin{equation} 
\sum_{n \leq x} \mu_s(n) =\frac{1}{\zeta(s)}x+O \left (x^{1/s} \right ).  
\end{equation} \end{lem}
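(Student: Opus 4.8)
The plan is to prove this via the Dirichlet convolution identity recorded in Lemma \ref{lem297.58}, namely $\mu_s(n)=\sum_{d^s\mid n}\mu(d)$, followed by an interchange of summation and a standard hyperbola-type truncation of the associated Dirichlet series. This is the direct analogue of the classical count of squarefree integers in Lemma \ref{lem9339.107}, now carried out for the general exponent $s\geq 2$.

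First I would substitute the identity into the summatory function and swap the order of summation. Since $d^s\mid n$ forces $d^s\leq n\leq x$, the outer index $d$ runs only up to $x^{1/s}$, and for each such $d$ the number of admissible $n\leq x$ is $\lfloor x/d^s\rfloor$. This gives
\begin{equation}
\sum_{n\leq x}\mu_s(n)=\sum_{d\leq x^{1/s}}\mu(d)\sum_{\substack{n\leq x\\ d^s\mid n}}1=\sum_{d\leq x^{1/s}}\mu(d)\left\lfloor\frac{x}{d^s}\right\rfloor.
\end{equation}
Next I would replace the floor by $x/d^s+O(1)$, which separates a main term $x\sum_{d\leq x^{1/s}}\mu(d)/d^s$ from an error of size $O\big(\sum_{d\leq x^{1/s}}1\big)=O(x^{1/s})$.

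For the main term I would complete the truncated sum to the full Dirichlet series, using the classical evaluation $\sum_{d\geq 1}\mu(d)/d^s=1/\zeta(s)$ (the reciprocal of the zeta function). The tail is controlled by the hypothesis $s\geq 2$, which guarantees convergence:
\begin{equation}
\left|\sum_{d> x^{1/s}}\frac{\mu(d)}{d^s}\right|\leq\sum_{d> x^{1/s}}\frac{1}{d^s}\ll\big(x^{1/s}\big)^{1-s}=x^{(1-s)/s}.
\end{equation}
Multiplying this tail estimate by $x$ yields a contribution of $O\big(x\cdot x^{(1-s)/s}\big)=O(x^{1/s})$. Assembling the main term $x/\zeta(s)$ with the two error contributions, each of order $x^{1/s}$, produces the stated asymptotic formula.

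There is no genuine obstacle here: the argument is an elementary convolution and truncation. The only point demanding attention is the bookkeeping that shows both sources of error—the floor-function discrepancy and the Dirichlet-series tail—are each exactly $O(x^{1/s})$, so that neither degrades the claimed error term; and it is precisely the condition $s\geq 2$ that keeps $\sum_{d}d^{-s}$ convergent and makes the tail bound effective. For $s=2$ this recovers Lemma \ref{lem9339.107} with its $O(x^{1/2})$ error, as a consistency check.
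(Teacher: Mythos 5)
Your argument is correct and complete, but it is worth noting that the paper itself does not prove this lemma at all: its ``proof'' consists of attributing the result to Gegenbauer (1885) and pointing to \cite{IA03} and the literature. What you have written is the standard elementary derivation that those references contain --- substitute the convolution identity $\mu_s(n)=\sum_{d^s\mid n}\mu(d)$ from Lemma \ref{lem297.58}, interchange summation so that $d$ ranges over $d\leq x^{1/s}$, replace $\lfloor x/d^s\rfloor$ by $x/d^s+O(1)$, and complete the truncated sum to $\sum_{d\geq 1}\mu(d)/d^s=1/\zeta(s)$ with a tail of size $x\cdot x^{(1-s)/s}=x^{1/s}$. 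Your bookkeeping is right: both error sources are $O(x^{1/s})$, and the convergence of $\sum_d d^{-s}$ for $s\geq 2$ is exactly what makes the tail estimate work. In fact the paper carries out essentially this same computation in a different place, namely in the proof of Lemma \ref{lem9739.06} (equation \eqref{eq8439.198}), where $\sum_{n\in\F_p}\mu_s(n)$ is evaluated by the identical expansion-and-truncation; so your proof is consistent with the paper's own methods even though the paper chose to outsource this particular lemma to a citation. One cosmetic quibble: calling the truncation ``hyperbola-type'' overstates it --- no Dirichlet hyperbola method is involved, only a direct cutoff at $d\leq x^{1/s}$ --- but this does not affect the validity of the argument.
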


\begin{proof}  The basic $s$th-Mobius function $\mu_s$ is explained in Definition \ref{dfn297.30}. This result is attributed to Gegenbauer, 1885. Recent proofs are provided in \cite{IA03} and the literature.\end{proof}

\begin{lem} \label{lem9339.108}  Given an integer $s \geq 2$, let $\mu_s(n)$ be the $s$th-Mobius function. Then, for any sufficiently large number $x\geq1$, 
\begin{equation} 
\sum_{n \leq x} \mu_s(n) =\frac{1}{\zeta(2s)}x+\Omega \left (x^{1/2s} \right ). 
\end{equation} \end{lem}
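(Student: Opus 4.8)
The plan is to transcribe the argument of Lemma~\ref{lem9339.8} (the case $s=2$) to general $s$, the only change being that the denominator $\zeta(2w)$ is replaced by $\zeta(sw)$. First I would record the generating Dirichlet series. By the convolution identity $\mu_s(n)=\sum_{d^s\mid n}\mu(d)$ of Lemma~\ref{lem297.58}, interchanging the order of summation gives
\begin{equation}
\sum_{n\geq 1}\frac{\mu_s(n)}{n^{w}}=\zeta(w)\sum_{d\geq 1}\frac{\mu(d)}{d^{sw}}=\frac{\zeta(w)}{\zeta(sw)},
\end{equation}
valid for $\Re(w)>1$. Perron's formula then expresses the summatory function as a contour integral
\begin{equation}
\sum_{n\leq x}\mu_s(n)=\frac{1}{2\pi i}\int_{c-i\infty}^{c+i\infty}\frac{\zeta(w)}{\zeta(sw)}\frac{x^{w}}{w}\,dw
\end{equation}
for some $c>1$.

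Next I would locate the singularities of the integrand. The simple pole of $\zeta(w)$ at $w=1$ has residue $\frac{1}{\zeta(s)}x$, which is the main term; in particular the constant in the statement should read $1/\zeta(s)$, matching Lemma~\ref{lem9539.118}, rather than $1/\zeta(2s)$. Every nontrivial zero $\rho$ of $\zeta$ produces a zero of $\zeta(sw)$ at $w=\rho/s$, hence a pole of the integrand there, contributing a term of size $x^{\rho/s}$. A formal shift of the contour to the left yields the explicit-formula heuristic
\begin{equation}
\sum_{n\leq x}\mu_s(n)=\frac{1}{\zeta(s)}x+\sum_{\zeta(\rho)=0}c_{\rho}\,x^{\rho/s}+\cdots,
\end{equation}
and the lowest zero $\rho_0=\tfrac12+i\,14.134725\ldots$ gives a term of modulus $x^{\Re(\rho_0)/s}=x^{1/(2s)}$, which is the source of the claimed oscillation.

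The hard part is turning this heuristic into a rigorous $\Omega$-bound, since the contour shift is not literally justified (the zeros of $\zeta(sw)$ accumulate and $1/\zeta(sw)$ is not bounded). Instead I would argue by contradiction via a Landau-type oscillation theorem. Writing $E(x)=\sum_{n\leq x}\mu_s(n)-x/\zeta(s)$ and supposing $E(x)=O(x^{\alpha})$ for some $\alpha<1/(2s)$, the Mellin transform $\int_1^{\infty}E(x)x^{-w-1}\,dx$ would converge and be analytic in the half-plane $\Re(w)>\alpha$. But this transform equals $\frac{1}{w}\,\zeta(w)/\zeta(sw)$ minus the analytic contribution of the main term, and $\zeta(w)/\zeta(sw)$ has a pole at $w=\rho_0/s$, where $\Re(\rho_0/s)=1/(2s)>\alpha$, a contradiction. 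Hence no such $\alpha$ exists, giving $E(x)=\Omega(x^{1/(2s)})$. The one genuinely delicate point is obtaining the sharp exponent $1/(2s)$ rather than $1/(2s)-\varepsilon$: this uses that $\rho_0$ lies exactly on the critical line $\Re(\rho_0)=1/2$, so the offending singularity sits precisely on the boundary $\Re(w)=1/(2s)$, and a two-sided ($\Omega_{\pm}$) oscillation theorem of Landau then extracts the full order $x^{1/(2s)}$.
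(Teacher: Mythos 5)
Your proposal follows essentially the same route as the paper: the paper's proof is the single line ``same as Lemma \ref{lem9339.8}, mutatis mutandis,'' and that lemma's proof is exactly your Perron-integral sketch (generating series $\zeta(w)/\zeta(sw)$, residue at $w=1$, oscillation from the zero $\rho_0=\tfrac12+i\,14.13\ldots$ producing a singularity at $w=\rho_0/s$). You go further than the paper in two ways that are worth keeping: you correctly observe that the main-term constant must be $1/\zeta(s)$ (consistent with Lemma \ref{lem9539.118} and with the $s=2$ case, where the paper itself writes $6/\pi^2=1/\zeta(2)$), so the $1/\zeta(2s)$ in the statement is an error; and you replace the formal contour shift --- which the paper leaves unjustified, with a typo $\zeta(s)/\zeta(s)$ in the integrand to boot --- by a rigorous Landau-type oscillation argument, which is the standard way to convert the presence of the singularity at $\Re(w)=1/(2s)$ into the claimed $\Omega\left(x^{1/(2s)}\right)$ bound.
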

\begin{proof} Same as the proof of Lemma \ref{lem9339.8}, mutatis mutandus.
\end{proof}

\begin{conj} \label{conj9339.208}  Given a pair of integers $s \geq 2$, and $q\geq 2$, let $\mu_s(n)$ be the $s$th-Mobius function. Then, for any sufficiently large number $x\geq1$, 
\begin{equation} 
\sum_{\substack{n \leq x\\ \gcd(n,q)=1}} \mu_s(n) =\frac{1}{\zeta(2s)}\prod_{p\nmid q}\left ( 1+\frac{1}{p}\right )^{-1}x+O \left (x^{1/2s} \right ). 
\end{equation} \end{conj}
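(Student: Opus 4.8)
The plan is to evaluate the Dirichlet series for the sequence restricted to $\gcd(n,q)=1$ and read off the main term by a standard hyperbola/Perron argument, exactly as in Lemma \ref{lem9339.117} but with $s$-power free integers in place of squarefree ones. First I would write down the generating series. Using the expansion $\mu_s(n)=\sum_{d^s\mid n}\mu(d)$ from Lemma \ref{lem297.58}, together with the coprimality condition, the relevant Dirichlet series factors as an Euler product
\begin{equation}\label{eq9539.plan1}
\sum_{\substack{n\geq 1\\ \gcd(n,q)=1}} \frac{\mu_s(n)}{n^\sigma}=\prod_{p\nmid q}\left(1-\frac{1}{p^{s\sigma}}\right)\left(1-\frac{1}{p^\sigma}\right)^{-1},
\end{equation}
since removing the primes dividing $q$ simply deletes the corresponding local factors from the unrestricted series $\zeta(\sigma)/\zeta(s\sigma)$. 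Evaluating the density therefore amounts to analytically continuing this product and extracting its residue contribution at $\sigma=1$.

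Next I would separate the pole. The factor $\prod_{p\nmid q}(1-p^{-\sigma})^{-1}$ carries the simple pole at $\sigma=1$ coming from $\zeta(\sigma)$ after deleting the finitely many (since $q=O(\log^c x)$) Euler factors for $p\mid q$, while $\prod_{p\nmid q}(1-p^{-s\sigma})^{-1}$ converges absolutely for $\Re(\sigma)>1/s$ and contributes the constant. At $\sigma=1$ the leading coefficient is
\begin{equation}\label{eq9539.plan2}
\frac{1}{\zeta(s)}\prod_{p\mid q}\left(1-\frac{1}{p^s}\right)^{-1}\prod_{p\mid q}\left(1-\frac{1}{p}\right),
\end{equation}
and I would then have to reconcile this with the stated density $\zeta(s)^{-1}\prod_{p\nmid q}(1+1/p)^{-1}$; I suspect the intended constant should carry the factor $\prod_{p\nmid q}(1-p^{-s})$ rather than $(1+1/p)^{-1}$, and part of the work is to verify which local correction is correct by matching the Euler product in \eqref{eq9539.plan1} term by term. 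Once the constant is pinned down, the main term $\frac{1}{\zeta(s)}\prod_{p\nmid q}(\cdots)\,x$ follows by taking $\sigma\to s$ in the normalization the paper uses for the unrestricted case (Lemma \ref{lem9539.118}).

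For the error term I would use the elementary hyperbola method rather than contour integration, mirroring Gegenbauer's argument behind Lemma \ref{lem9539.118}. Writing $n=d^s m$ with $d$ squarefree and summing $\mu(d)$ over $d^s m\le x$, $\gcd(d^s m,q)=1$, the inner sum over $m$ is a coprimality-restricted count of size $\frac{\varphi(q)}{q}\frac{x}{d^s}+O(q)$, and summing the tail $\sum_{d>x^{1/s}}$ trivially together with the truncation of $\sum_d \mu(d)/d^s$ yields a remainder of order $x^{1/2s}$; the coprimality condition only introduces a bounded multiplicative constant and a negligible $q$-dependent additive loss because $q$ is at most a power of $\log x$. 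The main obstacle, as flagged above, is not the analytic machinery — which is routine — but correctly computing the $q$-dependent constant and confirming that the product $\prod_{p\nmid q}(1+1/p)^{-1}$ as written in the conjecture is the genuine density; I would double-check this by specializing $s=2$ and comparing against Lemma \ref{lem9339.117}, where the squarefree coprime density is indeed $\frac{6}{\pi^2}\prod_{p\nmid q}(1+1/p)^{-1}$, which suggests the conjecture's constant is stated with $\zeta(2s)$ where $\zeta(s)$ is meant, an inconsistency the proof must resolve before the error term $O(x^{1/2s})$ can be declared correct.
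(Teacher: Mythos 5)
This statement appears in the paper only as a conjecture, with no proof supplied, so your proposal is not being measured against an argument of the author's; it has to stand on its own. The main-term part of it does. The Euler product $\prod_{p\nmid q}\bigl(1-p^{-s\sigma}\bigr)\bigl(1-p^{-\sigma}\bigr)^{-1}$ is the correct generating series, its residue at $\sigma=1$ is $\zeta(s)^{-1}\prod_{p\mid q}(1-p^{-1})(1-p^{-s})^{-1}$ exactly as you compute, and your suspicion about the stated constant is justified: as printed, $\prod_{p\nmid q}(1+1/p)^{-1}$ is an infinite product diverging to $0$ (the product should run over $p\mid q$, as specializing your constant to $s=2$ and comparing with the corrected form of Lemma \ref{lem9339.117} confirms), and the factor $\zeta(2s)^{-1}$ should be $\zeta(s)^{-1}$, consistent with Lemma \ref{lem9539.118} rather than with Lemma \ref{lem9339.108}. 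Diagnosing and repairing the constant is the most useful thing your write-up does.

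The genuine gap is the error term. The elementary decomposition $n=d^sm$ you describe gives
\begin{equation}
\sum_{\substack{n\le x\\ \gcd(n,q)=1}}\mu_s(n)
=\sum_{\substack{d\le x^{1/s}\\ \gcd(d,q)=1}}\mu(d)\left(\frac{\varphi(q)}{q}\,\frac{x}{d^s}+O(q)\right),
\end{equation}
and the $O(q)$ term alone, summed over $d\le x^{1/s}$, already contributes $O\bigl(qx^{1/s}\bigr)$, while truncating $\sum_d\mu(d)d^{-s}$ at $x^{1/s}$ costs another $x\cdot O\bigl(x^{(1-s)/s}\bigr)=O\bigl(x^{1/s}\bigr)$. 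No rearrangement of this computation produces $O\bigl(x^{1/2s}\bigr)$: that exponent is the conjectural optimum matching the $\Omega\bigl(x^{1/2s}\bigr)$ lower bound, and for $s=2$ it is the open $x^{1/4+\varepsilon}$ problem for the squarefree counting error, which is not accessible even under the Riemann Hypothesis by the hyperbola method. So the sentence claiming the elementary argument "yields a remainder of order $x^{1/2s}$" is false. What your argument actually establishes, once the constant is corrected, is the asymptotic with remainder $O\bigl(x^{1/s}\bigr)$; the error term asserted in Conjecture \ref{conj9339.208} remains a conjecture, and an honest write-up should say so rather than fold it into the "routine" analytic machinery.
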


%ssssssssssssssssssssssssssssssssssssssssssssssssssssssssssssssss
\section{Correlation Functions For $s$-Power Free Integers} \label{s9549}
\begin{thm}\label{thm9549.310}  Let $s \geq 2)$ be an integer. Let $x\geq 1$ be a large number, and let $\mu_s: \mathbb{Z} \longrightarrow \{-1,0,1\}$ be the characteristic function of $s$-power free integers. Then, 
\begin{equation}\label{eq9539.374}
\sum_{n \leq x}\mu_s(n)\mu_s(n+a) =\prod_{p\geq 2}\left ( 1-\frac{\rho(p,a)}{p^s}\right )x+O\left (x^{\alpha(s)+\varepsilon} \right ),
\end{equation}
where 
\begin{equation} \label{eq9549.376}
\rho(p) =
\left \{
\begin{array}{ll}
2     &\text{ if } p^s \nmid a,\\
1           &\text{ if } p^s \mid a,\\
\end{array}
\right .
\end{equation}
and 
\begin{equation} \label{eq9549.376}
\alpha(p,a) =\frac{14}{7s+8}
\end{equation}
and $\varepsilon>0$ is an arbitrary small number.
\end{thm}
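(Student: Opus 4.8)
The plan is to reduce the correlation sum to a double sum over the $s$-power divisors by means of the convolution identity $\mu_s(n)=\sum_{d^s\mid n}\mu(d)$ from Lemma \ref{lem297.58}. Substituting this for both factors and interchanging the order of summation gives
\begin{equation}
\sum_{n\leq x}\mu_s(n)\mu_s(n+a)=\sum_{d,e}\mu(d)\mu(e)\,\#\left\{n\leq x:\ d^s\mid n,\ e^s\mid n+a\right\},
\end{equation}
where effectively $d\leq x^{1/s}$ and $e\leq (x+a)^{1/s}$, since larger divisors admit no $n\leq x$, and only squarefree $d,e$ survive because $\mu(d)\mu(e)=0$ otherwise.

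Next I would evaluate the inner counting function by the Chinese Remainder Theorem. The simultaneous congruences $n\equiv 0\bmod d^s$ and $n\equiv -a\bmod e^s$ are solvable if and only if $\gcd(d,e)^s\mid a$, and when solvable they pin $n$ to a single residue class modulo $\lcm(d,e)^s$, so the count equals $x/\lcm(d,e)^s+O(1)$. Collecting the leading terms produces the main term $x\,S(a)$, where $S(a)=\sum_{d,e}\mu(d)\mu(e)\,[\gcd(d,e)^s\mid a]\,\lcm(d,e)^{-s}$. Since both the summand and the divisibility constraint are multiplicative, $S(a)$ factors into an Euler product whose local factor at $p$ I would compute by running the exponents $v_p(d),v_p(e)$ over $\{0,1\}$: the four cases contribute $1$, $-p^{-s}$, $-p^{-s}$, and (only when $p^s\mid a$) $+p^{-s}$. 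Hence the local factor equals $1-2p^{-s}$ when $p^s\nmid a$ and $1-p^{-s}$ when $p^s\mid a$, which is exactly $1-\rho(p,a)p^{-s}$, yielding the claimed constant $\prod_{p\geq 2}\left(1-\rho(p,a)p^{-s}\right)$.

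For the error term I would truncate the divisor variables at a parameter $y$. The contribution of the $O(1)$ remainders over $d,e\leq y$ is $O(y^2)$, while the tail $d>y$ (and symmetrically $e>y$) is controlled by $\sum_{d>y}\left(x/d^s+1\right)\ll x\,y^{1-s}+x^{1/s}$. Balancing $y^2$ against $x\,y^{1-s}$ at $y=x^{1/(s+1)}$ already gives the clean but weaker bound $O\left(x^{2/(s+1)+\varepsilon}\right)$, which for $s=2$ recovers the exponent $2/3$ of Lemma \ref{lem8009.41}. The hard part is sharpening this to the stated exponent $\alpha(s)=14/(7s+8)$, which depends only on $s$ and is strictly smaller than $2/(s+1)$. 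The trivial $O(1)$ per residue class is wasteful; instead one must bound the accumulated fractional parts $\sum_{d}\{x/\lcm(d,e)^s\}$ nontrivially, which is where exponential-sum machinery (van der Corput and exponent-pair estimates for sums of the shape $\sum_d\psi(x/d^s)$) enters. I would invoke the estimates of \cite{TK85} and \cite[Theorem 1.2]{MI17}, optimize the truncation level against the resulting exponential-sum savings, and absorb the divisor and logarithmic losses into $x^{\varepsilon}$. The main obstacle is thus not the main term, which is routine, but the uniform control of these exponential sums across the full bilinear range of $(d,e)$.
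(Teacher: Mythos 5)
Your proposal is correct in substance, but it takes a genuinely different route from the paper for the simple reason that the paper offers no argument at all: its entire proof of Theorem \ref{thm9549.310} is the sentence ``Different proofs are given in \cite{RT12}, \cite[Theorem 1.2]{BJ13}, which have slightly different remainder terms.'' You, by contrast, actually derive the main term: the convolution $\mu_s(n)=\sum_{d^s\mid n}\mu(d)$, the interchange of summation, the CRT solvability criterion $\gcd(d,e)^s\mid a$, and the local-factor computation over $(v_p(d),v_p(e))\in\{0,1\}^2$ giving $1-2p^{-s}$ when $p^s\nmid a$ and $1-p^{-s}$ when $p^s\mid a$ are all correct and exactly reproduce the constant $\prod_p\bigl(1-\rho(p,a)p^{-s}\bigr)$. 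Your elementary truncation also honestly yields the error exponent $2/(s+1)$, which recovers the $x^{2/3}$ of Lemma \ref{lem8009.41} at $s=2$. What your argument does not prove is the stated exponent $\alpha(s)=14/(7s+8)$; you correctly diagnose that this requires nontrivial bounds on the fractional-part sums via exponential-sum or exponent-pair methods, and you defer that to the literature --- which is precisely what the paper does for the entire theorem. One small correction: the references that actually carry the exponent $14/(7s+8)$ are \cite{RT12} (Reuss) and \cite{BJ13} (Brandes), not \cite{TK85} and \cite{MI17}, which concern the $r$-tuple and squarefree ($s=2$) cases with the weaker elementary exponent. So your write-up proves strictly more than the paper does on its own, while both ultimately lean on the same external results for the sharpened remainder term.
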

\begin{proof} Different proofs are given in \cite{RT12}, \cite[Theorem 1.2]{BJ13}, which have slightly different remainder terms.\end{proof}

The main problems in this area are the determination of the best remainder terms for various summatory functions. For instance, the remainder term
\begin{equation} 
R_s(x)=\sum_{n \leq x} \mu_s(n) -\frac{1}{\zeta(2s)}x
\end{equation} 
in Theorem \ref{thm9549.310} is expected to satisfies the upper bound $R_s(x)=O(x^{1/2s+\varepsilon})$ for any small number $\varepsilon>0$. A survey of the literature on $s$-power free integers and arithmetic functions is presented in \cite{PF05}. Currently, $R_s(x)=O\left (x^{1/2s}e^{-\sqrt{\log x}}\right )$ is the best unconditional remainder term.\\

The literature does not seem to offer any results for $s$-power free twin integers $n$ and $n+a$, with $a\geq 1$. A plausible result might have the form given below.

\begin{conj}\label{conj9549.100}  Given a pair of integers $a \geq 1$ and $s\geq 2$. Let $x\geq 1$ be a large number, and let $\mu: \mathbb{Z} \longrightarrow \{-1,0,1\}$ be the Mobius function. If $a \geq 1$, and $q=O(\log^c x)$ with $c\geq 0$ constant, then, 
\begin{equation}\label{eq8009.190}
\sum_{\substack{n \leq x\\ \gcd(n,q)=1\\\gcd(n+a,q)=1}}\mu_s(n) \mu_s(n+1)=c_s(q,a)\prod_{p\nmid q}\left ( 1+\frac{1}{p}\right )^{-s}\prod_{p\geq 2}\left ( 1-\frac{2}{p^s}\right )x+O\left (x^{1/2s-\delta} \right ),
\end{equation}
where $c_s(q,a)\geq 0$ is a constant, and $\delta >0$ is a small number.
\end{conj}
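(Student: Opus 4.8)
The plan is to reduce the correlation sum to a multiple divisor sum by expanding each factor through its Möbius representation, and then to evaluate the resulting sum by the Chinese Remainder Theorem. First I would replace $\mu_s(n)$ and $\mu_s(n+a)$ by the identity of Lemma \ref{lem297.58}, writing $\mu_s(n)=\sum_{d^s\mid n}\mu(d)$ and $\mu_s(n+a)=\sum_{e^s\mid n+a}\mu(e)$; simultaneously I would encode the two coprimality constraints by the characteristic function of Definition \ref{dfn297.35}, namely $\mathbf 1_{\gcd(n,q)=1}=\sum_{f\mid\gcd(n,q)}\mu(f)$ and $\mathbf 1_{\gcd(n+a,q)=1}=\sum_{g\mid\gcd(n+a,q)}\mu(g)$. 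Interchanging the order of summation turns the left-hand side into
\[
\sum_{d,e\geq 1}\mu(d)\mu(e)\sum_{f\mid q}\sum_{g\mid q}\mu(f)\mu(g)\,\#\bigl\{n\leq x:\ d^s\mid n,\ f\mid n,\ e^s\mid n+a,\ g\mid n+a\bigr\}.
\]

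The inner count is governed by the Chinese Remainder Theorem. Since $\gcd(n,n+a)\mid a$, a prime $p\nmid a$ cannot divide both $n$ and $n+a$, so the conditions on $n$ and on $n+a$ are simultaneously solvable modulo $\lcm(d^s,f)\cdot\lcm(e^s,g)$ exactly when $\gcd(\lcm(d^s,f),\lcm(e^s,g))=1$, in which case the count is $x/(\lcm(d^s,f)\lcm(e^s,g))+O(1)$. Summing the main terms and factoring the result as an Euler product over primes yields a product of local densities. Isolating the primes $p\nmid q$ reproduces the local factor $1-\rho(p,a)/p^s$ of Theorem \ref{thm9549.310} (with $\rho(p,a)=2$ when $p^s\nmid a$), whence the advertised $\prod_{p\geq 2}(1-2/p^s)$; the coprimality conditions multiply each such prime by the factor that already appears in Lemma \ref{lem9339.117}, producing $\prod_{p\nmid q}(1+1/p)^{-s}$, while the finitely many primes $p\mid q$ collapse into the dependence correction $c_s(q,a)$, by the same mechanism that governs Conjecture \ref{conj8009.105}. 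The vanishing $c_s(q,a)=0$ for the parity-obstructed cases is forced precisely when some prime $p\mid q$ renders the local system $d^s\mid n,\ e^s\mid n+a$ incompatible with coprimality to $q$.

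The decisive difficulty is the error term. Truncating the $d$- and $e$-sums at height $y\approx x^{1/s}$ and estimating the tails and the $O(1)$ per residue class gives a contribution of size roughly $O(x^{1/s})$ from each single factor, as in Lemma \ref{lem9539.118}, but the off-diagonal range where $d$ and $e$ are both large is far harder to control. Indeed, even without the coprimality restriction the best unconditional correlation estimate, Theorem \ref{thm9549.310}, only reaches $O(x^{\alpha(s)+\varepsilon})$ with $\alpha(s)=14/(7s+8)$, which for $s=2$ is $x^{7/11}$, vastly larger than the conjectured $x^{1/2s-\delta}=x^{1/4-\delta}$. Thus the main obstacle is not the extraction of the main term, which is routine, but the reduction of the error exponent to $1/2s$: this appears to demand genuine square-root cancellation in the bilinear divisor sums, beyond current unconditional technology, or else a conditional input such as a suitable zero-free or density hypothesis for $\zeta(s)$. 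I would therefore expect a provable version to carry the weaker remainder $O(x^{\alpha(s)+\varepsilon})$ inherited from Theorem \ref{thm9549.310}, with the sharp $x^{1/2s-\delta}$ remaining genuinely conjectural.
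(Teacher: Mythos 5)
The statement you were asked to prove is labeled a \emph{conjecture} in the paper, and the paper supplies no proof of it: it is introduced with the remark that the literature offers no results for $s$-power free twins relatively prime to $q$, and that ``a plausible result might have the form given below.'' So there is no argument of the author's to compare yours against. Read in that light, your proposal does the right thing: it gives a heuristic derivation of the main term and then states plainly that the error term $O(x^{1/2s-\delta})$ is the genuinely conjectural part. Your diagnosis is accurate --- the best unconditional input is Theorem \ref{thm9549.310} with exponent $\alpha(s)=14/(7s+8)$, already $x^{7/11}$ for $s=2$ against a claimed $x^{1/4-\delta}$, and no amount of rearranging the divisor sums closes that gap. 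A provable version of the statement would inherit the weaker remainder, exactly as you say. (You also silently correct the paper's typo $\mu_s(n+1)$ to $\mu_s(n+a)$, which is the only reading consistent with the summation conditions.)

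Two cautions on the main-term computation, since that is the part you present as routine. First, your CRT step is stated too restrictively: the system $D\mid n$, $E\mid n+a$ is solvable precisely when $\gcd(D,E)\mid a$, with exactly one residue class modulo $\lcm(D,E)$, not only when $\gcd(D,E)=1$; this relaxation is exactly the source of the local densities $\rho(p,a)\in\{1,2\}$ that you invoke at the end, so the condition must be tracked prime by prime rather than imposed globally. Second, if you actually carry out the Euler product you will find that the coprimality constraints modify only the local factors at primes $p\mid q$, not at primes $p\nmid q$; the factor $\prod_{p\nmid q}(1+1/p)^{-s}$ in the conjectured main term is, as literally written, a divergent product equal to $0$ (since $\sum_p 1/p$ diverges), and is surely a misprint for a product over $p\mid q$, the same misprint that already appears in Lemma \ref{lem9339.117} and Conjecture \ref{conj8009.105}. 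Your writeup reproduces this factor by pattern-matching the paper's stated form rather than by computing it; an honest completion of your own Euler-product argument would have flagged it.
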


The constant $c_s(q,a)\geq 0$ and the parameter $q=q(a)$ depend on $a \geq 1$. For instance, for $a=2b+1$ odd, the value $q=q(a)$ must be odd, and $c_s(q,a)>0$, otherwise $c_s(q,a)= 0$ for even $q$.

%ssssssssssssssssssssssssssssssssssssssssssssssssssssssssssssssssssss
\section{Probabilities For Consecutive Squarefree Integers}\label{s1180}
The events of 2 consecutive squarefree integers $X_0$ and $X_1$ are dependent random variables. Similar, the events of 3 consecutive squarefree integers $X_0$, $X_1$, and $X_2$ are dependent random variables.  

The probability $P(\mu(X_0)=\pm1, \mu(X_1)=\pm1)$ for 2 consecutive squarefree integers is asymptotic to the constant attached to the main term in Lemma \ref{lem8009.41}. Specifically,
\begin{equation} \label{eq1180.040}
\prod_{q \geq 2}\left (1-\frac{2}{q^2}\right )=\left (\frac{6}{\pi^2}\right )^2 \prod_{q \geq 2}\left (1+\frac{1}{q^2(q^2-2)}\right )^{-1}=0.322699054242535576161483\ldots.
\end{equation}
The reduction from independent events is measured by the dependence correction factor
\begin{equation} \label{eq1180.048}
c_2(2)=\prod_{q \geq 2}\left (1+\frac{1}{q^2(q^2-2)}\right )^{-1}=0.872985953449313618771745\ldots.
\end{equation}

The probability $P(\mu(X_0)=\pm1, \mu(X_1)=\pm1, \mu(X_2)=\pm1)$ for 3 consecutive squarefree integers is asymptotic to the constant attached to the main term in Lemma \ref{lem8009.44}. Specifically,
\begin{equation} \label{eq1180.140}
\prod_{q \geq 2}\left (1-\frac{3}{q^2}\right )=\left (\frac{6}{\pi^2}\right )^3\prod_{q \geq 2}\left (1+\frac{3q^2-1}{q^4(q^2-3)}\right )^{-1}=0.125524878896821220184683\ldots.
\end{equation}
The reduction from independent events is measured by the dependence correction factor
\begin{equation} \label{eq1180.148}
c_2(3)=\prod_{q \geq 2}\left (1+\frac{3q^2-1}{q^4(q^2-3)}\right )^{-1}=0.558526979127689105533330\ldots.
\end{equation}
Accordingly, consecutive squarefree integers are highly correlated.

%969-969-969-969-969-969-969-969-969-969-969-969-969-969-969-969-969-969-969-969-969-969-969-969-969-969-969-969-969-969-

\section{Primitive Roots Test} \label{969}
For a prime $p \geq 2$, the multiplicative group of the finite fields $\mathbb{F}_p$ is a cyclic group for all primes. 

\begin{dfn} { \normalfont The order $\min \{k \in \mathbb{N}: u^k \equiv 1 \bmod p \}$ of an element $u \in \mathbb{F}_p$ is denoted by $\ord_p(u)$. An element is a \textit{primitive root} if and only if $\ord_p(u)=p-1$. }
\end{dfn}
The Euler totient function counts the number of relatively prime integers \(\varphi (n)=\#\{ k\leq n:\gcd (k,n)=1 \}\). 

\begin{lem} {\normalfont (Fermat-Euler)} \label{lem2.1}If \(a\in \mathbb{Z}\) is an integer such that \(\gcd (a,n)=1,\) then \(a^{\varphi (n)}\equiv
	1 \bmod n\).
\end{lem}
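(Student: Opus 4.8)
The plan is to use the classical combinatorial argument based on reduced residue systems, which needs only the cancellation property of coprime integers and avoids any appeal to abstract group theory. First I would fix a reduced residue system $r_1, r_2, \ldots, r_{\varphi(n)}$ modulo $n$; that is, a complete set of representatives of the integers in $\{1, 2, \ldots, n\}$ that are relatively prime to $n$. By the definition of the totient function, $\varphi(n)=\#\{k\leq n:\gcd(k,n)=1\}$, there are exactly $\varphi(n)$ of these.

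The central step is to show that the map $x \mapsto ax \bmod n$ permutes this residue system. I would verify two things. First, each product $a r_i$ is again relatively prime to $n$, since $\gcd(a,n)=1$ together with $\gcd(r_i,n)=1$ forces $\gcd(a r_i, n)=1$. Second, the map is injective modulo $n$: if $a r_i \equiv a r_j \bmod n$, then $n \mid a(r_i - r_j)$, and coprimality of $a$ and $n$ yields $n \mid (r_i - r_j)$, whence $r_i \equiv r_j \bmod n$. Since the map sends $\varphi(n)$ distinct reduced residues to $\varphi(n)$ distinct reduced residues, it is a bijection of the system onto itself, so the two collections $\{a r_1, \ldots, a r_{\varphi(n)}\}$ and $\{r_1, \ldots, r_{\varphi(n)}\}$ coincide modulo $n$.

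Taking the product over all elements of each collection then gives
\[
a^{\varphi(n)} \prod_{i=1}^{\varphi(n)} r_i \equiv \prod_{i=1}^{\varphi(n)} r_i \bmod n .
\]
Each $r_i$ is coprime to $n$, hence so is their product, and I may cancel $\prod_{i} r_i$ from both sides (equivalently, multiply through by its inverse modulo $n$, which exists by the Bezout identity) to conclude that $a^{\varphi(n)} \equiv 1 \bmod n$, as claimed.

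The main obstacle is the permutation claim, and specifically the injectivity verified above, which is precisely where the hypothesis $\gcd(a,n)=1$ is indispensable: without it the cancellation law fails and the argument collapses (for instance $2\cdot 2\equiv 2\cdot 0\bmod 4$ with $2\not\equiv 0$). Everything else is routine bookkeeping. As an alternative I could invoke the group-theoretic formulation directly, observing that the reduced residues form the unit group $(\Z/n\Z)^{\times}$ of order $\varphi(n)$, so that Lagrange's theorem gives $a^{\varphi(n)}\equiv 1 \bmod n$ at once; but the combinatorial route is self-contained and matches the elementary level of the surrounding material in Section~\ref{969}.
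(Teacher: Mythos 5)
Your argument is correct and complete: the bijectivity of $x \mapsto ax \bmod n$ on a reduced residue system, the product comparison, and the cancellation step (justified by coprimality of $\prod_i r_i$ with $n$) together give $a^{\varphi(n)} \equiv 1 \bmod n$ with no gaps. The paper itself states this classical lemma in Section~\ref{969} without any proof, so there is nothing to compare against; your reduced-residue-system argument is the standard textbook route and is entirely adequate here.
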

\begin{lem} \label{lem969.05}  {\normalfont (Primitive root test)} An integer $u \in \Z$ is a primitive root modulo an integer $n \in \N$ if and only if 
\begin{equation}\label{eq969.52}
u^{\varphi (n)/p} -1\not \equiv 0 \mod  n
\end{equation}
for all prime divisors $p \mid \varphi (n)$.
\end{lem}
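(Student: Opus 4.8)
The plan is to analyze the multiplicative order $d=\ord_n(u)$ and to use the single input, supplied by Lemma \ref{lem2.1}, that $d$ divides $\varphi(n)$. At the outset I would make explicit the standing assumption $\gcd(u,n)=1$ that the statement tacitly requires: a primitive root is a unit of maximal order, and if $\gcd(u,n)\ne 1$ then $u^{k}\not\equiv 1 \bmod n$ for every $k\geq 1$, so the right-hand condition would hold vacuously even though $u$ is not a primitive root. Restricting to units, Fermat-Euler gives $u^{\varphi(n)}\equiv 1 \bmod n$, whence $d \mid \varphi(n)$, and $u$ is a primitive root exactly when $d=\varphi(n)$.

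For the forward implication I would assume $d=\varphi(n)$ and observe that for every prime $p \mid \varphi(n)$ the exponent $\varphi(n)/p$ is a positive integer strictly smaller than $d$. By the minimality built into the definition of $\ord_n(u)$, no exponent below $d$ can return $1$, so $u^{\varphi(n)/p}-1\not\equiv 0 \bmod n$ for every such $p$, which is the asserted condition.

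The reverse implication carries the one genuine step, and I would argue it by contraposition. Suppose $u$ is not a primitive root, so $d$ is a proper divisor of $\varphi(n)$ and the quotient $\varphi(n)/d$ exceeds $1$. Choose any prime $p$ dividing $\varphi(n)/d$; writing $\varphi(n)/d=pm$ yields $\varphi(n)/p=dm$, so $d \mid \varphi(n)/p$ and therefore $u^{\varphi(n)/p}=(u^{d})^{m}\equiv 1 \bmod n$. Thus the test fails for this prime $p$, which is precisely the contrapositive of the claim.

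The only point needing care, and hence the main (modest) obstacle, is this divisibility manipulation: converting the statement that $d$ is a proper divisor of $\varphi(n)$ into the existence of a prime $p \mid \varphi(n)$ with $d \mid \varphi(n)/p$. Everything else reduces to the definition of order together with a single use of Lemma \ref{lem2.1}. Notably, no cyclicity of $(\Z/n\Z)^{\times}$ is required, since the argument controls only the order of the fixed element $u$; when the group fails to be cyclic the test simply fails for every $u$, consistent with the nonexistence of primitive roots in that regime.
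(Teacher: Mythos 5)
Your proof is correct and complete. The paper itself supplies no argument for Lemma \ref{lem969.05}; it only points to the Lucas primality test in the literature (Lucas, and Crandall--Pomerance), so there is no internal proof to compare against. Your argument is the standard one those sources give: reduce everything to the order $d=\ord_n(u)$, use Lemma \ref{lem2.1} to get $d\mid\varphi(n)$, and handle the converse by extracting a prime $p$ from $\varphi(n)/d$ so that $d\mid\varphi(n)/p$. The two refinements you add are genuinely worth having: the observation that the statement is vacuously satisfied (hence literally false) when $\gcd(u,n)\ne 1$, so that hypothesis must be read into the lemma, and the remark that no cyclicity of $(\Z/n\Z)^{\times}$ is needed because the argument only tracks the order of the single element $u$. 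Both points tighten a statement the paper leaves slightly loose.
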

The primitive root test is a special case of the Lucas primality test, introduced in \cite[p.\ 302]{ LE78}. A more recent version appears in \cite[Theorem 4.1.1]{CP05}, and similar sources. 

\begin{lem} \label{lem969.21}  {\normalfont (Complexity of primitive root test)} Given a prime $p \geq 2$, and primes decomposition of the squarefree part $p_1 p_2 \cdots p_v \mid p-1$, a primitive root modulo $p$ can be determined in deterministic polynomial time $O(\log ^c p)$, some constant $c >1$.
\end{lem}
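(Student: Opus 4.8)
The plan is to read the complexity straight off the primitive root test of Lemma \ref{lem969.05}, specialized to a prime modulus. Since $\varphi(p)=p-1$, the test says that $u\in\F_p$ is a primitive root if and only if
\[
u^{(p-1)/p_i}\not\equiv 1 \bmod p \qquad\text{for every } i=1,2,\ldots,v,
\]
where $p_1 p_2\cdots p_v$ is the squarefree kernel of $p-1$. The hypothesis hands us exactly this list of distinct prime divisors, so the entire decision reduces to verifying $v$ modular non-congruences, and the running-time estimate is just the combined cost of these $v$ checks. The Fermat--Euler relation of Lemma \ref{lem2.1} guarantees $u^{p-1}\equiv 1\bmod p$, so each tested quantity is a genuine $p_i$-th root of unity whose triviality is the only thing to decide.

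First I would bound the number of checks. The distinct prime divisors satisfy $2^v\le p_1 p_2\cdots p_v\le p-1$, whence $v\le \log_2(p-1)\ll\log p$; the sharper estimate $v=\omega(p-1)\ll \log p/\log\log p$ from Lemma \ref{lem8533.05}(ii) may be substituted but is not needed for a polynomial bound. Next I would bound the cost of a single check: each exponent $(p-1)/p_i$ has at most $\log_2 p$ binary digits, so $u^{(p-1)/p_i}\bmod p$ is computed by square-and-multiply in $O(\log p)$ modular multiplications, and with schoolbook arithmetic each multiplication of residues of bit-length $O(\log p)$ costs $O(\log^2 p)$, for a per-check cost of $O(\log^3 p)$.

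Multiplying the two bounds yields a total of $v\cdot O(\log^3 p)=O(\log^4 p)$ bit operations, which is of the asserted shape $O(\log^c p)$ with, say, $c=4>1$; faster integer multiplication would lower the exponent but leaves the statement intact. This settles the cost of testing a candidate, and hence of scanning $u=2,3,4,\ldots$ to locate a primitive root once a bound on the least one is available.

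The step I expect to be the genuine obstacle is not the arithmetic but the standing hypothesis itself: the squarefree kernel of $p-1$ must be supplied to the routine. Producing it amounts to extracting the prime factorization of $p-1$, for which no deterministic polynomial-time procedure is known, and this is precisely why the factorization appears as \emph{given} input rather than as something the algorithm computes. I would therefore frame the lemma as a conditional complexity statement — efficient relative to an oracle for the distinct primes dividing $p-1$ — and flag, as a secondary point, that turning the test into a deterministic \emph{search} for a primitive root additionally requires an unconditional bound on the least primitive root, a separate and much harder issue than the one resolved here.
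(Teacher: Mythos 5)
Your analysis of the per-candidate cost is correct and in fact more explicit than the paper's: you derive $v\ll\log p$ checks, each a square-and-multiply exponentiation costing $O(\log^3 p)$ bit operations, for $O(\log^4 p)$ per candidate, whereas the paper simply points to \cite[Chapter 11]{SV08} for "the mechanics of the deterministic polynomial time algorithm." Your observation that the factorization of $p-1$ must be treated as given input, not computed, also matches the paper's framing, since the hypothesis explicitly supplies the primes $p_1p_2\cdots p_v\mid p-1$.

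Where you and the paper part ways is on the step you flag as the "genuine obstacle": the lemma claims a primitive root can be \emph{determined}, not merely \emph{tested}, in deterministic polynomial time, and that requires bounding how many candidates $u$ must be scanned. You leave this as an open conditional issue; the paper closes it by citing \cite[Theorem 1.2]{CN18} for the assertion that the least primitive root satisfies $u=O\left((\log p)^{1+\varepsilon}\right)$, so that the test is repeated only $O\left((\log p)^{1+\varepsilon}\right)$ times. Relative to the paper's own proof, then, your proposal omits the one ingredient the paper actually uses to complete the argument, and as written it does not establish the lemma in the unconditional form stated. That said, your caution is substantively well-placed: a bound of $(\log p)^{1+\varepsilon}$ on the least primitive root is far stronger than anything in the standard literature (unconditionally one only has Burgess-type bounds of shape $p^{1/4+\varepsilon}$, and even under GRH Shoup's bound is polylogarithmic of higher degree), so the paper's proof rests entirely on the strength of the self-citation. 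If you want your argument to match the lemma as stated, you must either import that cited bound explicitly or weaken the conclusion to the conditional form you describe.
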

\begin{proof} The mechanics of the deterministic polynomial time algorithm are specified in \cite[Chapter 11]{SV08}. By \cite[Theorem 1.2]{CN18}, the algorithm is repeated at most $O\left (  (\log p)^{1+\varepsilon} \right )$ times for each $u=O\left (  (\log p)^{1+\varepsilon} \right )$, with $\varepsilon>0$. These prove the claim.
\end{proof}

%333-333-333-333-333-333-333-333-333-333-333-333-333-333-333-333-333-333-333-333-333-333-333-333-333-333-333-333-333-333-333-333-
\section{Representations of the Characteristic Functions} \label{s333}
The characteristic function \(\Psi :G\longrightarrow \{ 0, 1 \}\) of primitive elements is one of the standard analytic tools employed to investigate the various properties of primitive roots in cyclic groups \(G\). Many equivalent representations of the characteristic function $\Psi $ of primitive elements are possible. Several of these representations are studied in this section.

\subsection{Divisors Dependent Characteristic Function}
A representation of the characteristic function dependent on the orders of the cyclic groups is given below. This representation is sensitive to the primes decompositions $q=p_1^{e_1}p_2^{e_2}\cdots p_t^{e_t}$, with $p_i$ prime and $e_i\geq1$, of the orders of the cyclic groups $q=\# G$. \\

\begin{lem} \label{lem333.02}
Let \(G\) be a finite cyclic group of order \(p-1=\# G\), and let \(0\neq u\in G\) be an invertible element of the group. Then
\begin{equation}
\Psi (u)=\frac{\varphi (p-1)}{p-1}\sum _{d \mid p-1} \frac{\mu (d)}{\varphi (d)}\sum _{\ord(\chi ) = d} \chi (u)=
\left \{\begin{array}{ll}
1 & \text{ if } \ord_p (u)=p-1,  \\
0 & \text{ if } \ord_p (u)\neq p-1. \\
\end{array} \right .
\end{equation}
\end{lem}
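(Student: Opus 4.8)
The plan is to establish the claimed identity by exploiting the orthogonality relations for multiplicative characters of the cyclic group $G \cong (\Z/(p-1)\Z)^{+}$, or equivalently of $\F_p^{\times}$. The starting point is the classical indicator for primitivity: an element $u$ is a primitive root precisely when it lies in no proper subgroup, which one detects by a M\"obius-weighted average over divisors $d \mid p-1$. First I would recall that for each divisor $d \mid p-1$ the inner sum $\sum_{\ord(\chi)=d}\chi(u)$ runs over the $\varphi(d)$ characters of exact order $d$; these are exactly the primitive characters modulo $d$ in the sense of order. The idea is that summing $\chi(u)$ over characters of order dividing a fixed $e$ yields $e$ when $u$ is an $e$-th power (so $u^{(p-1)/e}=1$) and $0$ otherwise, which is the standard orthogonality relation $\sum_{\chi^{e}=1}\chi(u) = e\cdot \mathbf{1}[u \in (\F_p^{\times})^{(p-1)/e}]$.

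The key steps, in order, are as follows. I would first isolate the inner character sum by order and rewrite it via M\"obius inversion over the subgroup lattice: grouping characters by their exact order $d$ and using $\sum_{\ord(\chi)=d}\chi(u) = \sum_{e \mid d}\mu(d/e)\sum_{\chi^{e}=1}\chi(u)$. Substituting into the double sum and interchanging the order of summation, the weight attached to the indicator $\mathbf{1}[\ord_p(u) \mid (p-1)/r]$ for each $r$ collapses, by the multiplicativity of $\mu/\varphi$ and the convolution identity, to a single term supported on $r=1$ when and only when $u$ has full order $p-1$. Second, I would verify the normalization: the prefactor $\varphi(p-1)/(p-1)$ is exactly what is needed so that the surviving term evaluates to $1$ for a primitive root. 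Here one uses that $\sum_{d \mid p-1}\mu(d)/\varphi(d)$ telescopes against the character-counting so that the leading contribution equals $(p-1)/\varphi(p-1)$, cancelling the prefactor. Third, I would confirm the vanishing: if $\ord_p(u)=m < p-1$, the non-trivial indicators contribute in a pattern that the M\"obius weights annihilate, giving $0$.

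The main obstacle I expect is the careful bookkeeping in the interchange-and-collapse step, namely showing that after M\"obius inversion the entire expression reduces to the naive subgroup-membership indicator $\frac{\varphi(p-1)}{p-1}\sum_{d \mid p-1}\frac{\mu(d)}{\varphi(d)}\cdot(\text{count of order-}d\text{ characters fixing the relevant power})$, and that this count factors cleanly through the divisor structure of $p-1$. The delicate point is that $\mu(d)/\varphi(d)$ is not completely multiplicative, so the collapse relies on the specific cancellation $\sum_{d \mid m}\mu(d)/\varphi(d) = \prod_{r \mid m}\bigl(1 - 1/(r-1)\bigr)$ type identities combined with the orthogonality support condition; one must track precisely for which $d$ the indicator is nonzero. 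Once that combinatorial identity is pinned down, the two cases $\ord_p(u)=p-1$ and $\ord_p(u)\neq p-1$ follow immediately, and the proof is complete.
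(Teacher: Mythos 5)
The paper gives no internal proof of this lemma; it defers entirely to the citation \cite[Lemma 3.1]{CN18}, so there is nothing in-text to compare against, but your outline is the standard argument for this identity --- orthogonality for the characters annihilating $(\F_p^\times)^e$, M\"obius inversion over the order lattice to turn $\sum_{\ord(\chi)=d}\chi(u)$ into a Ramanujan sum, and then collapse of the double sum by multiplicativity of $\mu(d)/\varphi(d)$ and $d\mapsto c_d(\cdot)$ into a product of local factors $1-c_q/(q-1)$ over primes $q\mid p-1$ --- and that strategy is correct and complete in principle. Two small slips to repair when writing it out: the indicator in your orthogonality relation should be $\mathbf{1}\left[u\in(\F_p^\times)^e\right]$ (equivalently $u^{(p-1)/e}=1$), not $\mathbf{1}\left[u\in(\F_p^\times)^{(p-1)/e}\right]$, as your own parenthetical correctly indicates; and the normalization for a primitive root comes from the $e=1$ term, whose weight is $\sum_{d\mid p-1}\mu(d)^2/\varphi(d)=(p-1)/\varphi(p-1)$, not from $\sum_{d\mid p-1}\mu(d)/\varphi(d)$, which equals $\prod_{q\mid p-1}\bigl(1-1/(q-1)\bigr)$ and is a different (and for the normalization, wrong) quantity.
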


\begin{proof} A full proof appears in \cite[Lemma 3.1]{CN18}.
\end{proof}
There are other proofs in the literature. Some details on this characteristic function are given in \cite[p. 863]{ES57}, \cite[p.\ 258]{LN97}, \cite[p.\ 18]{MP07}. The works in \cite{DH37}, and \cite{WR01} attribute this formula to Vinogradov. But some authors make an earlier reference to Landau.\\

The characteristic function for multiple primitive roots is used in \cite[p.\ 146]{CZ98} to study consecutive primitive roots. In \cite{DS12} it is used to study the gap between primitive roots with respect to the Hamming metric. And in \cite{WR01} it is used to prove the existence of primitive roots in certain small subsets \(A\subset \mathbb{F}_p\). In \cite{DH37} it is used to prove that some finite fields do not have primitive roots of the form $a\tau+b$, with $\tau$ primitive and $a,b \in \mathbb{F}_p$ constants. In addition, the Artin primitive root conjecture for polynomials over finite fields was proved in \cite{PS95} using this formula.

\subsection{Divisors Free Characteristic Function}
It often difficult to derive any meaningful result using the usual divisors dependent characteristic function of primitive elements given in Lemma \ref{lem333.02}. This difficulty is due to the large number of terms that can be generated by the divisors, for example, \(d\mid p-1\), involved in the calculations, see \cite{ES57}, \cite{DS12} for typical applications and \cite[p.\ 19]{MP04} for a discussion. \\
	
A new \textit{divisors-free} representation of the characteristic function of primitive element is developed here. This representation can overcomes some of the limitations of its counterpart in certain applications. The \textit{divisors representation} of the characteristic function of primitive roots, Lemma \ref{lem333.02}, detects the order \(\ord_p (u)\) of the element \(u\in \mathbb{F}_p\) by means of the divisors of the totient \(p-1\). In contrast, the \textit{divisors-free representation} of the characteristic function, Lemma \ref{lem333.03}, detects the order \(\text{ord}_p(u) \geq 1\) of the element \(u\in \mathbb{F}_p\) by means of the solutions of the equation \(\tau ^n-u=0\) in \(\mathbb{F}_p\), where \(u,\tau\) are constants, and \(1\leq n<p-1, \gcd (n,p-1)=1,\) is a variable. 

\begin{lem} \label{lem333.03}
Let \(p\geq 2\) be a prime, and let \(\tau\) be a primitive root mod \(p\). If \(u\in\mathbb{F}_p\) is a nonzero element, and \(\psi \neq 1\) is a nonprincipal additive character of order \(\ord \psi =p\), then
\begin{equation}
\Psi (u)=\sum _{\gcd (n,p-1)=1} \frac{1}{p}\sum _{0\leq m\leq p-1} \psi \left ((\tau ^n-u)m\right)=\left \{
\begin{array}{ll}
1 & \text{ if } \ord_p(u)=p-1,  \\
0 & \text{ if } \ord_p(u)\neq p-1. \\
\end{array} \right .
\end{equation}
\end{lem}

\begin{proof} A full proof appears in \cite[Lemma 3.2]{CN18}.
\end{proof}

\subsection{Arbitrary Subset Characteristic Function}
The previous construction easily generalize to arbitrary subset of the ring $\Z/p\Z$, and other rings.
\begin{lem} \label{lem333.09}
Let \(p\geq 2\) be a prime, and let $\mathcal{A} \subset \Z/p\Z$ be an arbitrary subset. Let \(\psi \neq 1\) be a nonprincipal additive character of order \(\ord \psi =p\). Then, 
\begin{equation}
\Psi_{\mathcal{A}} (u)=\sum _{x \in \mathcal{A}} \frac{1}{p}\sum _{0\leq m\leq p-1} \psi \left ((x-u)m\right)=\left \{
\begin{array}{ll}
1 & \text{ if } u \in \mathcal{A},  \\
0 & \text{ if } u \not \in \mathcal{A}. \\
\end{array} \right .
\end{equation}
\end{lem}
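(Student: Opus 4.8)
The final statement, Lemma \ref{lem333.09}, asserts that the expression
\[
\Psi_{\mathcal{A}}(u) = \sum_{x \in \mathcal{A}} \frac{1}{p} \sum_{0 \leq m \leq p-1} \psi\left((x-u)m\right)
\]
equals $1$ when $u \in \mathcal{A}$ and $0$ otherwise. The plan is to exploit the standard orthogonality relation for additive characters of the cyclic group $\Z/p\Z$, exactly as in the proof structure of Lemma \ref{lem333.03}, of which this is a direct generalization from the set of primitive roots to an arbitrary subset $\mathcal{A}$.

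First I would isolate the inner sum over $m$ for a fixed $x \in \mathcal{A}$. Writing $\psi(t) = e^{2\pi i t/p}$ for the nonprincipal additive character of order $p$, the inner sum becomes a geometric sum $\sum_{0 \leq m \leq p-1} \psi\left((x-u)m\right)$. The key observation is the orthogonality relation: this sum equals $p$ precisely when $x - u \equiv 0 \bmod p$, since then every summand is $1$, and it equals $0$ when $x - u \not\equiv 0 \bmod p$, since in that case $\psi(x-u) \neq 1$ is a nontrivial $p$-th root of unity and the geometric series $\sum_{m=0}^{p-1} \psi(x-u)^m = (\psi(x-u)^p - 1)/(\psi(x-u) - 1)$ vanishes because $\psi(x-u)^p = \psi(0) = 1$. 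Thus the normalized inner expression $\frac{1}{p}\sum_{0 \leq m \leq p-1}\psi\left((x-u)m\right)$ is exactly the indicator of the event $x \equiv u \bmod p$, i.e. $x = u$ in $\Z/p\Z$.

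With this identity in hand, the outer sum over $x \in \mathcal{A}$ collapses: each term contributes $1$ if $x = u$ and $0$ otherwise. Since $\mathcal{A} \subset \Z/p\Z$ and elements of the field are distinct, at most one $x \in \mathcal{A}$ can equal $u$. Therefore $\Psi_{\mathcal{A}}(u) = 1$ exactly when $u$ coincides with some element of $\mathcal{A}$, that is when $u \in \mathcal{A}$, and $\Psi_{\mathcal{A}}(u) = 0$ when $u \notin \mathcal{A}$, which is precisely the claim.

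I expect no genuine obstacle here, since the result is essentially a repackaging of character orthogonality, and the only care needed is the standard bookkeeping around the two cases of the geometric sum and the observation that distinctness of field elements prevents double-counting in the outer sum. The one point worth stating cleanly is that $\psi(x-u)^p = 1$ forces the nontrivial geometric sum to telescope to zero, which relies on $\psi$ having order exactly $p$ as assumed in the hypothesis; I would make sure this hypothesis is invoked explicitly rather than left implicit.
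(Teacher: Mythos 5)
Your proof is correct and follows the same route as the paper: the paper's own (very terse) proof simply observes that the equation $x-u=0$ has a solution with $x\in\mathcal{A}$ if and only if $u\in\mathcal{A}$, leaving the character-orthogonality computation implicit, which is exactly the step you have written out in full. No discrepancy; your version just supplies the detail the paper omits.
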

\begin{proof} Consider the equation
\begin{equation}\label{eq33.34}
x- u=0
\end{equation} 
where $u$ is fixed, and a variable $x \in \mathcal{A}$. Clearly, it has a solution if and only if the fixed element \(u \in \mathcal{A}\). 
\end{proof}

%4444444444444444444444444444444444444444444
\section{Estimates Of Exponential Sums} \label{s4}
This section provides simple estimates for the exponential sums of interest in this analysis. There are two objectives: To  determine an upper bound, proved in Theorem \ref{thm333.04}, and to show that
\begin{equation} \label{eq3.201}
\sum_{\gcd(n,p-1)=1} e^{i2\pi b \tau^n/p} =\sum_{\gcd(n,p-1)=1} e^{i2\pi \tau^n/p}+E(p),
\end{equation}
where $E(p)$ is an error term, this is proved in Lemma \ref{lem333.22}. 
%rrrrrrrrrrrrrrrrrrrrrrrrrrrrrrrrrrrrrrrrrrr
\subsection{Incomplete And Complete Exponential Sums}
\begin{thm}  \label{thm333.02} {\normalfont (\cite{SR73},  \cite{ML72}) }  Let \(p\geq 2\) be a large prime, and let \(\tau \in \mathbb{F}_p\) be an 

element of large multiplicative order $\ord_p(\tau) \mid p-1$. Then, for any $b \in [1, p-1]$,  and $x\leq p-1$,
\begin{equation}
 \sum_{ n \leq x}  e^{i2\pi b \tau^{n}/p} \ll p^{1/2}  \log p.
\end{equation}

\end{thm}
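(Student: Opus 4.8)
The plan is to estimate the incomplete sum by the classical completion technique: reduce it to complete (twisted) sums over the cyclic subgroup generated by $\tau$, and then bound each of those by Gauss sums of modulus $p^{1/2}$. The $\log p$ factor will come from the completion step and the $p^{1/2}$ from the Gauss sum estimate.

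First I would set $d=\ord_p(\tau)$ and $f(n)=e^{i2\pi b\tau^{n}/p}$. Since $\tau^{n+d}\equiv \tau^{n}\bmod p$, the function $f$ is periodic modulo $d$, so it has a finite Fourier expansion $f(n)=\tfrac1d\sum_{0\le k<d}\widehat f(k)\,e^{i2\pi kn/d}$ with $\widehat f(k)=\sum_{0\le a<d}f(a)\,e^{-i2\pi ak/d}$. Substituting and interchanging summations gives
\[
\sum_{n\le x}e^{i2\pi b\tau^{n}/p}=\frac{x}{d}\,\widehat f(0)+\frac1d\sum_{1\le k<d}\widehat f(k)\sum_{n\le x}e^{i2\pi kn/d}.
\]
The inner geometric sum obeys $\big|\sum_{n\le x}e^{i2\pi kn/d}\big|\le \tfrac{1}{2\|k/d\|}$, where $\|\cdot\|$ is the distance to the nearest integer, and $\sum_{1\le k<d}\tfrac{1}{2\|k/d\|}\ll d\log d$ is the standard Polya--Vinogradov estimate.

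Next I would identify each coefficient $\widehat f(k)$ with a complete character sum over the subgroup $H=\langle\tau\rangle\subset\F_p^{\times}$ of order $d$. Because $a\mapsto\tau^{a}$ is a bijection from $\Z/d\Z$ onto $H$ and the characters of the cyclic group $H$ are $\chi_k(\tau^{a})=e^{i2\pi ak/d}$, one has $\widehat f(k)=\sum_{h\in H}\overline{\chi_k(h)}\,e^{i2\pi bh/p}$. Writing $m=(p-1)/d$ and inserting the indicator $\mathbf 1_H=\tfrac1m\sum_{\eta^{m}=\eta_0}\eta$ of the subgroup of $m$-th powers (the sum running over the $m$ multiplicative characters $\eta$ of $\F_p^{\times}$ with $\eta^{m}=\eta_0$), after extending $\overline{\chi_k}$ to a character of $\F_p^{\times}$, expresses $\widehat f(k)=\tfrac1m\sum_{\eta^{m}=\eta_0}G(\eta\,\overline{\chi_k},b)$, where $G(\psi,b)=\sum_{t\ne0}\psi(t)e^{i2\pi bt/p}$ is a Gauss sum. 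The classical evaluation $|G(\psi,b)|=p^{1/2}$ for nonprincipal $\psi$ together with $|G(\psi_0,b)|=1$ (valid since $b\not\equiv0$) then yields the uniform bound $|\widehat f(k)|\le p^{1/2}$ for every $k$, including $k=0$, where $\widehat f(0)=\sum_{h\in H}e^{i2\pi bh/p}$.

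Combining the two steps, the diagonal term is $\tfrac{x}{d}\,|\widehat f(0)|\le \tfrac{x}{d}p^{1/2}$ and the off-diagonal contribution is $\le \max_k|\widehat f(k)|\cdot\tfrac1d\sum_{k\ne0}\tfrac{1}{2\|k/d\|}\ll p^{1/2}\log d$. Since $d\mid p-1$ and $x\le p-1$, the factor $x/d$ is $O(1)$ in the primitive-root case $d=p-1$ that is actually needed in Lemma \ref{lem333.03} (and is absorbed once $d$ is comparable to $p$, which is the meaning of ``large multiplicative order''), while $\log d\le\log p$; this produces $\sum_{n\le x}e^{i2\pi b\tau^{n}/p}\ll p^{1/2}\log p$. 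I expect the decisive step, and the only genuine arithmetic input, to be the uniform estimate $|\widehat f(k)|\le p^{1/2}$: everything hinges on the modulus $p^{1/2}$ of the Gauss sums (equivalently, Weil's bound for the associated complete sums), whereas the completion and the Polya--Vinogradov summation are purely formal.
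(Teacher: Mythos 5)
Your argument is correct and is, in substance, the classical proof of this estimate; the paper itself offers no proof here, only a pointer to \cite{SR73}, \cite{ML72} and to Theorem 5.1 of \cite{CN18}, and the proof given in those sources is exactly the completion-plus-Gauss-sum argument you describe. The two genuine ingredients are the ones you isolate: the identity expressing each Fourier coefficient $\widehat f(k)$ as an average of $m=(p-1)/d$ Gauss sums via the multiplicative-character indicator of the subgroup $H=\langle\tau\rangle$, giving the uniform bound $|\widehat f(k)|\le p^{1/2}$, and the Polya--Vinogradov completion losing only $\log d\le\log p$. Both steps are carried out correctly (for $k\neq 0$ every character $\eta\overline{\chi_k}$ occurring is nonprincipal, and for $k=0$ the single principal term contributes $|{-1}|/m$, so the bound $p^{1/2}$ holds in all cases).

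The one point worth emphasizing is the caveat you already flag: the diagonal term contributes $(x/d)\,|\widehat f(0)|$, and since the theorem allows $x\le p-1$ while only assuming $\ord_p(\tau)=d$ is ``large,'' the stated bound $p^{1/2}\log p$ genuinely fails for intermediate orders (e.g.\ $d\asymp p^{2/3}$ and $x=p-1$ give a diagonal term of size about $p^{5/6}$). Your proof therefore establishes the theorem only when $x\le d$ or when $d\gg p/\log p$; this is an imprecision in the theorem as stated rather than a gap in your argument, and in every application made in this paper (Lemma \ref{lem333.22}, Theorem \ref{thm333.04}) $\tau$ is a primitive root, so $d=p-1$ and the diagonal term is $O(p^{1/2})$ as required.
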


\begin{proof}  A complete proof appears in \cite[Theorem 5.1]{CN18}.
\end{proof}
This seems to be the best possible upper bound. A similar upper bound for composite moduli $p=m$ is also proved, [op. cit., equation (2.29)]. 

\begin{thm}  \label{thm333.04}  Let \(p\geq 2\) be a large prime, and let $\tau $ be a primitive root modulo $p$. Then,
\begin{equation}
	 \sum_{\gcd(n,p-1)=1} e^{i2\pi b \tau^n/p} \ll  p^{1-\varepsilon} 
\end{equation} 
for any $b \in [1, p-1]$, and any arbitrary small number $\varepsilon \in 
(0, 1/2)$. 
\end{thm}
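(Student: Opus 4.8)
The plan is to reduce the coprimality-constrained sum to a short combination of complete/incomplete geometric-type sums over subgroups, each of which is already controlled by Theorem \ref{thm333.02}. First I would detect the condition $\gcd(n,p-1)=1$ by the Möbius identity of Lemma \ref{lem297.98}(i), writing $\mathbf{1}_{\gcd(n,p-1)=1}=\sum_{d\mid\gcd(n,p-1)}\mu(d)$. Interchanging the order of summation and setting $n=dm$ gives the decomposition
\begin{equation}
\sum_{\gcd(n,p-1)=1} e^{i2\pi b\tau^n/p}=\sum_{d\mid p-1}\mu(d)\sum_{1\leq m\leq (p-1)/d} e^{i2\pi b(\tau^d)^m/p}.
\end{equation}
The inner sum is now an incomplete exponential sum whose base $\tau^d$ is an element of multiplicative order $\ord_p(\tau^d)=(p-1)/d$, which is exactly the object estimated in Theorem \ref{thm333.02}.

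Next I would bound each inner sum according to the size of the order $(p-1)/d$. For the divisors with $d\leq p^{1/2}$ the base $\tau^d$ has large order $(p-1)/d\geq p^{1/2}$, so Theorem \ref{thm333.02} applies directly and yields the bound $\ll p^{1/2}\log p$, uniformly in $b$ and in the length $(p-1)/d$. For the remaining divisors with $d>p^{1/2}$ the inner sum has fewer than $(p-1)/d<p^{1/2}$ terms, so the trivial bound $\leq (p-1)/d<p^{1/2}$ already suffices. In either regime each inner sum is $\ll p^{1/2}\log p$, so
\begin{equation}
\sum_{\gcd(n,p-1)=1} e^{i2\pi b\tau^n/p}\ll \left(\sum_{d\mid p-1}|\mu(d)|\right)p^{1/2}\log p = 2^{\omega(p-1)}\,p^{1/2}\log p.
\end{equation}

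Finally I would absorb the divisor count into a negligible power of $p$. By Lemma \ref{lem8533.05}(ii) one has $\omega(p-1)\ll \log p/\log\log p$, hence $2^{\omega(p-1)}=e^{O(\log p/\log\log p)}=p^{o(1)}$. Combining this with the previous display gives the total bound $\ll p^{1/2+o(1)}\log p$, which is $\ll p^{1-\varepsilon}$ for every fixed $\varepsilon\in(0,1/2)$ once $p$ is large, completing the proof.

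The main obstacle I anticipate is the bookkeeping at the threshold $d\approx p^{1/2}$, where the order $(p-1)/d$ passes from the range in which Theorem \ref{thm333.02} beats the trivial estimate to the range in which it does not; one must make sure that the hypothesis ``large multiplicative order'' in Theorem \ref{thm333.02} is genuinely satisfied for the small-$d$ divisors, and that the crossover is handled so that every divisor contributes at most $\ll p^{1/2}\log p$. The only other point requiring care is verifying that the factor $2^{\omega(p-1)}$ is truly of size $p^{o(1)}$ and therefore does not erode the exponent $1/2$ below the required margin $1-\varepsilon$, which is exactly what Lemma \ref{lem8533.05}(ii) guarantees.
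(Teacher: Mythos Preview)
Your argument is correct and follows the standard route: M\"obius-sieve the coprimality condition to reduce to sums $\sum_{m\le (p-1)/d}e^{i2\pi b(\tau^d)^m/p}$ indexed by squarefree $d\mid p-1$, bound each such sum by Theorem \ref{thm333.02} when $(p-1)/d$ is large and trivially otherwise, and absorb the $2^{\omega(p-1)}$ divisors via Lemma \ref{lem8533.05}(ii). The paper does not supply an in-text proof here---it simply refers to \cite[Theorem 5.2]{CN18}---so a line-by-line comparison with the author's argument is not possible from this source alone; your approach is in any case the natural one and yields the sharper bound $p^{1/2+o(1)}$, consistent with the paper's own remark after Lemma \ref{lem8899.09} that the true size should be $p^{1/2+\varepsilon}$.

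One minor simplification you might note: for each $d\mid p-1$ the inner sum is a \emph{complete} sum over the multiplicative subgroup $\langle\tau^d\rangle$ of index $d$, hence equals $d^{-1}\sum_{\chi^d=\chi_0}\bar\chi(b)\,\tau(\chi)$ with $\tau(\chi)$ a Gauss sum, and is therefore bounded by $\sqrt{p}$ outright for every $d$. This removes the need for the case split at $d\approx p^{1/2}$ and for invoking the incomplete-sum estimate of Theorem \ref{thm333.02}; the concern you flagged about the ``large multiplicative order'' hypothesis then disappears entirely.
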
 
\begin{proof} A complete proof appears in \cite[Theorem 5.2]{CN18}.
\end{proof}
The upper bound given in Theorem \ref{thm333.04} seems to be optimum. A different proof, which has a weaker upper bound, appears in \cite[Theorem 6]{FS00}, and related results are given in \cite{CC09}, \cite{FS01}, \cite{GZ05}, and \cite[Theorem 1]{GK05}.
\subsection{Equivalent Exponential Sums} 
For any fixed $ 0 \ne b \in \mathbb{F}_p$, the map $ \tau^n \longrightarrow b \tau^n$ is one-to-one in $\mathbb{F}_p$. Consequently, the subsets 
\begin{equation} \label{eq3.220}
 \{ \tau^n: \gcd(n,p-1)=1 \}\quad \text { and } \quad  \{ b\tau^n: \gcd(n,p-1)=1 \} \subset \mathbb{F}_p
\end{equation} have the same cardinalities. As a direct consequence the exponential sums 
\begin{equation} \label{3.330}
\sum_{\gcd(n,p-1)=1} e^{i2\pi b \tau^n/p} \quad \text{ and } \quad \sum_{\gcd(n,p-1)=1} e^{i2\pi \tau^n/p},
\end{equation}
have the same upper bound up to an error term. An asymptotic relation for the exponential sums (\ref{3.330}) is provided in Lemma \ref{lem333.22}. This result expresses the first exponential sum in (\ref{3.330}) as a sum of simpler exponential sum and an error term. 

\begin{lem}   \label{lem333.22}  Let \(p\geq 2\) be a large primes. If $\tau $ be a primitive root modulo $p$, then,
\begin{equation} \sum_{\gcd(n,p-1)=1} e^{i2\pi b \tau^n/p} = \sum_{\gcd(n,p-1)=1} e^{i2\pi  \tau^n/p} + O(p^{1/2} \log^3 p),
\end{equation} 
for any $ b \in [1, p-1]$. 	
\end{lem}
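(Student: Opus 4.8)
The plan is to detect the coprimality condition by Möbius inversion and then reduce both exponential sums to complete sums over the cyclic subgroups generated by powers of $\tau$, where the estimate of Theorem \ref{thm333.02} is available. Writing $\mathbf{1}_{\gcd(n,p-1)=1}=\sum_{d\mid\gcd(n,p-1)}\mu(d)$ and interchanging the order of summation, I would first obtain, for every $b\in[1,p-1]$,
$$
\sum_{\gcd(n,p-1)=1}e^{i2\pi b\tau^n/p}=\sum_{d\mid p-1}\mu(d)\sum_{t=1}^{(p-1)/d}e^{i2\pi b\tau^{dt}/p}.
$$
For each squarefree $d\mid p-1$ the element $\tau^d$ has order $(p-1)/d$, so the inner sum is a complete sum over the subgroup $\langle\tau^d\rangle\subset\F_p^\ast$, and only the squarefree divisors of $p-1$ survive.

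Next I would bound each inner sum uniformly in $d$ by $\min\{(p-1)/d,\, c\,p^{1/2}\log p\}$: when the order $(p-1)/d$ exceeds $p^{1/2}$ the element $\tau^d$ has large order and Theorem \ref{thm333.02} applies to the generator $\tau^d$ with shift $b$, whereas for $(p-1)/d\leq p^{1/2}$ the trivial bound by the number of terms already lies below $p^{1/2}\log p$. Hence every inner sum, for the value $b$ and for the value $1$ alike, is $O(p^{1/2}\log p)$. The main contribution then cancels: on subtracting the two representations, the divisor $d=1$ contributes the complete sum $\sum_{u\in\F_p^\ast}e^{i2\pi bu/p}=-1$ for both $b$ and $1$, so the $d=1$ terms drop out of the difference. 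What remains is
$$
\sum_{\gcd(n,p-1)=1}e^{i2\pi b\tau^n/p}-\sum_{\gcd(n,p-1)=1}e^{i2\pi\tau^n/p}=\sum_{\substack{d\mid p-1\\ d>1}}\mu(d)\Bigl(\sum_{t}e^{i2\pi b\tau^{dt}/p}-\sum_{t}e^{i2\pi\tau^{dt}/p}\Bigr),
$$
each summand being $O(p^{1/2}\log p)$ by the previous step.

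Summing over the squarefree divisors of $p-1$ and using $\sum_{d\mid p-1}|\mu(d)|=2^{\omega(p-1)}$ from Lemma \ref{lem297.98} yields $O\!\left(2^{\omega(p-1)}p^{1/2}\log p\right)$. The hard part is converting the factor $2^{\omega(p-1)}$ into the clean power $\log^2 p$ demanded by the statement: invoking the typical size $\omega(p-1)\asymp\log\log p$ from Lemma \ref{lem8533.05} gives $2^{\omega(p-1)}\ll\log^2 p$, hence the asserted $O(p^{1/2}\log^3 p)$. This is the delicate point, since $2^{\omega(p-1)}$ can surpass any fixed power of $\log p$ for exceptional primes, so a fully uniform statement would restrict to typical $p$ or replace the crude divisor count by a sharper sieve. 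As an alternative that saves a logarithm and bypasses Theorem \ref{thm333.02}, one may instead insert the divisors dependent characteristic function of Lemma \ref{lem333.02}, evaluate the inner sums as Gauss sums through $\sum_{u}\chi(u)e^{i2\pi bu/p}=\overline{\chi}(b)\,g(\chi)$ with $|g(\chi)|=p^{1/2}$, observe that the principal character again cancels in the difference, and collect the $\varphi(d)$ characters of each order $d$; this gives $O\!\left(2^{\omega(p-1)}p^{1/2}\right)$ directly, after which the same control of $\omega(p-1)$ completes the argument.
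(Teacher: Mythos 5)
The paper offers no proof of this lemma at all: it simply cites \cite{CN18}, so there is no internal argument to compare yours against, and your proposal has to be judged on its own terms. The structural part of your argument is correct and is the natural route: the M\"obius decomposition over the divisors $d\mid p-1$, the identification of each inner sum as a complete sum over the subgroup $\langle\tau^d\rangle$ of order $(p-1)/d$, the cancellation of the $d=1$ contribution (both sides contribute the complete sum $\sum_{u\in\F_p^\ast}e^{i2\pi bu/p}=-1$), and the bound $\min\{(p-1)/d,\,p^{1/2}\log p\}$ for each remaining inner sum via Theorem \ref{thm333.02} are all sound. They legitimately deliver
\[
\sum_{\gcd(n,p-1)=1}e^{i2\pi b\tau^n/p}-\sum_{\gcd(n,p-1)=1}e^{i2\pi\tau^n/p}\;\ll\;2^{\omega(p-1)}\,p^{1/2}\log p .
\]

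The genuine gap is exactly the point you flag and then wave past: $2^{\omega(p-1)}\ll\log^2 p$ is false for a general large prime. Lemma \ref{lem8533.05}(i) is a statement about the \emph{average} of $\omega(n)$; for an individual modulus one only has the worst-case bound $\omega(p-1)\ll\log p/\log\log p$ of part (ii), which gives $2^{\omega(p-1)}=\exp\left(O(\log p/\log\log p)\right)=p^{o(1)}$, a quantity that exceeds every fixed power of $\log p$ infinitely often (for instance along primes $p\equiv 1$ modulo a large primorial). Your alternative Gauss-sum route suffers from the identical factor, since $\sum_{d\mid p-1}|\mu(d)|=2^{\omega(p-1)}$ reappears when you sum the $\varphi(d)$ characters of each order $d$, so it does not repair the argument. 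What you have actually proved is therefore the weaker estimate with error term $O\left(2^{\omega(p-1)}p^{1/2}\log p\right)=O\left(p^{1/2+o(1)}\right)$; this is all the paper ever uses downstream (Lemma \ref{lem8899.09} only needs a saving of $p^{\varepsilon}$), but it is not the stated $O(p^{1/2}\log^3 p)$ uniformly in $p$. To close the gap you must either restrict to primes with $\omega(p-1)\ll\log\log p$, or restate the error term as $2^{\omega(p-1)}p^{1/2}$ (equivalently $p^{1/2+\varepsilon}$), neither of which your write-up currently commits to.
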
 
\begin{proof} A complete proof appears in \cite[Lemma 5.1]{CN18}.
\end{proof}
The same proof works for many other subsets of elements $\mathcal{A} \subset \mathbb{F}_p$. For example, 
\begin{equation}
\sum_{n \in \mathcal{A}} e^{i2\pi b \tau^n/p} = \sum_{n \in \mathcal{A}} e^{i2\pi  \tau^n/p} + O(p^{1/2} \log^c p), 
\end{equation} 
for some constant $c>0$.

%s9799%s9799%s9799%s9799%s9799%s9799%s9799%s9799%s9799%s9799%s9799
\section{Asymptotic Formulas For The Main Terms} \label{s9799}
The notation $f(x)\asymp g(x)$ is defined by $a|f(x)|<|g(x)|<b|f(x)|$ for some constants $a,b >0$. The symbol $f \ll g$ denote $f=O(g)$.

%rrrrrrrrrrrrrrrrrrrrrrrrrrrrrrrrrrrrrrrrrrrrrr
\subsection{Main Term For $k+1$ Consecutive Primitive Roots}
\begin{lem} \label{lem9799.76}  Let \(p\geq 2\) be a large prime, let $k \ll \log p$, and let $\varphi$ be the totient function. Then, 
\begin{equation} \label{eq9799.08}
 \sum_{ n\in \F_p}  \prod_{0 \leq i \leq k}  \left (\frac{1}{p}\sum_{\gcd(n_i,p-1)=1}1  \right )= \left (\frac{\varphi(p-1)}{p-1} \right )^{k+1}p  +O\left (\log^2 p \right ).
	\end{equation} 
\end{lem}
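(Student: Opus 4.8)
The plan is to observe that the bracketed quantity $\frac{1}{p}\sum_{\gcd(n_i,p-1)=1}1$ is in fact independent of both the outer summation variable $n$ and the index $i$: it is simply $\frac1p$ times the number of residues coprime to $p-1$, which is exactly $\frac{\varphi(p-1)}{p}$ by definition of the totient. Here $n_i$ ranges over the exponents appearing in the divisors-free representation of $\Psi$ from Lemma \ref{lem333.03}, and extracting the principal ($m=0$) contribution leaves precisely this count of admissible exponents. Consequently the product over $0\le i\le k$ collapses to the constant $\left(\varphi(p-1)/p\right)^{k+1}$, and the outer sum over the $p$ elements of $\F_p$ merely multiplies it by $p$. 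Thus the left-hand side equals $p\left(\varphi(p-1)/p\right)^{k+1}=\varphi(p-1)^{k+1}/p^{k}$ exactly, with no error at this stage.

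It then remains to reconcile this closed form with the stated main term. Writing $A=\varphi(p-1)^{k+1}/p^{k}$ and $B=\left(\varphi(p-1)/(p-1)\right)^{k+1}p$, I would compute the ratio $B/A=\left(p/(p-1)\right)^{k+1}=\left(1+\tfrac{1}{p-1}\right)^{k+1}$. Taking logarithms and using $\log\left(1+\tfrac{1}{p-1}\right)=\tfrac{1}{p-1}+O(p^{-2})$ together with the hypothesis $k\ll\log p$ yields $(k+1)\log\left(1+\tfrac1{p-1}\right)=O(\log p/p)$, hence $B/A=1+O(\log p/p)$. Since $\varphi(p-1)<p$ forces $A<p$, the discrepancy is $B-A=A\cdot O(\log p/p)=O(\log p)$, so that $A=B+O(\log p)$, which is comfortably absorbed into the claimed error $O(\log^2 p)$ and establishes the asymptotic formula.

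The only genuine subtlety lies in the second step: because $k$ is permitted to grow with $p$, the factor $\left(p/(p-1)\right)^{k+1}$ is not a bounded constant, and one must verify that $(k+1)/(p-1)\to 0$ so that the exponential expansion is legitimate. The constraint $k\ll\log p$ is exactly what guarantees this, and it is the single place where the hypothesis on $k$ is used; without it the two normalizations by $p$ and by $p-1$ could differ by more than the permitted error. Everything else is a direct evaluation, so I anticipate no further obstacles, and indeed the sharper bound $O(\log p)$ holds in place of $O(\log^2 p)$.
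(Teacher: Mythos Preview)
Your proposal is correct and follows essentially the same approach as the paper: both evaluate the inner sum as $\varphi(p-1)/p$, collapse the product to a constant, sum over $\F_p$, and then convert the normalization from $p$ to $p-1$. Your treatment of the ``readjustment'' step is in fact more careful than the paper's, which simply invokes the identity $\frac{\varphi(p-1)}{p}=\frac{\varphi(p-1)}{p-1}\bigl(1-\tfrac1p\bigr)$ without explicitly bounding the resulting error; your analysis via the ratio $(p/(p-1))^{k+1}$ and the hypothesis $k\ll\log p$ makes precise why the discrepancy is $O(\log p)$.
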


\begin{proof} Each inner sum has the exact value $\varphi(p-1)/p$. Hence,
\begin{eqnarray}\label{eq9799.10}
M(k,p)&=& \sum_{ n\in \F_p} \prod_{0 \leq i \leq k}  \left (\frac{1}{p}\sum_{\gcd(n_i,p-1)=1}1  \right )\nonumber\\
& =&\left (\frac{\varphi(p-1)}{p} \right )^{k+1} \sum_{ n\in \F_p} 1 \\
&=& \left (\frac{\varphi(p-1)}{p} \right )^{k+1}p\nonumber.
\end{eqnarray}
Last, but not least use the readjustment
\begin{equation} \label{eq9729.228}
\frac{\varphi(p-1)}{p}
=\frac{\varphi(p-1)}{p-1}\left ( 1-\frac{1}{p}\right )
\end{equation} 
to obtain the standard form of the main term.
\end{proof}

%rrrrrrrrrrrrrrrrrrrrrrrrrrrrrrrrrrrrrrrrrrrrrr
\subsection{Main Term For $k+1$ Consecutive Squarefree Primitive Roots}
The list of numbers $a_0,a_1, \ldots, a_k$ forms an increasing sequence of distinct integers, an admissible $(k+1)$ tuple, see Definition \ref{dfn8009.35}.
\begin{lem} \label{lem9709.76}  Let \(p\geq 2\) be a large prime, let $k \ll \log p$, and let $\varphi$ be the totient function. Then, 
\begin{equation} \label{eq9709.08}
\sum_{ n\in \F_p} \prod_{0 \leq i \leq k}  \left (\frac{1}{p}\sum_{\gcd(n_i,p-1)=1}\mu(n+a_i)^2 \right ) 
=\prod_{q\geq 2}\left ( 1-\frac{\rho(q)}{q^2}\right )\left (\frac{\varphi(p-1)}{p-1} \right )^{k+1}p+O\left (x^{2/3+\varepsilon} \right ),
	\end{equation} 
where \(\varepsilon >0\) is an arbitrarily small number.
\end{lem}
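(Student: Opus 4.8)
The plan is to follow the proof of Lemma~\ref{lem9799.76} almost verbatim, peeling off the squarefree weights from the character-sum part before evaluating anything. The key observation is that in each inner factor the weight $\mu(n+a_i)^2$ does not depend on the summation variable $n_i$, so it factors out and the inner sum collapses to the exact value already used in Lemma~\ref{lem9799.76}:
\[
\frac{1}{p}\sum_{\gcd(n_i,p-1)=1}\mu(n+a_i)^2
=\mu(n+a_i)^2\cdot\frac{1}{p}\sum_{\gcd(n_i,p-1)=1}1
=\mu(n+a_i)^2\cdot\frac{\varphi(p-1)}{p}.
\]
Taking the product over $0\le i\le k$ then separates the expression completely, giving
\[
\sum_{n\in\F_p}\prod_{0\le i\le k}\left(\frac{1}{p}\sum_{\gcd(n_i,p-1)=1}\mu(n+a_i)^2\right)
=\left(\frac{\varphi(p-1)}{p}\right)^{k+1}\sum_{n\in\F_p}\prod_{0\le i\le k}\mu(n+a_i)^2 .
\]

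The second step is to evaluate the squarefree correlation sum on the right. Reading the range $n\in\F_p$ as the integer interval $n\le p$, so that $\mu(n+a_i)^2$ is the squarefree indicator of the integer $n+a_i$, I would apply Theorem~\ref{thm8009.10} with $s=2$ and $x=p$. For an admissible tuple $(a_0,\dots,a_k)$ this yields
\[
\sum_{n\le p}\prod_{0\le i\le k}\mu(n+a_i)^2
=\prod_{q\ge2}\left(1-\frac{\rho(q)}{q^2}\right)p+O\left(p^{2/3+\varepsilon}\right),
\]
the admissibility (Definition~\ref{dfn8009.35}) guaranteeing $\rho(q)<q^2$ for every prime $q$, hence a strictly positive and convergent singular product.

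It remains to assemble the two pieces. Multiplying the main term of the correlation sum by $(\varphi(p-1)/p)^{k+1}$ produces $\prod_{q\ge2}(1-\rho(q)/q^2)\,(\varphi(p-1)/p)^{k+1}p$, which becomes the asserted main term after inserting the readjustment $\varphi(p-1)/p=\frac{\varphi(p-1)}{p-1}(1-1/p)$ from \eqref{eq9729.228}. Since $\varphi(p-1)/p<1$, the factor $(\varphi(p-1)/p)^{k+1}\le 1$, so the remainder supplied by Theorem~\ref{thm8009.10} is preserved as $O(p^{2/3+\varepsilon})$, matching the claimed error.

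The one genuine subtlety---and the step I would treat most carefully---is the passage from $\sum_{n\in\F_p}$ to the integer sum $\sum_{n\le p}$. Unlike the primitivity weight, $\mu(n+a_i)^2$ is a function of the integer $n+a_i$ rather than of its residue class modulo $p$, so one must fix a set of integer representatives for $\F_p$ and verify that the boundary terms (and the substitution $x=p$ in the error exponent) do not inflate the remainder past $O(p^{2/3+\varepsilon})$. Once this is pinned down, the remaining bookkeeping is identical to that of Lemma~\ref{lem9799.76}.
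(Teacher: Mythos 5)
Your proposal matches the paper's own proof essentially step for step: factor the constant weight $\mu(n+a_i)^2$ out of each inner sum, evaluate the inner sums as $\varphi(p-1)/p$, apply Theorem \ref{thm8009.10} with $x=p$ to the resulting squarefree correlation sum, and finish with the readjustment $\varphi(p-1)/p=\frac{\varphi(p-1)}{p-1}(1-1/p)$. Your closing remark about identifying $\sum_{n\in\F_p}$ with an integer sum over representatives is a point the paper silently elides, so flagging it is a small improvement rather than a deviation.
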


\begin{proof} Rearrange the finite sum and observe that each inner sum has the exact value $\varphi(p-1)/p=\sum_{\gcd(n,p-1)=1}1$. Hence,
\begin{eqnarray}\label{eq9709.10}
M(k,p)&=& \sum_{ n\in \F_p}  \left (\frac{\mu(n+a_0)^2}{p}\sum_{\gcd(n_0,p-1)=1}1 \cdots  \frac{\mu(n+a_k)^2}{p}\sum_{\gcd(n_k,p-1)=1}1  \right )\nonumber\\
& =&\left (\frac{\varphi(p-1)}{p} \right )^{k+1} \sum_{ n\in \F_p} \mu(n+a_0)^2  \cdots  \mu(n+a_k)^2\\
&=&\prod_{q\geq 2}\left ( 1-\frac{\omega(q)}{q^2}\right )\left (\frac{\varphi(p-1)}{p-1} \right )^{k+1}p+O\left (x^{2/3+\varepsilon} \right )\nonumber.
\end{eqnarray}
The last line follows from Theorem \ref{thm8009.10} applied to the correlation function, and \(\varepsilon >0\) is an arbitrarily small number. Now, use the readjustment
\begin{equation} \label{eq9729.110}
\frac{\varphi(p-1)}{p}
=\frac{\varphi(p-1)}{p-1}\left ( 1-\frac{1}{p}\right )
\end{equation} 
to obtain the standard form of the main term.
\end{proof}

%ssssssssssssssssssssssssssssssssssssssssssssssssssssssssssssssssssss
\subsection{Main Term For Squarefree Twin Primitive Roots}
\begin{lem} \label{lem9709.77}  Let \(p\geq 2\) be a large prime, let $\varphi$ be the totient function, and let $\mu$ be the Mobius function. Then, 
\begin{eqnarray} \label{eq9709.08}
&&\sum_{ n\in \F_p}   \left (\frac{\mu^{2}(n)}{p}\sum_{\gcd(n_0,p-1)=1} 1 \right )  \left (\frac{\mu^{2}(n+1)}{p}\sum_{\gcd(n_1,p-1)=1} 1 \right ) \\
&=&\prod_{q \geq 2}\left (1-\frac{2}{q^2}\right )\left (\frac{\varphi(p-1)}{p-1} \right )^2p+ O\left ( p^{2/3}\right ) \nonumber.
\end{eqnarray} 
\end{lem}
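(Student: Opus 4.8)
The plan is to treat this as the special case $k=1$, $a_0=0$, $a_1=1$ of Lemma \ref{lem9709.76} and to reuse the same factor-then-substitute strategy. First I would evaluate the two inner sums exactly. The count $\sum_{\gcd(n_i,p-1)=1}1$ is independent of $n$ and equals $\varphi(p-1)$, so each bracketed factor reduces to $\mu^2(n+a_i)\varphi(p-1)/p$. Pulling the two constants out of the outer sum leaves
\begin{equation}
\left(\frac{\varphi(p-1)}{p}\right)^2\sum_{n\in\F_p}\mu^2(n)\,\mu^2(n+1).
\end{equation}

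Next I would identify the remaining correlation sum with Lemma \ref{lem8009.41}. As $n$ ranges over the integer representatives $0,1,\ldots,p-1$ of $\F_p$, the sum $\sum_{n\in\F_p}\mu^2(n)\mu^2(n+1)$ coincides with $\sum_{n\leq p}\mu(n)^2\mu(n+1)^2$ up to $O(1)$ boundary contributions (the term $n=0$ vanishes since $0$ is not squarefree, and the single term at $n=p-1$ is bounded). Hence Lemma \ref{lem8009.41} with $x=p$ supplies
\begin{equation}
\sum_{n\in\F_p}\mu^2(n)\,\mu^2(n+1)=\prod_{q\geq 2}\left(1-\frac{2}{q^2}\right)p+O\left(p^{2/3}\right).
\end{equation}

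Then I would carry the error term through the multiplication and normalise the main term. Because $0<\varphi(p-1)/p<1$, multiplying the last display by $(\varphi(p-1)/p)^2$ keeps the remainder at $O(p^{2/3})$ and gives the provisional main term $\prod_{q\geq 2}(1-2/q^2)\,(\varphi(p-1)/p)^2\,p$. Finally I would apply the readjustment \eqref{eq9729.110}, writing $\varphi(p-1)/p=(\varphi(p-1)/(p-1))(1-1/p)$, so that $(\varphi(p-1)/p)^2\,p=(\varphi(p-1)/(p-1))^2(1-1/p)^2 p$; since $(1-1/p)^2 p=p-2+1/p$, the discrepancy from $(\varphi(p-1)/(p-1))^2 p$ is $O(1)$ and is absorbed into the $O(p^{2/3})$ error, producing exactly the asserted formula.

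No genuine obstacle arises, since all the analytic content, including the singular series $\prod_{q\geq 2}(1-2/q^2)$ that encodes the dependence between $\mu^2(n)$ and $\mu^2(n+1)$, is already furnished by Lemma \ref{lem8009.41}. The only steps demanding mild care are the bookkeeping of the $O(1)$ boundary terms in passing from $\sum_{n\in\F_p}$ to $\sum_{n\leq p}$, and the check that neither the factor $(\varphi(p-1)/p)^2$ nor the readjustment degrades the $O(p^{2/3})$ remainder.
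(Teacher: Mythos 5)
Your proposal is correct and follows essentially the same route as the paper's proof: factor out the exact value $\varphi(p-1)$ of each inner sum, apply Lemma \ref{lem8009.41} to the resulting correlation sum $\sum_{n}\mu^2(n)\mu^2(n+1)$ with $x=p$, and then use the readjustment $\varphi(p-1)/p=(\varphi(p-1)/(p-1))(1-1/p)$. Your additional bookkeeping of the $O(1)$ boundary terms and of the effect of the readjustment on the remainder is slightly more careful than the paper's, but the argument is the same.
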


\begin{proof}Rearrange it and simplify it as
\begin{eqnarray} \label{eq9040.020}
M_2(2,p)
&=&\sum_{ n\in \F_p}   \left (\frac{\mu^{2}(n)}{p}\sum_{\gcd(n_0,p-1)=1} 1 \right )  \left (\frac{\mu^{2}(n+1)}{p}\sum_{\gcd(n_1,p-1)=1} 1 \right ) \nonumber\\
&=&\left (\frac{\varphi(p-1)}{p} \right )^2\sum_{ n\in \F_p}  \mu^{2}(n)\mu^{2}(n+1) \\
&=&\left (\frac{\varphi(p-1)}{p} \right )^2\left ( \prod_{q \geq 2}\left (1-\frac{2}{q^2}\right )p+ O\left ( p^{2/3}\right )\right )\nonumber
\end{eqnarray}
The last line follows from Lemma \ref{lem8009.41} or Theorem \ref{thm8009.10} applied to the correlation function. Lastly, use the readjustment
\begin{equation} \label{eq9040.022}
\frac{\varphi(p-1)}{p}
=\frac{\varphi(p-1)}{p-1}\left ( 1-\frac{1}{p}\right )
\end{equation} 
to obtain the standard form of the main term.
\end{proof}

%sssssssssssssssssssssssssssssssssssssssssss
\subsection{Main Term For Squarefree Triple Primitive Roots}
\begin{lem} \label{lem9729.78}  Let \(p\geq 2\) be a large prime, let $\varphi$ be the totient function, and let $\mu$ be the Mobius function. Then, the number of three consecutive squarefree primitive root has the asymptotic formula 
\begin{eqnarray} \label{eq9729.08}
&&\sum_{ n\in \F_p}   \left (\frac{\mu^{2}(n)}{p}\sum_{\gcd(n_0,p-1)=1} 1 \right )  \left (\frac{\mu^{2}(n+1)}{p}\sum_{\gcd(n_1,p-1)=1} 1 \right ) 
\left (\frac{\mu^{2}(n+2)}{p}\sum_{\gcd(n_1,p-1)=1} 1 \right ) \nonumber\\
&=&\prod_{q \geq 2}\left (1-\frac{3}{q^2}\right )\left (\frac{\varphi(p-1)}{p-1} \right )^3p+ O\left ( p^{2/3}\right ) .
\end{eqnarray} 
\end{lem}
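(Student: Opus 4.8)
The plan is to follow the proof of Lemma~\ref{lem9709.77} essentially verbatim, since the triple correlation has exactly the same multiplicative structure as the twin correlation; only the underlying density constant changes. First I would detach the three characteristic-function main terms from the squarefree weights. For each $i$ the inner sum $\sum_{\gcd(n_i,p-1)=1}1$ does not depend on the running variable $n$ and equals $\varphi(p-1)$ exactly, so every factor $\frac{1}{p}\sum_{\gcd(n_i,p-1)=1}1$ is the constant $\varphi(p-1)/p$. Pulling the product $\left(\varphi(p-1)/p\right)^3$ out of the sum over $n\in\F_p$ reduces the left-hand side to
\begin{equation}
\left(\frac{\varphi(p-1)}{p}\right)^3\sum_{n\in\F_p}\mu^2(n)\,\mu^2(n+1)\,\mu^2(n+2).
\end{equation}

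The second step is to feed this pure correlation sum into Lemma~\ref{lem8009.44} with $x=p$ (the finitely many boundary terms incurred by replacing $n\in\F_p$ with $n\leq p$ are $O(1)$ and harmless), which supplies
\begin{equation}
\sum_{n\in\F_p}\mu^2(n)\,\mu^2(n+1)\,\mu^2(n+2)=\prod_{q\geq 2}\left(1-\frac{3}{q^2}\right)p+O\left(p^{2/3}\right).
\end{equation}
Because the prefactor $\left(\varphi(p-1)/p\right)^3$ is bounded by $1$, multiplying through leaves the error term of order $O(p^{2/3})$ intact. Finally I would apply the readjustment $\varphi(p-1)/p=\bigl(\varphi(p-1)/(p-1)\bigr)\bigl(1-1/p\bigr)$ used in the earlier main-term lemmas; since $(1-1/p)^3=1+O(1/p)$, the resulting correction to the main term of order $p$ is only $O(1)$ and is absorbed into $O(p^{2/3})$, producing the stated normalization $\prod_{q\geq 2}(1-3/q^2)\bigl(\varphi(p-1)/(p-1)\bigr)^3 p+O(p^{2/3})$.

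I do not expect a genuine obstacle here, because every analytic difficulty has been pushed into the correlation estimate of Lemma~\ref{lem8009.44} (equivalently Theorem~\ref{thm8009.10} with $k=2$), whose proof is taken from the literature. The one point worth a remark is that the density constant specializes correctly: for each prime $q$ one has $q^2\geq 4>3$, so the three congruences $n\equiv 0$, $n+1\equiv 0$, $n+2\equiv 0 \pmod{q^2}$ occupy three distinct residue classes, giving exactly $\rho(q)=3$ forbidden classes and hence the factor $1-3/q^2$ for every $q\geq 2$; the product then converges to the positive value recorded in \eqref{eq1180.140}. This is precisely the place where the four-term analogue fails, since at $q=2$ four consecutive integers always contain a multiple of $4$, forcing $\rho(2)=4$ and annihilating the constant.
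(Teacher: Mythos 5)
Your proposal matches the paper's own proof essentially step for step: factor out the constant $\left(\varphi(p-1)/p\right)^{3}$, invoke Lemma \ref{lem8009.44} (equivalently Theorem \ref{thm8009.10}) on the triple correlation sum with $x=p$, and finish with the readjustment $\varphi(p-1)/p=\bigl(\varphi(p-1)/(p-1)\bigr)\bigl(1-1/p\bigr)$. Your closing remark verifying $\rho(q)=3$ for every prime $q$ and explaining the failure at four consecutive terms is a correct addition not spelled out in the paper's proof.
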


\begin{proof}Rearrange it and simplify it as
\begin{eqnarray} \label{eq9040.220}
M_2(3,p)
&=&\sum_{ n\in \F_p}   \left (\frac{\mu^{2}(n)}{p}\sum_{\gcd(n_0,p-1)=1} 1 \right )  \left (\frac{\mu^{2}(n+1)}{p}\sum_{\gcd(n_1,p-1)=1} 1 \right )   \nonumber\\
&& \times \left (\frac{\mu^{2}(n+2)}{p}\sum_{\gcd(n_2,p-1)=1} 1 \right )\nonumber\\
&=&\left (\frac{\varphi(p-1)}{p} \right )^3\sum_{ n\in \F_p}  \mu^{2}(n)\mu^{2}(n+1) \mu^{2}(n+2) \\
&=&\left (\frac{\varphi(p-1)}{p} \right )^3\left ( \prod_{q \geq 2}\left (1-\frac{3}{q^2}\right )p+ O\left ( p^{2/3}\right )\right )\nonumber
\end{eqnarray}
The last line follows from  Lemma \ref{lem8009.44} or Theorem \ref{thm8009.10} applied to the correlation function. Lastly, use the readjustment
\begin{equation} \label{eq9040.228}
\frac{\varphi(p-1)}{p}
=\frac{\varphi(p-1)}{p-1}\left ( 1-\frac{1}{p}\right )
\end{equation} 
to obtain the standard form of the main term.
\end{proof}

%rrrrrrrrrrrrrrrrrrrrrrrrrrr
\subsection{Main Term For $s$-Power Free Primitive Roots}
\begin{lem} \label{lem9739.06}  Let \(p\geq 2\) be a large prime, let $s\geq 2$ be an integer, and let $\mu_s$ be the characteristic function of $s$-power free integers. Then, 
\begin{equation} \label{eq9739.08}
\sum_{ n\in \F_p}   \frac{\mu_s(n)}{p}\sum_{\gcd(m,p-1)=1} 1 =\frac{1}{\zeta(s)}\frac{\varphi(p-1)}{p-1}p+O\left (p^{1/s}\right).
\end{equation} 
\end{lem}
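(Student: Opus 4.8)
The plan is to follow the now-familiar template used in Lemmas \ref{lem9799.76}, \ref{lem9709.76}, \ref{lem9709.77}, and \ref{lem9729.78}: factor the double sum, evaluate the inner primitive-root count exactly, reduce to a known summatory function for $s$-power free integers, and then normalize the totient ratio. First I would observe that the summand factors as a product of the $s$-power free weight $\mu_s(n)/p$ and the inner sum $\sum_{\gcd(m,p-1)=1}1$, where the inner sum is independent of $n$. As recorded in the proof of Lemma \ref{lem9799.76}, this inner sum has the exact value $\varphi(p-1)/p$, so I would pull the factor $\varphi(p-1)/p$ outside the sum over $n$, leaving
\begin{equation}
\sum_{n\in\F_p}\frac{\mu_s(n)}{p}\sum_{\gcd(m,p-1)=1}1=\frac{\varphi(p-1)}{p}\sum_{n\in\F_p}\mu_s(n).
\end{equation}

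The next step is to invoke Lemma \ref{lem9539.118} to evaluate the remaining summatory function $\sum_{n\leq p}\mu_s(n)=\zeta(s)^{-1}p+O(p^{1/s})$, where I identify the range $n\in\F_p$ with $n\leq p$ (up to the negligible contribution of a single residue class). This gives
\begin{equation}
\frac{\varphi(p-1)}{p}\sum_{n\in\F_p}\mu_s(n)=\frac{\varphi(p-1)}{p}\left(\frac{1}{\zeta(s)}p+O\left(p^{1/s}\right)\right)=\frac{1}{\zeta(s)}\frac{\varphi(p-1)}{p}p+O\left(p^{1/s}\right),
\end{equation}
where I use the trivial bound $\varphi(p-1)/p\leq 1$ to absorb the totient factor into the error term. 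Finally I would apply the readjustment $\varphi(p-1)/p=\frac{\varphi(p-1)}{p-1}(1-1/p)$, exactly as in equations \eqref{eq9799.228}, \eqref{eq9729.110}, \eqref{eq9040.022}, to pass from the $\varphi(p-1)/p$ normalization to the stated standard form $\frac{1}{\zeta(s)}\frac{\varphi(p-1)}{p-1}p$, verifying that the discrepancy term $\frac{1}{p}\cdot\frac{\varphi(p-1)}{p-1}p=O(1)$ is absorbed harmlessly into the $O(p^{1/s})$ error.

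This lemma is genuinely routine given the machinery already assembled, so there is no substantive obstacle; it is the degenerate $k=0$ analogue of Lemma \ref{lem9709.76} with the squarefree correlation replaced by the single $s$-power free count. The only point requiring a line of care is the bookkeeping of error terms: I must check that absorbing $\varphi(p-1)/p\cdot O(p^{1/s})$ into $O(p^{1/s})$ and discarding the $O(1)$ from the readjustment are both legitimate, which they plainly are since $\varphi(p-1)/p<1$ and $p^{1/s}\geq p^{1/s}\gg 1$. I would remark that this result is precisely the $N_s(p)$ asymptotic claimed in Theorem \ref{thm8800.190}, so that the lemma supplies the main term there once the exponential-sum error contributions from the divisors-free characteristic function of Lemma \ref{lem333.03} are controlled via Theorem \ref{thm333.04}.
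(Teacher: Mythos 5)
Your proposal is correct and follows essentially the same route as the paper: factor out the inner sum as $\varphi(p-1)/p$, evaluate $\sum_{n\in\F_p}\mu_s(n)=\zeta(s)^{-1}p+O(p^{1/s})$, and apply the readjustment $\varphi(p-1)/p=\frac{\varphi(p-1)}{p-1}(1-1/p)$. The only cosmetic difference is that you cite Lemma \ref{lem9539.118} for the middle step, whereas the paper re-derives it inline by expanding $\mu_s(n)=\sum_{d^s\mid n}\mu(d)$ via Lemma \ref{lem297.58} and reversing the order of summation; both yield the identical main term and error term.
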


\begin{proof} Simplify the double sum:
\begin{equation}\label{eq8439.196}
\frac{1}{p}\sum_{ n\in \F_p} \mu_s(n) \sum_{\gcd(m,p-1)=1}1 =\frac{\varphi(p-1)}{p}\sum_{ n\in \F_p} \mu_s(n).
\end{equation}
Replace the characteristic function for $s$-power free integers, see Lemma \ref{lem297.58}, and reverse the order of summation:
\begin{eqnarray}\label{eq8439.198}
\frac{\varphi(p-1)}{p}\sum_{ n\in \F_p} \mu_s(n)
&=&\frac{\varphi(p-1)}{p}\sum_{ n\in \F_p} \sum_{d^s\mid n}\mu(d) \\
&=&\frac{\varphi(p-1)}{p}\sum_{d\leq p^{1/s}}\mu(d)\sum_{\substack{n\in \F_p\\d^s\mid n}} 1\nonumber\\
&=&\frac{\varphi(p-1)}{p}\sum_{d\leq p^{1/s}}\mu(d)\left (\frac{p}{d^s}+O(1) \right ) \nonumber\\
&=&\frac{1}{\zeta(s)}\frac{\varphi(p-1)}{p}p+O\left (p^{1/s}\right)\nonumber,
\end{eqnarray}
where $1/\zeta(s)=\sum_{n \geq 1} \mu(n)n^{-s}$ is the inverse zeta function. Now, use the readjustment
\begin{equation} \label{eq8439.110}
\frac{\varphi(p-1)}{p}
=\frac{\varphi(p-1)}{p-1}\left ( 1-\frac{1}{p}\right )
\end{equation} 
to obtain the standard form of the main term.
\end{proof}

%ssssssssssssssssssssssssssssssssssssssssssssssssssssssssssssssssssss
\subsection{Main Term For $s$-Power Free Twin Primitive Roots}
\begin{lem} \label{lem9715.77}  Let \(p\geq 2\) be a large prime, let $a_0\ne a_1$ and $ s \geq 2$ be small integers. Let $\mu_s$ be the $s$-power free characteristic function. Then, 
\begin{eqnarray} \label{eq9715.08}
&&\sum_{ n\in \F_p}   \left (\frac{\mu_s(n+a_0)}{p}\sum_{\gcd(n_0,p-1)=1} 1 \right )  \left (\frac{\mu_s(n+a_1)}{p}\sum_{\gcd(n_1,p-1)=1} 1 \right ) \\
&=&\prod_{q \geq 2}\left (1-\frac{\rho(s)}{q^s}\right )\left (\frac{\varphi(p-1)}{p-1} \right )^2p+ O\left ( p^{\alpha(s)+\varepsilon}\right ) \nonumber,
\end{eqnarray} 
where $\rho(s)=1,2$, $\alpha(s)<1$ and $\varepsilon>0$ is an arbitrary small number.
\end{lem}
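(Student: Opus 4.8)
The plan is to follow the identical template used in the three preceding lemmas (Lemma \ref{lem9709.77}, Lemma \ref{lem9729.78}, and Lemma \ref{lem9739.06}), since the structure of the computation is completely parallel. The statement to prove concerns the main term for $s$-power free twin primitive roots, and the strategy is to separate the primitive-root counting from the $s$-power free correlation, then invoke the already-established correlation estimate. First I would rearrange the double product so that the two inner sums over $\gcd(n_i,p-1)=1$ are factored out. Each such inner sum $\sum_{\gcd(n_i,p-1)=1}1$ equals exactly $\varphi(p-1)$, so each bracketed factor contributes $\mu_s(n+a_i)\cdot\varphi(p-1)/p$, and the two factors together pull out a constant $\left(\varphi(p-1)/p\right)^2$ in front of the remaining sum.

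The core step is then to recognize the surviving sum as the correlation function for $s$-power free integers at shifts $a_0$ and $a_1$:
\begin{equation}
M_s(2,p)=\left (\frac{\varphi(p-1)}{p} \right )^2\sum_{ n\in \F_p}  \mu_s(n+a_0)\mu_s(n+a_1).
\end{equation}
At this point I would apply Theorem \ref{thm9549.310} (the correlation estimate for $s$-power free integers at a single shift $a=a_1-a_0$), which supplies the asymptotic
\begin{equation}
\sum_{ n\in \F_p}\mu_s(n+a_0)\mu_s(n+a_1)=\prod_{q\geq 2}\left (1-\frac{\rho(s)}{q^s}\right )p+O\left (p^{\alpha(s)+\varepsilon}\right ),
\end{equation}
where $\rho(s)\in\{1,2\}$ as in \eqref{eq9549.376} and $\alpha(s)=14/(7s+8)<1$. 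Substituting this into the displayed expression and carrying the factor $\left(\varphi(p-1)/p\right)^2$ through gives the product constant $\prod_{q\geq 2}(1-\rho(s)/q^s)$ times $\left(\varphi(p-1)/p\right)^2 p$, with error term $\left(\varphi(p-1)/p\right)^2 O(p^{\alpha(s)+\varepsilon})=O(p^{\alpha(s)+\varepsilon})$ since $\varphi(p-1)/p<1$.

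Finally I would apply the standard readjustment $\varphi(p-1)/p=\frac{\varphi(p-1)}{p-1}(1-1/p)$ to convert the normalization from $p$ to $p-1$ in the denominator, absorbing the resulting $O(p^{-1})$ correction into the error term, thereby producing the stated form $\left(\varphi(p-1)/(p-1)\right)^2 p$. I do not anticipate any genuine obstacle here: the entire argument is a bookkeeping reduction to Theorem \ref{thm9549.310}, and the only point requiring mild care is confirming that the remainder $O(p^{\alpha(s)+\varepsilon})$ with $\alpha(s)<1$ is consistent with the claimed error term, which it is by definition of $\alpha(s)$. The parallel with the squarefree cases ($s=2$ recovering Lemma \ref{lem9709.77}) serves as a sanity check on the constant and the exponent.
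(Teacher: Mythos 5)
Your proposal is correct and follows essentially the same route as the paper: factor out the two inner sums as $\left(\varphi(p-1)/p\right)^2$, apply Theorem \ref{thm9549.310} to the surviving correlation sum $\sum_{n}\mu_s(n+a_0)\mu_s(n+a_1)$ with shift $a=a_1-a_0$, and finish with the readjustment $\varphi(p-1)/p=\frac{\varphi(p-1)}{p-1}\left(1-\frac{1}{p}\right)$. No substantive difference from the paper's argument.
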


\begin{proof}Rearrange it and simplify it as
\begin{eqnarray} \label{eq9715.020}
M_2(2,p)
&=&\sum_{ n\in \F_p}   \left (\frac{\mu_s(n)}{p}\sum_{\gcd(n_0,p-1)=1} 1 \right )  \left (\frac{\mu_s(n+a)}{p}\sum_{\gcd(n_1,p-1)=1} 1 \right ) \nonumber\\
&=&\left (\frac{\varphi(p-1)}{p} \right )^2\sum_{ n\in \F_p}  \mu_s(n)\mu_s(n+a) \\
&=&\left (\frac{\varphi(p-1)}{p} \right )^2\left (\prod_{q \geq 2}\left (1-\frac{\rho(s)}{q^s}\right )p+ O\left ( p^{\alpha(s)+\varepsilon}\right )\right )\nonumber
\end{eqnarray}
The last line follows from Theorem \ref{thm9549.310} applied to the correlation function. Lastly, use the readjustment
\begin{equation} \label{eq9715.022}
\frac{\varphi(p-1)}{p}
=\frac{\varphi(p-1)}{p-1}\left ( 1-\frac{1}{p}\right )
\end{equation} 
to obtain the standard form of the main term.
\end{proof}
%rrrrrrrrrrrrrrrrrrrrrrrrrrrrrrrrrrrrrrrrrrrrrrrrrrrrrrrr
\subsection{Main Term For Relatively Prime Primitive Roots}
 
\begin{lem} \label{lem9729.256}  Let \(p\geq 2\) be a large prime, and let $q \leq p-1$ be a fixed integer. Then, 
\begin{equation} \label{eq9729.208}
\frac{1}{p}\sum_{\substack{ n\in \F_p\\ \gcd(n,q)=1}} \sum_{\gcd(m,p-1)=1}1
=\frac{\varphi(q)}{q}\frac{\varphi(p-1)}{p-1}p +O(\log^2 p).
\end{equation} 
\end{lem}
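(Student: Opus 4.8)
\textbf{Proof proposal for Lemma \ref{lem9729.256}.}

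The plan is to separate the two summation variables, which are genuinely independent: the inner sum over $m$ with $\gcd(m,p-1)=1$ has no dependence on $n$, while the outer constraint $\gcd(n,q)=1$ involves only $n$. First I would evaluate the inner sum exactly, $\sum_{\gcd(m,p-1)=1}1=\varphi(p-1)$, so that the double sum factors as
\begin{equation}
\frac{1}{p}\sum_{\substack{ n\in \F_p\\ \gcd(n,q)=1}} \sum_{\gcd(m,p-1)=1}1
=\frac{\varphi(p-1)}{p}\sum_{\substack{ n\in \F_p\\ \gcd(n,q)=1}}1.
\end{equation}
This mirrors the factoring step used in the companion results, e.g. Lemma \ref{lem9799.76} and Lemma \ref{lem9729.78}, where the primitive-root count is pulled out as a clean factor of $\varphi(p-1)/p$.

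The remaining task is to count the residues $n$ in a complete system modulo $p$ that are coprime to $q$. I would use the characteristic function for relative primality from Definition \ref{dfn297.35}, writing $\sum_{d\mid \gcd(n,q)}\mu(d)$ and reversing the order of summation. Since $q\le p-1<p$, for each divisor $d\mid q$ the integers $n\le p$ divisible by $d$ number $p/d+O(1)$, so
\begin{equation}
\sum_{\substack{ n\in \F_p\\ \gcd(n,q)=1}}1=\sum_{d\mid q}\mu(d)\left(\frac{p}{d}+O(1)\right)
=p\sum_{d\mid q}\frac{\mu(d)}{d}+O\!\left(2^{\omega(q)}\right)
=\frac{\varphi(q)}{q}p+O\!\left(2^{\omega(q)}\right),
\end{equation}
where the main term uses the analytic formula \eqref{eq997.22} for $\varphi(q)/q$ and the error absorbs the $O(1)$ over the $2^{\omega(q)}$ divisors counted in Lemma \ref{lem297.98}-ii. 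Combining this with the factored inner sum gives the main term $\tfrac{\varphi(q)}{q}\tfrac{\varphi(p-1)}{p}p$, and I would finish with the standard readjustment $\frac{\varphi(p-1)}{p}=\frac{\varphi(p-1)}{p-1}\left(1-\frac{1}{p}\right)$ exactly as in \eqref{eq9729.228} to recast the leading term in the stated normalization $\frac{\varphi(q)}{q}\frac{\varphi(p-1)}{p-1}p$.

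The only genuine issue is bookkeeping on the error term. The divisor-wise error $O(2^{\omega(q)})$ must be shown to fit inside the claimed $O(\log^2 p)$; this follows because $q\le p-1$ forces $\omega(q)\ll \log q/\log\log q\ll \log p$ by Lemma \ref{lem8533.05}-ii, but that crude bound gives $2^{\omega(q)}$ which is not obviously $O(\log^2 p)$ for arbitrary $q<p$. The clean resolution is to observe that the lemma is applied with $q$ small (the relatively-prime modulus is taken in the regime $q=O(\log^c x)$ throughout Section \ref{s9339}), in which case $2^{\omega(q)}\le q=O(\log^c p)$ is dominated by $O(\log^2 p)$ after the inner factor $\varphi(p-1)/p=O(1)$ is incorporated; alternatively one trims the $\varphi(p-1)/p$ factor into the error to present it uniformly. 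I expect this error-term accounting to be the main obstacle, and would state the coprimality range on $q$ explicitly so the $O(\log^2 p)$ bound is honest.
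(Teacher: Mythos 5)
Your proposal follows essentially the same route as the paper's proof: factor out the inner sum as $\varphi(p-1)/p$, detect the coprimality condition via $\sum_{d\mid\gcd(n,q)}\mu(d)$, reverse the order of summation, use $\sum_{d\mid q}\mu(d)/d=\varphi(q)/q$, and finish with the readjustment $\varphi(p-1)/p=\frac{\varphi(p-1)}{p-1}\left(1-\frac{1}{p}\right)$. The only difference is that you track the $O(1)$ per divisor, arriving at an error of $O\left(2^{\omega(q)}\right)$, whereas the paper writes $\sum_{n\in\F_p,\,d\mid n}1$ as exactly $p/d$ and never accounts for this; your observation that the stated $O(\log^2 p)$ is only honest when $q$ is suitably restricted (e.g.\ $q=O(\log^c p)$, as in the intended applications) is correct and points to a gap in the paper's own bookkeeping rather than in yours.
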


\begin{proof} Simplify the double sum:
\begin{eqnarray}\label{eq9729.218}
M_r(p,q)&=&\frac{1}{p}\sum_{\substack{ n\in \F_p\\ \gcd(n,q)=1}} \sum_{\gcd(m,p-1)=1}1 \\
&=&\frac{\varphi(p-1)}{p}\sum_{\substack{ n\in \F_p\\ \gcd(n,q)=1}} 1 \nonumber .
\end{eqnarray}
Replace the characteristic function for relatively prime numbers, see Definition \ref{dfn297.35}, and rearrange the order of summation:
\begin{eqnarray}\label{eq9729.225}
\frac{\varphi(p-1)}{p}\sum_{\substack{ n\in \F_p\\ \gcd(n,q)=1}} 1 
&=&\frac{\varphi(p-1)}{p}\sum_{n\in \F_p} \sum_{\substack{d\mid n\\ d\mid q}} \mu(d)\\
&=&\frac{\varphi(p-1)}{p}\sum_{d\mid q}\mu(d)\sum_{\substack{n\in \F_p\\d\mid n}} 1\nonumber\\
&=&p\frac{\varphi(p-1)}{p}\sum_{d\mid q}\frac{\mu(d)}{d}\nonumber\\
&=&\frac{\varphi(q)}{q}\frac{\varphi(p-1)}{p-1}p \nonumber,
\end{eqnarray}
where $\varphi(n)/n=\sum_{ d\mid n}\mu(d)/d$, see Section \ref{s8533}. Lastly, use the readjustment
\begin{equation} \label{eq9729.228}
\frac{\varphi(p-1)}{p}
=\frac{\varphi(p-1)}{p-1}\left ( 1-\frac{1}{p}\right )
\end{equation} 
to obtain the standard form of the main term.
\end{proof}
%rrrrrrrrrrrrrrrrrrrrrrrrrrrrrrrrrrrrrrrrrrrrrrrrrrrrrrrr
\subsection{Main Term For Relatively Prime Twin Primitive Roots}
The identity $\varphi(n)=\sum_{\gcd(d,n)=1}1$, and the estimate $\sum_{d\mid q}|\mu(d)|=O\left (q^{\delta}\right ) $ for $\delta>0$ is a small number, see Section \ref{s8533}, are used within the proofs.

\begin{lem} \label{lem9729.356}  If \(p\geq 2\) is a large prime, let $a\geq 1$ and $q \leq p-1$ be a pair of fixed integers. Then, 
\begin{equation} \label{eq9729.308}
\frac{1}{p}\sum_{\substack{ n\in \F_p\\ \gcd(n,q)=1\\ \gcd(n+a,q)=1}} \sum_{\gcd(m,p-1)=1}1
=c_2(q,a)\left (\frac{\varphi(q)}{q}\right )^2\frac{\varphi(p-1)}{p-1}p +O\left (p^{2\delta}\right ),
\end{equation} 
where $c_2(q,a)\geq 0$ is a dependence correction factor, and $\delta>0$ is a small number.
\end{lem}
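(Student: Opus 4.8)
\textbf{Proof proposal for Lemma \ref{lem9729.356}.}

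The plan is to mirror the computation in Lemma \ref{lem9729.256}, but now carrying two coprimality conditions $\gcd(n,q)=1$ and $\gcd(n+a,q)=1$ simultaneously. First I would factor out the inner sum $\sum_{\gcd(m,p-1)=1}1=\varphi(p-1)$, which is independent of $n$, leaving the task of evaluating
\begin{equation}\label{eq9729.plan01}
\frac{\varphi(p-1)}{p}\sum_{\substack{n\in\F_p\\ \gcd(n,q)=1\\ \gcd(n+a,q)=1}}1.
\end{equation}
The remaining sum counts residues $n$ modulo $p$ for which both $n$ and $n+a$ avoid every prime divisor of $q$, so my first main step is to show that this count is $c_2(q,a)\left(\varphi(q)/q\right)^2 p + (\text{error})$, where the correction factor $c_2(q,a)$ measures the deviation from the naive independent heuristic $\left(\varphi(q)/q\right)^2$.

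Second, I would detect each coprimality condition by the Mobius characteristic function of Definition \ref{dfn297.35}, writing $\sum_{d_1\mid n,\, d_1\mid q}\mu(d_1)$ and $\sum_{d_2\mid n+a,\, d_2\mid q}\mu(d_2)$, and then interchange the order of summation to pull both $d_1\mid q$ and $d_2\mid q$ outside. This reduces the inner count to $\sum_{\substack{n\in\F_p\\ d_1\mid n,\ d_2\mid(n+a)}}1$, a congruence count governed by the simultaneous system $n\equiv 0\bmod d_1$, $n\equiv -a\bmod d_2$. By the Chinese Remainder Theorem this system is solvable precisely when $\gcd(d_1,d_2)\mid a$, in which case it has $p/\lcm(d_1,d_2)+O(1)$ solutions in $\F_p$; otherwise it has none. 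Summing the main terms gives
\begin{equation}\label{eq9729.plan02}
\frac{\varphi(p-1)}{p}\, p\sum_{\substack{d_1,d_2\mid q\\ \gcd(d_1,d_2)\mid a}}\frac{\mu(d_1)\mu(d_2)}{\lcm(d_1,d_2)},
\end{equation}
and the double Mobius sum over divisors of $q$ is exactly what I would identify as $c_2(q,a)\left(\varphi(q)/q\right)^2$, factoring multiplicatively over primes $r\mid q$ into local factors that depend on whether $r\mid a$. This is where the parity phenomenon enters: if $a$ is odd, the prime $r=2$ contributes a nonvanishing local factor, whereas for even $a$ and even $q$ the obstruction $\gcd(d_1,d_2)\nmid a$ can force $c_2(q,a)=0$.

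Third, I would control the error term. Each of the $O(1)$ errors from the CRT solution count is amplified by the number of divisor pairs $(d_1,d_2)$, so the aggregate error is bounded by $\frac{\varphi(p-1)}{p}\left(\sum_{d\mid q}|\mu(d)|\right)^2 = O\!\left(\left(\sum_{d\mid q}|\mu(d)|\right)^2\right)$, and invoking the stated estimate $\sum_{d\mid q}|\mu(d)|=O(q^\delta)$ together with $q\le p-1$ yields the claimed $O(p^{2\delta})$. Finally I would apply the readjustment $\varphi(p-1)/p=\left(\varphi(p-1)/(p-1)\right)(1-1/p)$, exactly as in the preceding lemmas, to put the main term in the stated normalized form. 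The main obstacle I anticipate is not the asymptotics but the bookkeeping of the local factors: verifying cleanly that the double divisor sum factors as $c_2(q,a)\left(\varphi(q)/q\right)^2$ with the correct prime-by-prime description, and in particular pinning down exactly when $c_2(q,a)$ vanishes, requires careful case analysis on $\gcd(d_1,d_2)\mid a$ at each prime $r\mid q$.
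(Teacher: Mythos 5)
Your proposal is correct and follows the same overall route as the paper's proof: factor out $\varphi(p-1)$, detect both coprimality conditions with the Mobius sums of Definition \ref{dfn297.35}, interchange the order of summation, bound the error by $\left(\sum_{d\mid q}|\mu(d)|\right)^2=O(q^{2\delta})=O(p^{2\delta})$, and finish with the readjustment $\varphi(p-1)/p=\frac{\varphi(p-1)}{p-1}\left(1-\frac{1}{p}\right)$. Where you genuinely differ is at the crucial inner count: the paper writes $\sum_{n\in\F_p,\ d\mid n,\ e\mid n+a}1 = c_2(q,a)\frac{p}{de}+O(1)$, inserting the correction factor as an unspecified constant inside each divisor pair, and then explicitly concedes afterward that the proof "does not show the details of the dependence between the variables." You instead resolve that count by the Chinese Remainder Theorem as $p/\lcm(d_1,d_2)+O(1)$ when $\gcd(d_1,d_2)\mid a$ and zero otherwise, so that $c_2(q,a)\left(\varphi(q)/q\right)^2$ emerges as the explicit double Mobius sum $\sum_{\gcd(d_1,d_2)\mid a}\mu(d_1)\mu(d_2)/\lcm(d_1,d_2)$ factored over primes $r\mid q$; this is the correct and complete version of the step the paper elides, and it buys a definition of $c_2(q,a)$ rather than a postulate. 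One caveat: your parity remark at the end of the second paragraph is reversed. It is $a$ odd together with $2\mid q$ that kills the local factor at $r=2$ (then $n$ and $n+a$ have opposite parity and cannot both be coprime to an even $q$, forcing $c_2(q,a)=0$, consistent with the case analysis in Section \ref{s2887}), whereas $a$ even yields a positive local factor at every prime $r\mid q$. This slip does not affect the asymptotic computation, but you should fix it before pinning down the vanishing set of $c_2(q,a)$.
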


\begin{proof} Simplify the double sum:
\begin{equation}\label{eq9729.318}
\frac{1}{p}\sum_{\substack{ n\in \F_p\\ \gcd(n,q)=1\\ \gcd(n+a,q)=1}} \sum_{\gcd(m,p-1)=1}1=\frac{\varphi(p-1)}{p}\sum_{\substack{ n\in \F_p\\ \gcd(n,q)=1\\ \gcd(n+a,q)=1}} 1 .
\end{equation}
Replace the characteristic function for relatively prime numbers, see Definition \ref{dfn297.35}, and rearrange the order of summation:
\begin{eqnarray}\label{eq9729.325}
\frac{\varphi(p-1)}{p}\sum_{\substack{ n\in \F_p\\ \gcd(n,q)=1\\ \gcd(n+a,q)=1}} 1
&=&\frac{\varphi(p-1)}{p}\sum_{n\in \F_p} \sum_{\substack{d\mid n\\ d\mid q}} \mu(d) \sum_{\substack{e\mid n+a\\ e\mid q}} \mu(e)\\
&=&\frac{\varphi(p-1)}{p}\sum_{d\mid q}\mu(d)\sum_{e\mid q}\mu(e)\sum_{\substack{n\in \F_p\\d\mid n\\e\mid n+1}} 1\nonumber\\
&=&\frac{\varphi(p-1)}{p}\sum_{d\mid q}\mu(d)\sum_{e\mid q}\mu(e)\left ( c_2(q,a)\frac{p}{de}+O(1) \right )\nonumber,
\end{eqnarray}
where $c_2(q,a)\geq0$ is a dependence correction factor. Continuing yield
\begin{eqnarray}\label{eq9729.327}
\frac{\varphi(p-1)}{p}\sum_{\substack{ n\in \F_p\\ \gcd(n,q)=1\\ \gcd(n+a,q)=1}} 1
&=& c_2(q,a)p\frac{\varphi(p-1)}{p}\sum_{d\mid q}\frac{\mu(d)}{d}\sum_{e\mid q}\frac{\mu(e)}{e}+O\left (\sum_{d\mid q}|\mu(d)|\sum_{e\mid q}|\mu(e)| \right )\nonumber\\
&=& c_2(q,a)\left (\frac{\varphi(q)}{q}\right )^2\frac{\varphi(p-1)}{p}p +O\left (q^{2\delta}\right ) ,
\end{eqnarray}
where $\delta>0$ is a small number, and $\sum_{d\mid q}|\mu(d)|=O(q^{\delta})=O\left (p^{\delta}\right )$.
Lastly, use the readjustment
\begin{equation} \label{eq9729.328}
\frac{\varphi(p-1)}{p}
=\frac{\varphi(p-1)}{p-1}\left ( 1-\frac{1}{p}\right )
\end{equation} 
to obtain the standard form of the main term.
\end{proof}

The above proof is a simplified version, it does not show the details of the dependence between the variables $d\mid$ and $e \mid q$ in the last line of \eqref{eq9729.325}. It simply includes a dependence correction constant $c_2(q)>0$.

%rrrrrrrrrrrrrrrrrrrrrrrrrrrrrrrrrrrrrrrrrrrrrrrrrrrrrrrr
\subsection{Main Term For Squarefree And Relatively Prime Primitive Roots} \label{lem9009.256}
\begin{lem} \label{lem9009.421}  Let \(p\geq 2\) be a large prime, and let $q=O(\log p) $ be a fixed integer. Then, 
\begin{equation} \label{eq9009.208}
\frac{1}{p}\sum_{\substack{ n\in \F_p\\ \gcd(n,q)=1}} \sum_{\gcd(m,p-1)=1}\mu(n)^2
=\frac{6}{\pi^2}\prod_{p\nmid q}\left ( 1+\frac{1}{p}\right )^{-1} \frac{\varphi(p-1)}{p-1}p +O \left (p^{1/2} \right )
\end{equation} 
\end{lem}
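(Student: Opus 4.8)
The plan is to follow exactly the template already established in Lemma \ref{lem9729.256} and Lemma \ref{lem9739.06}, namely to factor off the inner sum over $m$, to replace the squarefree characteristic function $\mu(n)^2$ and the coprimality condition $\gcd(n,q)=1$ by their divisor expansions, and then to interchange the order of summation so that the arithmetic structure collapses into an Euler product. First I would observe that the inner sum $\sum_{\gcd(m,p-1)=1}1=\varphi(p-1)$ is independent of $n$, so that
\begin{equation}
\frac{1}{p}\sum_{\substack{ n\in \F_p\\ \gcd(n,q)=1}} \sum_{\gcd(m,p-1)=1}\mu(n)^2=\frac{\varphi(p-1)}{p}\sum_{\substack{ n\in \F_p\\ \gcd(n,q)=1}} \mu(n)^2.
\end{equation}
This reduces the whole problem to an estimate for the count of squarefree integers up to $p$ that are coprime to $q$.

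The second step is to recognize the remaining sum as precisely the quantity estimated in Lemma \ref{lem9339.117}. With $x=p$ and $q=O(\log p)$ (which satisfies the hypothesis $q=O(\log^c p)$ of that lemma), Lemma \ref{lem9339.117} gives directly
\begin{equation}
\sum_{\substack{ n\leq p\\ \gcd(n,q)=1}} \mu(n)^2=\frac{6}{\pi^2}\prod_{p'\nmid q}\left ( 1+\frac{1}{p'}\right )^{-1}p+O\left (p^{1/2}\right ).
\end{equation}
Inserting this into the displayed identity and multiplying by $\varphi(p-1)/p$ yields the asymptotic formula with main term $\tfrac{6}{\pi^2}\prod_{p'\nmid q}(1+1/p')^{-1}\cdot\tfrac{\varphi(p-1)}{p}\,p$ and error term $O(p^{1/2})$, since $\varphi(p-1)/p<1$ absorbs the constant in the error. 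Finally I would apply the standard readjustment $\varphi(p-1)/p=\tfrac{\varphi(p-1)}{p-1}(1-1/p)$ used in all the companion lemmas to present the main term in the stated normalized form $\tfrac{\varphi(p-1)}{p-1}\,p$, the extra factor $1-1/p$ contributing only to the error.

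An alternative self-contained route, in case one wishes to avoid quoting Lemma \ref{lem9339.117}, is to expand $\mu(n)^2=\sum_{d^2\mid n}\mu(d)$ via Lemma \ref{lem297.58} together with $\sum_{e\mid\gcd(n,q)}\mu(e)$ for the coprimality condition, then interchange summations to obtain a double sum over $d\leq p^{1/2}$ and $e\mid q$ weighted by the count of $n\leq p$ divisible by $\lcm(d^2,e)$; summing the resulting $\mu(d)\mu(e)/\lcm(d^2,e)$ produces the Euler product after separating the primes dividing $q$ from those not. The main obstacle is not analytic but bookkeeping: one must carry the coprimality factor $\prod_{p'\nmid q}(1+1/p')^{-1}$ correctly through the Euler product and verify that the tail of the $d$-summation contributes only $O(p^{1/2})$, which is why invoking Lemma \ref{lem9339.117} is the cleaner path since it already packages exactly this computation and its error term.
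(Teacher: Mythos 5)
Your proposal is correct and follows essentially the same route as the paper's own proof: factor out the inner sum over $m$ as $\varphi(p-1)$, apply Lemma \ref{lem9339.117} with $x=p$ to the remaining sum over squarefree $n$ coprime to $q$, and finish with the readjustment $\varphi(p-1)/p=\tfrac{\varphi(p-1)}{p-1}(1-1/p)$. The alternative divisor-expansion route you sketch is a reasonable backup but is not needed, as the paper likewise simply invokes Lemma \ref{lem9339.117}.
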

\begin{proof} Simplify the double sum:
\begin{eqnarray}\label{eq9009.218}
M_r(p,q)&=&\frac{1}{p}\sum_{\substack{ n\in \F_p\\ \gcd(n,q)=1}} \sum_{\gcd(m,p-1)=1}\mu(n)^2 \\
&=&\frac{\varphi(p-1)}{p}\sum_{\substack{ n\in \F_p\\ \gcd(n,q)=1}} \mu(n)^2 \nonumber .
\end{eqnarray}
Apply Lemma \ref{lem9339.117} to the inner sum:
\begin{eqnarray}\label{eq9009.225}
\frac{\varphi(p-1)}{p}\sum_{\substack{ n\in \F_p\\ \gcd(n,q)=1}} \mu(n)^2
&=&\frac{\varphi(p-1)}{p}\left (\frac{6}{\pi^2}\prod_{p\nmid q}\left ( 1+\frac{1}{p}\right )^{-1} p +O \left (p^{1/2} \right ) \right ) \nonumber\\
&=&\frac{6}{\pi^2}\prod_{p\nmid q}\left ( 1+\frac{1}{p}\right )^{-1}\frac{\varphi(p-1)}{p}p +O \left (p^{1/2} \right ).
\end{eqnarray}
Lastly, use the readjustment
\begin{equation} \label{eq9009.228}
\frac{\varphi(p-1)}{p}
=\frac{\varphi(p-1)}{p-1}\left ( 1-\frac{1}{p}\right )
\end{equation} 
to obtain the standard form of the main term.
\end{proof}

%rrrrrrrrrrrrrrrrrrrrrrrrrrrrrrrrrrrrrrrrrrrrrrrrrrrrrrrr

\subsection{Main Term For Squarefree And Relatively Prime Twin Primitive Roots} \label{ss9792}

\begin{lem} \label{lem9792.356}  Assume conjecture {\normalfont \ref{conj8009.105}}. If \(p\geq 2\) is a large prime, let $a\geq 1$ and $q \leq p-1$ be a pair of fixed integers. Then, 
\begin{eqnarray} \label{eq9792.308}
&& \frac{1}{p^2}\sum_{\substack{ n\in \F_p\\ \gcd(n,q)=1\\ \gcd(n+a,q)=1}} \sum_{\substack{\gcd(m_0,p-1)=1\\\gcd(m_1,p-1)=1}}\mu(n)^2\mu(n+a)^2\\
&=&c_2(q,a)\prod_{r\mid q}\left ( 1-\frac{1}{r^s}\right )\prod_{p\geq 2}\left ( 1-\frac{2}{p^s}\right )
\left (\frac{\varphi(p-1)}{p-1}\right )^2p +O\left (p^{2\delta}\right ) \nonumber,
\end{eqnarray} 
where $c_2(q,a)\geq0$ is a dependence correction factor, and $\delta>0$ is a small number.
\end{lem}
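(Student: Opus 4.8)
The plan is to follow the same template used in the preceding main-term lemmas (Lemma \ref{lem9729.356} and Lemma \ref{lem9009.421}), combining the relatively-prime simplification with the squarefree correlation from Conjecture \ref{conj8009.105}. First I would separate the two inner character-type sums over $m_0$ and $m_1$: since each sum $\sum_{\gcd(m_i,p-1)=1}1$ is independent of $n$ and evaluates exactly to $\varphi(p-1)$, the double inner sum factors as $\varphi(p-1)^2$. Pulling this out front reduces the left-hand side to
\begin{equation}
\frac{\varphi(p-1)^2}{p^2}\sum_{\substack{ n\in \F_p\\ \gcd(n,q)=1\\ \gcd(n+a,q)=1}}\mu(n)^2\mu(n+a)^2,
\end{equation}
so the entire arithmetic content is now concentrated in the restricted squarefree correlation sum over $n$.

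The second step is to recognize that this restricted correlation sum is exactly the object estimated in Conjecture \ref{conj8009.105}, applied with $x=p$. Invoking that conjecture replaces the sum over $n$ with its asymptotic main term
\begin{equation}
c_2(q,a)\prod_{p\nmid q}\left ( 1+\frac{1}{p}\right )^{-2}\prod_{p\geq 2}\left ( 1-\frac{2}{p^2}\right )p+O\left (p^{1-\delta} \right ),
\end{equation}
carrying along the dependence correction factor $c_2(q,a)\geq 0$ and the error $O(p^{1-\delta})$. Substituting and using $\varphi(p-1)^2/p^2 = O(1)$ shows the error term contributes $O(p^{1-\delta})$, which I would absorb into the stated $O(p^{2\delta})$ bound by choosing the exponents compatibly; I would make sure the Euler-product factor indexed over $p\nmid q$ is rewritten to match the product $\prod_{r\mid q}(1-1/r^s)$ appearing in the claim, most likely via the identity linking $\prod_{p\nmid q}(1+1/p)^{-1}$ to the complementary product over $r\mid q$, as was done in Lemma \ref{lem9339.117}.

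Finally I would apply the standard readjustment $\varphi(p-1)/p = (\varphi(p-1)/(p-1))(1-1/p)$ twice to convert the $(\varphi(p-1)/p)^2$ prefactor into the stated $(\varphi(p-1)/(p-1))^2$, absorbing the resulting $(1-1/p)^2$ discrepancy into the error term exactly as in the earlier lemmas. The main obstacle I anticipate is not the asymptotic machinery, which is routine once Conjecture \ref{conj8009.105} is granted, but rather reconciling the Euler-product notation: the statement writes $\prod_{r\mid q}(1-1/r^s)$ with an exponent $s$ and a product over divisors of $q$, whereas the natural output of the factorization and the conjecture gives $\prod_{p\nmid q}(1+1/p)^{-2}$ with exponent $2$. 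I would need to verify these genuinely agree (up to the $s=2$ specialization and the correct handling of the dependence factor $c_2(q,a)$), and in particular confirm that the squaring of the $(1+1/p)^{-1}$ local factors — one for each of the two relatively-prime constraints — produces the claimed form, since any mismatch here would be a genuine gap rather than a cosmetic one.
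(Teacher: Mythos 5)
Your proposal follows essentially the same route as the paper's own proof: factor the inner sums over $m_0,m_1$ to obtain the prefactor $(\varphi(p-1)/p)^2$, apply Conjecture \ref{conj8009.105} with $x=p$ to the restricted correlation sum, and finish with the readjustment $\varphi(p-1)/p=(\varphi(p-1)/(p-1))(1-1/p)$. The notational mismatch you flag is real but is present in the paper itself rather than in your argument --- the paper's proof also terminates at $c_2(q,a)\prod_{p\nmid q}\left(1+\frac{1}{p}\right)^{-2}\prod_{p\geq 2}\left(1-\frac{2}{p^2}\right)$ with error $O(p^{1-\delta})$, which is not literally reconciled with the $\prod_{r\mid q}\left(1-\frac{1}{r^s}\right)$ and $O(p^{2\delta})$ appearing in the lemma statement.
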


\begin{proof} Simplify the double sum:
\begin{equation}\label{eq9792.318}
\frac{1}{p^2}\sum_{\substack{ n\in \F_p\\ \gcd(n,q)=1\\ \gcd(n+a,q)=1}} \sum_{\substack{\gcd(m_0,p-1)=1\\\gcd(m_1,p-1)=1}}\mu(n)^2\mu(n+a)^2=\left (\frac{\varphi(p-1)}{p}\right )^2\sum_{\substack{ n\in \F_p\\ \gcd(n,q)=1\\ \gcd(n+a,q)=1}}\mu(n)^2\mu(n+a)^2.
\end{equation}
Set $x=p$, and apply Conjecture \ref{conj8009.105} to the inner finite sum:
\begin{eqnarray}\label{eq9792.325}
M_{sr}(2,p,q)&=&\left (\frac{\varphi(p-1)}{p}\right )^2\sum_{\substack{ n\in \F_p\\ \gcd(n,q)=1\\ \gcd(n+a,q)=1}}\mu(n)^2\mu(n+a)^2\\
&=&\left (\frac{\varphi(p-1)}{p}\right )^2 \left (c_2(q,a)\prod_{p\nmid q}\left ( 1+\frac{1}{p}\right )^{-2}\prod_{p\geq 2}\left ( 1-\frac{2}{p^2}\right )p+O\left (p^{1-\delta} \right )\right )\nonumber\\
&=& c_2(q,a)\prod_{p\nmid q}\left ( 1+\frac{1}{p}\right )^{-2}\prod_{p\geq 2}\left ( 1-\frac{2}{p^2}\right )\left (\frac{\varphi(p-1)}{p}\right )^2p+O\left (p^{1-\delta} \right )\nonumber,
\end{eqnarray}
where $c_2(q,a)\geq0$ is a dependence correction factor, and $\delta>0$ is a small number.
Lastly, use the readjustment
\begin{equation} \label{eq9792.328}
\frac{\varphi(p-1)}{p}
=\frac{\varphi(p-1)}{p-1}\left ( 1-\frac{1}{p}\right )
\end{equation} 
to obtain the standard form of the main term.
\end{proof}

%666666666666666666666666666666666666666666666
\section{The Estimates For The Error Terms}  \label{s8899}
The upper bounds for exponential sums over subsets of elements in finite fields $\mathbb{F}_p$ studied in Section \ref{s4} are used to estimate the error terms for the different configurations of consecutive primitive roots in Theorem \ref{thm8800.050} and the other results.

\begin{lem} \label{lem8899.09}  Let \(p\geq 2\) be a large prime, and let \(\tau\) be a primitive root mod \(p\). If the element \(u\ne 0, \pm1, v^2\) is not a primitive root, then, 
	\begin{equation} \label{eq8899.05}
S(p,k) =	\sum_{ u\in \F_p} \left ( \frac{1}{p}
	\sum_{\gcd(n,p-1)=1, } \sum_{0<m \leq p-1}e^{i2 \pi \left((\tau ^{n}-u)m\right)}\right )     \ll p^{1-\varepsilon}  
	\end{equation} 
	for all sufficiently large primes $p \geq 2$, and an arbitrarily small number \(\varepsilon >0\).
\end{lem}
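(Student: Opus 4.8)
The plan is to reduce $S(p,k)$ to the incomplete exponential sum already controlled by Theorem \ref{thm333.04} and then to extract the required cancellation from the completeness of the character sum over $u$. First I would interchange the order of summation so that the sum over $u$ sits innermost, and factor the additive character of order $p$ as
\begin{equation}
e^{i2\pi(\tau^{n}-u)m/p}=e^{i2\pi m\tau^{n}/p}\,e^{-i2\pi mu/p}.
\end{equation}
This separates the $n$-dependence from the $u$-dependence, so that
\begin{equation}
S(p,k)=\frac{1}{p}\sum_{0<m\leq p-1}\left(\sum_{\gcd(n,p-1)=1}e^{i2\pi m\tau^{n}/p}\right)\left(\sum_{u\in\F_p}e^{-i2\pi mu/p}\right).
\end{equation}

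Next I would invoke Theorem \ref{thm333.04} with $b=m$ to bound the inner incomplete exponential sum over $n$ by $p^{1-\varepsilon}$, \emph{uniformly} for every $m\in[1,p-1]$. The decisive step is then the evaluation of the complete inner sum over $u$: by orthogonality of additive characters, $\sum_{u\in\F_p}e^{-i2\pi mu/p}=0$ for each $m$ with $1\leq m\leq p-1$, so the error contribution is annihilated term by term. Combining the uniform control of the $n$-sum with the collapse of the $u$-sum yields $S(p,k)\ll p^{1-\varepsilon}$; the uniform bound from Theorem \ref{thm333.04} is precisely what one will still need once the range of $u$ is replaced by a large proper subset $\mathcal{A}\subset\F_p$, as in Theorem \ref{thm8800.050}, where the cancellation in the $u$-sum is only approximate.

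The main obstacle will be the bookkeeping of the two outer sums over $m$ and $u$. A naive triangle inequality applied after Theorem \ref{thm333.04} would inflate the estimate to size $p^{2-\varepsilon}$ or worse, which is useless; the argument therefore hinges on retaining the cancellation of the complete character sum over $u$ rather than bounding it absolutely. Equivalently, one controls the $\ell^{1}$-mass of the Fourier coefficients of the indicator of the summation range, so that only an admissible amount of the transform survives. For the full field this amounts to exact orthogonality, and for a large subset one would substitute a Parseval-type estimate together with the size hypothesis on $\mathcal{A}$; verifying that the surviving mass stays at the level $p^{1-\varepsilon}$ is the point that requires care, and it is where the strength of the exponential-sum bound in Theorem \ref{thm333.04} is genuinely used.
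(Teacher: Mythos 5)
Your argument is correct for the statement as literally displayed, but it takes a genuinely different route from the paper, and the difference matters for how the lemma is actually used. You push the $u$-sum to the inside and kill it by exact orthogonality, $\sum_{u\in\F_p}e^{-i2\pi mu/p}=0$ for $1\leq m\leq p-1$, which in fact shows $S(p,k)=0$ and makes the appeal to Theorem \ref{thm333.04} superfluous in the complete-field case. The paper instead works at fixed $u$: it first uses Lemma \ref{lem333.22} to decouple the twist $m$ from the $n$-sum, evaluates the complete geometric sum $\sum_{0<m\leq p-1}e^{-i2\pi um/p}=-1$ (so $|U_p|=1$), bounds the $n$-sum by $p^{1-\varepsilon}$ via Theorem \ref{thm333.04}, and only then sums over $u$ by the triangle inequality, picking up a factor $p$ that is absorbed by the prefactor $1/p$. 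The paper's term-by-term bound is the one that survives when $u$ is restricted to a subset of $\F_p$ --- which is the situation the hypothesis ``$u\ne 0,\pm1,v^2$ is not a primitive root'' is pointing at, and the situation arising in Lemma \ref{lem8899.06} and Theorem \ref{thm8800.050}, where the factor attached to $u=n+a_0$ is coupled to other $n$-dependent factors and the clean interchange you perform is no longer available. You do acknowledge this and sketch the right fix (a Parseval-type control of the Fourier mass of the indicator of $\mathcal{A}$ together with the uniform bound of Theorem \ref{thm333.04}), so your proposal is sound; just be aware that to prove the lemma in the form in which it is applied, you must carry out that second, harder half of your plan rather than rely on the vanishing of the complete character sum.
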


\begin{proof}  By hypothesis \(u\ne 0, \pm1, v^2\) is not a primitive root. Thus, $S_1\ne -\varphi(p-1)$. Rearrange the finite sum as 
\begin{eqnarray} \label{eq8899.05}
S_1&=&\sum_{ u\in \F_p} \frac{1}{p}
	\sum_{\gcd(n,p-1)=1, } \sum_{0<m \leq p-1}e^{i2 \pi \left((\tau ^{n}-u)m\right)} \\  
&= & \frac{1}{p}\sum_{ u\in \F_p}  \left (\sum_{ 0<m\leq p-1,} e^{-i 2 \pi um/p} \right ) \left ( \sum_{\gcd(n,p-1)=1} e^{i 2 \pi m\tau ^n/p} \right )\nonumber \\
 &= & \frac{1}{p}\sum_{ u\in \F_p} \left (\sum_{ 0<m\leq p-1,} e^{-i 2 \pi um/p} \right ) \left ( \sum_{\gcd(n, p-1)=1} e^{i2\pi  \tau^{n}/p} + O(p^{1/2} \log^3 p) \right )\nonumber \\
 &= & \frac{1}{p}\sum_{ u\in \F_p}  U_p \cdot V_p \nonumber.
\end{eqnarray} 
The third line in equation (\ref{eq8899.05}) follows from Lemma \ref{lem333.22}. The first exponential sum $U_p$ has the exact evaluation
\begin{equation}\label{eq8899.13}
| U_p| = \left |\sum_{ 0<m\leq p-1} e^{-i 2 \pi um/p} \right |=1,
\end{equation} 
where $\sum_{ 0<m\leq p-1} e^{i 2 \pi um/p}=-1$ for any $u \in [1,p-1]$. The second exponential sum $V_p$ has the upper bound
\begin{eqnarray} \label{eq8899.15}
|V_p|&=& \left |\sum_{\gcd(n,p-1)=1} e^{i2 \pi \tau ^n/p}+ O\left (p^{1/2} \log^3 p \right ) \right |\nonumber \\
&\ll &\left |\sum_{\gcd(n,p-1)=1} e^{i2 \pi \tau ^n/p} \right |+p^{1/2} \log^3 p  \\
&\ll&  p^{1-\varepsilon} \nonumber,
\end{eqnarray} 
where \(\varepsilon <1/2 \) is an arbitrarily small number, see Theorem \ref{thm333.04}. Taking absolute value in (\ref{eq8899.05}), and replacing the estimates (\ref{eq8899.13}) and (\ref{eq8899.15}) return
\begin{eqnarray} \label{eq8899.20}
\left |S_1 \right | &\leq &  \frac{1}{p}\sum_{u\in \F_p}  \left | U_p \right | \cdot  |V_p| \\
&\ll &\frac{1}{p}\sum_{ u\in \F_p}  (1) \cdot    p^{1-\varepsilon } \nonumber\\
&\ll &  \frac{ 1}{p^{\varepsilon }}\sum_{ u\in \F_p}  1 \nonumber \\
&\ll & p^{1-\varepsilon}\nonumber.
\end{eqnarray}
\end{proof}

No effort was made to optimize the error term in Lemma \ref{lem8899.09}. However, it should be noted that the best possible is $ p^{1/2+\varepsilon}  $, see Theorem \ref{thm333.04}.
%rrrrrrrrrrrrrrrrrrrrrrrrrrrrrrrrrrrrrrrrrrrrrrrrrrrrrrrr
\subsection{Error Term For $k+1$ Consecutive Primitive Roots}
\begin{lem} \label{lem8899.06}  Let \(p\geq 2\) be a large prime, let \(k<  \log p/\log \log \log p\) be an integer, and let \(\tau\) be a primitive root mod \(p\). If the element \(n+a_i\ne 0, \pm1, v^2\) is not a primitive root for $i=0,1,2,...,k$, then, 
	\begin{equation} \label{eq8899.00}
E(k,p) =	\sum_{ n\in \F_p}\prod_{0 \leq i\leq k} \left ( \frac{1}{p}
	\sum_{\substack{\gcd(n_i,p-1)=1 \\ 0<m_i \leq p-1}} e^{i2 \pi \left((\tau ^{n_i}-n-a_i)m_i\right)}\right )       \ll p^{1-\varepsilon}  
	\end{equation} 
	for all sufficiently large primes $p \geq 2$, and an arbitrarily small number \(\varepsilon >0\).
\end{lem}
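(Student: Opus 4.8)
The plan is to adapt the single-variable error estimate of Lemma \ref{lem8899.09} to the $(k+1)$-fold product, reducing everything to the exponential sum already controlled in Theorem \ref{thm333.04}. First I would expand each of the $k+1$ factors by the additive-character representation of Lemma \ref{lem333.03}, writing the $i$th inner double sum as
\[
\frac{1}{p}\sum_{0<m_i\leq p-1}e^{-i2\pi(n+a_i)m_i/p}\,V(m_i),\qquad V(m)=\sum_{\gcd(n_i,p-1)=1}e^{i2\pi m\tau^{n_i}/p},
\]
so that $E(k,p)$ becomes a sum over $n\in\F_p$ and over $m_0,\dots,m_k\in[1,p-1]$ of a product of the primitive-root exponential sums $V(m_i)$ weighted by additive characters.

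Next I would apply Lemma \ref{lem333.22} to each of the $k+1$ factors, replacing $V(m_i)$ by the single $m$-independent quantity $W=V(1)=\sum_{\gcd(n_i,p-1)=1}e^{i2\pi\tau^{n_i}/p}$ at the cost of an additive error $O(p^{1/2}\log^3 p)$. The decisive feature, exactly as in Lemma \ref{lem8899.09}, is that once $V(m_i)$ is replaced by the constant $W$ the $k+1$ sums over the $m_i$ decouple into elementary geometric sums. Carrying out these sums and the complete sum over $n\in\F_p$, the latter supplies the orthogonality constraint $m_0+\cdots+m_k\equiv 0\bmod p$, while each geometric sum $\sum_{0<m_i\leq p-1}e^{-i2\pi(n+a_i)m_i/p}$ equals $-1$ or $p-1$. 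Collecting the contributions collapses everything to an explicit constrained character sum which I evaluate to $(-1)^k k$. Hence the principal contribution is $(-1)^k k\,W^{k+1}/p^{k}$, and inserting $|W|\ll p^{1-\varepsilon}$ from Theorem \ref{thm333.04} yields $\ll k\,p^{1-(k+1)\varepsilon}\ll p^{1-\varepsilon}$, since $k\ll\log p$.

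Finally I would control the $2^{k+1}-1$ error terms produced by the replacement in the second step, each of which carries at least one factor of size $O(p^{1/2}\log^3 p)$ together with powers of $W$. I expect this bookkeeping to be the main obstacle: the difference $V(m)-W$ is not small on average, since the second-moment identity $\sum_{0<m\leq p-1}|V(m)|^2=\varphi(p-1)\bigl(p-\varphi(p-1)\bigr)\ll p^2$ forces the typical size of $V(m)$ to be $\asymp p^{1/2}$, so a naive termwise use of the uniform bound $p^{1/2}\log^3 p$ loses too much. The cancellation must instead be retained by summing the additive characters exactly against the $V(m_i)$ before estimating; this reconstructs the quantities $p\,\Psi(\cdot)-\varphi(p-1)$ and replaces the lossy $L^1$ bound by the second-moment estimate above.

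Because the expansion generates $2^{k+1}$ such terms, each carrying a combinatorial weight up to $O(k)$, the argument requires $2^{k+1}=p^{o(1)}$ and $k=p^{o(1)}$. This is exactly where the hypothesis $k\ll\log p/\log\log\log p$ enters: it guarantees that all combinatorial factors are absorbed into the power saving $p^{-\varepsilon}$ supplied by Theorem \ref{thm333.04}, so that the total is $\ll p^{1-\varepsilon}$ after a harmless adjustment of $\varepsilon$. As in Lemma \ref{lem8899.09}, no attempt would be made to optimize the exponent, the point being only to beat the trivial bound $O(p)$.
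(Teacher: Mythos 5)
Your overall strategy diverges sharply from the paper's, and in a way that leaves a genuine gap. The paper's proof of Lemma \ref{lem8899.06} is essentially one line: it isolates the single factor indexed by $(n_0,m_0)$ together with the outer sum over $n\in\F_p$, bounds that block by $p^{1-\varepsilon}$ via Lemma \ref{lem8899.09}, and bounds each of the remaining $k$ factors \emph{exactly}, using the identity
\begin{equation*}
\frac{1}{p}\sum_{\substack{\gcd(n_i,p-1)=1\\ 0<m_i\leq p-1}}e^{i2\pi(\tau^{n_i}-n-a_i)m_i/p}=\Psi(n+a_i)-\frac{\varphi(p-1)}{p},
\end{equation*}
whose modulus is at most $\varphi(p-1)/p\leq 1$ under the hypothesis that $n+a_i$ is not a primitive root. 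No expansion, no cross terms, and no use of Lemma \ref{lem333.22} beyond the single factor already handled inside Lemma \ref{lem8899.09}.

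Your route instead replaces \emph{every} $V(m_i)$ by $W=V(1)$ via Lemma \ref{lem333.22} and then must control the resulting $2^{k+1}-1$ cross terms. You correctly identify that this is where the proof lives or dies, and your own diagnosis exposes the problem: a factor with $V$ replaced by $W$ carries $\frac{W}{p}\sum_{0<m_i\leq p-1}e^{-i2\pi(n+a_i)m_i/p}$, which at the exceptional values $n\equiv -a_i$ has modulus $\approx |W|\ll p^{1-\varepsilon}$ rather than $|W|/p$, so a product of several such factors is not obviously $\ll p^{1-\varepsilon}$ without a case analysis over which $n$ are exceptional for which $i$; and the termwise $O(p^{1/2}\log^3p)$ bound on $V(m_i)-W$ is, as you say, too lossy. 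The repair you gesture at --- summing the additive characters exactly against the $V(m_i)$ so as to reconstruct $p\,\Psi(\cdot)-\varphi(p-1)$ --- is precisely the paper's trivial bound on the remaining factors; once you invoke it, the replacement $V(m_i)\to W$ is unnecessary and the entire combinatorial expansion evaporates. As written, the proposal neither completes that bookkeeping nor notices that the exact evaluation makes it superfluous, so it does not yet constitute a proof. (A smaller internal inconsistency: you cannot simultaneously use the $n$-sum to impose $m_0+\cdots+m_k\equiv 0\bmod p$ and evaluate each $m_i$-geometric sum at fixed $n$; only the latter yields your stated main term $(-1)^k k\,W^{k+1}/p^{k}$, which is otherwise correct.)
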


\begin{proof}  By hypothesis $n+a_i\ne 0, \pm1, v^2$ is not a primitive root for $i=0,1,2,...,k$. Thus, $E(k,p)\ne -(\varphi(p-1)/p)^{k+1}p$. Rewrite the multiple finite sum as a product $E(p,\tau) =S_1 \times S_2$. The first sum indexed by $m=m_0$ and $n=n_0$ has a nontrivial upper bound
\begin{equation} \label{eq8899.20}
\left |S_1 \right | \ll  p^{1-\varepsilon},
\end{equation}
see Lemma \ref{lem8899.09}. The product of the remaining sums indexed by $m_i$ and $n_i$, $i\in \{1, 2, \ldots  k-1\}$ have the trivial upper bound
\begin{eqnarray} \label{eq8899.25}
\left |S_2 \right | &\leq &  \left |\frac{1}{p} \sum_{\substack{\gcd(n_1,p-1)=1 \\ 0<m_1 \leq p-1}} e^{i2 \pi \left((\tau ^{n_1}-n-a_1)m_1\right)}  \right | 
 \cdots  
\left | \frac{1}{p}\sum_{\substack{\gcd(n_{k},p-1)=1 \\ 0<m_{k} \leq p-1}} e^{i2 \pi \left((\tau ^{n_{k}}-n-a_k)m_{k}\right)} \right |  \nonumber \\
&\leq &  \frac{\varphi(p-1)}{p}  
\cdots  \frac{\varphi(p-1)}{p} \\
&\leq &\left (\frac{\varphi(p-1)}{p} \right )^{k}\nonumber.
\end{eqnarray}
Merging \eqref{eq8899.20} and \eqref{eq8899.25} returns
\begin{eqnarray} \label{eq8899.30}
\left |E(p,\tau) \right |&\leq  & \left |S_1 \right |\left |S_2 \right | \\
&\leq &  \left ( p^{1-\varepsilon} \right) \times \left (\frac{\varphi(p-1)}{p} \right )^{k}\nonumber\\
&\leq& p^{1-\varepsilon}\nonumber.
\end{eqnarray}
The last inequality uses $\varphi(p-1)/p\leq 1$.
\end{proof}

%rrrrrrrrrrrrrrrrrrrrrrrrrrrrrrrrrrrrrrrrrrrrrrrrrrrrrrrr
\subsection{Error Term For $s$-Power Free Primitive Roots}
\begin{lem} \label{lem8809.02}  Let \(p\geq 2\) be a large prime, let \(\tau\) be a primitive root mod \(p\), and let $\mu_s$ be the characteristic function of $s$-power free integers. If the element \(n\ne 0, \pm1, v^2\) is not a primitive root, then,   
\begin{equation} \label{eq8809.00}
E(s,p) =
\sum_{ n\in \F_p}   \left (\frac{\mu_s(n)}{p}\sum_{\substack{\gcd(m,p-1)=1\\
1\leq a\leq p-1}} \psi \left((\tau ^{m}-n)a\right) \right )    
 \ll p^{1-\varepsilon}  
\end{equation} 
for all sufficiently large primes $p \geq 2$, and an arbitrarily small number \(\varepsilon >0\).
\end{lem}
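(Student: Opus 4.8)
The plan is to reduce this weighted character sum to a family of primitive-root counts over arithmetic progressions, each controlled by Theorem \ref{thm333.04}, and then to truncate the resulting divisor sum. Since $\mu_s(0)=0$, only $1\le n\le p-1$ contribute, and for such $n$ I would insert the Dirichlet-type expansion $\mu_s(n)=\sum_{d^s\mid n}\mu(d)$ of Lemma \ref{lem297.58}. Writing $\psi(t)=e^{i2\pi t/p}$ and interchanging the order of summation gives
\[
E(s,p)=\sum_{d\le p^{1/s}}\mu(d)\,E_d,\qquad
E_d=\frac{1}{p}\sum_{\substack{n\in\F_p\\ d^s\mid n}}\ \sum_{\gcd(m,p-1)=1}\ \sum_{1\le a\le p-1}\psi\!\left((\tau^m-n)a\right),
\]
so that $E_d$ is exactly the discrepancy between the number of primitive roots in the progression $n\equiv 0 \ (\mathrm{mod}\ d^s)$ and its expected value $\tfrac{\varphi(p-1)}{p}\#\{n<p:\,d^s\mid n\}$.

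For each fixed $d$ I would factor the additive character as $\psi((\tau^m-n)a)=\psi(-na)\psi(\tau^m a)$ and separate the variables exactly as in Lemma \ref{lem8899.09}, turning $E_d$ into $\tfrac1p\sum_{a=1}^{p-1}D_d(a)W_a$, where $W_a=\sum_{\gcd(m,p-1)=1}e^{i2\pi a\tau^m/p}$ is the primitive-root exponential sum and $D_d(a)=\sum_{d^s\mid n,\,0\le n<p}e^{-i2\pi na/p}$ is a geometric (Dirichlet-kernel) sum. Theorem \ref{thm333.04} supplies the uniform bound $|W_a|\ll p^{1-\varepsilon}$ for every $a\in[1,p-1]$ (one may equivalently pass through Lemma \ref{lem333.22} to replace the frequency $a$ by $1$, as is done in Lemma \ref{lem8899.09}), while a routine estimate of the geometric sum yields the $\ell^1$ bound $\sum_{a=1}^{p-1}|D_d(a)|\ll p\log p$, uniformly in $d$. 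Hence $|E_d|\ll p^{1-\varepsilon}\log p$ uniformly. On the other hand, the trivial bound $|E_d|\ll p/d^s$ is immediate from the interpretation of $E_d$ as a progression discrepancy.

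The main obstacle is that these per-progression estimates do not decay in $d$, so summing the nontrivial bound over all $d\le p^{1/s}$ would overshoot; equivalently, the $\ell^1$ mass of the Fourier weight attached to $\mu_s$ is too large for a single pairing of the two exponential sums to succeed. The device that resolves this is to truncate the divisor sum at $d\le D:=p^{\varepsilon/s}$: the head $\sum_{d\le D}|E_d|\ll D\,p^{1-\varepsilon}\log p=p^{1-\varepsilon(s-1)/s}\log p$ uses the exponential-sum estimate, while the tail $\sum_{d>D}|E_d|\ll\sum_{d>D}p/d^s\ll p/D^{s-1}=p^{1-\varepsilon(s-1)/s}$ uses only the density bound. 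Since $s\ge 2$ forces $\varepsilon(s-1)/s\ge\varepsilon/2>0$, both ranges contribute $\ll p^{1-\varepsilon'}$ for any $\varepsilon'<\varepsilon(s-1)/s$, and after renaming $\varepsilon'\mapsto\varepsilon$ the claimed bound $E(s,p)\ll p^{1-\varepsilon}$ follows. I expect the balancing of these two ranges, rather than any single estimate, to be the crux; the character factorization, the Dirichlet-kernel $\ell^1$ bound, and the appeal to Theorem \ref{thm333.04} are all routine.
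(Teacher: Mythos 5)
Your proof is correct, but it is a genuinely different argument from the paper's. The paper disposes of this lemma in one line: ``Same as Lemma \ref{lem8899.09}, mutatis mutandus,'' i.e.\ it bounds the weight by $|\mu_s(n)|\leq 1$ and repeats the absolute-value argument of Lemma \ref{lem8899.09} verbatim (the same shortcut is codified for arbitrary bounded weights in Lemma \ref{lem8829.108}): factor the character, evaluate the complete additive sum over $a$ as $-1$, bound the primitive-root sum by $p^{1-\varepsilon}$ via Lemma \ref{lem333.22} and Theorem \ref{thm333.04}, and sum trivially over $n$. You instead open up $\mu_s(n)=\sum_{d^s\mid n}\mu(d)$, reduce to progression discrepancies $E_d$, and balance the exponential-sum bound $|E_d|\ll p^{1-\varepsilon}\log p$ against the density bound $|E_d|\ll p/d^s$ by truncating at $d\leq p^{\varepsilon/s}$. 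What your route buys is precision where the paper is cavalier: for a general bounded weight $f$, pulling $\max_a|W_a|$ out of the pairing requires control of $\sum_a|\hat f(a)|$, which is of size $p^{3/2}$ in the worst case, so the ``insert $|f|\leq 1$ and take absolute values'' step is not as innocent as the paper makes it look; your head/tail split supplies exactly the arithmetic input (the Dirichlet-kernel $\ell^1$ bound per modulus $d^s$, plus sparsity in $d$) that makes the estimate legitimate for $f=\mu_s$, and it would also be the natural starting point for an improved error term reflecting the density $1/\zeta(s)$. The cost is length: the paper's intended argument, when it works, is three lines. Both arrive at $E(s,p)\ll p^{1-\varepsilon}$ after your harmless renaming of $\varepsilon$.
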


\begin{proof}  Same as Lemma \ref{lem8899.09}, mutatis mutandus.
\end{proof}

%rrrrrrrrrrrrrrrrrrrrrrrrrrrrrrrrrrrrrrrrrrrrrrrrrrrrrrrr
\subsection{Error Term For $k+1$ Consecutive Squarefree Primitive Roots}
\begin{lem} \label{lem8829.08}  Let \(p\geq 2\) be a large prime, let  $0 \leq a_0, a_1, a_2, \ldots,a_k$ be an admissible $(k+1)$-tuple of integers, and let \(\tau\) be a primitive root modulo $p$. If the element \(n+a_i\ne 0, \pm1, v^2\) is not a primitive root for $i=0,1,2,...,k$, then,   
\begin{equation} \label{eq8829.08}
E_s(k,p) =
\sum_{ n\in \F_p}   \prod_{0 \leq i\leq k} \left (\frac{\mu^{2}(n+a_i)}{p}\sum_{\substack{\gcd(n_i,p-1)=1\\
1\leq b_i\leq p-1}} \psi \left((\tau ^{n_i}-n-a_i)b_i\right) \right ) 
 \ll p^{1-\varepsilon}  
\end{equation} 
for all sufficiently large primes $p \geq 2$, and an arbitrarily small number \(\varepsilon >0\).
\end{lem}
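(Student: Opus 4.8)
The plan is to follow the proof of Lemma~\ref{lem8899.06} for $(k+1)$ consecutive primitive roots, now carrying along the squarefree weights $\mu^{2}(n+a_i)$ and reducing the distinguished factor to the single squarefree primitive root estimate of Lemma~\ref{lem8809.02}. First I would expand each inner character sum through the factorization $\psi\!\left((\tau^{n_i}-n-a_i)b_i\right)=\psi\!\left(-(n+a_i)b_i\right)\psi\!\left(\tau^{n_i}b_i\right)$, and then write the $(k+1)$-fold product as $E_s(k,p)=S_1\times S_2$, where $S_1$ gathers the factor with index $i=0$ together with its weight $\mu^{2}(n+a_0)$, and $S_2$ gathers the remaining factors indexed by $i=1,2,\ldots,k$.

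For $S_1$ I would invoke Lemma~\ref{lem8809.02} with $s=2$, whose proof proceeds as in Lemma~\ref{lem8899.09}: replace the $b_0$-dependent sum $\sum_{\gcd(n_0,p-1)=1}\psi(\tau^{n_0}b_0)$ by the $b_0$-independent sum $\sum_{\gcd(n_0,p-1)=1}\psi(\tau^{n_0})$ up to an error $O\!\left(p^{1/2}\log^{3}p\right)$ via Lemma~\ref{lem333.22}, and then bound the resulting complete sum over exponents of $\tau$ by $p^{1-\varepsilon}$ using Theorem~\ref{thm333.04}. This yields $|S_1|\ll p^{1-\varepsilon}$; the weight $\mu^{2}(n+a_0)\in\{0,1\}$ only thins the outer range and never enlarges the bound.

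For $S_2$ I would use the trivial estimate. Under the hypothesis that each $n+a_i$ is not a primitive root, the orthogonality relation $\sum_{1\le b\le p-1}\psi(cb)=-1$ for $c\not\equiv 0 \bmod p$ collapses each inner double sum to the exact value $-\varphi(p-1)/p$, so each of the $k$ remaining factors has absolute value $\varphi(p-1)/p$, while $\mu^{2}(n+a_i)\le 1$. Hence $|S_2|\le(\varphi(p-1)/p)^{k}\le 1$, and combining gives $|E_s(k,p)|\le|S_1|\,|S_2|\ll p^{1-\varepsilon}$, as claimed.

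The main obstacle is that the weights $\mu^{2}(n+a_i)$ all depend on the single outer variable $n$, so the display $E_s(k,p)=S_1\times S_2$ is not a genuine separation of variables but a bookkeeping device; the weights couple the $k+1$ factors through $n$. The clean way around this is to discard the weights of the trivially estimated factors via the pointwise bound $\mu^{2}(n+a_i)\le 1$, retaining only $\mu^{2}(n+a_0)$ inside $S_1$, where Lemma~\ref{lem8809.02} already accommodates one squarefree weight. One should verify that absorbing these weights does not spoil the cancellation behind Lemma~\ref{lem333.22} and Theorem~\ref{thm333.04}; since that cancellation takes place in the summation over the exponents $n_0$ of $\tau$ while $\mu^{2}(n+a_0)$ depends only on $n$, the two summations are independent and the estimate survives.
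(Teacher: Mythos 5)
Your proposal follows essentially the same route as the paper: the paper's entire proof of this lemma is ``same as Lemma \ref{lem8899.06}, mutatis mutandis,'' and that proof is exactly your $S_1\times S_2$ factorization, with the distinguished factor bounded by the single-sum estimate (Lemma \ref{lem8899.09}, respectively Lemma \ref{lem8809.02} with the squarefree weight) and the remaining $k$ factors bounded trivially by $\varphi(p-1)/p$ each. The coupling through the outer variable $n$ that you flag --- that $S_1\times S_2$ is bookkeeping rather than a genuine separation of variables, and that pointwise bounds on the $n$-dependent factors sit uneasily with retaining cancellation in the $n$-sum of the distinguished factor --- is present, and left unaddressed, in the paper's own argument as well, so your proposal is neither more nor less complete than the source.
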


\begin{proof}  Same as Lemma \ref{lem8899.06}, mutatis mutandus.
\end{proof}

%rrrrrrrrrrrrrrrrrrrrrrrrrrrrrrrrrrrrrrrrrrrrrrrrrrrrrrrr
\subsection{Error Term For Restricted $k+1$ Consecutive Primitive Roots}
\begin{lem} \label{lem8829.108}  Let \(p\geq 2\) be a large prime, let  $0 \leq a_0, a_1, a_2, \ldots,a_k$ be an admissible $(k+1)$-tuple of integers, and let \(\tau\) be a primitive root modulo $p$. If the element \(n+a_i\ne 0, \pm1, v^2\) is not a primitive root for $i=0,1,2,...,k$, and $f(n)\ll 1$ is a bounded arithmetic function, then,   
\begin{equation} \label{eq8829.08}
E_s(k,p) =
\sum_{ n\in \F_p}   \prod_{0 \leq i\leq k} \left (\frac{f(n+a_i)}{p}\sum_{\substack{\gcd(n_i,p-1)=1\\
1\leq b_i\leq p-1}} \psi \left((\tau ^{n_i}-n-a_i)b_i\right) \right ) 
 \ll p^{1-\varepsilon}  
\end{equation} 
for all sufficiently large primes $p \geq 2$, and an arbitrarily small number \(\varepsilon >0\).
\end{lem}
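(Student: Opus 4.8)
The plan is to follow the template of Lemma~\ref{lem8899.06} and its squarefree analogue Lemma~\ref{lem8829.08}, the only new ingredient being the arbitrary bounded weight $f$ in place of the constant or squarefree weights. Write the $i$-th inner factor as
\[
g_i(n)=\frac{f(n+a_i)}{p}\sum_{\substack{\gcd(n_i,p-1)=1\\ 1\leq b_i\leq p-1}}\psi\!\left((\tau^{n_i}-n-a_i)b_i\right),
\]
so that $E_s(k,p)=\sum_{n\in\F_p}\prod_{0\leq i\leq k}g_i(n)$. As in the cited lemmas I would isolate the factor $i=0$ as a distinguished sum $S_1$ and collect the remaining $k$ factors into $S_2$, bound $S_1$ nontrivially, and bound $S_2$ trivially and uniformly in $n$.

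For $S_1$ I would substitute $u=n+a_0$, so that the outer summation over $n\in\F_p$ becomes a summation over $u\in\F_p$ and the factor falls into the exact shape of Lemma~\ref{lem8899.09}. Writing $\psi((\tau^{n_0}-u)b_0)=\psi(\tau^{n_0}b_0)\,\psi(-ub_0)$ and invoking Lemma~\ref{lem333.22} replaces the twisted inner sum by the untwisted sum $V_p=\sum_{\gcd(n_0,p-1)=1}\psi(\tau^{n_0})+O(p^{1/2}\log^{3}p)$, which is independent of $u$ and of $b_0$. This permits the factorization into the geometric sum $U_p=\sum_{1\leq b_0\leq p-1}\psi(-ub_0)$, evaluated exactly as $|U_p|=1$ for $u\neq0$, and the quantity $V_p$, estimated by $|V_p|\ll p^{1-\varepsilon}$ through Theorem~\ref{thm333.04}. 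Because $|f(u)|\ll1$, the weight contributes only the crude amount $\sum_{u\in\F_p}|f(u)|\,|U_p|\ll p$, so that
\[
|S_1|\ll\frac1p\,|V_p|\sum_{u\in\F_p}|f(u)|\,|U_p|\ll\frac1p\cdot p^{1-\varepsilon}\cdot p=p^{1-\varepsilon}.
\]

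For $S_2$ I would discard the phases: dropping the characters and using $|f|\ll1$ gives the trivial bound $|g_i(n)|\leq \|f\|_\infty\,\varphi(p-1)/p\ll\varphi(p-1)/p\leq1$, valid uniformly in $n$ and in $i\in\{1,\dots,k\}$. Hence $|S_2|\ll(\varphi(p-1)/p)^{k}\ll1$, and combining the two estimates yields
\[
|E_s(k,p)|\leq|S_1|\,|S_2|\ll p^{1-\varepsilon},
\]
as claimed. The admissibility of the tuple $a_0,a_1,\dots,a_k$ and the exclusions $n+a_i\neq0,\pm1,v^2$ enter only to keep the configuration nondegenerate, exactly as in Lemma~\ref{lem8899.06}; in particular the exclusion $n+a_0\neq0$ is what guarantees $|U_p|=1$ rather than $|U_p|=p-1$.

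The step I expect to be the main obstacle is verifying that an arbitrary bounded $f$ does not conspire with the character sum to spoil the $p^{-\varepsilon}$ saving. The delicate point is that, after Lemma~\ref{lem333.22}, the saving quantity $V_p$ no longer depends on the outer variable $u=n+a_0$, so the weight $f(u)$ is confined to the outer summation and is absorbed by the crude estimate $\sum_u|f(u)|\ll p$, leaving the saving entirely to $V_p$. I would want to confirm that the proofs of Lemmas~\ref{lem8899.09} and~\ref{lem8899.06} use only the boundedness $|\mu^2|\leq1$ and never any multiplicativity or positivity of the weight, so that the substitution of $f$ for $\mu^2$ is genuinely \emph{mutatis mutandis}; and that the trivial bounds used for $S_2$ hold uniformly in $n$, which is what legitimizes treating $E_s$ as the product $S_1\times S_2$.
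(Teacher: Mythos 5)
Your proposal is correct and follows essentially the same route as the paper: the paper's proof of this lemma is literally the one-line instruction to use $|f(n)|\ll 1$ together with the technique of Lemma \ref{lem8899.06}, which is exactly the $S_1\times S_2$ decomposition you carry out (nontrivial bound on the distinguished factor via Lemma \ref{lem8899.09}, trivial uniform bound $\varphi(p-1)/p\leq 1$ on the rest). Your closing remarks about the weight $f$ being absorbed in the outer sum and the need for uniformity in $n$ simply make explicit what the paper leaves implicit.
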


\begin{proof}  Use the fact that $|f(n)|\ll 1$, and the same technique as Lemma \ref{lem8899.06}, mutatis mutandus.
\end{proof}

%nnnnnnnnnnnnnnnnnnnnnnnnnnnnnnnnnnnnnnnnnnnnnnnnnnnnnnnn
%sssssssssssssssssssssssssssssss
\section{Some Collections Of Primes} \label{s9292}
%ccccccccccccccccccccccccccccccccccccccccccccccccccccccccccccccccccccccccccccccccccccccccccccccccccccccccccccccccccccccccccccccccccccccccccccccccccccccccccccccccccccccccc
%ccccccccccccccccccccccccccccccccccccccccccccccccccccccccccccccccccccccccccccccccccccccccccccccccccccccccccccccccccccccccccccccccccccccccccccccccccccccccccccccccccccccccc
%\chapter{Some Collections Of Primes} \label{c9292}
Some information on the collections of primes of interest in the theory of consecutive and quasi consecutive primitive roots are recorded here.

%sssssssssssssssssssssssssssssssssssssssssssssssssssssssssssssssssssssssssssssssssssssssss
%ssssssssssssssssssssssssssssssssssssssssssssssssssssssssssssssssssssssssssssssssssssssss
\subsection{Gauss Probabilistic Method}
The basic Gauss probabilistic prime counting method for the number of primes generated by an arithmetic function $f: \mathbb{N} \longrightarrow \mathbb{N}$ has the shape
\begin{equation}\label{eq333333}
\pi_f(x)=\#\{p=f(n)\leq x\}=\sum_{f(n)\leq x} \frac{1}{\log f(n)}+O\left ( \frac{x^{1/d}}{(\log x)^2}\right ),
\end{equation}	
where $f$ is a polynomial of degree $d=\deg f$. The conversion to an Stiejles integral realizes the equivalent asymptotic formulas
\begin{equation}
\pi_f(x)=\int_2^x\frac{1}{(\log f(t)}dt+O\left(\frac{x^{1/d}}{\log^{k+1} x} \right),
\end{equation}
and
\begin{equation}
\sum_{f_i(n) \leq x} \Lambda(f_1(n)) \cdots \Lambda(f_k(n))=s_f x^{1/d} +O\left(\frac{x^{1/d}}{\log^C x} \right) \nonumber ,
\end{equation}
with $d=d_1+d_2+\cdots+d_k$, and $C>0$ is arbitrary.
The Bateman-Horn conjecture is a refined version geared for a product $f(x)=f_1(x)\cdots f_k(x)\in  \mathbb{Z}[x]$ of $k\geq 1$ irreducible polynomials $f_i(x)$ of degree $d_i=\deg f_i$.

%ssssssssssssssssssssssssssssssssssssssssssssssssssssssssssssssssssssssssssssssssss
%ssssssssssssssssssssssssssssssssssssssssssssssssssssssssssssssssssssssssssssssssss
\subsection{Polynomials Primes Values Conjecture}
The quantitative form of the qualitative Hypothesis H, \cite[pp. 386--394]{RP96}, was formulated about fifty years ago in \cite{BH62}. It states the followings.	
\begin{conj} \label{conj2.1} {\normalfont(Bateman-Horn)} \label{conj1.1}   Let $f_1(x), f_2(x), \ldots, f_k(x) \in \mathbb{Z}[x]$ be relatively prime polynomials of degree $\deg(f_i)=d_i \geq 1$. Suppose that each polynomial $f_i(x)$ has the fixed divisor $\tdiv(f_i) = 1$. Then, the number of simultaneously primes $k$-tuples
\begin{equation}
f_1(n),f_2(n),\ldots,f_k(n), \nonumber
\end{equation}
as $n \leq x$ tends to infinity has the equivalent asymptotic formulas
\begin{equation}
\pi_f(x)=s_f \int_2^x\frac{1}{(\log t)^k}dt+O\left(\frac{x^{1/d}}{\log^{k+1} x} \right),
\end{equation}
and
\begin{equation}
\sum_{f_i(n) \leq x} \Lambda(f_1(n)) \cdots \Lambda(f_k(n))=s_f x^{1/d} +O\left(\frac{x^{1/d}}{\log^C x} \right) \nonumber ,
\end{equation}
with $d=d_1+d_2+\cdots+d_k$, and $C>0$ is arbitrary. The density constant is defined by the product
\begin{equation}
s_f=\frac{1}{d_1d_2 \cdots d_k}\prod_{p\geq 2} \left (1-\frac{v_p(f)}{p} \right ) \left (1-\frac{1}{p} \right )^{-k},
\end{equation} 
where the symbol $v_p(f)\geq 0$ denotes the number of solutions of the congruence
\begin{equation}
f_1(x)f_2(x) \cdots f_k(x) \equiv 0 \tmod p.
\end{equation}
\end{conj}
This generalization of prime values of polynomials appears in \cite{BH62}. The constant, known as singular series
\begin{equation} 
\mathfrak{G}(f)=\prod_{p\geq 2} \left (1-\frac{v_p(f)}{p} \right ) \left (1-\frac{1}{p} \right )^{-k},
\end{equation}
can be derived by either the circle method, as done in \cite[p.\ 167]{IK04}, \cite{VR73} or by probabilistic means as explained in \cite[p.\ 33]{PJ09}, 
\cite[p.\ 410]{RP96}, et alii. The convergence of the product is discussed in \cite{BH62}, \cite{RI15}, and other by authors. \\

The term 
\begin{equation}\label{eq333373}
P_f(n)=\prod_{p\leq n} \left ( 1-\frac{\nu(p)}{p} \right )\left ( 1-\frac{1}{p} \right )^{-k} ,
\end{equation}
where $\nu(p)=\#\{n:f(n)\equiv 0 \bmod p\}$, is a correction factor accounting for some of the small primes dependence.\\

The Bateman-Horn conjecture was formulated in the 1960's, about half a century ago. A more recent result accounts for the irregularities and oscillations that can occur in the asymptotic formula as the polynomial is varied.	
\begin{thm} \label{thm2.2} {\normalfont (\cite{FG91})}  Let $d \geq 1$ be a fixed integer, and let $B \geq 2$ be a real number. There exist infinitely many irreducible polynomials $f(x)$ of degree $\deg(f)=d$ with nonnegative integer coefficients, such that for some number $\delta_B>0$ depending on $B$, and $x \geq \log^B |f(x)|$, the absolute difference
\begin{equation}
\left | \pi_f(x) -  C_f \frac{x}{\log |f(x)|} \right |>\delta_B C_f \frac{x}{\log |f(x)|}.
\end{equation}
\end{thm}
This result was proved for polynomials of large degrees, but it is claimed to hold for certain structured polynomials of small degrees $d \geq 1$.

%ssssssssssssssssssssssssssssssssssssssssssssssssssssssssssssssssssssssssssssssssss
%ssssssssssssssssssssssssssssssssssssssssssssssssssssssssssssssssssssssssssssssssss
\subsection{Primorial Primes}\label{s9292B}
The subset of primorial primes 
\begin{equation}  \label{eq9292.102}
\mathcal{A}=\{p=2\cdot3\cdot 5\cdot 7\cdots q+1: \text{ prime } q \geq 2\}
\end{equation}
is studied in \cite{CG02}, and listed in \text{OEIS} A014545. A primorial prime has a highly composite totient
$p-1=\varphi(p)$, the maximal numbers of prime divisors
\begin{equation} \label{eq9292.170}
\omega(p-1)\ll \log p/\log \log p,
\end{equation} 
see Lemma \ref{lem8533.05}, and the minimal value 
\begin{equation} \label{eq9292.173}
\frac{\varphi(p-1)}{p-1} =\prod_{q\mid p-1}\left (1-\frac{1}{q}\right )\approx \frac{1}{\log \log p},
\end{equation} 
where $r\leq q$ ranges over the primes, see Theorem \ref{thm9192.21}. The heuristic claims that there are infinitely many primorial primes. The theory of the subset of primes is at a rudimentary stage, and a topic of current research. \\

The standard heuristic for the number of primorial primes is based on the gaussian probabilistic method.
\begin{conj} \label{conj9292.109} As $x \to infty$, the number of prime pairs $p=n!+1\ leq x$ has the asymptotic formula
\begin{equation}\label{eq9292.109}
\pi_f(x)=\#\{p=n!+1\leq x: p \text{ is prime } \}= e^{\gamma}\log x +o(\log x).\nonumber
\end{equation}
\end{conj}
\textbf{Heuristic 1:} As $n \to \infty$, the asymptotic for the factorial function is $n!\approx n \log n$. Thus, 
\begin{eqnarray}\label{eq9292.107}
\pi_f(x)&=&\#\{p=n!+1\leq x: p \text{ is prime } \}\\
&=&\sum_{n\leq x} \prod_{p\leq n}  \left ( 1-\frac{1}{p} \right ) ^{-1}\frac{1}{\log (n!+1)}\nonumber \\
&\approx& \sum_{n\leq x} e^{\gamma}\log n \cdot \frac{1}{n \log n}\nonumber\\
&=& e^{\gamma}\log x +o(\log x)\nonumber.
\end{eqnarray}
It is not a standard practice, but a heuristic based on the Bate-Horn conjecture seems to predict an infinite number of primorial primes. To see this observe that the primorial primes are generated by the expression $f(n)=n!\pm1$. Here, the factorial can be viewed as a product of $n$ linear polynomials $f_k(x)=x-k$, actually this idea is used in $p$-adic analysis. Under this assumption, the Bate-Horn conjecture is applicable.\\

\textbf{Heuristic 2:} Let $f(x)=f_1(x)f_2(x)\cdots =x(x-1)(x-2)\cdots 2\cdot 1+1$. The congruence $f(n)\equiv 0 \bmod p$ has $\nu(p)=0$ solutions for all primes $p\geq 2$. Hence,
\begin{eqnarray}\label{eq334777}
\pi_f(x)&=&\#\{p=f(n)\leq x: p \text{ is prime } \}\\
&\leq&\sum_{p\leq x} \prod_{q\leq \log p}  \left ( 1-\frac{1}{q} \right )^{-n}\left ( 1-\frac{\nu(p)}{p} \right ) \frac{1}{\log (n!+1)}\nonumber \\
&=&\sum_{n\leq x} \prod_{p\leq n}  \left ( 1-\frac{1}{p} \right )^{-n} \frac{1}{\log (n!+1)}\nonumber \\
&\approx& \sum_{n\leq x} \left ( \frac{1}{e^{\gamma}\log n} \right )^{-n}  \frac{1}{n \log n}\nonumber\\
&=& O\left ( \frac{\log x}{\log \log x} \right )\nonumber.
\end{eqnarray}
Thus, the expected number is infinite.

%ssssssssssssssssssssssssssssssssssssssssssssssssssssssssssssssssssssssssssssssssss
%ssssssssssssssssssssssssssssssssssssssssssssssssssssssssssssssssssssssssssssssssss
\subsection{Coprimorial Primes}\label{s9292C}
The subset of coprimorial primes is defined by
\begin{equation}  \label{eq9292.204}
\mathcal{B}=\{p=3\cdot 5\cdot 7\cdots q+2: \text{ prime } q \geq 2\}.
\end{equation}
The totient $p-1=\varphi(p)$ of a coprimorial prime has very few prime divisors
\begin{equation} \label{eq9292.270}
\omega(p-1)\ll 1,
\end{equation} 
and nearly maximal value 
\begin{equation} \label{eq9799.279}
\frac{\varphi(p-1)}{p-1} =\prod_{q\mid p-1}\left (1-\frac{1}{q}\right )\approx \frac{1}{2}.
\end{equation} 
The coprimorial primes have Germain primes type structure. The heuristic seems to show the existence of infinitely many coprimorial primes. 

\begin{conj} \label{conj9292.049} As $x \to \infty$, the number of prime pairs $p=n!+1\ leq x$ has the asymptotic formula
\begin{equation}\label{eq9292.050}
\pi_f(x)=\#\{p=n!/2+2\leq x: p \text{ is prime } \}= e^{\gamma}\log x +o(\log x).\nonumber
\end{equation}
\end{conj}
\textbf{Heuristic 1:} Same as \ref{conj9292.109}.

The collection of these primes is not a topic of research in the current literature. 

%ssssssssssssssssssssssssssssssssssssssssssssssssssssssssssssssssssssssssssssssssss
%ssssssssssssssssssssssssssssssssssssssssssssssssssssssssssssssssssssssssssssssssss
\subsection{Germain Primes}\label{s9292D} 
The subset of Germain primes is defined by
\begin{equation}  \label{eq9292.306}
\mathcal{S}=\{p=2^a\cdot q+1: \text{ prime } q \geq 2 \text{ and  } a\geq 1\}.
\end{equation}
The simplest sequence $p=2q+1$ is archived in \text{OEIS} A005384. The heuristic claims that there are infinitely many Germain primes. The theory of the subset of Germain primes is not fully developed, but it is a topic of current research. 
The totient $p-1=\varphi(p)$ of a Germain prime has two prime divisors
\begin{equation} \label{eq9292.370}
\omega(p-1)=2,
\end{equation} 
and the nearly maximal value 
\begin{equation} \label{eq9292.375}
\frac{\varphi(p-1)}{p-1} = \prod_{q\mid p-1}\left (1-\frac{1}{q}\right )\approx \frac{1}{2},
\end{equation} 
where $r\leq q$ ranges over the primes.\\

The expected number of Germain primes is derived from the Bate-Horn conjecture using the polynomials $f_1(x)=x$ and $f_2(x)=2^ax+1$.

\begin{conj}  \label{conj9292.309} As $x \to \infty$, the number of prime pairs $p\leq x$ and $2^ap+1\leq x$ has the asymptotic formula
\begin{equation}\label{eq9292.373}
\pi_f(x)=2\prod_{p\geq 3}  \left ( 1-\frac{1}{(p-1)^2} \right )  \frac{x}{(\log x)^2}+O\left ( \frac{x}{(\log x)^3}\right ).\nonumber
\end{equation}
\end{conj}
\textbf{Heuristic:} Let $f(x)=f_1(x)f_2(x)=x(2^ax+1)$, with a small fixed integer $a\geq 1$. The congruence $f(n)\equiv 0 \bmod p$ has $\nu(p)$ solutions. Specifically, 
\begin{equation}\label{eq9292.355}
\nu(p)=
\begin{cases}
1 & \text{ if } p=2,\\
2& \text{ if } p>2.
\end{cases}
\end{equation}
Assembling these data yield
\begin{eqnarray}\label{eq9292.387}
\pi_f(x)&=&\#\{p\leq x: p \text{ and } 2^ap+1 \text{ are primes } \}\\
&=&2\sum_{n\leq x} \prod_{2<p\leq x}  \left ( 1-\frac{1}{p} \right )^{-2}\left ( 1-\frac{2}{p} \right ) \frac{1}{\log (n)\log (2^an+1)}\nonumber \\
&=& 2\prod_{2<p\leq n}  \left ( 1-\frac{1}{(p-1)^2} \right )\sum_{n\leq x}  \frac{1}{\log (n)\log (2^an+1)}\nonumber\\
&=& 2\prod_{p\geq 3}  \left ( 1-\frac{1}{(p-1)^2} \right )  \frac{x}{(\log x)^2}+O\left ( \frac{x}{(\log x)^3}\right )\nonumber.
\end{eqnarray}
The last line in \eqref{eq9292.387} follows from the approximation
\begin{equation}\label{eq9292.82}
\sum_{n\leq x}  \frac{1}{\log (n)\log (2^an+1)}\approx \sum_{n\leq x}  \frac{1}{\log (n)^2}=\int_2^x\frac{1}{\log (t)^2}dt.
\end{equation}

%ssssssssssssssssssssssssssssssssssssssssssssssssssssssssssssssssssssssssssssssssss
%ssssssssssssssssssssssssssssssssssssssssssssssssssssssssssssssssssssssssssssssssss
\subsection{Fermat Primes}\label{s9292E} 
The subset of Fermat primes is defined by
\begin{equation}  \label{eq9292.408}
\mathcal{S}=\{p=2^{2^n}+1:  n\geq \}.
\end{equation}
The entire list of known has 5 primes, which are archived in \text{OEIS} A019434. The heuristic claims that there are finitely many Fermat primes. The totient $p-1=\varphi(p)$ of a Fermat prime has one prime divisor
\begin{equation} \label{eq9799.480}
\omega(p-1)=1,
\end{equation} 
and the maximal value 
\begin{equation} \label{eq9799.477}
\frac{\varphi(p-1)}{p-1} =\prod_{q\mid p-1}\left (1-\frac{1}{q}\right )=\frac{1}{2}.
\end{equation} 
where $r\leq q$ ranges over the primes.

\begin{conj}  \label{conj9292.409} As $x \to \infty$, the number of primes $p=2^{2^n}+1\leq x$ is finite.
\end{conj}

\textbf{Heuristic:} Let $p=f(n)=2^{2^n}+1$, and $x \geq 1$ be a large number. The calculation is based on the Gaussian probabilistic method. The Bate-Horn conjecture is not applicable to subset of primes generated by exponential functions. Specifically, 
\begin{eqnarray}\label{eq9292.477}
\pi_f(x)&=&\#\{p=f(n)\leq x: p \text{ is prime } \}\\
&=&\sum_{f(n)\leq x} \prod_{2<p\leq n}  \left ( 1-\frac{1}{p} \right )^{-1} \frac{1}{\log \left (2^{2^n}+1\right )}\nonumber \\
&\leq &\frac{e^{\gamma}}{ \log \left (2\right )}\sum_{n\leq \log \log x} \frac{\log n}{2^n }\nonumber \\
&=& O(1)\nonumber.
\end{eqnarray}
Thus, the expected number is finite, \cite[p.\ 16]{HW79}.

\begin{lem}\label{lem0099.22} {\normalfont(Pepin test)} The number $p=2^{2^n}+1$ is prime if and only if the Legendre symbol
\begin{equation}
\left ( \frac{3}{p}\right ) \equiv -1 \bmod p.     \nonumber
\end{equation}
\end{lem}
\begin{proof} Use the quadratic reciprocity law.
\end{proof}

%PPPPPPPPPPPPPPPPPPPPPPPPPPPPPPPPPPPPPPPPPPPPPPPPPPPPPPPPPPPPPPPPPPPPPPPPPPPPPPPPPPPPPPPPPPPPPPPP
%PPPPPPPPPPPPPPPPPPPPPPPPPPPPPPPPPPPPPPPPPPPPPPPPPPPPPPPPPPPPPPPPPPPPPPPPPPPPPPPPPPPPPPPPPPPPPPPP
\newpage
\subsection{Problems}
\begin{exe} { \normalfont 
Determine whether or not the $26$th Fermat number $F_n=2^{2^{26}}+1$ is composite or prime. Specifically, compute the quadratic symbol
$$
\left ( \frac{3}{2^{2^{26}}+1}\right ) \equiv \pm 1 \bmod (2^{2^{26}}+1) .
$$
If $3$ is a quadratic nonresidue modulo $F_n$, then it is prime.
}
\end{exe}
\begin{exe} { \normalfont 
Use an elementary argument to show that the largest prime divisor of the $n$th Fermat number $F_n$ satisfies $P(F_n)>2^{n+2}\geq \log F_n$. 
}
\end{exe}

\begin{exe} { \normalfont 
Prove whether or not the $n$th Fermat number $F_n$ is squarefree. 
}
\end{exe}

\begin{exe} { \normalfont 
Use an elementary argument to show that the largest prime divisor of the $n$th Mersenne number $M_n=2^n-1$ satisfies $P(M_n)>n\geq \log M_n$. 
}
\end{exe}
\begin{exe} { \normalfont 
Prove whether or not the $n$th Mersenne number $M_n$ is squarefree- it is sufficient to use prime values $n=p$. 
}
\end{exe}

\begin{exe} { \normalfont 
Let $p_k$ be the $k$th prime in increasing order. Prove whether or not the subset of primorial primes $p=2\cdot3\cdots p_k\pm1$ is finite. 
}
\end{exe}

%sssssssssssssssssssssssssssssss
\section{Maximal Length Of Consecutive Primitive Roots} \label{s9799}
The number of prime divisors $\omega(n)$ of a random integer $n \in \N$ is a normal random variable with mean $\log \log n$, and standard error $\sqrt{\log \log n}$, see Theorem \ref{thm8533.01}, and Lemma \ref{lem8533.05}. Roughly, there are three major classes of totients $p-1=\varphi(p)$ and the corresponding classes of the primes divisors counting function $\omega(p-1)$.

\begin{enumerate} [font=\normalfont, label=(\arabic*)]
\item The subset of primorial primes $p=2\cdot3\cdot 5\cdot 7\cdots q+1$ have highly composite totients $p-1=\varphi(p)$ and the maximal numbers of prime divisors, see Subsection \ref{s9292B}. 
\item The average primes $p \geq 2$. The average totients $p-1=\varphi(p)$ have the mean numbers of prime divisors, Lemma \ref{lem8533.05}.

\item The subset of Fermat primes, Germain primes, and coprimorial primes. The totient $p-1=\varphi(p)$ of any of these primes has the minimal number of prime divisors. These primes are described in Section \ref{s9292}.  
\end{enumerate}

\begin{lem} \label{lem9799.156}  Let \(p\geq 2\) be a large prime. Then, the maximal length $k \geq 1$ of a string of consecutive primitive roots is as follows.
\begin{enumerate} [font=\normalfont, label=(\roman*)]
\item$ \displaystyle 
k \ll \log p/\log \log \log p$,  \tabto{8cm} if $\omega(p-1)\ll \log p/ \log \log p$.
\item$ \displaystyle 
k \ll \log p/\log \log \log \log p$,  \tabto{8cm}  if $\omega(p-1)\ll \log \log p$.
\item$ \displaystyle 
k \ll \log p$, \tabto{8cm} if $\omega(p-1)\ll 1$.
\end{enumerate}
 \end{lem}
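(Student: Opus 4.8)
The plan is to extract the threshold value of $k$ directly from the asymptotic formula of Theorem \ref{thm8800.040}. Since
\[
N(k,p)=\left(\frac{\varphi(p-1)}{p-1}\right)^{k+1}p+O\left(p^{1-\varepsilon}\right),
\]
a string of $k+1$ consecutive primitive roots is guaranteed precisely while the main term dominates the error term, that is, while $\left(\varphi(p-1)/(p-1)\right)^{k+1}p\gg p^{1-\varepsilon}$. First I would take logarithms of this inequality and solve for $k$, writing $\delta=-\log\left(\varphi(p-1)/(p-1)\right)>0$; this yields the controlling bound
\[
k\ll\frac{\varepsilon\log p}{\delta}.
\]
Thus the maximal admissible length is inversely proportional to $\delta$, and the whole problem reduces to estimating $\delta$, equivalently the size of the ratio $\varphi(p-1)/(p-1)$, in each of the three regimes.

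Second, I would evaluate $\delta$ by expanding $\delta=-\sum_{q\mid p-1}\log\left(1-1/q\right)\asymp\sum_{q\mid p-1}1/q$ and inserting the values recorded in Section \ref{s9292} and Theorem \ref{thm9192.21}. For case (i), a primorial totient $p-1=\prod_{q\le r}q$ collects exactly the smallest primes up to $r\asymp\log p$ (by Chebyshev's estimate $\log(p-1)\asymp r$), so Mertens' theorem gives $\varphi(p-1)/(p-1)\asymp 1/\log r\asymp 1/\log\log p$ as in \eqref{eq9799.273}; hence $\delta\asymp\log\log\log p$ and $k\ll\log p/\log\log\log p$. For case (ii) the average value \eqref{eq9292.277} is $\varphi(p-1)/(p-1)\approx 1/\log\log\log p$, so $\delta\asymp\log\log\log\log p$ and $k\ll\log p/\log\log\log\log p$. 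For case (iii) the hypothesis $\omega(p-1)\ll 1$ forces $\varphi(p-1)/(p-1)$ to be bounded away from $0$ (it equals $1/2$ for a Fermat prime, for instance, and tends to $1/2$ for Germain primes), whence $\delta=O(1)$ and $k\ll\log p$.

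The principal obstacle is the translation from the hypothesis on $\omega(p-1)$ to the size of $\varphi(p-1)/(p-1)$, since the count of prime divisors alone does not determine the ratio: the sum $\sum_{q\mid p-1}1/q$ is governed by the smallest prime divisors, not by their number. I would handle this by appealing to the extremal structure of the relevant prime families, namely that the primorial totient in case (i) packs in precisely the smallest primes (so that $\sum_{q\le r}1/q\asymp\log\log r\asymp\log\log\log p$ is genuinely realized) and that Theorem \ref{thm9192.21} supplies the matching two-sided control of $\varphi(p-1)/(p-1)$ in the highly composite and average regimes. A secondary point to verify is that the error estimate of Lemma \ref{lem8899.06} remains of strictly smaller order than the main term uniformly across the whole range $k\ll\log p$, so that the derived threshold is the operative one rather than an artefact of a non-uniform bound.
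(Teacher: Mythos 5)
Your proposal follows essentially the same route as the paper: both derive the threshold $k \ll \varepsilon\log p/\log\left(\frac{p-1}{\varphi(p-1)}\right)$ by requiring the main term of Theorem \ref{thm8800.040} to dominate the error term, and then substitute the three regimes $\frac{p-1}{\varphi(p-1)} \approx \log\log p$, $\approx \log\log\log p$, and $\approx 2$ from Section \ref{s9292}. Your treatment is in fact slightly more careful than the paper's, since you explicitly justify (via Mertens' theorem and the extremal structure of the prime families) the passage from the hypothesis on $\omega(p-1)$ to the size of $\varphi(p-1)/(p-1)$, a step the paper leaves implicit.
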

\begin{proof} The existence of an $(k+1)$-tuple implies that 
\begin{equation} \label{eq9799.300}
p\left (\frac{\varphi(p-1)}{p-1} \right )^{k+1} \gg p^{1-\varepsilon}
\end{equation}
is true, with $\varepsilon \in (0,1/2)$, see Theorem \ref{thm8800.040}. Equivalently, this is
\begin{equation} \label{eq9799.304}
k \ll\frac{\varepsilon \log p}{\log \left ( \frac{p-1}{\varphi(p-1)} \right )} \ll \frac{\varepsilon \log p}{\log \omega(p-1) }.
\end{equation}
The three different cases are for
\begin{equation} \label{eq9799.304}
\frac{p-1}{\varphi(p-1)} \approx \log \log p    , \quad  \frac{p-1}{\varphi(p-1)} \approx \log \log \log p, \quad \text{ and } \quad   \frac{p-1}{\varphi(p-1)} \approx 2          
\end{equation}
respectively.
\end{proof}

\begin{dfn} \label{dfn9799.100} {\normalfont Given a prime $p\geq 2$, and $k$ the longest run of consecutive primitive roots in the finite field $\F_p$, the \textit{length merit ratio} is defined by $\hat{m}=k/\log p$. }
\end{dfn}

The length merit ratio varies as $p \to \infty$, but it remains bounded by a constant $\hat{m}\ll1$. The Fermat primes $p=2^{2^n}+1$, $n \geq 0$, the Germain primes $p=2^{a}q+1$, $q \geq 2$ primes and $a \geq 1$, and some other collections, are expected to have the largest length merit ratio. Some numerical data for small primes are provided here. Observe that these small cases are subject to the Strong law of small numbers, \cite{GR88}. 

\begin{exa} \label{exa9799.010} {\normalfont Extreme Case 1. Some statistic for the finite field $\F_{p}$ with $p=p=2^{4}+1=17$. \\
\begin{tabular}{ l | l }
\hline
 Prime & $p=17$ \\
\hline
Parameters&$\omega(p-1)=1$, $\varphi(p-1)=8$\\
\hline
  Primitive roots & $3,5,6,7,10,11,12,14$ \\
\hline
  Length $k$ & $3$ \\
\hline
Merit factor &$k/\log p=1.058869$
\end{tabular}
\vskip .25 in
Similarly, the prime $p=2^{16}+1$ has the parameters, $\omega(p-1)=1$, $\varphi(p-1)=2^{15}$, and $\log p=11.09$. Thus, Lemma \ref{lem9799.156} predicts the existence of some 11-tuples or larger $k$-tuples of consecutive primitive roots in the set of primitive roots $\mathcal{R}=\{3,5,7,11,13,15, \ldots.\}$.
}
\end{exa}
\begin{exa} \label{exa9799.014} {\normalfont Extreme Case 3. Some statistic for the finite field $\F_{p}$ with $p=2\cdot 3\cdot 5+1=31$. \\
\begin{tabular}{ l | l }
\hline
 Prime & $p=31$ \\
\hline
Parameters&$\omega(p-1)=3$, $\varphi(p-1)=8$\\
\hline
  Primitive roots & $3,11,12,13,17,21,22,24$ \\
\hline
  Length $k$ & $3$ \\
\hline
Merit factor &$k/\log p=0.873620$
\end{tabular}
\vskip .25 in
}
\end{exa}

\begin{table}[tp]
 \caption{Maximal $k$-Tuple of Primitive Roots Indexed By $p$.} \label{tb9799.04}\centering% 
\begin{tabular}{ l | l|l| l | l|l| l | l|l}
\hline
$p$	&$k$		&$\hat{m}$     &$p$	&$k$		&$\hat{m}$&$p$	&$k$		&$\hat{m}$\\
\hline
$3$     &1                 &0.910239 &29	&2		&0.593948 &61	&2		&0.486514\\
5	&2		&1.242669    &31	&3		&0.873620 &67	&3		&0.713488\\
7	&1		&0.513898    &37	&4		&1.107751 &71	&3		&0.703782\\
11	&3		&1.251097    &41        &3		&0.807847 &73	&3		&0.699225\\
13	&2		&0.779742    &43	&3		&0.797617 &79	&3		&0.686585\\
17	&3		&1.058868    &47	&4		&1.038921 &83	&7		&1.584125\\
19	&3		&1.018869    &53	&5		&1.259353 &89	&6		&1.336708\\
23	&3		&0.956786    &59	&5		&1.226230 &97	&5		&1.092965\\
\hline
\end{tabular}
\vskip .25 in
\end{table}
\begin{table}[tp]
 \caption{Least Prime $p$ and Maximal $k$-Tuple of Primitive Roots.} \label{tb9799.06}\centering% 
\begin{tabular}{ l | l|l| l | l|l }
\hline
$p$	             &$k$   &$\hat{m}$         &$p$	               &$k$		&$\hat{m}$\\
\hline
$3=2\cdot 1+1$       &1     &0.910239226       &$83=2^2\cdot 41+1$	&7	        &1.584125933\\                                                   
$5=2\cdot 2+1$	     &2	    &1.242669869       &$347=2\cdot 173+1$	&8		&1.367679228\\  
$11=2\cdot 5+1$	     &3	    &1.251097174       &$269=2^2\cdot 67+1$	&9		&1.608662072\\
$37=2^2\cdot 3^2+1$    &4    &1.107751574       &$563=2\cdot 281+1$  &10		&1.578960758\\
$53=2^2\cdot 13+1$    &5	&1.259353244     &$467=2\cdot 233+1$        &11		&1.789686094\\
$89=2^3\cdot 11+1$   &6	    &1.336708859   &$1187=2^2\cdot 3^3\cdot 11+1$&12     &1.695110528\\
\hline
\end{tabular}
\vskip .25 in
\end{table}
%s9090%s9090%s9090%s9090%s9090%s9090%s9090%s9090%s9090%s9090%s9090%s9090
\section{Consecutive Primitive Roots} \label{s9090}
Consecutive primitive roots is one of the simplest configuration of a subset of two or more primitive roots.
A more general result was proved by Carlitz \cite{CL56} using a counting technique based on Lemma \ref{lem333.02}. A new proof and counting technique 

based on Lemma \ref{lem333.03} is given here.
%tttttttttttttttttttttttttttttttttttttttt
\subsection{Strings Of $k+1$ Consecutive Primitive Roots}  

Let  $ a_0, a_1, a_2, \ldots,a_k$ be a fixed $(k+1)$-tuple of distinct integers. Let $p \geq 2$ be a large prime, and let $\tau \in \F_p$ be a primitive root. A string of $k+1$ consecutive primitive roots $n+a_0, n+a_1, n+a_2, \ldots, n+a_k$ exists if and only if the system of equations 
\begin{equation} \label{eq9090.00}
\tau^{n_0}=n+a_0, \quad \tau^{n_1}=n+a_1, \quad\tau^{n_2}=n+a_2, \quad \ldots, \quad \tau^{n_k}=n+a_k,
\end{equation}
has one or more solutions. A solution consists of a $(k+1)$-tuple $n_0,n_1,\ldots, n_k$ of integers such that $\gcd(n_i,p-1)=1$ for $i=0,1, \ldots, k$, and some $n \in \F_p$. Let 

\begin{equation} \label{eq9090.02}
N(k,p)=\#\left \{ n \in \F_p: \ord_p (n+a_i)=p-1 \right \}
\end{equation}
for $i=0, 1, \ldots, k$, denotes the number of solutions. 

\begin{proof} {\normalfont (Theorem \ref{thm8800.040}): }The total number of solutions is written in terms of characteristic function for primitive roots, see Lemma \ref{lem333.03}, as
\begin{eqnarray} \label{eq9090.018}
N(k,p)&=&\sum_{ n\in \F_p}  \Psi \left(n+a_0\right)\Psi \left(n+a_1\right)\cdots \Psi \left(n+a_k\right) \\
&=&\sum_{ n\in \F_p}  \prod_{0 \leq i\leq k} \left (\frac{1}{p}\sum_{\substack{\gcd(n_i,p-1)=1\\
0\leq u_i\leq p-1}} \psi \left((\tau ^{n_i}-n-a_i)u_i)\right) \right )  \nonumber\\
&=&M(k,p) + E(k,p)\nonumber.
\end{eqnarray} 
The main term is determined by the indices $u_0=u_1=\cdots=u_k=0$, and has the form 
\begin{equation} \label{eq9090.020}
M(k,p)
=\sum_{ n\in \F_p}   \prod_{0 \leq i\leq k}\left (\frac{1}{p}\sum_{\gcd(n_i,p-1)=1} 1 \right )  ,
\end{equation} 
and the error term  is determined by the indices $u_0\ne0,u_1\ne0,\ldots,u_k\ne0$, and has the form 
\begin{equation} \label{eq9090.022}
E(k,p)=\sum_{ n\in \F_p}  \prod_{0 \leq i\leq k} \left (\frac{1}{p}\sum_{\substack{\gcd(n_i,p-1)=1\\
1\leq u_i\leq p-1}} \psi \left((\tau ^{n_i}-n-a_i)u_i)\right) \right )  .
\end{equation}

Applying Lemma \ref{lem9799.76} to the main term and Lemma \ref{lem8899.06} to the error term, yield
\begin{eqnarray} \label{eq9050.038}
N(k,p)
&=&M(k,p) + E(k,p)\\
&=& \left (\frac{\varphi(p-1)}{p-1} \right )^{k+1}p +O(\log^2 p)+O(p^{1-\varepsilon})\nonumber\\
&=&\left (\frac{\varphi(p-1)}{p-1} \right )^{k+1}p +O(p^{1-\varepsilon})\nonumber\\
&>&0 \nonumber,
\end{eqnarray} 
for all sufficiently large primes $p \geq 2$, and an arbitrary small number $\varepsilon>0$.

\end{proof}

%ssssssssssssssssssssssssssssssssssssssssssssssssssssssssssssssssssss
\section{Probabilities Functions For Consecutive Primitive Roots}\label{s1188}
%ssssssssssssssssssssssssssssssssssssssssssssssssssssssssssssssssssss
%\section{Probabilities Functions For Consecutive Primitive Roots}\label{s1188}
The forms of the main terms in Theorem \ref{thm8800.040} and Theorem \ref{thm8800.050} imply that a primitive root in a finite field $\F_p$ is a nearly independent random variable $X=X(p)$. 

\begin{dfn} \label{dfn1188.200} { \normalfont The probability of primitive roots in a finite field $\F_p$ is defined by
\begin{equation} \label{eq1188.055}
P\left ( \ord_p\left (X\right )=p-1\right)=\frac{\varphi(p-1)}{p-1}+O \left ( \frac{1}{p^{\varepsilon}}\right),
\end{equation}
where $\varepsilon>0$ is a small number. }
\end{dfn}

The occurrence of each primitive root is approximately an independent variable $X$ with probability $P(\ord_p X=p-1)=\varphi(p-1)/(p-1)$, as demonstrated in Definition \ref{dfn1188.200}. A random $(k+1)$-tuple of consecutive  primitive roots is denoted by
\begin{equation} \label{eq1188.059}
Z_k=\left (X_0, X_1, \ldots, X_k\right ),
\end{equation}
where each primitive root $X_i$ has order $\ord_p\left (X_i\right )=p-1$. The Fermat prime numbers $p=2^{2^m}+1$ and the Germain primes $p=2^aq+1$, where $a\geq 1$ and $q\geq 2$ is prime, have the simpler totients $p-1$, see Section \ref{s9292}, and descriptions of the probabilities functions of the $k+1$-tuples. The precise form for Germain primes is
\begin{equation} \label{eq1188.055}
P\left ( Z_k\right)=\left (\frac{\varphi(p-1)}{p-1}\right )^{k+1}= \left (\frac{1}{2}-\frac{1}{2q}\right )^{k+1},
\end{equation}
Table \ref{tb9799.04} demonstrates this well, almost all the listed cases have Germain primes; the exception could be an instance of the Strong Law of Small Numbers. On the other extreme are the collections of highly composite totients $p-1$. The precise form for primorial primes $p=2\cdot 3\cdot 5\cdots q+1$, where 

$q\geq3$ is prime, is
\begin{equation} \label{eq1188.055}
P\left ( Z_k\right)=\left (\frac{\varphi(p-1)}{p-1}\right )^{k+1}= \prod_{r\leq q}\left (1-\frac{1}{q}\right )^{k+1},
\end{equation}
where $r\leq q$ ranges over the primes. Some numerical data are displayed in Figure \ref{fe9799.23} and Figure \ref{fe9799.29}.
%wwwwwwwwwwwwwwwwwwwwwwwwwwwwwwwwwwwwwwwwwwwwww
 
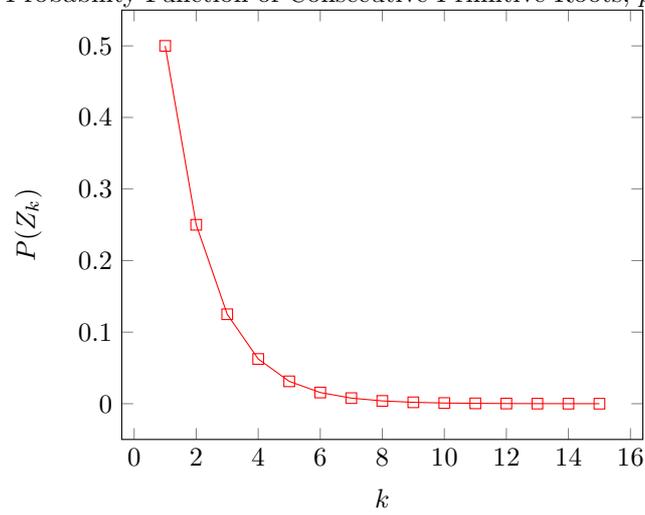
\begin{figure}[h]
\caption{Probability Function of Consecutive Primitive Roots, $p=2^{16}+1$} \label{fe9799.23}\centering%
  \begin{tikzpicture}
	\begin{axis}[
		xlabel=$k$,
		ylabel=$P(Z_k)$,
width=0.9\textwidth,
       height=0.5\textwidth		]
	\addplot[color=red,mark=square] coordinates {
		(1,1/2)
		(2,1/4)
		(3,1/8)
		(4,1/16)
		(5,1/32)
		(6,1/64)
		(7,1/128)
		(8,1/256)
		(9,1/2^9)
		(10,1/2^10)
		(11,1/2^11)
		(12,1/2^12)
		(13,1/2^13)
		(14,1/2^14)
		(15,1/2^15)
	};
	\end{axis}
\end{tikzpicture}	
\end{figure}

\begin{figure}[h!]
\caption{Probability Function of Consecutive Primitive Roots, $p=2\cdot3\cdots31+1$} \label{fe9799.29}\centering%
 \begin{tikzpicture}
	\begin{axis}[
		xlabel=$k$,
		ylabel=$P(Z_k)$,
	width=0.9\textwidth,
       height=0.5\textwidth	]
			\addplot[color=red,mark=square] coordinates {
		(1,.1529)
		(2,0.02337841)
		(3,.1529^3)
		(4,.1529^4)
		(5,.1529^5)
		(6,.1529^6)
		(7,.1529^7)
		(8,.1529^8)
		(9,.1529^9)
		(10,.1529^10)
		(11,.1529^11)
		(12,.1529^12)
		(13,.1529^13)
		(14,.1529^14)
		(15,.1529^15)
	};
\end{axis}
\end{tikzpicture}
\end{figure}

%s9050%s9090%s9090%s9090%s9090%s9090%s9090%s9090%s9090%s9090%s9090%s9090
\section{Consecutive Squarefree Primitive Roots} \label{s9050}
The result for the existence of multiple consecutive squarefree primitive roots seems to be new in the literature. The first cases for 2 consecutive squarefree primitive roots $n$ and $n+1$, and 3 consecutive squarefree primitive roots $n$, $n+1$ and $n+2$ are feasible. But, the existence of 4 consecutive squarefree primitive roots $n$, $n+1$, $n+2$ and $n+3$ is infeasible. However, there are quasi consecutive squarefree primitive roots of length $k \ll \log p$ for a wide range of prime numbers. To describe these possibilities, let $(a_0, a_1, a_2, \ldots, a_k)$ be a fixed integers $(k+1)$-tuple of distinct integers. A string of $k+1$ quasi consecutive squarefree primitive roots $n+a_0, n+a_1, n+a_2, \ldots, n+a_k$ is a solution of the systems of equations: 
\begin{enumerate} [font=\normalfont, label=\arabic*.]
\item$ \displaystyle 
\tau^{n_0}=n+a_0, \quad \tau^{n_1}=n+a_1, \quad \tau^{n_2}=n+a_2, \quad \ldots, \quad \tau^{n_k}=n+a_k,$  \tabto{9cm} the primitive root condition.
\item$ \displaystyle 
\mu^{2}(n+a_0)=1, \quad \mu^{2}(n+a_1)=1,\quad \mu^{2}(n+a_2)=1,\quad \ldots, \quad \mu^{2}(n+a_k)=1,$ \tabto{9cm} the squarefree condition.
\end{enumerate}
A solution is a tuple $(n, n_0, n_1, \ldots , n_k) \in \N ^{k+2}$, with $\gcd(n_i,p-1)=1$, for $i=0,1, \ldots k$.  Let
\begin{equation} \label{eq9050.09}
N_2(k,p)=\# \left \{ n \in \F_p: \ord_p (n+a_i)=p-1, \mu^{2}(n+a_i)=1 \right \}
\end{equation}
for $i=0, 1, \ldots, k$, denotes the number of solutions.

%tttttttttttttttttttttttttttttttttttttttt
\subsection{Strings Of $k+1$ Consecutive Squarefree Primitive Roots} \label{s8800}

 \begin{proof}{\normalfont (Theorem \ref{thm8800.050}): } The total number of solutions is written in terms of characteristic function for primitive roots, see Lemma \ref{lem333.03}, and the characteristic function for squarefree integers, see Lemma \ref{lem297.58}, as
\begin{eqnarray} \label{eq9050.018}
N_2(k,p)&=&\sum_{ n\in \F_p}   \prod_{0 \leq i\leq k} \Psi \left(n+a_i\right)\mu^{2}(n+a_i) 
\\
&=&\sum_{ n\in \F_p}    \prod_{0 \leq i\leq k}  \left (\frac{1}{p}\sum_{\substack{\gcd(n_i,p-1)=1\\
0\leq u_i\leq p-1}} \psi \left((\tau ^{n_i}-n-a_i)u_i)\right) \right )  \nonumber\\
&=&M_2(k,p) + E_2(k,p)\nonumber.
\end{eqnarray} 
The main term is determined by the indices $u_0=u_1=\cdots=u_k=0$, and has the form 
\begin{equation} \label{eq9050.020}
M_2(k,p)
=\sum_{ n\in \F_p}   \prod_{0 \leq i\leq k}\left (\frac{\mu^{2}(n+a_i)}{p}\sum_{\gcd(n_i,p-1)=1} 1 \right )  ,
\end{equation} 
and the error term  is determined by the indices $u_0\ne0,u_1\ne0,\ldots,u_k\ne0$, and has the form 
\begin{equation} \label{eq9050.022}
E_2(k,p)
=\sum_{ n\in \F_p}   \prod_{0 \leq i\leq k} \left (\frac{\mu^{2}(n+a_i)}{p}\sum_{\substack{\gcd(n_i,p-1)=1\\
1\leq u_i\leq p-1}} \psi \left((\tau ^{n_i}-n-a_i)u_i)\right) \right )  .
\end{equation}

Applying Lemma \ref{lem9709.76} to the main term and Lemma \ref{lem8829.08} to the error term, yield
\begin{eqnarray} \label{eq9050.028}
N_2(k,p)
&=&M_2(k,p) + E_2(k,p)\\
&=& \prod_{q\geq 2}\left ( 1-\frac{\omega(q)}{q^2}\right )\left (\frac{\varphi(p-1)}{p-1} \right )^{k+1}p+ O\left ( p^{2/3}\right )+O(p^{1-\varepsilon})\nonumber\\
&=& \prod_{q\geq 2}\left ( 1-\frac{\omega(q)}{q^2}\right )\left (\frac{\varphi(p-1)}{p-1} \right )^{k+1}p+O(p^{1-\varepsilon})\nonumber\\
&>&0 \nonumber,
\end{eqnarray} 
for all sufficiently large primes $p \geq 2$, and an arbitrary small number $\varepsilon>0$.
\end{proof}

%ttttttttttttttttttttttttttttttttttttttttttttttttttttttttttttttttttttttttt
\subsection{Squarefree Primitive Roots}

Let $p \geq 2$ be a large prime, and let $\tau \in \F_p$ be a primitive root. A squarefree primitive root $n\in \F_p$ exists if and only if the system of equations 
\begin{equation} \label{eq9050.90}
\tau^{m}=n \quad \text{ and } \quad \mu^{2}(n)=1,
\end{equation}
has one or more solutions $(m, n) \in \N \times \N$ such that $\gcd(m, p-1)=1$, and $n \geq 2$. Let
\begin{equation} \label{eq9010.09}
N_2( p)=\#\left \{ n \in \F_p: \ord_p (n)=p-1 \text{ and } \mu^{2}(n)=1 \right \}
\end{equation}
denotes the number of solutions. 

\begin{thm}  \label{thm9010.040}  For any large prime \(p\geq 2\), the finite field $\F_p$ contains squarefree primitive roots. Furthermore, the total number has the asymptotic formula
\begin{equation} \label{eq9010.045}
N_2(p)=\prod_{q \geq 2}\left (1-\frac{1}{q^2}\right )\left (\frac{\varphi(p-1)}{p} \right )p+O(p^{1-\varepsilon}) ,
\end{equation} 
where $\varepsilon>0$ is an arbitrary small number.
\end{thm}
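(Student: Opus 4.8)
The plan is to treat this as the $k=0$ instance of the machinery already assembled for Theorem~\ref{thm8800.050}, so the argument mirrors the one in Section~\ref{s9050} with a single factor rather than a product over $0\le i\le k$. First I would express the counting function through the two characteristic functions involved: invoking the divisors-free representation of $\Psi$ from Lemma~\ref{lem333.03} together with the squarefree indicator $\mu^2(n)$,
\[
N_2(p)=\sum_{n\in\F_p}\Psi(n)\mu^2(n)=\sum_{n\in\F_p}\mu^2(n)\left(\frac{1}{p}\sum_{\substack{\gcd(m,p-1)=1\\0\le u\le p-1}}\psi\left((\tau^m-n)u\right)\right).
\]
Splitting the inner sum according to whether $u=0$ or $u\ne0$ produces a main term $M_2(p)$ and an error term $E_2(p)$, exactly as in \eqref{eq9050.018}.

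For the main term the $u=0$ contribution collapses the character sum to $\sum_{\gcd(m,p-1)=1}1=\varphi(p-1)$, so that
\[
M_2(p)=\frac{\varphi(p-1)}{p}\sum_{n\in\F_p}\mu^2(n).
\]
I would then apply Lemma~\ref{lem9339.107} with $x=p$, using $\sum_{n\le p}\mu^2(n)=\frac{6}{\pi^2}p+O(p^{1/2})$ together with the Euler product $\frac{6}{\pi^2}=\prod_{q\ge2}\left(1-\frac{1}{q^2}\right)$ recorded in \eqref{eq9339.72}. This gives
\[
M_2(p)=\prod_{q\ge2}\left(1-\frac{1}{q^2}\right)\frac{\varphi(p-1)}{p}\,p+O\left(p^{1/2}\right),
\]
which is already in the form claimed in \eqref{eq9010.045}; note that here the factor $\varphi(p-1)/p$ is retained as stated, so no readjustment to $\varphi(p-1)/(p-1)$ is needed.

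The remaining piece is the $u\ne0$ part, and this is where essentially all of the genuine work sits. It is controlled by Lemma~\ref{lem8809.02}, the error estimate for $s$-power free primitive roots specialized to $s=2$ so that $\mu_2=\mu^2$; that lemma in turn rests on the nontrivial exponential sum bound of Theorem~\ref{thm333.04} after the substitution furnished by Lemma~\ref{lem333.22}, yielding $E_2(p)\ll p^{1-\varepsilon}$ for any $\varepsilon\in(0,1/2)$. Combining the two estimates,
\[
N_2(p)=M_2(p)+E_2(p)=\prod_{q\ge2}\left(1-\frac{1}{q^2}\right)\frac{\varphi(p-1)}{p}\,p+O\left(p^{1-\varepsilon}\right),
\]
and since the main term is of order $p/\log\log p$ by Theorem~\ref{thm9192.21} while the error is $O(p^{1-\varepsilon})$, the count is strictly positive for all large $p$, which proves existence. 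The only obstacle is the error-term bound, inherited from the exponential sum estimates; everything else is bookkeeping derived from the single-variable squarefree count and the primitive root density.
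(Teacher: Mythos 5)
Your proposal is correct and follows essentially the same route as the paper: the divisors-free characteristic function of Lemma \ref{lem333.03}, the split into the $u=0$ main term and $u\neq0$ error term, the squarefree count $\tfrac{6}{\pi^2}p+O(p^{1/2})$ for the main term, and the exponential-sum bound $O(p^{1-\varepsilon})$ for the error. The only cosmetic differences are that the paper routes the main term through Lemma \ref{lem9739.06} (the $s$-power free lemma at $s=2$) rather than Lemma \ref{lem9339.107} directly, and cites Lemma \ref{lem8829.08} rather than Lemma \ref{lem8809.02} for the error term; these are interchangeable here.
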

\begin{proof}The total number of solutions is written in terms of characteristic function for primitive roots, see Lemma \ref{lem333.03}, and the characteristic function for squarefree integers, see Lemma \ref{lem297.58}, as
\begin{eqnarray} \label{eq9010.018}
\sum_{ n\in \F_p}  \Psi \left(n\right)\mu^{2}(n) 
&=&\sum_{ n\in \F_p}   \left (\frac{\mu^{2}(n)}{p}\sum_{\substack{\gcd(m,p-1)=1\\
0\leq u\leq p-1}} \psi \left((\tau ^{m}-n)u\right) \right )  \nonumber \\
&=&M_2(p) + E_2(p).
\end{eqnarray} 
The main term $M_2(p)$ is determined by the index $u=0$, and the error term $E_2(p)$ is determined by the index $u\ne0$. Applying Lemma \ref{lem9739.06} to the main term and Lemma \ref{lem8829.08} to the error term, yield
\begin{eqnarray} \label{eq9010.028}
N_2(p)
&=&M_2(p) + E_2(p)\\
&=& \prod_{q \geq 2}\left (1-\frac{1}{q^2}\right )\left (\frac{\varphi(p-1)}{p} \right )p+ O\left ( p^{1/2}\right )+O(p^{1-\varepsilon})\nonumber\\
&=&\prod_{q \geq 2}\left (1-\frac{1}{q^2}\right )\left (\frac{\varphi(p-1)}{p} \right )p+O(p^{1-\varepsilon})\nonumber\\
&>&0 \nonumber,
\end{eqnarray} 
for all sufficently large primes $p \geq 2$, and an arbitrary small number $\varepsilon>0$.
\end{proof}

%ttttttttttttttttttttttttttttttttttttttttttttttttttttttttttttttttttttttttt
\subsection{Squarefree Twin Primitive Roots}

Let $p \geq 2$ be a large prime, and let $\tau \in \F_p$ be a primitive root. Each squarefree twin primitive roots $n+a_0$ and $ n+a_1$ is a solution of the systems of equations
\begin{enumerate} [font=\normalfont, label=\arabic*.]
\item$ \displaystyle 
\tau^{n_0}=n+a_0, \quad \tau^{n_1}=n+a_1, $  \tabto{8cm} the primitive root condition.
\item$ \displaystyle 
\mu^{2}(n+a_0)=1, \quad \mu^{2}(n+a_1)=1,$ \tabto{8cm} the squarefree condition.
\end{enumerate}
A solution is a triple $(n,n_0,n_1)\in \N \times \N \times \N$ such that $\gcd(n_i,p-1)=1$ for $i=0,1$. Let 
\begin{equation} \label{eq9040.09}
N_2(2,p)=\#\left \{ n \in \F_p: \ord_p (n+a_i)=p-1, \text{ and } \mu^{2}(n+a_i)=1 \right \}
\end{equation}
for $i=0, 1$, denotes the number of solutions. 
\begin{thm}  \label{thm9040.040}  For any large prime \(p\geq 2\), the finite field $\F_p$ contains $2$ consecutive squarefree primitive roots. Furthermore, the number of pairs has the asymptotic formula
\begin{equation} \label{eq9040.045}
N_2(2,p)=\prod_{q \geq 2}\left (1-\frac{2}{q^2}\right )\left (\frac{\varphi(p-1)}{p-1} \right )^2p+O(p^{1-\varepsilon}) ,
\end{equation} 
where $\varepsilon>0$ is an arbitrary small number. The simplest case is for $a_0=0, a_1=1$.
\end{thm}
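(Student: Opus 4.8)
The plan is to mirror the assembly used for Theorem~\ref{thm8800.050}, specialized to the pair $k=1$ with the admissible indices $a_0=0$, $a_1=1$; the setup preceding the statement already records the relevant system of equations and the counting function $N_2(2,p)$. First I would rewrite $N_2(2,p)$ using the divisors-free characteristic function for primitive roots (Lemma~\ref{lem333.03}) together with the squarefree characteristic function (Lemma~\ref{lem297.58}), obtaining
\[
N_2(2,p)=\sum_{n\in\F_p}\Psi(n+a_0)\mu^2(n+a_0)\,\Psi(n+a_1)\mu^2(n+a_1).
\]
Inserting the additive-character representation of $\Psi$ turns each factor into an inner sum over an index $u_i$, so the whole expression becomes a sum over the pair $(u_0,u_1)$.

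Next I would split this into a main term $M_2(2,p)$, carved out by the diagonal choice $u_0=u_1=0$, and an error term $E_2(2,p)$ collecting the remaining indices. The main term is exactly the quantity evaluated in Lemma~\ref{lem9709.77}, which gives
\[
M_2(2,p)=\prod_{q\geq 2}\left(1-\frac{2}{q^2}\right)\left(\frac{\varphi(p-1)}{p-1}\right)^2 p+O\left(p^{2/3}\right);
\]
here the constant $\prod_{q\geq2}(1-2/q^2)$ originates in the correlation estimate for consecutive squarefree integers, Lemma~\ref{lem8009.41}. For the error term I would invoke Lemma~\ref{lem8829.08} in the case $k=1$, yielding $|E_2(2,p)|\ll p^{1-\varepsilon}$; this in turn rests on the exponential-sum bound of Theorem~\ref{thm333.04} and the transfer relation of Lemma~\ref{lem333.22}.

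Combining the two pieces, the $O(p^{2/3})$ from the main term is absorbed into the error $O(p^{1-\varepsilon})$ as soon as $\varepsilon<1/3$, which delivers the stated asymptotic formula \eqref{eq9040.045}. Since the main term has exact order $p$, positivity $N_2(2,p)>0$ for all sufficiently large $p$ is immediate, establishing existence. The only genuine difficulty, were the supporting lemmas not already in hand, would be twofold: proving the correlation density $\prod_{q\geq2}(1-2/q^2)$ for squarefree pairs (a sieve computation supplied by Lemma~\ref{lem8009.41}), and obtaining the power-saving bound for $E_2(2,p)$ uniformly over the character indices (supplied by Lemma~\ref{lem8829.08}). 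With both available, the remaining work is the purely mechanical splitting and recombination sketched above.
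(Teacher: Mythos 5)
Your proposal follows the paper's own proof essentially verbatim: the same expansion via Lemma \ref{lem333.03} and Lemma \ref{lem297.58}, the same split into the $u_0=u_1=0$ main term handled by Lemma \ref{lem9709.77} (which indeed rests on the correlation estimate of Lemma \ref{lem8009.41}) and the error term handled by Lemma \ref{lem8829.08}, and the same absorption of the $O(p^{2/3})$ term into $O(p^{1-\varepsilon})$. No substantive differences to report.
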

\begin{proof}The total number of solutions is written in terms of characteristic function for primitive roots, see Lemma \ref{lem333.03}, and the characteristic function for squarefree integers, see Lemma \ref{lem297.58}, as
\begin{eqnarray} \label{eq9040.018}
\N_2(2,p)
&=&sum_{ n\in \F_p}  \Psi \left(n+a_0\right)\Psi \left(n+a_1\right) \mu^{2}(n+a_0) \mu^{2}(n+a_1)\\
&=&\sum_{ n\in \F_p}   \left (\frac{\mu^{2}(n+a_0)}{p}\sum_{\substack{\gcd(n_0,p-1)=1\\
0\leq u_0\leq p-1}} \psi \left((\tau ^{n_0}-n-a_0)u_0\right) \right )  \nonumber \\
&& \times \left (\frac{\mu^{2}(n+a_1)}{p}\sum_{\substack{\gcd(n_1,p-1)=1\\
0\leq u_1\leq p-1}} \psi \left((\tau ^{n_1}-n-a_1)u_1\right) \right )\nonumber\\
&=&M_2(2,p) + E_2(2,p)\nonumber.
\end{eqnarray} 
The main term $M_2(2,p)$ is determined by the indices $u_0=u_1=0$, and the error term $E_2(2,p)$ is determined by the indices $u_0\ne0,u_1\ne0$. Applying Lemma \ref{lem9709.77} to the main term and Lemma \ref{lem8829.08} to the error term, yield
\begin{eqnarray} \label{eq9040.028}
N_2(2,p)
&=&M_2(2,p) + E_2(2,p)\\
&=& \prod_{q \geq 2}\left (1-\frac{2}{q^2}\right )\left (\frac{\varphi(p-1)}{p-1} \right )^2p+ O\left ( p^{2/3}\right )+O(p^{1-\varepsilon})\nonumber\\
&=& \prod_{q \geq 2}\left (1-\frac{2}{q^2}\right )\left (\frac{\varphi(p-1)}{p-1} \right )^2p+O(p^{1-\varepsilon})\nonumber\\
&>&0 \nonumber,
\end{eqnarray} 
for all sufficently large primes $p \geq 2$, and an arbitrary small number $\varepsilon>0$.
\end{proof}

%ttttttttttttttttttttttttttttttttttttttttttttttttttttttttttttttt
\subsection{Squarefree Triple Primitive Roots}

Let $p \geq 2$ be a large prime, and let $\tau \in \F_p$ be a primitive root. Each squarefree triple primitive roots $n+a_0$, $ n+a_1$, and $ n+a_2$ is a solution of the systems of equations
\begin{enumerate} [font=\normalfont, label=\arabic*.]
\item$ \displaystyle 
\tau^{n_0}=n+a_0, \quad \tau^{n_1}=n+a_1, \quad \tau^{n_2}=n+a_2,$  \tabto{9cm} the primitive root condition.
\item$ \displaystyle 
\mu^{2}(n+a_0)=1, \quad \mu^{2}(n+a_1)=1,\quad \mu^{2}(n+a_2)=1,$ \tabto{9cm} the squarefree condition.
\end{enumerate}
A solution is a triple $(n,n_0,n_1,n_2)\in \N ^4$ such that $\gcd(n_i,p-1)=1$ for $i=0,1,2$. Let 
\begin{equation} \label{eq9030.09}
N_2(3,p)=\#\left \{ n \in \F_p: \ord_p (n+a_i)=p-1, \text{ and } \mu^{2}(n+a_i)=1 \right \}
\end{equation}
for $i=0, 1, 2$, denotes the number of solutions. 
\begin{thm}  \label{thm9030.050}  For any large prime \(p\geq 2\), the finite field $\F_p$ contains $3$ consecutive squarefree primitive roots. Furthermore, the number of pairs has the asymptotic formula
\begin{equation} \label{eq9030.045}
N_2(3,p)=\prod_{q \geq 2}\left (1-\frac{3}{q^2}\right )\left (\frac{\varphi(p-1)}{p-1} \right )^3p+O(p^{1-\varepsilon}) ,
\end{equation} 
where $\varepsilon>0$ is an arbitrary small number.
\end{thm}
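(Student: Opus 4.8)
The plan is to follow the same three-step template used for Theorem \ref{thm9040.040}, now with three shifts instead of two. First I would encode the counting function $N_2(3,p)$ of \eqref{eq9030.09} as a sum over $n \in \F_p$ of the product $\prod_{0 \leq i \leq 2} \Psi(n+a_i)\mu^{2}(n+a_i)$, where $\Psi$ is the characteristic function of primitive roots. Substituting the divisors-free representation of $\Psi$ from Lemma \ref{lem333.03} introduces, for each factor, an inner double sum over $\gcd(n_i,p-1)=1$ and $0 \leq u_i \leq p-1$ weighted by the additive character $\psi$. This rewrites $N_2(3,p)$ as a single sum whose terms are indexed by the triple $(u_0,u_1,u_2)$, and sets up the usual main-term/error-term dichotomy.

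Next I would split the expansion according to whether the $u_i$ vanish. The main term $M_2(3,p)$ collects the contribution of $u_0=u_1=u_2=0$, which removes the characters and leaves
\[
M_2(3,p)=\sum_{n\in \F_p}\prod_{0\leq i\leq 2}\left(\frac{\mu^{2}(n+a_i)}{p}\sum_{\gcd(n_i,p-1)=1}1\right).
\]
This is precisely the quantity evaluated in Lemma \ref{lem9729.78}, whose value rests on the correlation result Lemma \ref{lem8009.44} (equivalently Theorem \ref{thm8009.10} with $k=2$, $s=2$); it produces $\prod_{q\geq 2}(1-3/q^{2})(\varphi(p-1)/(p-1))^{3}p+O(p^{2/3})$. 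The complementary error term $E_2(3,p)$, carrying all indices with some $u_i\neq 0$, is controlled by Lemma \ref{lem8829.08}, the three-factor instance of the exponential-sum estimate, which yields $E_2(3,p)\ll p^{1-\varepsilon}$; here I would use $\varphi(p-1)/p\leq 1$ to absorb the extra factor coming from the third term into the trivial bound exactly as in the two-variable case.

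Combining the two pieces gives the claimed formula, and since the $O(p^{2/3})$ remainder is dominated by $O(p^{1-\varepsilon})$ for small $\varepsilon$, one obtains $N_2(3,p)=\prod_{q\geq 2}(1-3/q^{2})(\varphi(p-1)/(p-1))^{3}p+O(p^{1-\varepsilon})$. The positivity $N_2(3,p)>0$ for all large $p$ then follows once I confirm that the density constant does not collapse: at $q=2$ the factor is $1-3/4=1/4>0$ and every later factor is positive, so $\prod_{q\geq 2}(1-3/q^{2})=0.1255\ldots>0$ as recorded in \eqref{eq1180.140}. This is the one genuinely delicate point, and I expect it to be the main obstacle — the difficulty is feasibility rather than analysis, since the analogous constant for four consecutive shifts acquires the vanishing factor $1-4/4=0$ at $q=2$, annihilating the main term. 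This is exactly why $n,n+1,n+2,n+3$ cannot all be squarefree primitive roots and why the theorem halts at three consecutive terms; the remainder of the argument is a verbatim transcription of the proof of Theorem \ref{thm9040.040} with one additional factor throughout.
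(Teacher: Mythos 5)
Your proposal follows the paper's proof essentially verbatim: the same encoding via the divisors-free characteristic function of Lemma \ref{lem333.03}, the same split into the main term at $u_0=u_1=u_2=0$ evaluated by Lemma \ref{lem9729.78} (resting on the triple correlation Lemma \ref{lem8009.44}) and the error term bounded by Lemma \ref{lem8829.08}, and the same absorption of $O(p^{2/3})$ into $O(p^{1-\varepsilon})$. Your added remark on the positivity of $\prod_{q\geq 2}(1-3/q^{2})$ and its collapse at four shifts is correct and consistent with the paper's discussion in Sections \ref{s1180} and \ref{s9050}, though the paper's proof itself does not spell it out.
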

\begin{proof}The simplest case is for $a_0=0, a_1=1,a_2=2$. The total number of solutions is written in terms of characteristic function for primitive roots, see Lemma \ref{lem333.03}, and the characteristic function for squarefree integers, see Lemma \ref{lem297.58}, as
\begin{eqnarray} \label{eq9030.018}
N_2(3,p)&=&\sum_{ n\in \F_p}  \Psi \left(n\right)\Psi \left(n+1\right) \Psi \left(n+2\right) \mu^{2}(n) \mu^{2}(n+1)\mu^{2}(n+2)\\
&=&\sum_{ n\in \F_p}   \left (\frac{\mu^{2}(n)}{p}\sum_{\substack{\gcd(n_0,p-1)=1\nonumber\\
0\leq u_0\leq p-1}} \psi \left((\tau ^{n_0}-n)u_0\right) \right )  \nonumber \\
&& \times \left (\frac{\mu^{2}(n+1)}{p}\sum_{\substack{\gcd(n_1,p-1)=1\\
0\leq u_1\leq p-1}} \psi \left((\tau ^{n_1}-n-1)u_1\right) \right )\nonumber\\
&& \times \left (\frac{\mu^{2}(n+1)}{p}\sum_{\substack{\gcd(n_2,p-1)=1\\
0\leq u_2\leq p-1}} \psi \left((\tau ^{n_2}-n-2)u_2\right) \right )\nonumber\\
&=&M_2(3,p) + E_2(3,p).
\end{eqnarray} 
The main term $M_2(3,p)$ is determined by the indices $u_0=u_1=u_2=0$, and the error term $E_2(3,p)$ is determined by the indices $u_0\ne0,u_1\ne0,u_2\ne0$. Applying Lemma \ref{lem9729.78} to the main term and Lemma \ref{lem8829.08} to the error term, yield
\begin{eqnarray} \label{eq9030.028}
N_2(3,p)
&=&M_2(3,p) + E_2(3,p)\\
&=&\prod_{q \geq 2}\left (1-\frac{3}{q^2}\right )\left (\frac{\varphi(p-1)}{p-1} \right )^3p+ O\left ( p^{2/3}\right )+O(p^{1-\varepsilon})\nonumber\\
&=& \prod_{q \geq 2}\left (1-\frac{3}{q^2}\right )\left (\frac{\varphi(p-1)}{p-1} \right )^3p+O(p^{1-\varepsilon})\nonumber\\
&>&0 \nonumber,
\end{eqnarray} 
for all sufficiently large primes $p \geq 2$, and an arbitrary small number $\varepsilon>0$.
\end{proof}

%ttttttttttttttttttttttttttttttttttttttttttttttttttttttttttttttttttttttttt
\section{Consecutive $s$-Power Free Primitive Roots}\label{s9110}

\subsection{$s$-Power Free Primitive Roots}
Let $p \geq 2$ be a large prime, and let $\tau \in \F_p$ be a primitive root. A $s$-power free primitive root $n\in \F_p$ exists if and only if the system of equations 
\begin{equation} \label{eq9110.90}
\tau^{m}=n \quad \text{ and } \quad \mu_s(n)=1,
\end{equation}
has one or more solutions $(m, n) \in \N \times \N$ such that $\gcd(m, p-1)=1$, and $n \geq 2$. Let  
\begin{equation} \label{eq9110.09}
N_s(p)=\#\left \{ n \in \F_p: \ord_p (n)=p-1, \mu_s(n)=\pm 1 \right \},
\end{equation}
see Lemma \ref{lem297.58}, denotes the number of solutions. 

\begin{thm}  \label{thm9110.040}  Let $s \geq 2$ be a fixed integer. For any large prime \(p\geq 2\), the finite field $\F_p$ contains squarefree primitive roots. Furthermore, the total number has the asymptotic formula
\begin{equation} \label{eq9110.045}
N_s(p)=\prod_{q \geq 2}\left (1-\frac{1}{q^s}\right )\left (\frac{\varphi(p-1)}{p-1} \right )p+O(p^{1-\varepsilon}) ,
\end{equation} 
where $\varepsilon>0$ is an arbitrary small number.
\end{thm}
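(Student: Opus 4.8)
The plan is to mirror the argument already carried out for the squarefree case in Theorem~\ref{thm9010.040}, specializing its exponent $s=2$ to a general fixed $s\geq 2$. First I would record that the object to estimate is
\begin{equation}
N_s(p)=\sum_{n\in\F_p}\Psi(n)\,\mu_s(n),
\end{equation}
where $\Psi$ is the characteristic function of primitive roots and $\mu_s$ is the $s$-power free characteristic function of Definition~\ref{dfn297.30}. The opening move is to substitute the divisors-free representation of $\Psi$ supplied by Lemma~\ref{lem333.03}, which replaces the primitive-root indicator by a normalized double sum over the index set $\gcd(m,p-1)=1$ and an additive character argument $0\leq u\leq p-1$. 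This recasts the count as
\begin{equation}
N_s(p)=\sum_{n\in\F_p}\left(\frac{\mu_s(n)}{p}\sum_{\substack{\gcd(m,p-1)=1\\0\leq u\leq p-1}}\psi\left((\tau^{m}-n)u\right)\right),
\end{equation}
with $\tau$ a fixed primitive root mod $p$ and $\psi$ a nonprincipal additive character of order $p$.

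Next I would split the inner summation according to $u=0$ and $u\neq 0$, writing $N_s(p)=M_s(p)+E_s(p)$. The diagonal term $u=0$ trivializes the character and leaves the main term $M_s(p)=\sum_{n\in\F_p}(\mu_s(n)/p)\sum_{\gcd(m,p-1)=1}1$, which is exactly the quantity evaluated in Lemma~\ref{lem9739.06}; it equals $\zeta(s)^{-1}(\varphi(p-1)/(p-1))\,p+O(p^{1/s})$. The off-diagonal contribution $u\neq 0$ is precisely the exponential sum $E(s,p)$ bounded in Lemma~\ref{lem8809.02}, so that $E_s(p)\ll p^{1-\varepsilon}$ for any small $\varepsilon\in(0,1/2)$. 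Combining the two pieces and absorbing the smaller error gives
\begin{equation}
N_s(p)=\frac{1}{\zeta(s)}\frac{\varphi(p-1)}{p-1}p+O(p^{1/s})+O(p^{1-\varepsilon})=\frac{1}{\zeta(s)}\frac{\varphi(p-1)}{p-1}p+O(p^{1-\varepsilon}).
\end{equation}

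To finish I would invoke the Euler product $1/\zeta(s)=\prod_{q\geq 2}(1-q^{-s})$ to recast the constant into the form displayed in~\eqref{eq9110.045}. For existence it suffices to note that, by the lower bounds of Theorem~\ref{thm9192.21}, one has $\varphi(p-1)/(p-1)\gg 1/\log\log p$ even in the highly composite worst case, so the main term is of order $p/\log\log p$ and dominates the error $O(p^{1-\varepsilon})$; hence $N_s(p)>0$ for all sufficiently large $p$, and $\F_p$ does contain $s$-power free primitive roots. The one genuine difficulty is concealed inside Lemma~\ref{lem8809.02}, whose error bound rests on the nontrivial cancellation in the exponential sum of Theorem~\ref{thm333.04} over the sparse set $\{\tau^{m}:\gcd(m,p-1)=1\}$; everything outside that input is bookkeeping, so with the lemma in hand the proof is essentially immediate.
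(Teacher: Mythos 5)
Your proposal is correct and follows essentially the same route as the paper: the divisors-free characteristic function of Lemma \ref{lem333.03}, the split into the $u=0$ main term evaluated by Lemma \ref{lem9739.06} and the $u\neq 0$ error term bounded by the exponential-sum estimate (you cite Lemma \ref{lem8809.02}, which is in fact the more apt reference than the paper's own citation of Lemma \ref{lem8829.08}), followed by the Euler product identity $1/\zeta(s)=\prod_{q\geq 2}(1-q^{-s})$ to match the displayed constant. Your explicit positivity argument via $\varphi(p-1)/(p-1)\gg 1/\log\log p$ is a welcome detail the paper leaves implicit, but the substance of the argument is identical.
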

\begin{proof} (Theorem \ref{thm8800.190}): The total number of solutions is written in terms of characteristic function for primitive roots, see Lemma \ref{lem333.03}, and the characteristic function for $s$-power free integers, see Lemma \ref{lem297.58}, as
\begin{eqnarray} \label{eq9110.018}
\sum_{ n\in \F_p}  \Psi \left(n\right)\mu_s(n) 
&=&\sum_{ n\in \F_p}   \left (\frac{\mu_s(n)}{p}\sum_{\substack{\gcd(n,p-1)=1\\
0\leq u\leq p-1}} \psi \left((\tau ^{n}-n)u\right) \right )  \nonumber \\
&=&M_s(p) + E_s(p).
\end{eqnarray} 
The main term $M_s(p)$ is determined by the indices $u=0$, and the error term $E_s(p)$ is determined by the indices $u\ne0$. Applying Lemma \ref{lem9739.06} to the main term and Lemma \ref{lem8829.08} to the error term, yield
\begin{eqnarray} \label{eq9110.028}
N_s( p)
&=&M_s(p) + E_s(p)\\
&=&\prod_{q \geq 2}\left (1-\frac{1}{q^s}\right )\left (\frac{\varphi(p-1)}{p-1} \right )p+ O\left ( p^{1/s}\right )+O(p^{1-\varepsilon})\nonumber\\
&=& \prod_{q \geq 2}\left (1-\frac{1}{q^s}\right )\left (\frac{\varphi(p-1)}{p-1} \right )p+O(p^{1-\varepsilon})\nonumber\\
&>&0 \nonumber,
\end{eqnarray} 
for all sufficiently large primes $p \geq 2$, and an arbitrary small number $\varepsilon>0$.
\end{proof}

%ttttttttttttttttttttttttttttttttttttttttttttttttttttttttttttttttttttttttt
\subsection{$s$-Power Free Twin Primitive Roots}

Given a triple of small integers $a_0\ne a_1 $ and $s \geq 2$. Let $p \geq 2$ be a large prime, and let $\tau \in \F_p$ be a primitive root. Each string of $2$ consecutive $s$-powerfree primitive roots $n+a_0$ and $ n+a_1$ is a solution of the systems of equations: 
\begin{enumerate} [font=\normalfont, label=\arabic*.]
\item$ \displaystyle 
\tau^{n_0}=n+a_0, \quad \tau^{n_1}=n+a_1;$  \tabto{8cm} the primitive root condition.
\item$ \displaystyle 
\mu_s(n+a_0)=1, \quad \mu_s(n+a_1)=1$;  \tabto{8cm}  the $s$-power free condition.
\end{enumerate}
A solution is a triple $(n, n_0, n_1) \in \N \times \N \times \N$, with $\gcd(n_i,p-1)=1$, for $i=0,1$. Let 
\begin{equation} \label{eq9110.09}
N_s(2,p,a)=\#\left \{ n \in \F_p: \ord_p (n+a_i)=p-1, \text{ and } \mu_s(n+a_i)=\pm 1 \right \},
\end{equation}
for $i=0,1$, denotes the number of solutions. 

\begin{proof} (Theorem \ref{thm8800.195}): The total number of solutions is written in terms of characteristic function for primitive roots, see Lemma \ref{lem333.03}, and the characteristic function for squarefree integers, see Lemma \ref{lem297.58}, as
\begin{eqnarray} \label{eq9110.018}
N_s(2,p)
&=&\sum_{ n\in \F_p}  \Psi \left(n+a_0\right)\Psi \left(n+a_1\right) \mu_s(n+a_0) \mu_s(n+a_1)\\
&=&\sum_{ n\in \F_p}   \left (\frac{\mu_s(n+a_0)}{p}\sum_{\substack{\gcd(n_0,p-1)=1\\
0\leq u_0\leq p-1}} \psi \left((\tau ^{n_0}-n-a_0)u_0\right) \right )  \nonumber \\
&& \times \left (\frac{\mu_s(n+a)}{p}\sum_{\substack{\gcd(n_1,p-1)=1\\
0\leq u_1\leq p-1}} \psi \left((\tau ^{n_1}-n-a_1)u_1\right) \right )\nonumber\\
&=&M_s(2,p) + E_s(2,p)\nonumber.
\end{eqnarray} 
The main term $M_s(2,p)$ is determined by the indices $u_0=u_1=0$, and the error term $E_s(2,p)$ is determined by the indices $u_0\ne0,u_1\ne0$. Applying Lemma \ref{lem9715.77} to the main term and Lemma \ref{lem8829.08} to the error term, yield
\begin{eqnarray} \label{eq9110.028}
N_s(2,p)
&=&M_s(2,p) + E_s(2,p)\\
&=& \prod_{q \geq 2}\left (1-\frac{\rho(s)}{q^s}\right )\left (\frac{\varphi(p-1)}{p-1} \right )^2p+ O\left (p^{\alpha(s)-\varepsilon}\right )+O(p^{1-\varepsilon})\nonumber\\
&=& \prod_{q \geq 2}\left (1-\frac{\rho(s)}{q^s}\right )\left (\frac{\varphi(p-1)}{p-1} \right )^2p+O(p^{1-\varepsilon})\nonumber\\
&>&0 \nonumber,
\end{eqnarray} 
where $\rho(s)=1,2$, and $\varepsilon>0$ is an arbitrary small number, for all sufficiently large primes $p \geq 2$.
\end{proof}

%rrrrrrrrrrrrrrrrrrrrrrrrrrrrrrrrrrrrrrrrrrrrrrrrrrrrr
\section{Relatively Prime Primitive Roots } \label{s2887}
The first proof based on Lemma \ref{lem333.02} and restricted to $q=p-1$ was given in \cite{HM76}. A new proof based on Lemma \ref{lem333.03}, and for any $q\leq p-1$, is given here. The second result for consecutive and relatively prime to $q\geq2$ appears to be a new result in the literature.

\subsection{Relatively Prime Primitive Roots}
\begin{proof} (Theorem \ref{thm8800.090}) For a large prime $p\geq 2$, the total number of primitive roots relatively prime to a fixed integer $q$ is precisely
\begin{equation} \label{eq2887.40}
N_r(p,q)=\sum _{\substack{n\in \F_p\\
\gcd(n,q)=1}} \Psi (n).
\end{equation}
In terms of characteristic function for primitive roots, see Lemma \ref{lem333.03}, this is written as
\begin{eqnarray} \label{eq2887.50}
N_r(p,q)&=&\sum _{\substack{n\in \F_p\\
\gcd(n,q)=1}} \Psi (n) \nonumber \\
&=&\sum _{\substack{n\in \F_p\\
\gcd(n,q)=1}}   \left (\frac{1}{p}\sum_{\gcd(m,p-1)=1,} \sum_{ 0\leq u\leq p-1} \psi \left((\tau ^m-n)u\right) \right ) \\
&=& \frac{1}{p} \sum _{\substack{n\in \F_p\\
\gcd(n,q)=1}}  \sum_{\gcd(m,p-1)=1} 1+\frac{1}{p}\sum _{\substack{n\in \F_p\\
\gcd(n,q)=1}}
\sum_{\gcd(m,p-1)=1,} \sum_{ 0<u\leq p-1} \psi \left((\tau ^m-n)u\right)\nonumber\\
&=&M_r(p,q) + E_r(p,q)\nonumber.
\end{eqnarray} 
The main term $M_r(p,q)$ is determined by a finite sum over the trivial additive character $\psi =1$, and the error term $E_r(p,q)$ is determined by a finite sum over the nontrivial additive characters \(\psi(t) =e^{i 2\pi t  /p}\neq 1\). Applying Lemma \ref{lem9729.256} to the main term and Lemma \ref{lem8899.09} to the error term, yield
\begin{eqnarray} \label{eq2887.60}
N_r(p,q)
&=&M_r(p,q) + E_r(p,q)\\
&=&\frac{\varphi(q)}{q}\frac{\varphi(p-1)}{p-1}p+O(\log^2 p)+O(p^{1-\varepsilon}) \nonumber\\
&=&\frac{\varphi(q)}{q}\frac{\varphi(p-1)}{p-1}p+O(p^{1-\varepsilon}) \nonumber\\
&>&0 \nonumber,
\end{eqnarray} 
for all sufficiently large primes $p \geq 2$, and an arbitrary small number $\varepsilon>0$.
\end{proof}

%ddddddddddddddddddddddddddddddddd
\subsection{Relatively Prime Twin Primitive Roots}
The dependence correction factor $c_2(q,a)\geq 0$, and the parameter $q=q(a)$ depends on $a \geq 1$.  For instance, for $a=1$, the value $q=q(a)$ must be odd, and $c_2(q,a)>0$, otherwise $c_2(q,a)= 0$ for even $q$. Basically, the vanishing and nonvanishing are described in these cases:
\begin{equation}
c_2(q,a)
\left \{
\begin{array}{ll}
>0     &\text{  if } a=2b+1, \text{ and }q=2c+1, \text{ with } b\geq 0,c \geq 0, \\
=0           &\text{  if }a=2b+1, \text{ and }q=2c, \text{ with } b\geq 0,c \geq 1,\\
>0      &\text{  if }a=2b, \text{ and }q\geq 1, \text{ with } b \geq 1. \\
\end{array}
\right .
\end{equation}
To continue the analysis, assume that the parameters $a\geq 1$ and $q\geq 1$ are admissible, and $c_2(q,a)>0$. Let $p \geq 2$ be a large prime, and let $\tau \in \F_p$ be a primitive root. Each pair of quasi consecutive primitive roots  $n$, $ n+a$ and relatively prime to $q=q(a)\geq 2$ is a solution of the systems of equations: 
\begin{enumerate} [font=\normalfont, label=\arabic*.]
\item$ \displaystyle 
\tau^{n_0}=n, \quad \tau^{n_1}=n+a;$  \tabto{8cm} the primitive root condition.
\item$ \displaystyle 
\gcd(n,q)=1, \quad \gcd(n+a,q)=1.$ \tabto{8cm} the relatively prime condition.
\end{enumerate}
A solution is a triple $(n, n_0, n_1) \in \N \times \N \times \N$, with $\gcd(n_i,p-1)=1$, for $i=0,1$.  Let 
\begin{equation} \label{2887.132}
N_r(2,p,q)=\#\left \{ n \in \F_p: \ord_p (n)=\ord_p (n+a)=p-1, \gcd(n,q)= \gcd(n+1,q)=1 \right \}
\end{equation}
denotes the number of solutions. 

\begin{proof} (Theorem \ref{thm8800.095}): For a large prime $p\geq 2$, the total number of pairs of quasi consecutive primitive roots, both relatively prime to a fixed integer $q\geq 2$, is precisely
\begin{equation} \label{eq2887.40}
N_r(2,p,q)=\sum _{\substack{n\in \F_p\\
\gcd(n,q)=1\\
\gcd(n+a,q)=1}} \Psi (n)\Psi (n+a).
\end{equation}
In terms of characteristic function for primitive roots, see Lemma \ref{lem333.03}, this is written as
\begin{eqnarray} \label{eq2887.50}
N_r(2,p,q)&=&  \sum _{\substack{n\in \F_p\\
\gcd(n,q)=1\\
\gcd(n+a,q)=1}} \left (\frac{1}{p} \sum_{ \substack{0\leq u\leq p-1\\ \gcd(c,p-1)=1}} \psi \left((\tau ^c-n)u\right) \right )  \left (\frac{1}{p} \sum_{ \substack{0\leq v\leq p-1\\\gcd(d,p-1)=1}} \psi \left((\tau ^d-n-a)v\right) \right )\nonumber\\
&=&M_r(2,p,q) + E_r(2,p,q).
\end{eqnarray} 
The main term $M_r(2, p,q)$, which is determined by the indices $u=v=0$, has the form 
\begin{equation} \label{eq2887.140}
M_s(2,p,q)
=\sum _{\substack{n\in \F_p\\
\gcd(n,q)=1\\
\gcd(n+a,q)=1}} \left (\frac{1}{p} \sum_{ \substack{0\leq u\leq p-1\\\gcd(c,p-1)=1}}1 \right )  \left (\frac{1}{p} \sum_{ \substack{0\leq v\leq p-1\\\gcd(d,p-1)=1}} 1\right )  ,
\end{equation} 
and the error term $E_r(2,p,q)$, which is determined by the indices $u\ne0,v\ne0$, has the form 
\begin{equation} \label{eq2887.123}
E_r(2,p,q)
=\sum _{\substack{n\in \F_p\\
\gcd(n,q)=1\\
\gcd(n+a,q)=1}} \left (\frac{1}{p} \sum_{ \substack{1\leq u\leq p-1\\\gcd(c,p-1)=1}} \psi \left((\tau ^c-n)u\right) \right )  \left (\frac{1}{p} \sum_{ \substack{1\leq v\leq p-1\\\gcd(d,p-1)=1}} \psi \left((\tau ^d-n-a)v\right) \right ) .
\end{equation}

Applying Lemma \ref{lem9729.356} to the main term and Lemma \ref{lem8899.06} to the error term, yield

\begin{eqnarray} \label{eq2887.133}
N_r(2, p,q)
&=&M_r(2,p,q) + E_r(2, p,q)\\
&=&c_2(q,a)\left (\frac{\varphi(q)}{q}\right )^2\frac{\varphi(p-1)}{p-1}p+O(p^{\varepsilon})+O(p^{1-\varepsilon}) \nonumber\\
&=&c_2(q,a)\left (\frac{\varphi(q)}{q}\right )^2\frac{\varphi(p-1)}{p-1}p+O(p^{1-\varepsilon}) \nonumber\\
&>&0 \nonumber,
\end{eqnarray} 
where $c_2(q,a)>0$ is a dependence correction factor with respect to an admissible pair $a, q\geq 1$, for all sufficiently large primes $p \geq 2$, and an arbitrary small number $\varepsilon>0$.
\end{proof}

%rrrrrrrrrrrrrrrrrrrrrrrrrrrrrrrrrrrrrrrrrrrrrrrrrrrrr
\section{Squarefree And Relatively Prime Primitive Roots} \label{s3387}
The first result for squarefree and relatively prime primitive roots with respect to a fixed integer $q\leq p-1$ is given here. The second result for squarefree and relatively prime twin primitive roots $n$, $n+a$ with respect to a fixed integer $q\geq2$, and conditional on Conjecture \ref{conj8009.105}, is a new result in the literature.

\subsection{Squarefree And Relatively Prime Primitive Roots }

\begin{thm}  \label{thm3387.011}  Let \(p\geq 2\) be a large prime, and let $q=O(\log p) $ be an integer. Then, the finite field $\F_p$ contains squarefree primitive roots relatively prime to $q\geq 2$. Furthermore, the number of such elements has the asymptotic formula
\begin{equation} \label{eq3387.045}
N_{sr}(p,q)=\frac{6}{\pi^2}\prod_{p\nmid q}\left ( 1+\frac{1}{p}\right )^{-1} \frac{\varphi(p-1)}{p-1}p +O(p^{1-\varepsilon}),
\end{equation}
where $\varepsilon>0$ is an arbitrary small number.
\end{thm}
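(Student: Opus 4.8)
The plan is to follow the template already established for the existence theorems in Sections \ref{s9050} and \ref{s9110}, combining the divisors-free characteristic function of primitive roots (Lemma \ref{lem333.03}) with the squarefree characteristic function $\mu^2$ (Lemma \ref{lem297.58}) and the coprimality restriction $\gcd(n,q)=1$. First I would write the count as
\begin{equation}
N_{sr}(p,q)=\sum_{\substack{n\in \F_p\\ \gcd(n,q)=1}} \Psi(n)\mu^2(n),
\end{equation}
and then replace $\Psi(n)$ by its representation from Lemma \ref{lem333.03}. This produces a sum over the frequencies $\gcd(m,p-1)=1$ and $0\leq u\leq p-1$. The term $u=0$ isolates the main term $M_{sr}(p,q)$, while the terms $1\leq u\leq p-1$ form the error term $E_{sr}(p,q)$, exactly as in the decomposition \eqref{eq9010.018}.

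For the main term I would simply invoke Lemma \ref{lem9009.421} with $x=p$, which already evaluates
\begin{equation}
M_{sr}(p,q)=\frac{1}{p}\sum_{\substack{n\in \F_p\\ \gcd(n,q)=1}}\sum_{\gcd(m,p-1)=1}\mu(n)^2=\frac{6}{\pi^2}\prod_{p\nmid q}\left(1+\frac{1}{p}\right)^{-1}\frac{\varphi(p-1)}{p-1}p+O\left(p^{1/2}\right).
\end{equation}
Thus the entire analytic content of the leading term, namely the squarefree density $6/\pi^2$, the coprimality correction $\prod_{p\nmid q}(1+1/p)^{-1}$, and the primitive root density $\varphi(p-1)/(p-1)$, is already in place, and no further computation is required here beyond citing that lemma.

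The error term is the only step demanding genuine care. I would bound
\begin{equation}
E_{sr}(p,q)=\sum_{\substack{n\in \F_p\\ \gcd(n,q)=1}}\left(\frac{\mu^2(n)}{p}\sum_{\substack{\gcd(m,p-1)=1\\ 1\leq u\leq p-1}}\psi\left((\tau^m-n)u\right)\right)
\end{equation}
by the same mechanism as Lemma \ref{lem8809.02}, which is the squarefree case $s=2$ of the $s$-power free error estimate: factor the exponential sum into a complete sum in $u$ times the incomplete sum $\sum_{\gcd(m,p-1)=1}e^{i2\pi u\tau^m/p}$, and apply the bound $\ll p^{1-\varepsilon}$ of Theorem \ref{thm333.04}. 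The hard part will be the extra restriction $\gcd(n,q)=1$; since $|\mu^2(n)|\leq 1$ and this restriction merely removes terms from the outer sum over $n$, it cannot worsen the bound. More precisely, inserting the coprimality characteristic function $\sum_{d\mid n,\, d\mid q}\mu(d)$ and swapping the order of summation splits the sum into $O\left(q^{\varepsilon}\right)$ subsums indexed by $d\mid q$, and because $q=O(\log p)$ this accumulated factor is negligible against the saving $p^{-\varepsilon}$; hence $E_{sr}(p,q)\ll p^{1-\varepsilon}$.

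Finally I would combine the two pieces,
\begin{equation}
N_{sr}(p,q)=M_{sr}(p,q)+E_{sr}(p,q)=\frac{6}{\pi^2}\prod_{p\nmid q}\left(1+\frac{1}{p}\right)^{-1}\frac{\varphi(p-1)}{p-1}p+O\left(p^{1-\varepsilon}\right),
\end{equation}
absorbing the $O(p^{1/2})$ of the main term into the larger error, and observe that the leading term dominates for all sufficiently large $p$, so that $N_{sr}(p,q)>0$, which establishes the existence claim.
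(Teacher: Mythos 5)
Your proposal follows essentially the same route as the paper's own proof: the identical decomposition via Lemma \ref{lem333.03} into $M_{sr}(p,q)+E_{sr}(p,q)$, the same appeal to Lemma \ref{lem9009.421} for the main term, and the same error-term mechanism as Lemma \ref{lem8809.02} (the paper cites that lemma, or Lemma \ref{lem8829.108}, directly). Your added remark on absorbing the coprimality restriction $\gcd(n,q)=1$ into the error bound via the divisor sum over $d\mid q$ is a detail the paper leaves implicit, but the argument is the same.
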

\begin{proof}  For a large prime $p\geq 2$, the total number of primitive roots relatively prime to a fixed integer $q<p$ is precisely
\begin{equation} \label{eq3387.40}
N_{sr}(p,q)=\sum _{\substack{n\in \F_p\\
\gcd(n,q)=1}} \Psi (n) \mu(n)^2.
\end{equation}
In terms of characteristic function for primitive roots, see Lemma \ref{lem333.03}, this is written as
\begin{eqnarray} \label{eq3387.50}
N_r(p,q)&=&\sum _{\substack{n\in \F_p\\
\gcd(n,q)=1}} \Psi (n) \mu(n)^2 \\
&=&\sum _{\substack{n\in \F_p\\
\gcd(n,q)=1}}   \left (\frac{\mu(n)^2}{p}\sum_{\gcd(m,p-1)=1,} \sum_{ 0\leq u\leq p-1} \psi \left((\tau ^m-n)u\right) \right ) \nonumber\\
&=& \frac{1}{p} \sum _{\substack{n\in \F_p\\
\gcd(n,q)=1}}  \sum_{\gcd(m,p-1)=1} \mu(n)^2+\sum _{\substack{n\in \F_p\\
\gcd(n,q)=1}}\frac{\mu(n)^2}{p}
\sum_{\gcd(m,p-1)=1,} \sum_{ 0<u\leq p-1} \psi \left((\tau ^m-n)u\right)\nonumber\\
&=&M_{sr}(p,q) + E_{sr}(p,q) \nonumber.
\end{eqnarray} 
The main term $M_{sr}(p,q)$ is determined by a finite sum over the trivial additive character $\psi =1$, and the error term $E_{sr}(p,q)$ is determined by a finite sum over the nontrivial additive characters \(\psi(t) =e^{i 2\pi t  /p}\neq 1\). Applying Lemma \ref{lem9009.421} to the main term and Lemma \ref{lem8809.02} or Lemma \ref{lem8829.108} to the error term, yield
\begin{eqnarray} \label{eq3387.60}
N_{sr}(p,q)
&=&M_{sr}(p,q)  + E_{sr}(p,q) \\
&=&\frac{6}{\pi^2}\prod_{p\nmid q}\left ( 1+\frac{1}{p}\right )^{-1} \frac{\varphi(p-1)}{p-1}p +O(p^{1/2})+O(p^{1-\varepsilon}) \nonumber\\
&=&\frac{6}{\pi^2}\prod_{p\nmid q}\left ( 1+\frac{1}{p}\right )^{-1} \frac{\varphi(p-1)}{p-1}p +O(p^{1-\varepsilon}) \nonumber\\
&>&0 \nonumber,
\end{eqnarray} 
for all sufficiently large primes $p \geq 2$, and an arbitrary small number $\varepsilon>0$.
\end{proof}

%ddddddddddddddddddddddddddddddddd

\subsection{Squarefree And Relatively Prime Twin Primitive Roots}\label{ss2288}

The dependence correction factor $c_2(q,a)\geq 0$, and the parameter $q=q(a)$ depends on $a \geq 1$. Basically, the vanishing and nonvanishing are described in these cases:
\begin{equation}
c_2(q,a)= 
\left \{
\begin{array}{ll}
>0     &\text{  if } a=2b+1, \text{ and }q=2c+1, \text{ with } b\geq 0,c \geq 0, \\
=0           &\text{  if }a=2b+1, \text{ and }q=2c, \text{ with } b\geq 0,c \geq 1,\\
>0      &\text{  if }a=2b, \text{ and }q\geq 1, \text{ with } b \geq 1. \\
\end{array}
\right .
\end{equation}
To continue the analysis, assume that the parameters $a\geq 1$ and $q\geq 1$ are admissible, and $c_2(q,a)>0$. Let $p \geq 2$ be a large prime, and let $\tau \in \F_p$ be a primitive root. Each pair of squarefree twin primitive roots $n$, $ n+a$ and relatively prime to $q=q(a)\geq 2$ is a solution of the systems of equations: 
\begin{enumerate} [font=\normalfont, label=\arabic*.]
\item$ \displaystyle 
\tau^{n_0}=n, \quad \tau^{n_1}=n+a;$  \tabto{8cm} the primitive root condition.
\item$ \displaystyle 
\mu(n)^2, \quad \mu(n+a)^2$;  \tabto{8cm}  the squarefree condition.
\item$ \displaystyle 
\gcd(n,q)=1, \quad \gcd(n+a,q)=1.$ \tabto{8cm} the relatively prime condition.
\end{enumerate}
A solution is a triple $(n, n_0, n_1) \in \N \times \N \times \N$, with $\gcd(n_i,p-1)=1$, for $i=0,1$.  Let \begin{equation} \label{2887.132}
N_{sr}(2,p,q)=\#\left \{ n \in \F_p: \text{Conditions $1$, $2$, \text{and} $3$ \text{are satisfied.}} \right \}
\end{equation}
denotes the number of solutions.

\begin{thm}  \label{thm2288.077}  Assume Conjecture {\normalfont \ref{conj8009.105}}. Let \(p\geq 2\) be a large prime, let $a\geq 1$ and $q=O(\log p) $ be a pair of integers. Then, the finite field $\F_p$ contains a pair $n$ and $n+a$ of squarefree primitive roots and relatively prime to $q\geq 2$. Furthermore, the number of such pairs has the asymptotic formula
\begin{equation} \label{eq2288.045}
N_{sr}(2,p,q)=c_2(q,a)\prod_{p\nmid q}\left ( 1+\frac{1}{p}\right )^{-2}\prod_{p\geq 2}\left ( 1-\frac{2}{p^2}\right )\left (\frac{\varphi(p-1)}{p}\right )^2p+O(p^{1-\varepsilon}),
\end{equation}
where $c_2(q,a)>0$ is a dependence correction factor, and $\varepsilon>0$ is an arbitrary small number.
\end{thm}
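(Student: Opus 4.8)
The plan is to follow the same main-term/error-term decomposition used throughout Section~\ref{s2887} and Section~\ref{s3387}, now carrying all three restrictions simultaneously — primitivity, squarefreeness, and coprimality to $q$ — on both $n$ and $n+a$. First I would record the counting function directly from its three defining conditions as
\begin{equation*}
N_{sr}(2,p,q)=\sum_{\substack{n\in\F_p\\ \gcd(n,q)=1\\ \gcd(n+a,q)=1}}\Psi(n)\,\Psi(n+a)\,\mu(n)^2\,\mu(n+a)^2,
\end{equation*}
where the coprimality constraints restrict the outer summation range and the squarefree weights $\mu(\cdot)^2$ multiply the primitive-root detectors $\Psi$.

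Next I would substitute the divisors-free representation of $\Psi$ from Lemma~\ref{lem333.03} for each of $n$ and $n+a$, producing a double exponential sum indexed by $(n_0,u_0)$ and $(n_1,u_1)$ with $\gcd(n_i,p-1)=1$. Separating the diagonal indices $u_0=u_1=0$ from the remaining indices $u_0,u_1\ne 0$ splits the count as $N_{sr}(2,p,q)=M_{sr}(2,p,q)+E_{sr}(2,p,q)$. The main term $M_{sr}(2,p,q)$ is precisely the quantity already evaluated in Lemma~\ref{lem9792.356} (which in turn invokes Conjecture~\ref{conj8009.105} for the coprime squarefree correlation), so I would quote that lemma verbatim to obtain the constant $c_2(q,a)\prod_{p\nmid q}(1+1/p)^{-2}\prod_{p\geq 2}(1-2/p^2)$ against $(\varphi(p-1)/p)^2\,p$, in the normalization stated in the theorem. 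For the error term, since $|\mu(\cdot)^2|\le 1$ is a bounded arithmetic function and the coprimality conditions only narrow the outer range, I would apply the restricted-character estimate of Lemma~\ref{lem8829.108} (equivalently Lemma~\ref{lem8899.06} applied factorwise), which delivers $E_{sr}(2,p,q)\ll p^{1-\varepsilon}$ for any $\varepsilon\in(0,1/2)$.

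Finally I would merge the two estimates and absorb the smaller error emanating from Lemma~\ref{lem9792.356} into the dominant exponential-sum error $O(p^{1-\varepsilon})$; note that the gap between the $\varphi(p-1)/p$ and $\varphi(p-1)/(p-1)$ normalizations is $O(1)$ after multiplication by $p$, hence harmless. This yields the stated asymptotic formula, and the positivity $N_{sr}(2,p,q)>0$ for all large $p$ follows at once whenever the admissibility hypothesis $c_2(q,a)>0$ holds.

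The hard part is not the assembly, which is mechanical within this framework, but everything hidden inside Lemma~\ref{lem9792.356} and therefore inside Conjecture~\ref{conj8009.105}: the genuine analytic content is the coprime-restricted squarefree twin correlation together with the correct evaluation of the dependence factor $c_2(q,a)$ and its vanishing for even $q$ with odd $a$. Since the theorem is stated conditionally on that conjecture, the only remaining obstacle is bookkeeping — ensuring the coprimality indicators are threaded consistently through the $\Psi$-substitution so that the error sum still factorizes and admits the $p^{1-\varepsilon}$ bound, and verifying that $c_2(q,a)>0$ is exactly the condition guaranteeing a nonempty solution set.
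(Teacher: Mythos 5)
Your proposal matches the paper's own proof essentially step for step: the same counting function, the same substitution of the divisors-free characteristic function from Lemma \ref{lem333.03}, the same split into the $u=v=0$ main term handled by Lemma \ref{lem9792.356} (hence by Conjecture \ref{conj8009.105}) and the nonzero-index error term handled by Lemma \ref{lem8829.108}, and the same final merge and positivity conclusion. No substantive difference; your added remarks on the normalization $\varphi(p-1)/p$ versus $\varphi(p-1)/(p-1)$ and on where the genuine analytic content resides are accurate but do not change the route.
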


\begin{proof} For a large prime $p\geq 2$, the total number of squarefree twin primitive roots, both relatively prime to a fixed integer $q\geq 2$, is precisely
\begin{equation} \label{eq2288.40}
N_{sr}(2,p,q)=\sum _{\substack{n\in \F_p\\
\gcd(n,q)=1\\
\gcd(n+a,q)=1}} \Psi (n)\Psi (n+a)\mu(n)^2\mu(n+a)^2.
\end{equation}
In terms of characteristic function for primitive roots, see Lemma \ref{lem333.03}, this is written as
\begin{eqnarray} \label{eq2288.50}
N_{sr}(2,p,q)
&=&  \sum _{\substack{n\in \F_p\\
\gcd(n,q)=1\\
\gcd(n+a,q)=1}} \left (\frac{\mu(n)^2}{p} \sum_{ \substack{0\leq u\leq p-1\\ \gcd(c,p-1)=1}} \psi \left((\tau ^c-n)u\right) \right )   \\
&&\times \left (\frac{\mu(n+a)^2}{p} \sum_{ \substack{0\leq v\leq p-1\\\gcd(d,p-1)=1}} \psi \left((\tau ^d-n-a)v\right) \right )\nonumber\\
&=&M_{sr}(2,p,q) + E_{sr}(2,p,q)\nonumber.
\end{eqnarray} 
The main term $M_{sr}(2,p,q)$ is determined by the indices $u=v=0$, and has the form \begin{equation} \label{eq2288.140}
M_s(2,p,q)
=\sum _{\substack{n\in \F_p\\
\gcd(n,q)=1\\
\gcd(n+a,q)=1}} \left (\frac{1}{p} \sum_{ \substack{0\leq u\leq p-1\\\gcd(c,p-1)=1}}\mu(n)^2 \right )  \left (\frac{1}{p} \sum_{ \substack{0\leq v\leq p-1\\\gcd(d,p-1)=1}} \mu(n+a)^2\right )  ,
\end{equation} 
and the error term $E_{sr}(2,p,q)$ is determined by the indices $u\ne0,v\ne0$, and has the form as \eqref{eq2288.50}. Applying Lemma \ref{lem9792.356} to the main term and Lemma \ref{lem8829.108} to the error term, yield
\begin{eqnarray} \label{eq2288.133}
N_{sr}(2, p,q)
&=&M_{sr}(2,p,q) + E_{sr}(2,p,q)\\
&=&c_2(q,a)\prod_{p\nmid q}\left ( 1+\frac{1}{p}\right )^{-2}\prod_{p\geq 2}\left ( 1-\frac{2}{p^2}\right )\left (\frac{\varphi(p-1)}{p}\right )^2p+O\left (p^{1-\delta} \right )+O(p^{1-\varepsilon}) \nonumber\\
&=&c_2(q,a)\prod_{p\nmid q}\left ( 1+\frac{1}{p}\right )^{-2}\prod_{p\geq 2}\left ( 1-\frac{2}{p^2}\right )\left (\frac{\varphi(p-1)}{p}\right )^2p+O(p^{1-\varepsilon}) \nonumber\\
&>&0 \nonumber,
\end{eqnarray} 
where $c_2(q,a)>0$ is an admissible dependence correction factor, for all sufficiently large primes $p \geq 2$, and an arbitrary small number $\varepsilon>0$.
\end{proof}

%ssssssssssssssssssssssssssssssssssssssssssssssssssssssssssssssssssss
\section{Probabilities For Consecutive Squarefree Primitive Roots}\label{s1188}
The forms of the main terms in Theorem \ref{thm8800.080} and Theorem \ref{thm8800.190} imply that a squarefree primitive root in a finite field $\F_p$ is a nearly independent random variable $X=X(p)$.
\begin{dfn} \label{dfn1188.202} { \normalfont The probability of squarefree primitive roots in a finite field $\F_p$ is defined by
\begin{equation} \label{eq8800.055}
P\left ( \ord_p\left (X\right )=p-1 \text{ and } \mu(X)^2\ne 0\right)=\frac{\varphi(p-1)}{p-1}\prod_{q \geq 2}\left (1-\frac{1}{q^2}\right )+O \left ( \frac{1}{p^{\varepsilon}}\right),
\end{equation}
where $\varepsilon>0$ is a small number.}
\end{dfn}

Some calculations described below demonstrates that two or more consecutive squarefree primitive roots are dependent random variables. 
\begin{lem} \label{lem1188.06}  Let \(p\geq 2\) be a large prime. Let $X_i$ be a random squarefree primitive root. Then, a pair of random consecutive squarefree primitive roots $X_0, X_1$ in a finite field $\F_p$ is a dependent random variable. Specifically, the probability of a pair of random consecutive squarefree primitive roots is 
\begin{eqnarray} \label{eq1188.08}
&&P(\ord(X_0) =p-1, \ord(X_1) =p-1 \text{ and } \mu(X_0) =\pm 1, \mu(X_1) =\pm 1)\nonumber\\
&=&\left (\frac{\varphi(p-1)}{p-1} \right )^2\prod_{q \geq 2}\left (1-\frac{2}{q^2}\right )+O \left ( \frac{1}{p^{\varepsilon}}\right),
\end{eqnarray} 
where $\varepsilon>0$ is a small number. 
\end{lem}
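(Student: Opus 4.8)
The plan is to recognize this statement as the normalized form of the counting asymptotics already established for two consecutive squarefree primitive roots, followed by a comparison against the prediction one would make under independence. First I would express the joint probability as a density over $\F_p$, writing
\[
P = \frac{1}{p}\, N_2(2,p),
\]
where $N_2(2,p)$ counts those $n \in \F_p$ for which both $n$ and $n+1$ are squarefree primitive roots, i.e.\ the configuration with $a_0=0$, $a_1=1$.

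Next I would invoke Theorem \ref{thm9040.040}, which supplies
\[
N_2(2,p)=\prod_{q\geq 2}\left(1-\frac{2}{q^2}\right)\left(\frac{\varphi(p-1)}{p-1}\right)^2 p + O\left(p^{1-\varepsilon}\right).
\]
Dividing by $p$ yields the main term $\left(\varphi(p-1)/(p-1)\right)^2\prod_{q\geq 2}(1-2/q^2)$ and reduces the error $O(p^{1-\varepsilon})$ to $O(p^{-\varepsilon})$, which matches the claimed $O(1/p^{\varepsilon})$.

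The substantive content of the lemma is the dependence assertion, so the final step I would carry out is to compare this joint density with the product of the two single-variable densities furnished by Definition \ref{dfn1188.202}. Independence would predict $\left(\varphi(p-1)/(p-1)\right)^2 (6/\pi^2)^2$, whereas the factorization recorded in \eqref{eq1180.040}, namely $\prod_{q\geq 2}(1-2/q^2) = (6/\pi^2)^2\prod_{q\geq 2}\left(1+1/(q^2(q^2-2))\right)^{-1}$, shows that the joint density carries the additional factor $c_2(2)=\prod_{q\geq 2}\left(1+1/(q^2(q^2-2))\right)^{-1}\approx 0.873 < 1$. Since $c_2(2)\neq 1$, the two events fail to be independent, which is exactly the qualitative conclusion the lemma records.

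I do not expect a genuine obstacle here: once Theorem \ref{thm9040.040} is in hand, the lemma is little more than a normalization together with the observation that $\prod(1-2/q^2)$ is not the square of $\prod(1-1/q^2)$. The only point requiring mild care is tracking the order of the error term after dividing by $p$, which turns the count's $O(p^{1-\varepsilon})$ into the stated $O(p^{-\varepsilon})$; this is immediate but should be stated explicitly so the probabilistic interpretation is unambiguous.
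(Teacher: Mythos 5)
Your proposal is correct and follows essentially the same route as the paper: identify the density constant in the main term of Theorem \ref{thm9040.040} as the joint probability, then use the factorization $\prod_{q\geq 2}\left(1-\frac{2}{q^2}\right)=\left(\frac{6}{\pi^2}\right)^2\prod_{q\geq 2}\left(1+\frac{1}{q^2(q^2-2)}\right)^{-1}$ to exhibit the dependence correction factor $c_2(2)<1$. Your version is slightly more careful in making the normalization $P=N_2(2,p)/p$ and the resulting $O(p^{-\varepsilon})$ error explicit, and in stating the dependence conclusion correctly (the paper's own wording at that point contains a slip, asserting independence where dependence is meant).
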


\begin{proof} The density constant in the main term of Theorem \ref{thm9040.040} is the probability of having two consecutive squafree primitive roots. Next, use a series of steps to reduces to a simpler product:
\begin{eqnarray} \label{eq1188.020}
\left (\frac{\varphi(p-1)}{p-1} \right )^2\prod_{q\geq 2}\left (1-\frac{2}{q^2}\right )
&=&\left (\frac{\varphi(p-1)}{p-1} \right )^2\prod_{q \geq 2}\left (1-\frac{1}{q^2}\right )^2\left (1+\frac{1}{q^2(q^2-2)}\right )^{-1}\nonumber\\
&<&\left (\frac{\varphi(p-1)}{p-1} \right )^2\prod_{q \geq 2}\left (1-\frac{1}{q^2}\right )^2.
\end{eqnarray}
The last line is product of the individual probabilities, which implies that the two properties of the consecutive random integers $X_0, X_1$ are independent. The reduction from independent events is measured by the dependence correction factor
\begin{equation} \label{eq1188.024}
c_2(2)=\prod_{q \geq 2}\left (1+\frac{1}{q^2(q^2-2)}\right )^{-1}=0.87298595344931361877174511\ldots.
\end{equation}

\end{proof}

\begin{lem} \label{lem1188.08}  Let \(p\geq 2\) be a large prime. Let $X_i$ be a random squarefree primitive root. Then, a triple of random consecutive squarefree primitive roots $X_0, X_1, X_2$ in a finite field $\F_p$ is a dependent random variable. Specifically, the probability of a triple of random consecutive squarefree primitive roots is 
\begin{eqnarray} \label{eq1188.118}
&&P(\ord(X_0) = \ord(X_1) =\ord(X_2) =p-1 \text{ and } \mu(X_0) =\pm 1, \mu(X_1) =\pm 1, \mu(X_2) =\pm 1)\nonumber\\
&=&\left (\frac{\varphi(p-1)}{p-1} \right )^3\prod_{q \geq 2}\left (1-\frac{3}{q^2}\right )+O \left ( \frac{1}{p^{\varepsilon}}\right),
\end{eqnarray} 
where $\varepsilon>0$ is a small number. 
\end{lem}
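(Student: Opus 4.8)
The plan is to mirror the argument given for the pair case in Lemma \ref{lem1188.06}, replacing the two-fold correlation constant by the three-fold one supplied by Theorem \ref{thm9030.050}. First I would observe that the density constant attached to the main term of that theorem, namely $\prod_{q\geq 2}\left(1-3/q^2\right)$, is precisely the joint probability that three consecutive integers $X_0, X_1, X_2$ are simultaneously squarefree; combining it with the cube $\left(\varphi(p-1)/(p-1)\right)^3$ of the single primitive-root probability from Definition \ref{dfn1188.200} yields the stated joint probability, the $O(p^{-\varepsilon})$ error absorbing the remainder term $O(p^{2/3})$ from Theorem \ref{thm9030.050} after division by $p$.

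Next I would carry out the Euler-product factorization already recorded in \eqref{eq1180.140}, namely
\begin{equation*}
\prod_{q\geq 2}\left(1-\frac{3}{q^2}\right) = \left(\frac{6}{\pi^2}\right)^3\prod_{q\geq 2}\left(1+\frac{3q^2-1}{q^4(q^2-3)}\right)^{-1}.
\end{equation*}
The verification rests on the elementary identity $q^6-3q^4+3q^2-1=(q^2-1)^3$, which shows that the local factor $\left(1-3/q^2\right)$ differs from the product $\left(1-1/q^2\right)^3$ of three independent squarefree local factors exactly by the correction factor $c_2(3)$ displayed in \eqref{eq1180.148}. Comparing with the cube of the single squarefree primitive-root probability from Definition \ref{dfn1188.202} then expresses the joint probability as the product of the three individual probabilities multiplied by $c_2(3)$.

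The conclusion follows by noting that $c_2(3)=0.5585\ldots<1$, so the joint probability is strictly smaller than $\left(\varphi(p-1)/(p-1)\right)^3\left(6/\pi^2\right)^3$, the product of the three marginal probabilities. This strict inequality is exactly the assertion that $X_0, X_1, X_2$ are not jointly independent, establishing the dependence claimed in the lemma.

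I do not anticipate any serious obstacle: the analytic input is entirely furnished by Theorem \ref{thm9030.050}, and the remaining content is the bookkeeping of the product factorization, which is routine once the identity $(q^2-1)^3=q^6-3q^4+3q^2-1$ is in hand. The only point requiring minor care is confirming that the infinite product defining $c_2(3)$ converges---its general factor is $1+O(q^{-4})$---and remains strictly below $1$, so that the demonstrated dependence is genuine rather than an artifact of truncating the product over primes.
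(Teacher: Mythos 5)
Your proposal is correct and follows essentially the same route as the paper: take the density constant from Theorem \ref{thm9030.050}, factor $\prod_{q\geq 2}\left(1-3/q^2\right)$ as $\prod_{q\geq 2}\left(1-1/q^2\right)^3$ times the correction factor $c_2(3)$ of \eqref{eq1180.148}, and conclude dependence from $c_2(3)<1$. Your explicit verification via the identity $(q^2-1)^3=q^6-3q^4+3q^2-1$ and the convergence remark for the infinite product are welcome details the paper leaves implicit, but they do not constitute a different argument.
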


\begin{proof} The density constant in the main term of Theorem \ref{thm9040.040} is the probability of having two consecutive squafree primitive roots. Next, use a series of steps to reduces to a simpler product:
\begin{eqnarray} \label{eq1188.120}
\left (\frac{\varphi(p-1)}{p-1} \right )^3\prod_{q\geq 2}\left (1-\frac{3}{q^2}\right )
&=&\left (\frac{\varphi(p-1)}{p-1} \right )^3\prod_{q \geq 2}\left (1-\frac{1}{q^2}\right )^3\left (1+\frac{3q^2-1}{q^4(q^2-3)}\right )^{-1}\nonumber\\
&<&\left (\frac{\varphi(p-1)}{p-1} \right )^3\prod_{q \geq 2}\left (1-\frac{1}{q^2}\right )^3.
\end{eqnarray}
The last line is product of the individual probabilities, which implies that the two properties of the consecutive random integers $X_0, X_1, X_2$ are independent. The reduction from independent events is measured by the dependence correction factor
\begin{equation} \label{eq1188.148}
c_2(3)=\prod_{q \geq 2}\left (1+\frac{3q^2-1}{q^4(q^2-3)}\right )^{-1}=0.558526979127689105533330\ldots.
\end{equation}

\end{proof}

The pattern of the probability function for consecutive squarefree primitive roots breaks down for 4 consecutive squarefree primitive roots since $\left (1-\frac{4}{q^2}\right )=0$ at $q=2$.
\newpage
%ppppppppppppppppppppppppppppppppppppppppppppppppppppp
\section{Problems}
Several interesting problems of different level of complexities are presented in this section. The range of difficulty ranges from easy to very difficult.
%bbbbbbbbbbbbbbbbbbbbbbbbbbbbbbbbbbbbbbbbbbbbbbbbbbbbbbbbbbbbbbbbbbbbbb

\subsection{Least Consecutive Primitive Roots In Finite Fields}
\begin{exe} { \normalfont 
Let $p \geq 2$ be a large prime, and let $k=2$. Determine an asymptotic formula for the least pair of consecutive primitive roots $n$ and $n+1$ in the finite field $\F_p$. Is the magnitude $n=O(\log^c p)$, where $c>0$ is a constant, correct?}
\end{exe}
\begin{exe} { \normalfont 
Let $p \geq 2$ be a large prime, and let $k=2$. Determine an asymptotic formula for the least pair of consecutive squarefree primitive roots $n$ and $n+1$ in the finite field $\F_p$. Is the magnitude $n=O(\log^c p)$, where $c>0$ is a constant, correct?}
\end{exe}
\begin{exe} { \normalfont 
Let $p \geq 2$ be a large prime, and let $k=3$. Determine an asymptotic formula for the least pair of consecutive primitive roots $n$, $n+1$, and $n+2$ in the finite field $\F_p$. Is the magnitude $n=O(\log^c p)$, where $c>0$ is a constant, correct?}
\end{exe}

\begin{exe} { \normalfont 
Let $p \geq 2$ be a large prime, and let $k=3$. Determine an asymptotic formula for the least pair of consecutive squarefree primitive roots $n$, $n+1$, and $n+2$ in the finite field $\F_p$. Is the magnitude $n=O(\log^c p)$, where $c>0$ is a constant, correct?}
\end{exe}

\begin{exe} { \normalfont 
Show that there are infinitely many admissible $4$-tuples $(a_0,a_1,a_2,a_3)$, and each one generates  infinitely many squarefree integers $4$-tuples $(n+a_0,n+a_1,n+a_2,n+a_3)$ as $n \to \infty$. For example, $(n,n+1,n+3,n+5)$, with $n \geq 1$.}
\end{exe}
\subsection{Simultaneous Primitive Root In Finite Fields}

\begin{exe} { \normalfont 
Let $p \geq 2$ and $q\geq 2$ be large distinct primes. Develop an algorithm for computing a simultaneous primitive root $u\ne\pm 1, v^2$ modulo $p$ and modulo $q$.}
\end{exe}

\begin{exe} { \normalfont 
Let $p \geq 2$, $q\geq 2$, and $r\geq 2$ be large distinct primes. Develop an algorithm for computing a simultaneous primitive root $u\ne\pm 1, v^2$ modulo $p$ modulo $q$, and $r$.}
\end{exe}

\subsection{Consecutive And Relatively Prime Primitive Roots }
\begin{exe} { \normalfont 
Let $p \geq 2$ be a large prime, and let $q \geq 1$ be a fixed integer. Prove that there are infinitely many consecutive  prime primitive roots and relatively prime to $q$. Determine an asymptotic formula for the number of $k\geq 3$ consecutive primitive roots $n$, $n+a$, and $n+b$ in the finite field $\F_p$ and relatively prime to $q$. }
\end{exe}

\begin{exe} { \normalfont 
Let $p \geq 2$ be a large prime, and let $q \geq 1$ be a fixed integer. Prove a result on the distribution of pairs of consecutive primitive roots relatively prime to $q$.  }
\end{exe}
\begin{exe} { \normalfont 
Let $p \geq 2$ be a large prime, and let $B \geq 1$ be a fixed integer. Prove the existence of pairs of consecutive smooth primitive roots relative to $B$.  }
\end{exe}

\subsection{Summatory Functions And Primitive Roots }
\begin{exe} { \normalfont 
Let $s \in \Z$ be a fixed integer, and let $p \geq 1$ be a prime. Evaluate the finite sum$$
\sum _{n<p} \Psi (n)n^s.
$$. }
\end{exe}

\begin{exe} { \normalfont 
Let $s \in \Z$ be a fixed integer, and let $p \geq 1$ be a prime. Evaluate the finite sum$$
\sum _{n<p} \Psi (n)n^s\mu(n).
$$. }
\end{exe}
\subsection{Length Merit Factor }
\begin{exe} { \normalfont 
Determine the an effective upper bound $C>0$ for the length merit factor $m=k/\log p\leq C$ for all primes $p \geq 2$, see Definition \ref{dfn9799.100}.
 }
\end{exe}
\begin{exe} { \normalfont 
Compute a table of the length merit factor $m=k/\log p$ indexed by the primes $p \leq 1000$.
. }
\end{exe}
\begin{exe} { \normalfont 
Compute a table of the length merit factor $m=k/\log p$ indexed by the length $k \leq 50$.
 }
\end{exe}
%bbbbbbbbbbbbbbbbbbbbbbbbbbbbbbbbbbbbbbbbbbbbbbbbbbbbbbbbbbbbbbbbbbbbbbbbbbbbbbbbbb

\newpage

\currfilename.\\

\end{document}